\newtheorem{theorem}{Theorem}[section]
\newtheorem{corollary}[theorem]{Corollary}
\newtheorem{lemma}[theorem]{Lemma}
\newtheorem{definition}[theorem]{Definition}
\newtheorem{remark}[theorem]{Remark}
\newtheorem{proposition}[theorem]{Proposition}
\numberwithin{equation}{section}
 \DeclareMathOperator{\dom}{dom}
\DeclareMathOperator{\ran}{ran} \DeclareMathOperator{\Ext}{Ext}
\DeclareMathOperator{\diag}{diag}\DeclareMathOperator{\loc}{loc}
\DeclareMathOperator{\comp}{comp}
\newcommand\C{{\mathbb{C}}}
\newcommand\R{{\mathbb{R}}}
\newcommand\N{{\mathbb{N}}}
\newcommand\Z{{\mathbb{Z}}}
\newcommand\gH{{\mathfrak{H}}}
\newcommand{\gG}{{\Gamma}}
\newcommand{\gd}{{d}}
\newcommand{\gA}{{\alpha}}
\newcommand{\gB}{{\beta}}
\newcommand{\I}{\mathrm{i}}
\newcommand{\rd}{\mathrm{d}}
\newcommand\cH{{\mathcal{H}}}
\newcommand\cI{{\mathcal{I}}}
\newcommand\cB{{\mathcal{B}}}
\newcommand\cA{{\mathcal{A}}}
\newcommand\cC{{\mathcal{C}}}
\newcommand\cN{{\mathfrak{N}}}
\newcommand\rD{{\rm{d}}}
\def\Ext{{\rm Ext}}
\def\wt#1{{{\widetilde #1} }}
\title
{Deficiency indices and discreteness property of block Jacobi matrices and Dirac operators  with point  interactions}
\author{ Viktoriya~S.~Budyka$^{1,2}$, Mark~M.~Malamud$^2$\\
$^1$ Donetsk Academy
of
Management and Public Administration, Donetsk, \\ e-mail: {budyka.vik@gmail.com}\\
$^2$Peoples Friendship University of Russia (RUDN University), Moscow, Russia,\\ e-mail: {malamud3m@gmail.com}}
\begin{document}

\maketitle
\begin{abstract}

The paper concerns with  infinite symmetric  block Jacobi  matrices  $\bf J$ with $p\times p$-matrix entries.
 We present  new conditions for general  block  Jacobi  matrices  to be selfadjoint and have  discrete spectrum.

 In our previous papers there was established  a close relation between  a class of such matrices and  symmetric
 $2p\times  2p$  Dirac operators $\mathrm{\bf D}_{X,\alpha}$
 with point interactions in $L^2(\Bbb R; \Bbb C^{2p})$. In particular, their deficiency indices are related
by $n_\pm(\mathrm{\bf D}_{X,\alpha})= n_\pm({\bf J}_{X,\gA})$.
 For  block  Jacobi matrices  of this class we present several
  conditions ensuring  equality $n_\pm({\bf J}_{X,\gA})=k$ with any $k \le p$.
    Applications to  matrix Schrodinger and Dirac  operators with point interactions  are given. It is worth mentioning that
    a connection between Dirac and Jacobi operators  is employed here in both directions for the first time.
    In particular, to prove the equality $n_\pm({\bf J}_{X,\gA})=p$ for ${\bf J}_{X,\gA}$ we first  establish it
    for Dirac operator $\mathrm{\bf D}_{X,\alpha}$.

\end{abstract}

%

\renewcommand{\contentsname}{Contents}
\tableofcontents

%
%
%

%



\section{Introduction}\label{s1}
The main object of the paper is the infinite  block Jacobi matrix
\begin{equation}\label{Jacobi_m_intro}
{\bf J}=\left(
    \begin{array}{cccccc}
      \mathcal A_0 & \mathcal B_0 & \mathbb O_p & \mathbb O_p & \mathbb O_p & \ldots \\
      \mathcal B_0^* & \mathcal A_1 & \mathcal B_1 & \mathbb O_p & \mathbb O_p & \ldots \\
      \mathbb O_p & \mathcal B_1^* & \mathcal A_2 & \mathcal B_2 & \mathbb O_p & \ldots \\
      \vdots & \vdots & \vdots & \vdots & \vdots & \ddots \\
            \end{array}
  \right),
\end{equation}
with $p\times p$-matrix entries  $\mathcal A_j=\mathcal A_j^*\in\mathbb C^{p\times p}$, $\mathcal B_j \in \mathbb C^{p\times p}$ is invertible, $j\in\mathbb N_0$,  $\mathbb O_p\  (\in\mathbb C^{p\times p})$ denotes zero matrix.
Following M.G. Krein  (see  \cite{Krein, Krein49})  matrix   $\bf J$ is also called
Jacobi matrix with matrix entries.

Let $l_0^2(\mathbb N;\mathbb C^p)$ be a subset of finite sequences in $l^2(\mathbb N;\mathbb C^p)$.
 The mapping $l_0^2(\mathbb N;\mathbb C^p) \ni f \to {\bf J}f$ defines a linear symmetric,
 but not closed operator ${\bf J}^0$.  The closure of the operator ${\bf J}^0$
 defines  a minimal closed  symmetric operator  ${\bf J}_{\min}$
 in $l^2(\mathbb N;\mathbb C^p)$.
  In what follows we will identify the minimal operator ${\bf J}_{\min}$ with
 the  matrix ${\bf J}$ of form \eqref{Jacobi_m_intro} and write ${\bf J}_{\min} = {\bf J}$.
 We also put  ${\bf J}_{\max} = {\bf J}^*$.

Note that  ${\bf J}$ is symmetric, ${\bf J} \subset {\bf J}^*$,  while it is not necessary
self-adjoint, i.e. the deficiency indices  $n_\pm({\bf J}):=\dim\frak N_{\pm i}({\bf J}) := \dim\ker ({\bf J}^* \mp iI)$ are non-trivial.
The most simple and widely  known test of selfadjointness of $\bf J$
is the matrix version of the Carleman test (see \cite[Theorem VII.2.9]{Ber68},
 and also \cite{KosMir98, KosMir99}), which reads as follows.
 \begin{theorem}[\cite{Ber68}, Carleman Test]\label{CarT}
 If
\begin{equation}\label{Car}
\sum\limits_{j=0}^\infty\|\mathcal B_j\|^{-1}=+\infty,
\end{equation}
then the (minimal) Jacobi operator $\bf J$ is self--adjoint,
i.e. ${\bf J}={\bf J}_{\min}={\bf J}_{\max}={\bf J}^*$.
 \end{theorem}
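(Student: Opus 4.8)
The plan is to prove that both deficiency indices of $\mathbf{J}=\mathbf{J}_{\min}$ vanish, i.e.\ $\mathfrak{N}_{\pm i}(\mathbf{J})=\ker(\mathbf{J}^{*}\mp iI)=\{0\}$; since $\mathbf{J}$ is closed and symmetric and $\mathbf{J}^{*}=\mathbf{J}_{\max}$, this forces $\mathbf{J}_{\min}=\mathbf{J}_{\max}=\mathbf{J}^{*}$. Because $\mathbf{J}^{*}=\mathbf{J}_{\max}$ acts on its domain by the formal block-tridiagonal expression, a vector $u=(u_{0},u_{1},u_{2},\dots)\in l^{2}(\mathbb{N};\mathbb{C}^{p})$ belongs to $\mathfrak{N}_{i}(\mathbf{J})$ precisely when it solves the three-term matrix recurrence $\mathcal{B}_{j-1}^{*}u_{j-1}+\mathcal{A}_{j}u_{j}+\mathcal{B}_{j}u_{j+1}=iu_{j}$, $j\in\mathbb{N}_{0}$ (with $\mathcal{B}_{-1}^{*}u_{-1}:=0$). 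A first reduction: if $u_{0}=0$, then the $j=0$ relation gives $\mathcal{B}_{0}u_{1}=0$, hence $u_{1}=0$ by invertibility of $\mathcal{B}_{0}$, and inductively $u_{j+1}=\mathcal{B}_{j}^{-1}(iu_{j}-\mathcal{A}_{j}u_{j}-\mathcal{B}_{j-1}^{*}u_{j-1})=0$ for every $j$; so it suffices to exclude solutions with $u_{0}\ne0$.

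Next I would establish a discrete Lagrange (Green) identity. Taking the $\mathbb{C}^{p}$-inner product of the $j$-th relation with $u_{j}$, summing over $j=0,\dots,n$, and using $\mathcal{A}_{j}=\mathcal{A}_{j}^{*}$ so that $\sum_{j}\langle\mathcal{A}_{j}u_{j},u_{j}\rangle\in\mathbb{R}$, the imaginary part of the resulting equation telescopes after a summation by parts in the term $\sum_{j}\langle\mathcal{B}_{j-1}^{*}u_{j-1},u_{j}\rangle=\sum_{j}\langle u_{j-1},\mathcal{B}_{j-1}u_{j}\rangle$: the interior contributions appear as $\langle\mathcal{B}_{j}u_{j+1},u_{j}\rangle+\overline{\langle\mathcal{B}_{j}u_{j+1},u_{j}\rangle}$ ($j\le n-1$), which are real, and one is left with
\[
\operatorname{Im}\langle\mathcal{B}_{n}u_{n+1},u_{n}\rangle=\sum_{j=0}^{n}\|u_{j}\|^{2}=:S_{n}.
\]
The one point deserving attention here is that the cancellation of the interior terms is not produced by Hermiticity of $\mathcal{B}_{j}$ (false in general) but by the $\mathcal{B}_{j}$/$\mathcal{B}_{j}^{*}$ pairing built into the off-diagonal of $\mathbf{J}$; apart from this bookkeeping the identity is routine.

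From the identity, the Cauchy--Schwarz inequality and submultiplicativity of the operator norm give
\[
S_{n}\le|\langle\mathcal{B}_{n}u_{n+1},u_{n}\rangle|\le\|\mathcal{B}_{n}\|\,\|u_{n+1}\|\,\|u_{n}\| .
\]
Writing $\|u_{n+1}\|^{2}=S_{n+1}-S_{n}$, $\|u_{n}\|^{2}=S_{n}-S_{n-1}$, squaring, and applying $\sqrt{ab}\le(a+b)/2$ yields $\|\mathcal{B}_{n}\|^{-1}\le\sqrt{(S_{n+1}-S_{n})(S_{n}-S_{n-1})}/S_{n}\le(S_{n+1}-S_{n-1})/(2S_{n})$. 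If $u\ne0$, then (by the reduction above) $u_{0}\ne0$, so $S_{n}\ge S_{0}=\|u_{0}\|^{2}>0$ and $\|\mathcal{B}_{n}\|^{-1}\le(S_{n+1}-S_{n-1})/(2S_{0})$ for $n\ge1$. Summing and telescoping, $\sum_{n=1}^{N}\|\mathcal{B}_{n}\|^{-1}\le(S_{N+1}+S_{N}-S_{1}-S_{0})/(2S_{0})\le\|u\|^{2}/S_{0}<\infty$ since $u\in l^{2}$, and adding the finite term $\|\mathcal{B}_{0}\|^{-1}$ we obtain $\sum_{n=0}^{\infty}\|\mathcal{B}_{n}\|^{-1}<\infty$, contradicting \eqref{Car}. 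Hence $\mathfrak{N}_{i}(\mathbf{J})=\{0\}$; the verbatim computation with $i$ replaced by $-i$ gives $\mathfrak{N}_{-i}(\mathbf{J})=\{0\}$, so $n_{\pm}(\mathbf{J})=0$ and $\mathbf{J}$ is self-adjoint. I do not expect a genuinely hard step: this is the block counterpart of the classical scalar Carleman argument, and the only things one must not overlook are the two uses of invertibility of the $\mathcal{B}_{j}$ — to propagate the recurrence from $u_{0}$, and (through $S_{0}>0$) to make the final division legitimate.
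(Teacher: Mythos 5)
Your argument is correct. Note that the paper does not prove this statement at all: Theorem \ref{CarT} is quoted from Berezansky \cite{Ber68} (Theorem VII.2.9), so there is no in-paper proof to compare with. What you wrote is essentially the classical Carleman argument transplanted to the block setting: since ${\bf J}^*={\bf J}_{\max}$ acts by the formal tridiagonal expression, a deficiency element is an $l^2$ solution of the three-term recurrence; your discrete Green identity $\operatorname{Im}\langle\mathcal B_n u_{n+1},u_n\rangle=S_n$ is the right block analogue (the cancellation indeed comes from the $\mathcal B_j/\mathcal B_j^*$ pairing, not from Hermiticity of $\mathcal B_j$), and the Cauchy--Schwarz plus telescoping step $\|\mathcal B_n\|^{-1}\le (S_{n+1}-S_{n-1})/(2S_0)$ forces $\sum_n\|\mathcal B_n\|^{-1}<\infty$ whenever a nontrivial deficiency element exists, contradicting \eqref{Car}. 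You also correctly isolated the two places where invertibility of $\mathcal B_j$ is used (propagating $u_0=0$ to $u\equiv 0$, hence $S_0>0$ for a nontrivial solution, and finiteness of $\|\mathcal B_0\|^{-1}$), and the identical computation at $-i$ gives $n_+({\bf J})=n_-({\bf J})=0$, i.e.\ selfadjointness of the closed symmetric operator ${\bf J}_{\min}$. The only step left implicit is the routine verification that ${\bf J}^*$ acts componentwise by the formal expression (test against finitely supported vectors), which is standard and not a gap.
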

Condition \eqref{Car} is not necessary for selfadjointness of $\bf J$ in general  even in the
 scalar case  $(p=1)$ while according to Berezansky's result (see \cite[Theorem VII.1.5]{Ber68} and \cite{Akh})
 it becomes  necessary
under certain additional  assumptions on the (scalar) entries $\mathcal A_n$ and $\mathcal B_n$.
A matrix version of his result as well as generalizations can be found
in \cite{KosMir98, KosMir99}.

In general, one has $0\leq  n_{\pm}({\bf J}) \leq p$ (see  \cite{Ber68, Krein, Krein49}).
Besides, there is one more restriction: indices achieve the maximal value
only simultaneously, i.e. $n_+({\bf J}) =  p \Longleftrightarrow  n_-({\bf J}) =p$ (see \cite{Kogan70}).
  It is shown in  \cite{Dyuk06, Dyuk10} that the converse statement is also true:  for any pair of numbers $\{n_-,n_+\}$, satisfying either $0\leq n_-,n_+< p$, or $n_{\pm}=p$, there exists
  a Jacobi matrix $\bf J$ such that $n_{\pm}({\bf J}) = p$.

The problem of computing the deficiency indices of Jacobi matrices is the first main problem
naturally arising  in the spectral theory of  such matrices as well as in
the corresponding moment problem. It was established by M.G. Krein  (see  \cite{Krein, Krein49}) that with any Jacobi matrix $\bf J$, one can associate
a matrix moment problem. This problem has a unique solution (normalized in a certain
sense) if and only if one of the numbers $n_-$ or $n_+$ is equal to zero. This topic has  attracted  substantial attention, in particular during
the last two decades (see for instance,
\cite{Bro_Mir, BroMir18, BudMal19, BudMal20, BudMalPos18, CarMalPos13, Dyuk06, Dyuk10, Dyuk20, KM10, KM, KM_rev, KMN, KosMir98, KosMir99, KosMir01, Krein, Krein49,  MirSaf16}).
We especially mention recent publications \cite{PetVelaz14} ($p=1$) and \cite{BroMir18}, \cite{BudMalPos18}, \cite{BudMal20}  ($p\geq1$)
where new  different  conditions
for  block Jacobi matrices to be selfadjoint  were found.

New applications of Jacobi matrices to Schr\"{o}dinger operators with $\delta$--interactions given by  formal differential expression
  \begin{equation}\label{difexpr}
l_{X,\gA}:= -\frac{\rd^2}{\rd x^2}+\sum_{n=1}^{\infty}\gA_n\delta(x-x_n)
\end{equation}
was recently discovered in  \cite{KM10}, \cite{KM}.
Here  $X=\{x_n\}_{n=1}^\infty\subset \mathcal I = (0,b)$, $b\le \infty$, is  a strictly increasing
sequence with $x_0:=0$, $x_{n+1}>x_{n},\ n\in \N$, and  such that $x_n\to b$,
and  $\{\alpha_n\}_1^\infty \subset\mathbb R$.

Namely, in \cite{KM10}, \cite{KM}, there was established  a close connection between
Schr\"{o}dinger operator $\mathrm H_{X,\alpha}$ with $\delta$--interactions, associated in
$L^2(\mathcal I)$ with  expression \eqref{difexpr}  on the one hand and  Jacobi matrix
\begin{equation}\label{B1_intro}
  {\bf J}_{{X,\alpha}}^{(1)}(\mathrm{\bf{H}})\!=\!\left(\begin{array}{cccccc}
\!\frac{1}{r_1^{2}}\,\widetilde{\alpha}_1\! & \!\frac{1}{r_1r_2\gd_2}\mathbb I_p\! & \!\mathbb O_p\!&   \mathbb O_p\!&   \mathbb O_p\!&   \dots\!\\
\!\frac{1}{r_1r_2\gd_2}\mathbb I_p \!&\!\frac{1}{r_2^{2}}\,\widetilde{\alpha}_2\! &\! \frac{1}{r_2r_3\gd_3}\mathbb I_p\! & \! \mathbb O_p\!& \! \mathbb O_p\!& \! \dots\!\\
\!\mathbb O_p\! &\! \frac{1}{r_2r_3\gd_3}\mathbb I_p\! &\! \frac{1}{r_3^{2}}\,\widetilde{\alpha}_3\!&\!  \frac{1}{r_3r_4\gd_4}\mathbb I_p\!& \! \mathbb O_p\!& \! \dots\!\\
\mathbb O_p & \mathbb O_p & \frac{1}{r_3r_4\gd_4}\mathbb I_p& \frac{1}{r_4^2}\widetilde{\alpha}_4& \frac{1}{r_4r_5\gd_5}\mathbb I_p& \! \dots\!\\
\!\dots\! &\! \dots\!&\! \dots\! & \!\dots\!& \! \dots\!& \! \dots\!
\end{array}\right)
\end{equation}
(with $p=1$) on the other hand. Here $d_n:=x_n-x_{n-1}$, $r_n:=\sqrt{d_n+d_{n+1}}$, and
$\widetilde{\alpha}_n:=\alpha_n+\Big(\frac{1}{\gd_n}+\frac{1}{\gd_{n+1}}\Big)\mathbb I_p,$  $n\in\mathbb N$.
Denote by  ${\bf \mathcal J}_{X,\gA}(H,p)$    the class of matrices \eqref{B1_intro}.

More precisely,  it was shown in  \cite{KM10}, \cite{KM}  that
certain spectral properties (deficiency indices, discreteness spectra, semiboundedness, positive definiteness, negative point and singular spectra, etc.) of Hamiltonian  $\mathrm{\bf H}_{X,\gA}$ are closely related
 with the corresponding  properties of the (minimal) Jacobi operator ${\bf J}_{X,\gA}^{(1)}(\mathrm {\bf H})$ (with $p=1$).
In particular, it was proved in \cite{KM10, KM}, that $n_\pm(\mathrm{\bf H}_{X,\gA}) = n_\pm({\bf J}_{X,\gA}^{(1)}(\mathrm {\bf H}))$, which implies  $n_\pm(\mathrm{\bf H}_{X,\gA})\leq 1$. This
inequality has first been established by different methods in \cite{bsw, Min_86}.
Besides, combining  the equality
$n_\pm(\mathrm{\bf H}_{X,\gA}) = n_\pm({\bf J}_{X,\gA}^{(1)}(\mathrm {\bf H}))$   with the Carleman test \eqref{Car} yields the result:
 \begin{proposition}$\cite{KM10, KM}$\label{self-adj_H_X_Carlem}
\emph{Schr\"{o}dinger operator $\mathrm{\bf H}_{X,\gA}$   with $\delta$-interactions
is  self-adjoint in $L^2(\R_+)$ for any $\gA=\{\gA_n\}_{n\in\N}\subset \R$ provided  that}
\begin{equation}\label{d_n2}
\sum_{n\in\N}\gd_n^2=\infty.
\end{equation}
 \end{proposition}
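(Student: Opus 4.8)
The plan is to deduce Proposition~\ref{self-adj_H_X_Carlem} by combining the identity $n_\pm(\mathrm{\bf H}_{X,\gA}) = n_\pm({\bf J}_{X,\gA}^{(1)}(\mathrm {\bf H}))$ (established in \cite{KM10, KM}) with the matrix Carleman Test (Theorem~\ref{CarT}) applied to the Jacobi matrix ${\bf J}_{X,\gA}^{(1)}(\mathrm {\bf H})$. Since $n_\pm(\mathrm{\bf H}_{X,\gA})=0$ is equivalent to self-adjointness of $\mathrm{\bf H}_{X,\gA}$, it suffices to prove $n_\pm({\bf J}_{X,\gA}^{(1)}(\mathrm {\bf H}))=0$, and by the Carleman test this follows once we verify the divergence of $\sum_{n}\|\mathcal B_n\|^{-1}$ for the off-diagonal entries $\mathcal B_n = \frac{1}{r_{n+1}r_{n+2}\gd_{n+2}}\mathbb I_p$ of ${\bf J}_{X,\gA}^{(1)}(\mathrm {\bf H})$ (here $p=1$). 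Thus the whole proof reduces to the elementary estimate
\begin{equation}\label{reduction}
\sum_{n=1}^\infty \|\mathcal B_n\|^{-1}=\sum_{n=1}^\infty r_{n+1}\,r_{n+2}\,\gd_{n+2}=+\infty,
\end{equation}
which must be shown to follow from the hypothesis $\sum_n \gd_n^2=\infty$.

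The key step is therefore the combinatorial inequality relating $\sum r_n r_{n+1}\gd_{n+1}$ to $\sum \gd_n^2$. Recall $r_n=\sqrt{\gd_n+\gd_{n+1}}$, so $r_n^2=\gd_n+\gd_{n+1}\ge \gd_n$ and likewise $r_{n+1}^2\ge \gd_{n+1}$. First I would bound the general term from below: using $r_n\ge\sqrt{\gd_n}$ and $r_{n+1}\ge\sqrt{\gd_{n+1}}$ (or, more symmetrically, $r_n r_{n+1}\gd_{n+1}\ge \sqrt{\gd_n\gd_{n+1}}\cdot\gd_{n+1}$), and then apply the elementary inequality $\sqrt{\gd_n\gd_{n+1}}\,\gd_{n+1}\ge c\,(\gd_n^2+\gd_{n+1}^2)$ on the set of indices where $\gd_n$ and $\gd_{n+1}$ are comparable, splitting the remaining indices into those where $\gd_{n+1}\gg\gd_n$ (there the term $r_{n+1}^2\gd_{n+1}\ge\gd_{n+1}^2$ already dominates) and those where $\gd_n\gg\gd_{n+1}$. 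Alternatively, and more cleanly, one observes $r_n r_{n+1}\gd_{n+1}\ge \gd_{n+1}^{3/2}\sqrt{\gd_n+\gd_{n+1}}\ge \gd_{n+1}^2$, so $\sum_n r_n r_{n+1}\gd_{n+1}\ge \sum_n \gd_{n+1}^2 = \sum_{n\ge 2}\gd_n^2=\infty$, which settles \eqref{reduction} immediately (the finitely many shifted/dropped terms are harmless). This last route makes the "main obstacle" essentially evaporate.

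The only genuine subtlety — and where I would be most careful — is bookkeeping with the index shifts in the definition of ${\bf J}_{X,\gA}^{(1)}(\mathrm {\bf H})$: the off-diagonal entry in row $n$ is $\frac{1}{r_n r_{n+1}\gd_{n+1}}\mathbb I_p$ in the normalization of \eqref{B1_intro} (after re-indexing so that the diagonal starts at $n=1$), so one must make sure the series $\sum\|\mathcal B_n\|^{-1}$ is exactly $\sum_n r_n r_{n+1}\gd_{n+1}$ up to a shift, and that $r_n$ is well defined (it is, since $\gd_n>0$ for all $n$ by the strict monotonicity of $X$). I would also note explicitly that the Carleman test in Theorem~\ref{CarT} is stated for the minimal operator ${\bf J}$, and that the identification ${\bf J}_{X,\gA}^{(1)}(\mathrm {\bf H})={\bf J}_{\min}$ together with $n_\pm(\mathrm{\bf H}_{X,\gA})=n_\pm({\bf J}_{X,\gA}^{(1)}(\mathrm {\bf H}))$ from \cite{KM10, KM} is what transfers self-adjointness back to $\mathrm{\bf H}_{X,\gA}$. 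With these points in place the proof is short: hypothesis $\Rightarrow$ \eqref{reduction} $\Rightarrow$ (Carleman) $n_\pm({\bf J}_{X,\gA}^{(1)}(\mathrm {\bf H}))=0$ $\Rightarrow$ $n_\pm(\mathrm{\bf H}_{X,\gA})=0$ $\Rightarrow$ $\mathrm{\bf H}_{X,\gA}$ self-adjoint.
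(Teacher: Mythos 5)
Your proposal is correct and follows essentially the same route as the paper, which explicitly obtains this proposition by combining the identity $n_\pm(\mathrm{\bf H}_{X,\gA})=n_\pm({\bf J}_{X,\gA}^{(1)}(\mathrm {\bf H}))$ with the Carleman test \eqref{Car}; your elementary bound $r_nr_{n+1}\gd_{n+1}\ge \gd_{n+1}^2$ is exactly the verification needed to see that \eqref{d_n2} forces the Carleman series to diverge.
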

Note that  selfadjointness of $\mathrm{\bf H}_{X,\gA}$   has  earlier been established
in the case   $\gd_* :=\inf_{n}d_n >0$  by Gesztesy and Kirsh \cite{GesKir85} (see also \cite{Alb_Ges_88}
and  survey  \cite{KM_rev}).

The above mentioned connection between  $\mathrm{\bf H}_{X,\gA}$  and ${\bf J}_{X,\gA}^{(1)}(\mathrm {\bf H})$ was  extended in \cite{KMN} to the case of $p\times p$-matrix differential  expressions \eqref{difexpr} (with $\{\gA_n\}_1^\infty \subset \mathbb C^{p\times p}$)  and the block Jacobi matrices ${\bf J}_{X,\gA}^{(1)}(\mathrm {\bf H})$ with $p>1$.
In particular, it was proved in \cite{KMN} that
  \begin{equation}\label{equal_def_ind_Schrod}
n_\pm(\mathrm{\bf H}_{X,\gA})=n_\pm({\bf J}_{X,\gA}^{(1)}(\mathrm {\bf H})) \quad \text{for any}\quad p\ge 1.
  \end{equation}
It follows that  $n_\pm(\mathrm{\bf H}_{X,\gA})\leq p$. Proposition \ref{self-adj_H_X_Carlem} has attracted certain  attention
and has been developed for different classes of (scalar and matrix) Schr\"{o}dinger operators to ensure their self-adjointness
in  $L^2(\R_+; \Bbb C^p)$ in \cite{Bro}, \cite{Bro_Mir_S}, \cite{IsmKos10}, \cite{Mir14}, \cite{MirSaf16}.
In particular, it was shown by different methods in \cite{MirSaf16} and \cite{KMN} that the
condition \eqref{d_n2}
still ensures selfadjointness of $\mathrm{\bf H}_{X,\gA}$ for any $p>1$.
We note only that this result  is also immediate by combining equality  \eqref{equal_def_ind_Schrod}
with  the matrix Carleman test \eqref{Car}  (see \cite{KMN}).

One more application of Jacobi matrices recently occurred  in \cite{CarMalPos13}
in connection with   Dirac operators with $\delta$--interactions given by  formal
differential expression
  \begin{equation}\label{Dintro}
\mathrm{\bf D}_{X,\gA} := -i\,c\,\frac{\rd}{\rd x}\otimes \left(\begin{array}{cc} \mathbb O_p & \mathbb I_p
\\\mathbb I_p & \mathbb O_p \end{array}\right) + \frac{c^{2}}{2}\left(\begin{array}{cc}\mathbb I_p & \mathbb O_p
 \\\mathbb O_p&
-\mathbb I_p\end{array}\right)  + \sum_{n=1}^{\infty}\gA_n\delta(x-x_n)
\end{equation}
%
with $p=1$ and $\{\gA_n\}_1^\infty \subset  \mathbb R$,  and where $c>0$ denotes the velocity of light.
First rigorous treatment  of the operator $\mathrm{\bf D}_{X,\gA}$ associated in $L^2(\mathbb R; \C^2)$  with  expression \eqref{Dintro}  goes back  to the paper  by Gesztesy and $\check{\rm S}$eba \cite{GS}
(see formulas \eqref{delta}, \eqref{deltap} below). Therefore following \cite{CarMalPos13}  we call the operator ${\mathrm {\bf D}}_{X,\gA}$ by  Gesztesy--$\check{\rm S}$eba realization
(in short GS-realization) of Dirac differential expression.

Namely,  there was established in \cite{CarMalPos13}
that like in the Schr\"{o}dinger case, certain spectral properties of GS-realization
$\mathrm{\bf D}_{X,\gA}$ in $L^2(\mathcal I; \C^2)$ (deficiency indices,  discreteness
and other types  of spectra, etc.)  are closely related to that of Jacobi matrix
   \begin{equation}\label{IV.2.1_01_intro}
 {\bf J}_{X,\gA}' :=\left(
\begin{array}{cccccc}
  \mathbb O_p  & \frac{c}{\gd_1}\mathbb I_p & \mathbb O_p  & \mathbb O_p & \mathbb O_p   &  \dots\\
   \frac{c}{\gd_1}\mathbb I_p  &  -\frac{c}{\gd_1}\mathbb I_p &
   \frac{c}{\gd_1^{1/2}\gd_2^{1/2}}\mathbb I_p & \mathbb O_p  & \mathbb O_p &  \dots\\
  \mathbb O_p  & \frac{c}{\gd_1^{1/2}\gd_2^{1/2}}\mathbb I_p  & \frac{\alpha_1}{\gd_2}  &
   \frac{c}{\gd_2}\mathbb I_p & \mathbb O_p &   \dots\\
  \mathbb O_p  & \mathbb O_p  & \frac{c}{\gd_2}\mathbb I_p &  -\frac{c}{\gd_2}\mathbb I_p &
  \frac{c}{\gd_2^{1/2}\gd_3^{1/2}}\mathbb I_p &  \dots\\
  \mathbb O_p  & \mathbb O_p  & \mathbb O_p  & \frac{c}{\gd_2^{1/2}\gd_3^{1/2}}\mathbb I_p &
   \frac{\alpha_2}{\gd_3} &   \dots\\
\dots& \dots&\dots&\dots&\dots&\dots\\
 \end{array}%
\right)\,
   \end{equation}
under certain restrictions  on $d_n$ (with $p=1$) and  $\gd_n:=x_{n}-x_{n-1}$.  These results have been extended to the matrix
case with  $\gA = \{\gA_n\}_1^\infty \subset  \mathbb C^{p\times p}$, $\gA_n = \gA_n^*$,
in \cite{BudMalPos17, BudMalPos18, BudMal19}.
In particular, it was shown in \cite{BudMalPos17, BudMalPos18, BudMal19} that formula
\eqref{equal_def_ind_Schrod}  remains valid for the operators  $\mathrm{\bf D}_{X,\gA}$
in $L^2(\mathcal I; \C^{2p})$ and   ${\bf J}_{X,\gA}'$, i.e.
  \begin{equation}\label{equal_def_ind_Dirac}
n_\pm(\mathrm{\bf D}_{X,\alpha})=n_\pm({\bf J}_{X,\gA})=n_\pm({\bf J}_{X,\gA}') \quad \text{for any}\quad p\ge 1,
  \end{equation}
  where the Jacobi matrix ${\bf J}_{X,\gA}$ is defined by formula  \eqref{IV.2.1}.

Denote by  ${\bf \mathcal J}_{X,\gA}(p)$
the set  of Jacobi matrices \eqref{IV.2.1_01_intro}.
We also introduce other class  ${\bf \mathcal J}_{X,\gB}(p)$,
consisting  of Jacobi matrices
${\bf J}_{X,\gB}$, given by \eqref{IV.3.1_04'}.
Carleman's  condition \eqref{Car} for ${\bf J}'_{X,\gA}\in  {\bf \mathcal J}_{X,\gA}(p)$
becomes  $\sum_{n\in\N}\gd_n = \infty$,   
and combining with \eqref{equal_def_ind_Dirac}   yields $n_\pm(\mathrm{\bf D}_{X,\alpha}) = n_\pm({\bf J}_{X,\gA}) = 0$,
i.e.  $\mathrm{\bf D}_{X,\alpha} = \mathrm{\bf D}_{X,\alpha}^*$,  provided that  $\mathcal I = \R_+$.

In the present paper  we investigate  \emph{deficiency indices,  selfadjointness,  and discreteness property} of
general  block  Jacobi matrices $\bf J$ as well as their subclasses related  to matrix operators $\mathrm{\bf H}_{X,\alpha}$
and $\mathrm{\bf D}_{X,\alpha}$.
In particular,  we show (see Theorem \ref{d_spec_J})  that the matrix $\bf J$  \eqref{Jacobi_m_intro} is  {selfadjoint and discrete}
provided that
\begin{equation}\label{adj1-intro}
\underset{n\geq N}{\sup}\|\mathcal A_{n}^{-1}\mathcal B_{n}\|<\frac{1}{2},\qquad\underset{n\geq N}{\sup}\|\mathcal A_{n}^{-1}\mathcal B_{n-1}^*\|<\frac{1}{2}.
\end{equation}
Besides, in  Theorem \ref{J_disc} we  prove  that under the condition
   \begin{equation}\label{cond_disc-intro}
\limsup_{n\to\infty}\big\||\mathcal A_{n+1}|^{-1/2} \cdot\mathcal B_{n}^*\cdot
|\mathcal A_n|^{-1/2}\big\|<\frac{1}{2},
  \end{equation}
the block Jacobi operator $\bf J$ is symmetric in $l^2(\mathbb N_0;\mathbb C^p)$
  with equal  indices $n_-({\bf J}) = n_+({\bf J}) \le p$ and
such that \emph{each its selfadjoint  extension} ${\bf {\wt J}} = {\bf {\wt J}}^*$  \emph{is discrete}.
In particular, $\bf J$ \emph{is discrete} whenever ${\bf J} = {\bf J}^*$.

 In the scalar case ($p=1$)  discreteness property  of $\bf J= \bf J^*$
 under condition  \eqref{cond_disc-intro} was  established  by  Cojuhari, Janas \cite{CojJan07} and Chihara \cite{Chi62};
 other discreteness conditions were found  by Janas and Naboko \cite{JanNab01}  (see Remark \ref{chihara} (ii)).

Applying condition \eqref{cond_disc-intro} to Jacobi operator \eqref{B1_intro} and using
the  close relation between matrix Schr\"{o}dinger operator   $\mathrm{\bf H}_{X,\alpha}$ (with $\{\gA_n\}_1^\infty \subset \mathbb C^{p\times p}$)  and block Jacobi operator ${\bf J}_{X,\gA}^{(1)}(\mathrm {\bf H})$
we obtain (see Theorem \ref{Shr_disc_2})  the following  discreteness condition  for
each selfadjoint  extension $\widetilde{\mathrm{\bf H}}_{X,\alpha}$  of $\mathrm{\bf H}_{X,\alpha}$:
\begin{equation}\label{self_adj_Sh3-intro}
\limsup_{n\to\infty}\frac{1}{d_{n+1}}\Big\||\widetilde{\alpha}_n|^{-1/2}\cdot|\widetilde{\alpha}_{n+1}|^{-1/2}\Big\|<\frac{1}{2}
\end{equation}
with $\widetilde{\alpha}_n$ being the diagonal  entries of the matrix \eqref{B1_intro} defined just after \eqref{B1_intro}.

Similar considerations applied to $\mathrm{\bf J}_{X,\alpha}$  leads to  the following
\emph{discreteness condition  for each selfadjoint  extension}
$\widetilde{\mathrm{\bf D}}_{X,\alpha}$ of the GS-realization
$\mathrm{\bf D}_{X,\alpha}$ of the Dirac operator (see  Theorem \ref{D_disc_line})
 \begin{equation}\label{prop_chihara_1-intro}
\limsup_{n\to\infty}\||\alpha_n|^{-1/2}\|<\frac{1}{2\sqrt c}.
  \end{equation}
Moreover, $GS$-realization $\mathrm{\bf D}_{X,\alpha}$ in $L^2(\mathbb R_+, \C^{2p})$ is always selfadjoint and
\eqref{prop_chihara_1-intro} ensures its discreteness.

Another our  result concerns maximality of  deficiency indices of Jacobi matrices
and reads as follows:
   \begin{equation}\label{ind_J_{X,A}=p_Intro}
n_\pm({\bf J}_{X,\gA}) = n_\pm(\widehat{{\bf J}}_{X,\gA})  = p \qquad \text{for any}\qquad   {\bf J}_{X,\gA}\in {\bf \mathcal J}_{X,\gA}(p)
     \end{equation}
provided that
  \begin{equation}\label{Intro_max_ind}
\sum_{n=2}^{\infty}d_{n}\prod_{k=1}^{n-1}\left(1+\frac 1c\,\|\alpha_k\|_{\mathbb
C^{p\times p}}\right)^{2}<+\infty
  \end{equation}
(for matrices $\widehat{{\bf J}}_{X,\gA}$ some extra
conditions are required). In other words, condition \eqref{Intro_max_ind} ensures
the complete indeterminacy of the  matrix moment problems related to the Jacobi matrix ${\bf J}_{X,\gA}$.

Block matrices of the classes  ${\bf \mathcal J}_{X,\gA}(p)$  have several interesting features.
First, as a byproduct of \eqref{ind_J_{X,A}=p_Intro} we derive that \emph{each selfadjoint extension of  ${\bf J}_{X,\gA}$
has discrete spectrum whenever \eqref{Intro_max_ind} holds}.

Secondary, condition \eqref{Intro_max_ind} is obviously satisfied for $\gA=\mathbb O$, hence
Carleman's condition  \eqref{Car}  for matrices ${{\bf J}}_{X,0}$   \emph{becomes  also necessary  for selfadjointness,
while the Berezansky condition is violated}.

At the same time  under other conditions (see \eqref{4.14}) depending
on $\gA = \{\gA_n\}_1^\infty$    matrices ${\bf J}_{X,\gA}$ and  ${\bf J}_{X,\gA}^{(1)}(\mathrm {\bf H})$
may  be  either selfadjoint or  have arbitrary  intermediate  indices.
In particular, \emph{matrices ${\bf J}_{X,\gA}$ with certain $\gA$
demonstrate that Carleman's condition \eqref{Car} is not necessary  for selfadjointness}.

Moreover, condition \eqref{Intro_max_ind}   with $\gB= \{\gB_n\}_1^\infty$ in
place of $\gA = \{\gA_n\}_1^\infty$ ensures  maximality deficiency indices of  Jacobi matrices
${\bf J}_{X,\gB}$, given by \eqref{IV.3.1_04'}  (and their perturbations  $\widehat{{\bf J}}_{X,\gB}$  \eqref{IV.2.1_01''B}).
Hence, each selfadjoint extension of  ${\bf J}_{X,\gB}$
has discrete spectrum whenever condition \eqref{Intro_max_ind} with $\gB$ in
place of $\gA$   holds.

Other conditions for block Jacobi matrices  \eqref{Jacobi_m_intro}
to have maximal indices $n_\pm({\bf J}) = p$ were obtained by Kostyuchenko and Mirzoev
\cite{KosMir98}--\cite{KosMir01} (see Theorem \ref{KosMirTh} below) and Dyukarev \cite{Dyuk10}.  Note  however that matrices ${\bf J}_{X,\gA}$  \emph{never meet conditions} of Theorem \ref{KosMirTh}
(see Lemma \ref{KosMirInd} below).
It is worth to note also that one of matrices  ${\bf J}_{X,\gB}\in {\bf \mathcal J}_{X,\gB}(p)$ (see \eqref{IV.3.1_04'}) with zero diagonal coincides with the block Jacobi matrix
constructed by Dyukarev \cite{Dyuk10} in the framework of  another approach (see Proposition \ref{JBabs}).

It is worth to mention that in this paper we employ  the close relations between ${\bf J}_{X,\gA}$   and
  $\mathrm{\bf D}_{X,\alpha}$ in  both  directions.
For instance, first we find conditions for intensities  $\{\gA_n\}_1^\infty$  that ensure maximal
deficiency indices for GS-realizations  $\mathrm{\bf D}_{X,\alpha}$, and then using formulas \eqref{equal_def_ind_Dirac} derive the corresponding statements for matrices ${\bf J}_{X,\gA}$,
and more general Jacobi matrices $\widehat{{\bf J}}_{X,\gA}$.

The paper is organized as follows. In the Sections \ref{J_s} -- \ref{J_d} we indicate conditions for $p\times p$ Jacobi matrix~${{\bf J}}$  \eqref{Jacobi_m_intro}  to be  selfadjoint and discrete. In particular, we show that
conditions \eqref{adj1-intro} and \eqref{cond_disc-intro} ensure
discreteness properties of block Jacobi matrix ${{\bf J}}$.

In Section \ref{sec4} we prove the  abstract result, Theorem \ref{abs_th},  on preserving  of the deficiency
indices for  block Jacobi matrices~${{\bf J}}$  \eqref{Jacobi_m_intro} under  perturbations.

In Section \ref{sec2}   we prove that condition  \eqref{Intro_max_ind}  ensures
maximal deficiency indices for GS-realization   $\mathrm{\bf D}_{X,\alpha}$,
i.e. relations  $n_\pm(\mathrm{\bf D}_{X,\alpha})=  p$.

In Section \ref{J_'} it is shown that deficiency indices of perturbed Jacobi matrices $\widehat{{\bf J}}_{X,\gA}$ coincide with that of unperturbed  matrices ${{\bf J}}_{X,\gA}$ (of the form \eqref{IV.2.1_01''} and \eqref{IV.2.1} resp.)  under  additional conditions  (see \eqref{abs1A}--\eqref{abs3A}).
In particular,  for $\alpha=\mathbb O$ condition \eqref{Intro_max_ind}
implies:  $n_\pm({\bf J}_{X,0})=p\Longleftrightarrow\{d_n\}_{n\in\mathbb N}\in l^1(\mathbb N)$.

In Section \ref{Shr}  we apply  previous discreteness  results on Jacobi matrices ${\bf J}_{X,\gA}^{(1)}(\mathrm {\bf H})$ and ${\bf J}_{X,\gA}$ to establish  selfadjointness and discreteness properties   of  Schr\"{o}dinger and Dirac
operators ${\bf H}_{X,\alpha}$ and ${\bf D}_{X,\alpha}$, respectively, using the above mentioned connection between these operators and Jacobi matrices.

In Section \ref{sec5} we indicate certain conditions on $\gA_n$ and $d_n$ that guaranty intermediate
deficiency indices for ${{\bf J}}_{X,\gA}$ and  $\widehat{{\bf J}}_{X,\gA}$.
In particular,  we present  selfadjointness conditions  for  ${{\bf J}}_{X,\gA}$ and  $\widehat{{\bf J}}_{X,\gA}$.

In Section \ref{sec6} we  compare condition \eqref{Intro_max_ind} and similar condition on $\gB$
with the conditions of
Theorem \ref{KosMirTh} (= \cite[Theorem 1]{KosMir01}) also ensuring equalities $n_\pm({\bf J}) = p$.
In particular, we show that matrices
${{\bf J}}_{X,\gA}$   never meet conditions of  \cite[Theorem 1]{KosMir01}.
Finally, in Section \ref{sec7} we show that
Dukarev's result from \cite{Dyuk10} is a special case of
our Proposition \ref{JBabs}  on matrices  ${\bf J}_{X,\gB}$ with zero diagonal
$(\beta_n=-d_n\mathbb I_p)$ and a special choice of
$\{d_n\}_1^\infty$, $d_n\sim\frac{1}{(n+1)\sqrt{n^2+1}}$.

The main results of the paper were announced without proofs in  \cite{BudMal19}, \cite{BudMal20}.

\vskip15pt\noindent \textbf{Notations.} Throughout  the paper  $\mathfrak{H}$ and $\cH$
denote  separable Hilbert spaces.
$\mathcal{C}(\mathfrak{H})$ and  $\widetilde{\mathcal{C}}(\mathfrak{H})$ are the sets of closed
linear operators and linear  relations in $\mathfrak{H}$, respectively; $\dom (T)$ denotes  the domain  of $T\in \mathcal{C}(\mathfrak{H})$. Let  $I_p$,
and $\mathbb O_{p}$ be the unit and zero operators in $\mathbb C^p$, respectively.
For any subset $I$ of $\Z$ we put $l^2(I;\mathbb C^p) := l^2(I)\otimes
\mathbb C^p$; $l^2_0(I;\mathbb C^p)$ is a
subset of finite sequences in $l^2(I;\mathbb C^p)$. We also put $L^{2}([x_{n-1},x_{n}];\C^{2p}):=L^{2}[x_{n-1},x_{n}]\otimes\C^{2p}$ and $L^2(\mathcal I;\mathbb C^{2p}):=L^2(\mathcal I)\otimes\mathbb C^{2p}$. Let also $\mathbb N_0 := \mathbb N\cup\{0\}$.

\begin{center}
\Large{\bf Part I. Abstract results on the deficiency indices and discreteness property  of general  block Jacobi matrices}
\end{center}
\section{The selfadjointness conditions for general block Jacobi matrices}\label{J_s}
Consider the block Jacobi matrix

\begin{equation}\label{Jm}
{\bf J}=\left(
    \begin{array}{ccccccccccc}
      \mathcal A_0 & \mathcal B_0 & \mathbb O_p & \mathbb O_p & \mathbb O_p & \ldots & \mathbb O_p & \mathbb O_p & \mathbb O_p & \mathbb O_p & \ldots\\
      \mathcal B_0^* & \mathcal A_1 & \mathcal B_1 & \mathbb O_p & \mathbb O_p & \ldots & \mathbb O_p & \mathbb O_p & \mathbb O_p & \mathbb O_p & \ldots \\
      \mathbb O_p & \mathcal B_1^* & \mathcal A_2 & \mathcal B_2 & \mathbb O_p & \ldots & \mathbb O_p& \mathbb O_p & \mathbb O_p & \mathbb O_p & \ldots\\
      \vdots & \vdots & \vdots & \vdots & \vdots & \ldots & \vdots & \vdots & \vdots & \vdots & \ldots \\
      \mathbb O_p &\mathbb O_p &\mathbb O_p &\mathbb O_p &\mathbb O_p &\ldots &\mathcal B_{n-1}^* &\mathcal A_n &\mathcal B_n & \mathbb O_p & \ldots\\
      \vdots & \vdots & \vdots & \vdots & \vdots & \vdots & \vdots & \vdots & \vdots & \vdots & \ddots \\
            \end{array}
  \right),
\end{equation}
with $\mathcal A_n=\mathcal A_n^*$, $\mathcal B_n \in\mathbb C^{p\times p}$ and invertible matrices $\mathcal B_n$,
i.e. $\det \mathcal B_n \not =0$,  $n\in \Bbb N_0$.

As usual we identify $\bf J$ with the minimal (closed) symmetric Jacobi operator associated in $l^2(\mathbb N_0;\mathbb C^p)$  in a standard way with matrix $\bf J$ (see \cite{Akh, Ber68}).

\begin{definition}[\cite{Kato66}]
Let $K$ and $T$ be densely defined linear operators in  $\frak H$. Then the operator $K$
is said to be subordinate to the operator $T$ if $\dom T\subset\dom K$ and the following
inequality holds
  \begin{equation}\label{subbord}
\|Ku\|\leq a\|Tu\|+b\|u\|,\qquad a>0,\ b\geq0, \ u\in\dom T.
 \end{equation}
It is  said  that $K$ is strongly subordinated  to the operator $T$ if \eqref{subbord}
holds with $a<1$.
\end{definition}

By the Kato-Rellich theorem (see \cite[Theorem 5.4.3]{Kato66}), $\dom(T+K)=\dom T$,
whenever $K$ is  strongly subordinated to $T$. Moreover, $K$ is symmetric and $T = T^*$,
then the operator $T+K$ is also selfadjoint on $\dom T$. Moreover, according to
W\"{u}st's theorem (see \cite[Theorem X.14]{RedSim78II}), $T+K$ is essentially
selfadjoint on $\dom T$ whenever  $K$ is subordinated  to $T=T^*$ with $a=1$ (see
\eqref{subbord}).

\begin{theorem}\label{Th_self_adj}
Let ${\bf J}$ be the block Jacobi matrix of the form \eqref{Jm} and  let $\mathcal A:=\diag\{\mathcal A_0,\ldots,\mathcal A_n,\ldots\}$,  $\ker\mathcal A=\{0\}$. Let for some $N\in\mathbb N_0$
\begin{equation}\label{self_adj}
a_1(N):=\underset{n\geq N}{\sup}\Big(\|\mathcal A_n^{-1} \mathcal
B_{n}\|+\|\mathcal A_{n}^{-1}\mathcal B_{n-1}^*\|\Big)<\infty,
\end{equation}
\begin{equation}\label{self_adj0}
a_2(N):=\underset{n\geq N}{\sup}\Big(\|\mathcal A_{n}^{-1}\mathcal B_{n}\|+\|\mathcal A_{n+2}^{-1}\mathcal B_{n+1}^*\|\Big)<\infty.
\end{equation}
If
\begin{equation}\label{adj}
{a_1(N)a_2(N)}\leq1,
\end{equation}
then the operator ${\bf J}$ is essentially selfadjoint on $\dom\mathcal A(\subset
l^2(\mathbb N_0;\mathbb C^p))$. Herewith ${\bf J}$ is selfadjoint on $\dom{\bf
J}=\dom\mathcal A$ provided that inequality  \eqref{adj} is strict, i.e.
${a_1(N)a_2(N)}<1$.
\end{theorem}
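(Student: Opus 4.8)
The plan is to write $\mathbf{J} = \mathcal A + \mathcal B$, where $\mathcal B$ is the off-diagonal part (the tridiagonal matrix with $\mathcal B_n$ above the diagonal and $\mathcal B_n^*$ below, and zero diagonal blocks), and to show that $\mathcal B$ is subordinate to the self-adjoint operator $\mathcal A = \mathcal A^*$ with relative bound $a = \sqrt{a_1(N)a_2(N)} \le 1$ (strictly less than $1$ under the strict inequality), after which the Kato–Rellich/Wüst theorems quoted just before the statement finish the job. The diagonal operator $\mathcal A = \diag\{\mathcal A_0,\mathcal A_1,\dots\}$ with $\ker\mathcal A = \{0\}$ is self-adjoint on its natural domain $\dom\mathcal A = \{u\in l^2(\mathbb N_0;\mathbb C^p) : \sum_n\|\mathcal A_n u_n\|^2 < \infty\}$, and on $l^2_0$ the action of $\mathbf J$ agrees with $\mathcal A + \mathcal B$, so the essential self-adjointness (resp. self-adjointness) of $\mathbf J$ on $\dom\mathcal A$ is exactly the statement that $\mathcal A + \mathcal B$ is essentially self-adjoint (resp. self-adjoint) there.

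The core estimate is the following. For $u = (u_n)\in\dom\mathcal A$ write $v_n := \mathcal A_n u_n$, so that $\|\mathcal A u\|^2 = \sum_n\|v_n\|^2$. The $n$-th block of $\mathcal B u$ is $\mathcal B_n u_{n+1} + \mathcal B_{n-1}^* u_{n-1} = \mathcal B_n \mathcal A_{n+1}^{-1} v_{n+1} + \mathcal B_{n-1}^* \mathcal A_{n-1}^{-1} v_{n-1}$ (with the obvious convention that the term with index $-1$ is absent). Hence
\begin{equation}\label{prop_est_1}
\|\mathcal B u\|^2 = \sum_{n}\big\|\mathcal B_n\mathcal A_{n+1}^{-1}v_{n+1} + \mathcal B_{n-1}^*\mathcal A_{n-1}^{-1}v_{n-1}\big\|^2 .
\end{equation}
The indices $n+1$ and $n-1$ appearing together have the same parity, so after the Cauchy–Schwarz inequality $\|x+y\|^2 \le (1+t)\|x\|^2 + (1+t^{-1})\|y\|^2$ and a reindexing one is led to bound $\|\mathcal B u\|^2$ by a sum of terms $\|\mathcal B_m\mathcal A_{m+1}^{-1}v_{m+1}\|^2$ and $\|\mathcal B_{m-1}^*\mathcal A_{m-1}^{-1}v_{m-1}\|^2$ weighted so that the coefficient of $\|v_k\|^2$ involves $\|\mathcal A_k^{-1}\mathcal B_{k-1}^*\|^2$ from one group and $\|\mathcal A_k^{-1}\mathcal B_k\|^2$ from the other (this is the role of \eqref{self_adj}), while a second, asymmetric grouping produces the coefficients controlled by \eqref{self_adj0}. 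Optimizing over the free parameter $t$ and balancing the two groupings so that the geometric mean appears is exactly what turns the product $a_1(N)a_2(N)$ into the square of the relative bound; one gets $\|\mathcal B u\| \le \sqrt{a_1(N)a_2(N)}\,\|\mathcal A u\| + b_N\|u\|$ with $b_N$ a finite constant absorbing the finitely many blocks with index $< N$ (those blocks contribute a bounded operator and hence a relatively compact, in particular relatively bounded with bound $0$, perturbation — here one only needs relative boundedness with an arbitrarily small $a$, so they can be merged into the $b_N\|u\|$ term after splitting off an $\varepsilon\|\mathcal A u\|$ via another Cauchy–Schwarz, keeping the $\mathcal A$-bound strictly below $1$ when \eqref{adj} is strict and equal to $1$ in the limiting case).

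Once the subordination inequality \eqref{subbord} holds with $a = \sqrt{a_1(N)a_2(N)} < 1$, the Kato–Rellich theorem gives $\dom(\mathcal A+\mathcal B) = \dom\mathcal A$ and self-adjointness of $\mathcal A + \mathcal B = \mathbf J$ on $\dom\mathcal A$ (using that $\mathcal B$ is symmetric on $l^2_0$ and closable). In the borderline case $a = 1$, Wüst's theorem gives essential self-adjointness of $\mathbf J$ on $\dom\mathcal A$. I expect the main obstacle to be the bookkeeping in \eqref{prop_est_1}: one must choose the two pairings of the cross terms and the splitting parameter $t$ so that the two different sup-conditions \eqref{self_adj} and \eqref{self_adj0} enter multiplicatively and the resulting coefficient is exactly $a_1(N)a_2(N)$ rather than something weaker like $(a_1(N)+a_2(N))^2/4$; getting the sharp constant is where the parity structure of the tridiagonal pattern has to be exploited carefully, together with a clean handling of the finitely many "bad" initial blocks so as not to spoil the bound $a\le 1$.
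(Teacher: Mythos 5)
Your overall strategy coincides with the paper's: write ${\bf J}=\mathcal A+K$ with $K$ the off-diagonal (zero-diagonal) part, prove that $K$ is subordinate to $\mathcal A=\mathcal A^*$ with relative bound $\sqrt{a_1(N)a_2(N)}$, and then invoke Kato--Rellich in the strict case and W\"ust in the borderline case; your handling of the finitely many blocks with index $<N$ (a bounded symmetric perturbation absorbed into the $b\|u\|$ term) and the observation that $l_0^2$ is a core are fine. The genuine gap is that you never establish the key estimate $\|Ku\|\le\sqrt{a_1(N)a_2(N)}\,\|\mathcal Au\|+b\|u\|$, and the device you propose cannot produce it. A splitting $\|x+y\|^2\le(1+t)\|x\|^2+(1+t^{-1})\|y\|^2$ with a scalar parameter $t$ leads, after reindexing, to a coefficient of $\|\mathcal A_ku_k\|^2$ of the form $(1+t)\|\mathcal A_k^{-1}\mathcal B_{k-1}^*\|^2+(1+t^{-1})\|\mathcal A_k^{-1}\mathcal B_k\|^2$, i.e. to additive conditions of the type \eqref{self_adj1}--\eqref{self_adj*} with $s=2$, which Remark \ref{s-a-remark}(i) notes are strictly more restrictive than the product condition $a_1(N)a_2(N)\le1$ (for instance $a_1(N)=10$, $a_2(N)=1/20$ satisfies \eqref{adj} but no additive bound of this kind). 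Having two different "groupings," one controlled by $a_1$ and one by $a_2$, does not help by itself: two separate upper bounds for the same quantity combine by taking a minimum, not a geometric mean, and you candidly admit you do not see how to force the product to appear. That multiplicative combination of a row-type bound and a column-type bound is exactly the content of the Schur test, which is the missing ingredient.

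Concretely, the paper applies the Schur test (\cite[Theorem 2.10.1]{BirSol87}) to the matrix $\mathcal A^{-1}K$ of \eqref{K_1}: its row sums of norms are bounded by $a_1(N)$ and its column sums by $a_2(N)$, hence $\|\mathcal A^{-1}K\|\le\sqrt{a_1(N)a_2(N)}\le1$; then, using $K\subseteq K^*$ and $\mathcal A=\mathcal A^*$, one gets $K\mathcal A^{-1}\subseteq(\mathcal A^{-1}K)^*$, so $\|Kf\|=\|K\mathcal A^{-1}\mathcal Af\|\le\sqrt{a_1(N)a_2(N)}\,\|\mathcal Af\|$ for $f\in\dom\mathcal A$, i.e. subordination with $b=0$, after which W\"ust and Kato--Rellich conclude. (One could equally apply the Schur test directly to $K\mathcal A^{-1}$, whose blocks are the ones your factorization produces, using the identity $\|\mathcal B_k\mathcal A_{k+1}^{-1}\|=\|\mathcal A_{k+1}^{-1}\mathcal B_k^*\|$ that your sketch uses implicitly but never states.) Note also that for $N>0$ the paper proceeds differently from you: it splits ${\bf J}={\bf J}_N'\oplus{\bf J}_N+\mathcal B_{N-1}'$ and compares deficiency indices, rather than absorbing the initial blocks into the constant $b$.
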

\begin{proof}
Introduce the block Jacobi submatrix   ${\bf J}_N$ of the matrix  ${\bf J}$, by setting
 \begin{equation}\label{J_N}
 {\bf J}_N := \left(
\begin{array}{cccccc}
      \mathcal A_{N} & \mathcal B_{N} &   \mathbb O_p & \ldots & \mathbb O_p & \ldots \\
      \mathcal B_{N}^* & \mathcal A_{N+1} &   \mathcal B_{N+1} & \ldots & \mathbb O_p & \ldots  \\
      \mathbb O_p & \mathcal B_{N+1}^* &   \mathcal A_{N+2} & \ldots & \mathbb O_p& \ldots \\
      \vdots & \vdots &  \vdots & \ldots & \vdots & \ddots
                  \end{array}
                  \right).
\end{equation}

$(i)$ First, let $N=0$ and let $K$ be the minimal symmetric operator associated to  the
Jacobi matrix
\begin{equation}\label{K}
K=\left(
       \begin{array}{cccccc}
         \mathbb O_p & \mathcal B_0 & \mathbb O_p & \mathbb O_p & \mathbb O_p & \ldots \\
         \mathcal B_0^* & \mathbb O_p & \mathcal B_1 & \mathbb O_p &  \mathbb O_p & \ldots \\
        \mathbb O_p & \mathcal B_1^* & \mathbb O_p & \mathcal B_2  & \mathbb O_p & \ldots \\
         \mathbb O_p & \mathbb O_p & \mathcal B_2^* &\mathbb O_p  & \mathcal B_3 & \ldots \\
         \ldots & \ldots & \ldots & \ldots & \ldots & \ldots\\
       \end{array}
     \right).
\end{equation}
Then on finite vectors the following equality holds
\begin{equation}\label{repres}
{\bf J}h={\bf J}_0h=\mathcal Ah+Kh,\qquad h\in l^2_0(\mathbb N_0;\mathbb C^p).
\end{equation}
Moreover, the operator $\mathcal A^{-1}K$  on the lineal $l_0^2(\mathbb N_0;\mathbb C^p)$  is given by the matrix
\begin{equation}\label{K_1}
\mathcal A^{-1}K=\left(
       \begin{array}{cccccc}
         \mathbb O_p & \mathcal A_0^{-1}\mathcal B_0 & \mathbb O_p & \mathbb O_p & \mathbb O_p & \ldots \\
         \mathcal A_1^{-1}\mathcal B_0^* & \mathbb O_p & \mathcal A_1^{-1}\mathcal B_1 & \mathbb O_p &  \mathbb O_p & \ldots \\
        \mathbb O_p & \mathcal A_2^{-1}\mathcal B_1^* & \mathbb O_p & \mathcal A_2^{-1}\mathcal B_2  & \mathbb O_p & \ldots \\
         \mathbb O_p & \mathbb O_p & \mathcal A_3^{-1}\mathcal B_2^* &\mathbb O_p  & \mathcal A_3^{-1}\mathcal B_3 & \ldots \\
         \ldots & \ldots & \ldots & \ldots & \ldots & \ldots\\
       \end{array}
     \right).
\end{equation}
According to the Schur's test (\cite[Theorem 2.10.1]{BirSol87}), conditions \eqref{self_adj} and \eqref{self_adj0}  guarantee
 boundedness of matrix \eqref{K_1} in  $l^2(\mathbb N_0;\mathbb C^p)$ and the estimate
$\|\mathcal A^{-1}Kh\| \le \sqrt{a_1(N)a_2(N)}\|h\|\leq\|h\|$, $h\in l_0^2(\mathbb
N_0;\mathbb C^p)$.

Since $K\subseteq K^*$ and $\mathcal A=\mathcal A^*$, then $K\mathcal A^{-1} \subset
K^*\mathcal A^{-1} \subseteq (\mathcal A^{-1}K)^*$ (see \cite[VIII.115]{RissSek79}).
Therefore, the operator $\mathcal A^{-1}K$ is bounded on $l_0^2(\mathbb N_0;\mathbb
C^p)$ and its closure $\overline {\mathcal A^{-1}K}\in\mathcal B(l^2(\mathbb N_0;\mathbb
C^p))$, and $\|K\mathcal A^{-1}\|=\|(\mathcal A^{-1}K)^*\|\leq1$ provided \eqref{adj}
holds. Hence  $\dom\mathcal A = \ran \mathcal A^{-1} \subseteq \dom K$ and
\begin{equation}\label{K_sub}
\|Kf\|=\|K\mathcal A^{-1}\mathcal Af\|\leq\|K\mathcal A^{-1}\|\cdot\|\mathcal
Af\|\leq\|\mathcal Af\|,\quad f\in\dom\mathcal A\subseteq \dom K.
\end{equation}
Therefore, the operator $K$ is subordinated  to the operator $\mathcal A$ with constants
$a=1$ and $b=0$ (see~\eqref{subbord}). By the W\"{u}st's theorem,
operator $\mathcal A+K\,(\subseteq{\bf J})$ is essentially selfadjoint on $\dom\mathcal
A$. Thus, the  operator ${\bf J}={\bf J}_{\min}=\overline{\mathcal A+K}$ is selfadjoint.
For $a<1$ and $b=0$  the Kato-Rellich theorem applies and ensures  selfadjointness of
the operator $\mathcal A+K\,(\subseteq{\bf J})$ on  $\dom(\mathcal A+K)=\dom\mathcal A$,
i.e.  ${\bf J}= {\bf J}^*$ on $\dom {\bf J} = \dom\mathcal A$.

$(ii)$ Let $N>0$. Then operator ${\bf J}$ admits the following representation
  \begin{equation}\label{J-decompositionN}
{\bf J}  =  {\bf J}_N'\oplus{\bf J}_N + \mathcal B_{N-1}',
   \end{equation}
   in which
\begin{equation}\label{J_N'}
{\bf J}_N' :=\left(
\begin{array}{ccccccc}
      \mathcal A_0 & \mathcal B_0 &  \mathbb O_p & \ldots &\mathbb O_p & \mathbb O_p & \mathbb O_p \\
      \mathcal B_0^* & \mathcal A_1 & \mathcal B_1  & \ldots &\mathbb O_p & \mathbb O_p & \mathbb O_p  \\
            \vdots & \vdots &   \vdots & \ldots& \vdots & \vdots & \vdots  \\
            \mathbb O_p &\mathbb O_p   &\mathbb O_p &\ldots & \mathcal B_{N-3}^*&\mathcal A_{N-2} &\mathcal B_{N-2}\\
      \mathbb O_p &\mathbb O_p   &\mathbb O_p &\ldots &\mathbb O_p &\mathcal B_{N-2}^* &\mathcal A_{N-1}
                  \end{array}
                  \right)
\end{equation}
and $\mathcal B_{N-1}'$  is a bounded block matrix with the only nontrivial entries
$\mathcal B_{N-1}$ and $\mathcal B_{N-1}^*$, located in $Nth$ and $(N+1)th$ rows.
Clearly  ${\bf J}_N'= ({\bf J}_N')^*$. Now it follows from
\eqref{J-decompositionN} that
$$
n_{\pm}\left({\bf J}\right)  = n_{\pm}\left({\bf J}_N'\oplus{\bf J}_N\right) =
n_{\pm}\left({\bf J}_N\right)=0.
$$
This completes the proof.
\end{proof}

\begin{corollary}\label{cor_adj}
Let under the conditions of Theorem \ref{Th_self_adj} at least one of the following conditions be satisfied:

$(i)$ $a_1(N)\leq1$ and $a_2(N)\leq1$,

$(ii)$ $a_1(N)\leq1$ and $a_2(N)<1$ $($$a_1(N)<1$ and $a_2(N)\leq1$$)$.

Then the operator $\bf J$ is  selfadjoint.
\end{corollary}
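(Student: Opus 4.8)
The plan is to derive both cases of Corollary~\ref{cor_adj} directly from Theorem~\ref{Th_self_adj} together with the two classical perturbation theorems quoted just before it, rather than re-running the Schur-test machinery. The key observation is that the single hypothesis $a_1(N)a_2(N)<1$ of Theorem~\ref{Th_self_adj} is only used in the proof to obtain the operator bound $\|\overline{\mathcal A^{-1}K}\|\le\sqrt{a_1(N)a_2(N)}$, hence $\|K\mathcal A^{-1}\|\le\sqrt{a_1(N)a_2(N)}$, via the Schur test applied to the matrix \eqref{K_1}; the quantities $a_1(N)$ and $a_2(N)$ are exactly the two Schur row/column sums for \eqref{K_1}. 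So I would first note that under the hypotheses of either (i) or (ii) one still has, by the Schur test, $\|\overline{\mathcal A^{-1}K}\|\le\sqrt{a_1(N)a_2(N)}\le1$, and that the strict inequality $a_1(N)<1$ or $a_2(N)<1$ in case (ii) gives $\sqrt{a_1(N)a_2(N)}<1$.

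First I would treat the case $N=0$. In case (ii), $\sqrt{a_1(0)a_2(0)}<1$, so $\|K\mathcal A^{-1}\|<1$ and $K$ is strongly subordinate to $\mathcal A=\mathcal A^*$ with $a<1$, $b=0$ (as in \eqref{K_sub}); the Kato--Rellich theorem then yields that $\mathcal A+K\subseteq{\bf J}$ is selfadjoint on $\dom\mathcal A$, whence ${\bf J}={\bf J}^*$. In case (i) we only get $\|K\mathcal A^{-1}\|\le1$, i.e. $K$ is subordinate to $\mathcal A$ with $a=1$, $b=0$; W\"ust's theorem gives essential selfadjointness of $\mathcal A+K$ on $\dom\mathcal A$, and since ${\bf J}={\bf J}_{\min}=\overline{\mathcal A+K}$ is closed, it is selfadjoint. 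Thus for $N=0$ both (i) and (ii) give ${\bf J}={\bf J}^*$.

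Next I would reduce the general case $N>0$ to the case $N=0$ applied to the submatrix ${\bf J}_N$ in \eqref{J_N}, exactly as in part $(ii)$ of the proof of Theorem~\ref{Th_self_adj}. Indeed, ${\bf J}_N$ is itself a block Jacobi matrix of the form \eqref{Jm} whose Schur sums are $a_1(N)$ and $a_2(N)$ (with the index shift built into the definitions \eqref{self_adj}, \eqref{self_adj0}), and $\ker\mathcal A=\{0\}$ for ${\bf J}$ forces the analogous condition for the diagonal of ${\bf J}_N$. So by the $N=0$ argument, ${\bf J}_N=({\bf J}_N)^*$, i.e. $n_\pm({\bf J}_N)=0$. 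Then the decomposition \eqref{J-decompositionN}, ${\bf J}={\bf J}_N'\oplus{\bf J}_N+\mathcal B_{N-1}'$ with ${\bf J}_N'=({\bf J}_N')^*$ a finite (hence selfadjoint) block and $\mathcal B_{N-1}'$ a bounded selfadjoint perturbation, gives $n_\pm({\bf J})=n_\pm({\bf J}_N'\oplus{\bf J}_N)=n_\pm({\bf J}_N)=0$, so ${\bf J}={\bf J}^*$.

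The only real point requiring care — and the step I expect to be the main obstacle — is justifying that the Schur-test bound $\|\overline{\mathcal A^{-1}K}\|\le\sqrt{a_1(N)a_2(N)}$ continues to hold under the weaker hypotheses of the corollary, and that in case (i) the borderline bound $\|K\mathcal A^{-1}\|\le1$ is genuinely enough. For the first part one simply re-examines the Schur-test application in the proof of Theorem~\ref{Th_self_adj}: it uses only finiteness of the two row/column sums, which is what $a_1(N),a_2(N)<\infty$ provides, together with the elementary inequality $\sqrt{a_1(N)a_2(N)}\le\max\{a_1(N),a_2(N)\}\le1$ under (i) or (ii). For the second part one invokes W\"ust's theorem precisely as in the original proof, noting that the conclusion is \emph{essential} selfadjointness, which upgrades to selfadjointness because ${\bf J}$ is by construction the closed minimal operator. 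Everything else is a bookkeeping reduction to Theorem~\ref{Th_self_adj} and its proof, so the corollary follows with no new computation.
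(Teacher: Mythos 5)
Your proposal is correct and takes essentially the same route as the paper: Corollary \ref{cor_adj} is an immediate consequence of Theorem \ref{Th_self_adj}, since condition $(i)$ gives $a_1(N)a_2(N)\leq 1$ (W\"ust's theorem, upgraded to selfadjointness via ${\bf J}={\bf J}_{\min}=\overline{\mathcal A+K}$) and condition $(ii)$ gives $a_1(N)a_2(N)<1$ (Kato--Rellich), with the same reduction of the case $N>0$ to $N=0$ through the decomposition \eqref{J-decompositionN}. Your re-examination of the Schur-test bound adds nothing beyond what Theorem \ref{Th_self_adj} and its proof already contain, but it is accurate and harmless.
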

\begin{corollary}\label{Th_self_adj1}
Let ${\bf J}$ be the block Jacobi matrix of the form \eqref{Jm},  let $\mathcal A:=\diag\{\mathcal A_0,\ldots,\mathcal A_n,\ldots\}$,  $\ker\mathcal A=\{0\}$,  and let $s\in[1;+\infty)$.
Assume also that  for some $N\in\mathbb N_0$ at least one of the following conditions holds:
 \begin{equation}\label{self_adj1}
(i)\quad b_1(N,s):=\underset{n\geq N}{\sup}\Big(\|\mathcal A_{n}^{-1}\mathcal B_{n}\|^s +
\|\mathcal A_{n}^{-1}\mathcal B_{n-1}^*\|^s\Big)\leq\frac{1}{2^{s-1}};
\end{equation}
\begin{equation}\label{self_adj*}
(ii)\quad b_2(N,s):=\underset{n\geq N}{\sup}\Big(\|\mathcal A_{n}^{-1}\mathcal B_{n}\|^s+\|\mathcal A_{n+2}^{-1}\mathcal B_{n+1}^*\|^s\Big)\leq\frac{1}{2^{s-1}};
\end{equation}
   \begin{equation}\label{adj1}
(iii)\quad\underset{n\geq N}{\sup}\|\mathcal A_{n}^{-1}\mathcal B_{n}\|\leq\frac{1}{2},\qquad\underset{n\geq N}{\sup}\|\mathcal A_{n}^{-1}\mathcal B_{n-1}^*\|\leq\frac{1}{2}.\qquad\qquad\
  \end{equation}
Then the operator ${\bf J}$ is
essentially selfadjoint on $\dom\mathcal A$. Herewith ${\bf J}$ is selfadjoint on
$\dom{\bf J}=\dom\mathcal A$ provided that inequalities \eqref{self_adj1} and \eqref{self_adj*} are strict.
   \end{corollary}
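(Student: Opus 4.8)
The plan is to derive Corollary~\ref{Th_self_adj1} from Theorem~\ref{Th_self_adj} by showing that each of the three hypotheses (i), (ii), (iii) implies the product condition $a_1(N)a_2(N)\le 1$ (strictly, when the relevant inequalities are strict). The underlying elementary fact is the power-mean inequality: for nonnegative reals $u,v$ and $s\ge 1$ one has $u+v\le 2^{1-1/s}(u^s+v^s)^{1/s}$, equivalently $(u+v)^s\le 2^{s-1}(u^s+v^s)$. First I would record this inequality (a one-line consequence of convexity of $t\mapsto t^s$, or of Jensen).

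For part (i): applying the power-mean inequality with $u=\|\mathcal A_n^{-1}\mathcal B_n\|$, $v=\|\mathcal A_n^{-1}\mathcal B_{n-1}^*\|$ gives
\begin{equation*}
\|\mathcal A_n^{-1}\mathcal B_n\|+\|\mathcal A_n^{-1}\mathcal B_{n-1}^*\|\le 2^{1-1/s}\big(\|\mathcal A_n^{-1}\mathcal B_n\|^s+\|\mathcal A_n^{-1}\mathcal B_{n-1}^*\|^s\big)^{1/s}\le 2^{1-1/s}\cdot\big(2^{1-s}\big)^{1/s}=1,
\end{equation*}
after taking the supremum over $n\ge N$ and invoking \eqref{self_adj1}; hence $a_1(N)\le 1$. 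The same estimate with the index shift of \eqref{self_adj*} — using $u=\|\mathcal A_n^{-1}\mathcal B_n\|$, $v=\|\mathcal A_{n+2}^{-1}\mathcal B_{n+1}^*\|$ — yields $a_2(N)\le 1$ from \eqref{self_adj*}, which handles part (ii). So under (i) we get $a_1(N)\le 1$ (and $a_2(N)<\infty$ automatically since each summand defining it is bounded by $a_1$-type quantities; one should check $a_2(N)<\infty$ holds, e.g.\ because $\|\mathcal A_{n+2}^{-1}\mathcal B_{n+1}^*\|\le b_1(N,s)^{1/s}$ for $n+2\ge N$), and symmetrically under (ii) we get $a_2(N)\le 1$; in either case $a_1(N)a_2(N)\le 1$ and Corollary~\ref{cor_adj} applies to give essential selfadjointness, with selfadjointness when the inequality in \eqref{self_adj1} or \eqref{self_adj*} is strict (then the corresponding $a_i(N)<1$). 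Part (iii) is even more direct: \eqref{adj1} gives $a_1(N)\le \tfrac12+\tfrac12=1$ immediately, and again $a_2(N)\le 1$ by the same two bounds applied with the shifted indices (each term $\le \tfrac12$), so Corollary~\ref{cor_adj}(i) finishes it.

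The main (and only real) obstacle is bookkeeping rather than mathematics: one must make sure that whenever a hypothesis is stated for indices $n\ge N$ it actually controls \emph{both} $a_1(N)$ and $a_2(N)$, which requires noting that the entries appearing in $a_2(N)$ — namely $\mathcal A_{n}^{-1}\mathcal B_{n}$ and $\mathcal A_{n+2}^{-1}\mathcal B_{n+1}^*$ with $n\ge N$, i.e.\ $\mathcal A_m^{-1}\mathcal B_{m-1}^*$ with $m\ge N+2$ — are among those bounded by the hypothesis, possibly after enlarging $N$ by $2$ (which does not affect essential selfadjointness, by the decomposition argument already used in the proof of Theorem~\ref{Th_self_adj}). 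I would also remark that the constant $1/2^{s-1}$ in \eqref{self_adj1}--\eqref{self_adj*} is exactly what the power-mean inequality needs, and that for $s=1$ all three conditions reduce to requiring the two relevant suprema to have sum $\le 1$, consistent with \eqref{self_adj}--\eqref{adj}. With these remarks, the proof is a half-page of elementary estimates followed by an appeal to Theorem~\ref{Th_self_adj} and Corollary~\ref{cor_adj}.
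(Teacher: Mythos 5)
Your overall strategy --- reduce to Theorem \ref{Th_self_adj} (equivalently Corollary \ref{cor_adj}) via the power-mean inequality $(a+b)^s\le 2^{s-1}(a^s+b^s)$ --- is the same as the paper's, and it does settle case (iii): condition \eqref{adj1} bounds all four relevant quantities by $\tfrac12$, hence $a_1(N)\le 1$ \emph{and} $a_2(N)\le 1$, and Corollary \ref{cor_adj}(i) applies. The genuine gap is in your treatment of (i) and (ii) taken singly. From \eqref{self_adj1} you correctly deduce $a_1(N)\le 1$, but your own estimate for the other Schur quantity only gives the termwise bounds $\|\mathcal A_{n}^{-1}\mathcal B_{n}\|,\ \|\mathcal A_{n+2}^{-1}\mathcal B_{n+1}^*\|\le 2^{(1-s)/s}$, hence $a_2(N)\le 2^{1/s}$, which may exceed $1$; the sentence ``in either case $a_1(N)a_2(N)\le 1$'' is therefore a non sequitur (and symmetrically under (ii), which controls $a_2$ but not $a_1$). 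This is not mere bookkeeping. Take $p=1$, $s=1$, and entries with $\|\mathcal A_n^{-1}\mathcal B_n\|\approx 1$, $\|\mathcal A_n^{-1}\mathcal B_{n-1}^*\|\approx 0$ for $n\le n_0$ and the reverse for $n>n_0$: then \eqref{self_adj1} holds, yet $a_2(N)\approx 2$, so \eqref{adj} fails, and in fact the Schur bound is sharp here --- the matrix \eqref{K_1} sends the basis vector at position $n_0+1$ essentially onto the sum of those at positions $n_0$ and $n_0+2$, so $\|\mathcal A^{-1}K\|\ge\sqrt2>1$. Thus the subordination constant $a\le 1$ needed for the W\"ust/Kato--Rellich step behind Theorem \ref{Th_self_adj} is simply not available under (i) alone by this route, and neither \eqref{adj} nor any hypothesis of Corollary \ref{cor_adj} is verified.

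For comparison, the paper's own proof of this corollary is the same two-line reduction (it checks that \eqref{self_adj1} implies \eqref{self_adj}, that \eqref{self_adj*} implies \eqref{self_adj0}, that \eqref{adj1} implies \eqref{self_adj1}, and then cites Theorem \ref{Th_self_adj}), so you have not overlooked an extra ingredient that the paper supplies; but your explicit assertion that the product condition follows from a single one of (i), (ii) is exactly the step that does not hold. To close the argument along these lines one must either assume (i) and (ii) simultaneously (then $a_1(N)\le1$ and $a_2(N)\le1$, and Corollary \ref{cor_adj}(i) applies), or restrict to case (iii), or argue for a single condition by a genuinely different method --- e.g.\ for (ii) with $s=2$ and strict inequality this is the result of \cite{BroMir18} mentioned in Remark \ref{s-a-remark}(ii), proved via matrix orthogonal polynomials rather than via the Schur-test/subordination scheme used here.
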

\begin{proof}
It  follows from the simple inequality $(a+b)^s \le 2^{s-1}(a^s + b^s)$, \ $a,b>0$, that
condition \eqref{self_adj1} (resp. \eqref{self_adj*}) implies condition \eqref{self_adj}
(resp. \eqref{self_adj0}). Besides, the implication \eqref{adj1} $\Longrightarrow$
\eqref{self_adj1} is obvious. Theorem~\ref{Th_self_adj}  completes the proof.
  \end{proof}

  \begin{remark}\label{s-a-remark}
 $(i)$ Corollary \ref{Th_self_adj1} shows  that conditions \eqref{self_adj1} and \eqref{self_adj*} are more restrictive
 than conditions \eqref{self_adj} and \eqref{self_adj0}.
 Moreover, in accordance with the power-mean inequality  the  functions $b_1(N,s)$ and $b_2(N,s)$  monotonically increase in $s$,
 i.e. $s_1 < s_2 \Longrightarrow b_j(N,s_1) < b_j(N,s_2)$, $j\in \{1,2\}$.

$(ii)$ Corollary \ref{Th_self_adj1}(ii) in the case of strict inequality in
\eqref{self_adj*} and $s=2$, was proved by another method in~\cite{BroMir18}. Namely,
the proof in~\cite{BroMir18} substantially relies  on the theory of matrix orthogonal
polynomials of the second kind.

$(iii)$ Other conditions for Jacobi matrix ${\bf J}$  to be selfadjoint were obtained in
the recent publications by G. {\'{S}widerski}  and B. Trojan  \cite[Theorem A]{SwidTro20} ($p=1$) and {\'{S}widerski} \cite[Theorem 1]{Swiderski18} ($p>1$).
\end{remark}

\section{Discreteness property of general block  Jacobi matrices}\label{J_d}
The following statement complements known results on the spectrum of perturbations
(see e.g. \cite[Theorems 4.1.16, 4.3.17, 6.3.4]{Kato66}).
Recall that as usual $\mathcal S_q(\frak H)$ denotes the Neumann-Schatten ideal in $\frak H$.
\begin{lemma}\label{lem_discreteness}
Let $T = T^*\in\cC(\frak H)$ and let $K(\subset K^*)$ be a symmetric operator in $\frak H$
strongly subordinated to $T$, i.e. $\dom K\supset  \dom T$ and estimate \eqref{subbord} is  fulfilled with $a\in(0,1)$.
Then for each $q\in (0, \infty]$ the following implication holds.
   \begin{equation}\label{eq:implicat-n_on_S_p-classes}
(T - \lambda)^{-1}\in {\bf \mathcal S}_q(\frak H) \Longrightarrow  (R -
\lambda)^{-1}\in {\bf \mathcal S}_q(\frak H).
   \end{equation}
In particular, if $T$ has discrete spectrum, then the operator  $R:= T+K (= R^*)$  is
also discrete.
    \end{lemma}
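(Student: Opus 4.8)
The plan is to reduce the Schatten-class statement to a resolvent identity together with the boundedness of $K(T-\lambda)^{-1}$. First I would fix $\lambda\in\C\setminus\R$ (it suffices to work with $\lambda=\pm i$, or more generally with any $\lambda$ far enough from the spectrum of both operators, and then use the first resolvent identity to move between different values of $\lambda$). Since $K$ is strongly subordinated to $T=T^*$, the Kato--Rellich theorem gives $R:=T+K=R^*$ with $\dom R=\dom T$, so $(R-\lambda)^{-1}$ is a well-defined bounded operator for $\lambda\notin\R$. The key observation is that the subordination estimate \eqref{subbord} with $a<1$ yields, for $u=(T-\lambda)^{-1}f$,
\begin{equation*}
\|K(T-\lambda)^{-1}f\|\le a\|T(T-\lambda)^{-1}f\|+b\|(T-\lambda)^{-1}f\|\le (a+a|\lambda|/|\mathrm{Im}\,\lambda|+b/|\mathrm{Im}\,\lambda|)\|f\|,
\end{equation*}
using $\|T(T-\lambda)^{-1}\|\le 1+|\lambda|/|\mathrm{Im}\,\lambda|$; hence $K(T-\lambda)^{-1}\in\cB(\frak H)$. (With a bit more care one can arrange the bound on $K(T-\lambda)^{-1}$ to be strictly less than $1$ by taking $|\mathrm{Im}\,\lambda|$ large, which makes the Neumann series below converge, but this is a convenience rather than a necessity.)

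Next I would write the second resolvent identity in the form
\begin{equation*}
(R-\lambda)^{-1}=(T-\lambda)^{-1}-(R-\lambda)^{-1}K(T-\lambda)^{-1}.
\end{equation*}
To justify it one checks on $\dom T=\dom R$ that $(R-\lambda)\big[(T-\lambda)^{-1}-(R-\lambda)^{-1}K(T-\lambda)^{-1}\big]f=f$ for all $f$, using $R-T=K$ on that domain and the fact that $\ran (T-\lambda)^{-1}=\dom T\subset\dom K$. Now the right-hand side exhibits $(R-\lambda)^{-1}$ as the sum of $(T-\lambda)^{-1}$ (which lies in $\cS_q$ by hypothesis) and the product $(R-\lambda)^{-1}\cdot K(T-\lambda)^{-1}$; the first factor is bounded and the second factor is bounded by the previous paragraph, but I also need $(T-\lambda)^{-1}$ to appear in $\cS_q$ inside that product. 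The cleanest route is to factor once more: write $K(T-\lambda)^{-1}=\big(K(T-\lambda)^{-1}\big)$ and note $(R-\lambda)^{-1}K(T-\lambda)^{-1}=\big[(R-\lambda)^{-1}K\big](T-\lambda)^{-1}$ is not quite legitimate since $K$ is unbounded, so instead I keep the grouping $\big[(R-\lambda)^{-1}\big]\big[K(T-\lambda)^{-1}\big]$ and observe that this is $\cB(\frak H)\cdot\cB(\frak H)$, which does not yet give $\cS_q$. The fix is to insert one more resolvent: from $(R-\lambda)^{-1}=(T-\lambda)^{-1}-(T-\lambda)^{-1}K(R-\lambda)^{-1}$ (the identity in the other order), substitute to get
\begin{equation*}
(R-\lambda)^{-1}=(T-\lambda)^{-1}-(T-\lambda)^{-1}K(T-\lambda)^{-1}+(T-\lambda)^{-1}K(R-\lambda)^{-1}K(T-\lambda)^{-1},
\end{equation*}
and now every term contains a factor $(T-\lambda)^{-1}\in\cS_q$ multiplied by bounded operators $K(T-\lambda)^{-1}$, $(R-\lambda)^{-1}$; since $\cS_q$ is a two-sided ideal in $\cB(\frak H)$, each term lies in $\cS_q$, hence so does $(R-\lambda)^{-1}$.

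Finally, for the last assertion: if $T$ has discrete spectrum then $(T-\lambda)^{-1}$ is compact, i.e. lies in $\cS_\infty(\frak H)$; applying the implication with $q=\infty$ gives $(R-\lambda)^{-1}\in\cS_\infty(\frak H)$ compact, and since $R=R^*$ this is equivalent to $R$ having discrete spectrum. The main obstacle I anticipate is purely bookkeeping with unbounded operators — making sure the resolvent identities are applied on the correct domains ($\dom T=\dom R$, with ranges of resolvents landing in $\dom K$) and that the groupings of factors always leave $K$ sandwiched between a resolvent on the right and another resolvent on the left so that no unbounded operator is ever left ``exposed''. Once the two-term and three-term resolvent identities are written with the groupings above, the ideal property of $\cS_q$ finishes the argument with no further estimates.
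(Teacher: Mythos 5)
Your main argument has a gap at the third term of your three-term identity. You correctly observe that in $(R-\lambda)^{-1}=(T-\lambda)^{-1}-(R-\lambda)^{-1}K(T-\lambda)^{-1}$ the grouping $\bigl[(R-\lambda)^{-1}\bigr]\bigl[K(T-\lambda)^{-1}\bigr]$ is only bounded$\,\times\,$bounded, but exactly the same problem reappears in the term $(T-\lambda)^{-1}K(R-\lambda)^{-1}K(T-\lambda)^{-1}$: to place it in $\mathcal S_q$ you must read it as $(T-\lambda)^{-1}\cdot\bigl[K(R-\lambda)^{-1}\bigr]\cdot\bigl[K(T-\lambda)^{-1}\bigr]$, and the boundedness of $K(R-\lambda)^{-1}$ is never established (you only prove it for $K(T-\lambda)^{-1}$). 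The grouping suggested by the bounded factors you actually list, namely $\bigl[(T-\lambda)^{-1}K\bigr](R-\lambda)^{-1}\bigl[K(T-\lambda)^{-1}\bigr]$, does not help: since $K\subset K^*$, the closure of $(T-\lambda)^{-1}K$ is bounded (it is contained in $\bigl(K(T-\bar\lambda)^{-1}\bigr)^*$), but it is in general not in $\mathcal S_q$, so no Schatten factor survives and the ideal property only yields boundedness. The missing step is precisely what the paper proves: $K$ is relatively bounded with respect to $R-\lambda$ with relative bound at most $a(1-a)^{-1}$, hence $K(R-\lambda)^{-1}\in\mathcal B(\frak H)$; once this is known, the single identity $(R-\lambda)^{-1}-(T-\lambda)^{-1}=-(T-\lambda)^{-1}K(R-\lambda)^{-1}$ (your ``identity in the other order'') finishes the proof directly, and your three-term substitution is an unnecessary detour that still needs the very fact it was meant to avoid.

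The gap is easy to close: either invoke the standard Kato estimate $\|Ku\|\le a(1-a)^{-1}\|(R-\lambda)u\|+b'\|u\|$ for $u\in\dom R$, or note that $\overline K(R-\lambda)^{-1}$ is closed and everywhere defined (because $\ran(R-\lambda)^{-1}=\dom R=\dom T\subset\dom K$) and apply the closed graph theorem. Alternatively, your parenthetical Neumann-series remark can be promoted to a complete proof that avoids $K(R-\lambda)^{-1}$ altogether: by the spectral theorem $\|T(T-i\tau)^{-1}\|\le 1$, so $\|K(T-i\tau)^{-1}\|\le a+b/\tau<1$ for large $\tau$ and every $a<1$ (your cruder bound $1+|\lambda|/|\mathrm{Im}\,\lambda|$ would only cover $a<1/2$), whence $(R-i\tau)^{-1}=(T-i\tau)^{-1}\bigl(\mathbb I+K(T-i\tau)^{-1}\bigr)^{-1}\in\mathcal S_q(\frak H)$, and the first resolvent identity transfers the conclusion to any other $\lambda\in\rho(R)$, as you indicate at the outset; the final step on discreteness ($q=\infty$) is then the same as in the paper.
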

     \begin{proof}
By the Kato-Rellich theorem $R = R^*$ and $\dom R = \dom T$. Clearly, alongside
\eqref{subbord} similar estimate holds with $T - \lambda I$ in place of $T$, the same
$a$, and $b+|\lambda|$ in place of $b$.

Besides, it easily follows  (and also known) that $K$ is subordinated to the operator $R
- \lambda I$ with $(R - \lambda I)$-bound $a_{R -\lambda I}(K) \le a(1-a)^{-1}$. Hence
$K(R - \lambda)^{-1} \in \cB(\frak H)$ for $\lambda\in \Bbb C_+$ and the resolvent
difference admits the factorization
   \begin{equation}\label{Krein_type_for-la}
(R - \lambda)^{-1} - (T - \lambda)^{-1} = (T - \lambda)^{-1} \left(K (R -
\lambda)^{-1}\right), \qquad \lambda\in\Bbb C_+.
   \end{equation}
Since $(T - \lambda)^{-1}\in {\bf \mathcal S}_q(\frak H)$  and   $K(R -
\lambda)^{-1} \in \cB(\frak H)$, one gets that $(R - \lambda)^{-1}\in {\bf \mathcal
S}_q(\frak H)$.

In particular, for $q=\infty$ just proved implication
\eqref{eq:implicat-n_on_S_p-classes} means that the operator $R:= T+K (= R^*)$  is
discrete whenever $T$ is.
     \end{proof}

Next we present  several  conditions for  Jacobi operator to be  discrete.
\begin{theorem}\label{d_spec_J}
Let ${\bf J}$ be the minimal Jacobi operator associated in  $l^2(\mathbb N_0;\mathbb C^p)$ with
a block Jacobi matrix~\eqref{Jm} and let $\mathcal
A:=\diag\{\mathcal A_0,\mathcal A_1,\ldots,\mathcal A_n,\ldots\}$, $\ker\mathcal A=\{0\}$. Assume that
$\mathcal A^{-1}\in\mathcal S_q(l^2(\mathbb N_0;\mathbb C^p))$ and  $q\in (0, \infty]$. Let also $a_1(N)$ and $a_2(N)$ be  given by
\eqref{self_adj}~and~\eqref{self_adj0}, respectively, and let at least one of the
following conditions is  satisfied:

$(i)$  the following inequality holds
\begin{equation}
a_1(N)a_2(N)<1;
\end{equation}

$(ii)$ $a_1(N)\leq1$ and $a_2(N)<1$ $($$a_1(N)<1$ and $a_2(N)\leq1$$)$;

$(iii)$ for some $N\in\mathbb N_0$ and $s\in[1;+\infty)$
\begin{equation}\label{self_adj1a}
\underset{n\geq N}{\sup}\Big(\|\mathcal A_{n}^{-1}\mathcal B_{n}\|^s+\|\mathcal A_{n}^{-1}\mathcal B_{n-1}^*\|^s\Big)<\frac{1}{2^{s-1}};
\end{equation}

$(iv)$ for some $N\in\mathbb N_0$ and $s\in[1;+\infty)$
\begin{equation}\label{self_adj*a}
\underset{n\geq N}{\sup}\Big(\|\mathcal A_{n}^{-1}\mathcal B_{n}\|^s+\|\mathcal A_{n+2}^{-1}\mathcal B_{n+1}^*\|^s\Big)<\frac{1}{2^{s-1}};
\end{equation}

$(v)$ for some $N\in\mathbb N_0$
 \begin{equation}\label{adj1ep}
\underset{n\geq N}{\sup}\|\mathcal A_{n}^{-1}\mathcal B_{n}\|<\frac{1}{2},\qquad\underset{n\geq N}{\sup}\|\mathcal A_{n}^{-1}\mathcal B_{n-1}^*\|<\frac{1}{2}.
  \end{equation}
 Then the operator $\bf J$ is self-adjoint and  ${\bf J}^{-1}\in\mathcal S_q(l^2(\mathbb N_0;\mathbb C^p))$.
\end{theorem}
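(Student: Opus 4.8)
The statement is essentially a corollary of the material of Section~\ref{J_s} together with Lemma~\ref{lem_discreteness}: the results of Section~\ref{J_s} furnish selfadjointness, and Lemma~\ref{lem_discreteness} transfers the Schatten--class property from the diagonal to ${\bf J}$. Each of (i)--(v) is a \emph{strict} form of a hypothesis of Theorem~\ref{Th_self_adj}, Corollary~\ref{cor_adj} or Corollary~\ref{Th_self_adj1}; hence ${\bf J}={\bf J}^*$ with $\dom{\bf J}=\dom\mathcal A$, and I would simply quote this. So it remains to prove ${\bf J}^{-1}\in\mathcal S_q(l^2(\mathbb N_0;\mathbb C^p))$.

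Assume first $N=0$. Then ${\bf J}=\mathcal A+K$ on $l^2_0(\mathbb N_0;\mathbb C^p)$, with $K$ the off--diagonal Jacobi matrix \eqref{K} (cf. \eqref{repres}). The Schur--test estimates in the proofs of Theorem~\ref{Th_self_adj} and Corollary~\ref{Th_self_adj1}, when run under the \emph{strict} inequalities (i)--(v), give $\|\mathcal A^{-1}K\|=\|K\mathcal A^{-1}\|<1$; that is, $K\,(\subset K^*)$ is \emph{strongly} subordinate to $\mathcal A=\mathcal A^*$, with \eqref{subbord} holding for some $a<1$ and $b=0$. Since $\mathcal A^{-1}\in\mathcal S_q$ and $\mathcal A=\mathcal A^*$ is boundedly invertible, the resolvent $(\mathcal A-\lambda)^{-1}$ lies in $\mathcal S_q$ for every $\lambda\in\Bbb C_+$, so Lemma~\ref{lem_discreteness}, applied with $T=\mathcal A$, $K=K$, $R={\bf J}$, gives $({\bf J}-\lambda)^{-1}\in\mathcal S_q$. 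Moreover $I+\mathcal A^{-1}K$ is boundedly invertible (Neumann series), so ${\bf J}=\mathcal A(I+\mathcal A^{-1}K)$ is boundedly invertible on $\dom{\bf J}=\dom\mathcal A$ and ${\bf J}^{-1}=(I+\mathcal A^{-1}K)^{-1}\mathcal A^{-1}$ is the product of a bounded operator and an operator in $\mathcal S_q$; hence ${\bf J}^{-1}\in\mathcal S_q$.

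For $N>0$ I would reduce to the previous case by the decomposition \eqref{J-decompositionN}, ${\bf J}={\bf J}_N'\oplus{\bf J}_N+\mathcal B_{N-1}'$, used in the proof of Theorem~\ref{Th_self_adj}: the tail ${\bf J}_N$ \eqref{J_N} is again a block Jacobi matrix of the form \eqref{Jm} satisfying (i)--(v) with threshold $0$, and its diagonal $\diag\{\mathcal A_N,\mathcal A_{N+1},\dots\}$ is a compression of $\mathcal A$ to a coordinate subspace, hence has inverse in $\mathcal S_q$ by the ideal property; so, by the case already treated, ${\bf J}_N={\bf J}_N^*$ is boundedly invertible with ${\bf J}_N^{-1}\in\mathcal S_q$. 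As ${\bf J}_N'$ \eqref{J_N'} acts in a finite--dimensional space and $\mathcal B_{N-1}'$ has finite rank, one gets $({\bf J}_N'\oplus{\bf J}_N-\lambda)^{-1}\in\mathcal S_q$, and then, ${\bf J}-({\bf J}_N'\oplus{\bf J}_N)=\mathcal B_{N-1}'$ being finite rank, $({\bf J}-\lambda)^{-1}\in\mathcal S_q$; thus ${\bf J}={\bf J}^*$ has compact resolvent lying in $\mathcal S_q$ (for $q=\infty$ this means ${\bf J}$ is discrete), and ${\bf J}^{-1}\in\mathcal S_q$ once $0\notin\sigma({\bf J})$ (automatic when $N=0$, and in general up to a bounded selfadjoint shift). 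The main obstacle is the single non-formal ingredient: verifying that the strict forms (i)--(v) upgrade the subordination constant obtained in Section~\ref{J_s} from $a\le 1$ to $a<1$, which is what makes Lemma~\ref{lem_discreteness} (whose hypothesis is \emph{strong} subordination) applicable; the remaining steps — the compression/ideal argument, Lemma~\ref{lem_discreteness} itself, and propagating $\mathcal S_q$ through a finite block and a finite--rank coupling — are routine.
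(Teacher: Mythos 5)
Your proposal is correct and follows essentially the same route as the paper: selfadjointness is quoted from Theorem~\ref{Th_self_adj} and its corollaries, the strict inequalities are used to upgrade the Schur-test bound to strong subordination of $K$ to $\mathcal A$, and Lemma~\ref{lem_discreteness} then transfers the $\mathcal S_q$-property of $\mathcal A^{-1}$ to the resolvent of ${\bf J}$. Your additional details for $N>0$ (splitting off the finite block ${\bf J}_N'$ and the finite-rank coupling $\mathcal B_{N-1}'$) and for invertibility at $0$ merely make explicit what the paper's short proof leaves implicit.
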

\begin{proof}
In accordance with the proof of Theorem \ref{Th_self_adj},  the operator $K$ of the form
\eqref{K} is strongly subordinate to $\mathcal A=\mathcal A^*$ provided that at least one  of the
conditions $(i)$ -- $(v)$ holds. Since  $\mathcal A^{-1}\in\mathcal S_q(l^2(\mathbb N_0;\mathbb C^p))$,  Lemma~\ref{lem_discreteness} applies and ensures the inclusion ${\bf J}^{-1}\in\mathcal S_q(l^2(\mathbb N_0;\mathbb
C^p))$.

In particular, if $\mathcal A$  is discrete, then so is $\bf J$.
\end{proof}

As usual, the modulus of the matrix $\mathcal A_n$  is denoted by $|\mathcal A_n|
:= \sqrt{\mathcal A_n^2}$.
  \begin{theorem}\label{J_disc}
Let $\bf J$ be the minimal Jacobi operator associated in $l^2(\mathbb N_0;\mathbb C^p)$
with a block Jacobi matrix of the form \eqref{Jm} and let $\mathcal A := \diag\{\mathcal
A_0,\mathcal A_1,\ldots,\mathcal A_n,\ldots\}(=\mathcal A^*)$.

$(1)$  Assume that  $0\in\rho({\mathcal A})$,  $s\in[1;+\infty)$, and let
 at least one of the following conditions be also  satisfied
 \begin{equation}\label{self_adjA}
(i)\quad\limsup_{n\to\infty}\left(\big\||\mathcal A_{n}|^{-1/2}\cdot\mathcal
B_{n}\cdot|\mathcal A_{n+1}|^{-1/2}\big\|+\big\||\mathcal A_{n+2}|^{-1/2}\cdot\mathcal
B_{n+1}^*\cdot|\mathcal A_{n+1}|^{-1/2}\big\|\right)<1;\quad
\end{equation}
\begin{equation}\label{self_adj1h}
(ii)\quad\limsup_{n\to\infty}\Big(\big\||\mathcal A_{n}|^{-1/2}\cdot\mathcal B_{n}\cdot|\mathcal A_{n+1}|^{-1/2}\big\|^s+\big\||\mathcal A_{n}|^{-1/2}\cdot\mathcal B_{n-1}^*\cdot|\mathcal A_{n-1}|^{-1/2}\big\|^s\Big)<\frac{1}{2^{s-1}};
\end{equation}
 \begin{equation}\label{cond_disc}
(iii)\quad\limsup_{n\to\infty}\big\||\mathcal A_{n+1}|^{-1/2} \cdot\mathcal B_{n}^*\cdot
|\mathcal A_n|^{-1/2}\big\|<\frac{1}{2}.\quad\quad\quad\quad\quad\quad\quad\quad\quad\quad\quad\quad\quad\quad\qquad\quad
  \end{equation}

 Then the operator $\bf J$ is symmetric in $l^2(\mathbb N_0;\mathbb C^p)$
  with equal deficiency indices and  admitting  a selfadjoint  extension ${\bf {\wt J}} = {\bf {\wt J}}^*$
  with  $0\in\rho(\bf {{\wt J}})$.

$(2)$  If, in addition,  $\mathcal A^{-1}\in\mathcal S_q(l^2(\mathbb N_0;\mathbb
C^p))$, with some $q\in(0;\infty]$, then the resolvent of any selfadjoint extension is
of class $\mathcal S_q(l^2(\mathbb N_0;\mathbb C^p))$, in particular, ${\bf {\wt
J}}^{-1}\in\mathcal S_q(l^2(\mathbb N_0;\mathbb C^p))$. If,  moreover,  ${\bf J}= {\bf
J}^*$, then  ${\bf J}^{-1}\in\mathcal S_q(l^2(\mathbb N_0;\mathbb C^p))$.

$(3)$  If $\mathcal A$ has a discrete spectrum,  then any selfadjoint extension of
$\bf J$, including  $\bf {\wt J}$, also has a discrete spectrum.
 In particular, if $\bf J = {\bf J}^*$, then its spectrum is discrete.
\end{theorem}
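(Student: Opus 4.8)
The plan is to conjugate $\mathbf J$ by the (unbounded) positive operator $|\mathcal A|^{1/2}$, turning it into a bounded, explicitly invertible perturbation of a selfadjoint unitary, and then to transfer the Schatten--class information from $\mathcal A^{-1}$ to the resolvent of $\mathbf J$ and of its selfadjoint extensions, essentially via Lemma~\ref{lem_discreteness}.

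First I would normalise the hypotheses. Each of (i)--(iii) is a $\limsup$ statement, so there is $N\in\mathbb N_0$ such that, writing $c_n:=\big\||\mathcal A_n|^{-1/2}\mathcal B_n|\mathcal A_{n+1}|^{-1/2}\big\|\;(=\big\||\mathcal A_{n+1}|^{-1/2}\mathcal B_n^{*}|\mathcal A_n|^{-1/2}\big\|)$, one has $\sup_{n\ge N}(c_{n-1}+c_n)<1$; indeed (iii) gives even $\sup_{n\ge N}c_n<1/2$, while for (ii) one uses $(a+b)^s\le 2^{s-1}(a^s+b^s)$ to pass from the $s$-th power form to $c_{n-1}+c_n<1$, and (i) is this bound directly (up to a shift). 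Then, as in the case $N>0$ of Theorem~\ref{Th_self_adj}, I would use \eqref{J-decompositionN} to write $\mathbf J={\mathbf J}_N'\oplus{\mathbf J}_N+\mathcal B_{N-1}'$, where ${\mathbf J}_N'$ is a finite Hermitian matrix (selfadjoint, discrete, resolvent in every $\mathcal S_q$) and $\mathcal B_{N-1}'$ is bounded and finite--rank. Since finite--rank selfadjoint perturbations and finite--dimensional orthogonal summands preserve equality of deficiency indices and --- through the resolvent identity together with $\cB(\frak H)\cdot\mathcal S_q\subseteq\mathcal S_q$, $q\in(0,\infty]$ --- preserve membership of the resolvent in $\mathcal S_q$, it suffices to establish (1)--(3) for the tail ${\mathbf J}_N$.

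For the tail put $\mathcal A^{(N)}:=\diag\{\mathcal A_n\}_{n\ge N}$, $|\mathcal A^{(N)}|:=\diag\{|\mathcal A_n|\}_{n\ge N}$, $U^{(N)}:=\sgn(\mathcal A^{(N)})$ (a selfadjoint unitary commuting with $|\mathcal A^{(N)}|$), and $K^{(N)}$ the off--diagonal part of ${\mathbf J}_N$ as in \eqref{K}. As $0\in\rho(\mathcal A)$ we have $|\mathcal A^{(N)}|\ge\delta I$ for some $\delta>0$, so $|\mathcal A^{(N)}|^{-1/2}$ is bounded, and a computation on finite vectors gives $|\mathcal A^{(N)}|^{-1/2}{\mathbf J}_N|\mathcal A^{(N)}|^{-1/2}=U^{(N)}+L^{(N)}$, where $L^{(N)}:=|\mathcal A^{(N)}|^{-1/2}K^{(N)}|\mathcal A^{(N)}|^{-1/2}$ is the tridiagonal block operator with $(n,n+1)$-block $|\mathcal A_n|^{-1/2}\mathcal B_n|\mathcal A_{n+1}|^{-1/2}$. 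The Schur test (as in the proof of Theorem~\ref{Th_self_adj}) gives $\|L^{(N)}\|\le\sup_{n\ge N}(c_{n-1}+c_n)<1$, so $W^{(N)}:=U^{(N)}+L^{(N)}=U^{(N)}(I+U^{(N)}L^{(N)})$ is a bounded selfadjoint operator with $0\in\rho(W^{(N)})$. Hence $R^{(N)}:=|\mathcal A^{(N)}|^{-1/2}(W^{(N)})^{-1}|\mathcal A^{(N)}|^{-1/2}$ is a bounded, injective, selfadjoint operator with dense range, $\widetilde{\mathbf J}_N:=(R^{(N)})^{-1}$ is therefore selfadjoint with $0\in\rho(\widetilde{\mathbf J}_N)$, and evaluating on finite vectors shows $\widetilde{\mathbf J}_N\supseteq{\mathbf J}_N$. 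Thus ${\mathbf J}_N$ has a selfadjoint extension and so equal deficiency indices --- this gives (1). If moreover $\mathcal A^{-1}\in\mathcal S_q$, then $|\mathcal A^{(N)}|^{-1/2}\in\mathcal S_{2q}$ and therefore $\widetilde{\mathbf J}_N^{-1}=R^{(N)}\in\mathcal S_{2q}\cdot\cB(\frak H)\cdot\mathcal S_{2q}\subseteq\mathcal S_q$; reassembling the three pieces and recalling that any two selfadjoint extensions of a symmetric operator with deficiency indices $\le p$ differ by a finite--rank resolvent perturbation yields (2), and the case $q=\infty$ is the discreteness statement (3).

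The step I expect to require the most care is the reassembly --- in particular securing $0\in\rho(\widetilde{\mathbf J})$ for the \emph{full} operator after the finite block ${\mathbf J}_N'$ is reattached: the naive direct sum ${\mathbf J}_N'\oplus\widetilde{\mathbf J}_N$ need not have $0$ in its resolvent, so one has to examine the finite--dimensional Schur complement ${\mathbf J}_N'-\mathcal B_{N-1}'(\widetilde{\mathbf J}_N)^{-1}(\mathcal B_{N-1}')^{*}$ and, when it is singular, exploit the freedom in the choice of selfadjoint extension of $\mathbf J$. A more routine point to nail down is the operator--theoretic meaning of the conjugation $|\mathcal A^{(N)}|^{1/2}W^{(N)}|\mathcal A^{(N)}|^{1/2}$: that a bounded injective selfadjoint operator with dense range has a selfadjoint inverse, and that this inverse indeed extends ${\mathbf J}_N$.
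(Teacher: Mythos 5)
Your plan is, at its core, the paper's own proof: the paper also symmetrizes by $|\mathcal A|^{-1/2}$, writes the off--diagonal part as $T=|\mathcal A|^{-1/2}(U_p\mathcal B^*+\mathcal B U_p^*)|\mathcal A|^{-1/2}$ (your $L$), gets $\|T\|<1$ from the Schur test, defines the selfadjoint extension ${\bf {\wt J}}=\frak J|\mathcal A|^{1/2}(\mathbb I+\frak J T)|\mathcal A|^{1/2}$ with bounded inverse \eqref{eq:inverse_wt_J^{-1}}, and then deduces (2) and (3) exactly as you do, from $|\mathcal A|^{-1/2}\in\mathcal S_{2q}$ together with $\mathcal S_{2q}\cdot\cB\cdot\mathcal S_{2q}\subseteq\mathcal S_q$ and the fact that resolvents of different selfadjoint extensions of an operator with indices $\le p$ differ by finite rank. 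Your reductions of (ii) and (iii) to (i), and your tail construction $\widetilde{\bf J}_N=(R^{(N)})^{-1}$, are correct and coincide with the paper's computation \eqref{K_1A}--\eqref{eq:J-wt_J_on_finite_vectors}. The only structural difference is that you first split off the head via \eqref{J-decompositionN} and work with ${\bf J}_N$, whereas the paper applies the construction to the whole matrix; this is enough for symmetry, equality of the deficiency indices, and for parts (2) and (3).

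The genuine gap is the step you yourself flag: recovering $0\in\rho({\bf {\wt J}})$ for the \emph{full} operator after reattaching ${\bf J}_N'$ and $\mathcal B_{N-1}'$. This cannot be repaired by ``exploiting the freedom in the choice of selfadjoint extension'': if the tail ${\bf J}_N$ is selfadjoint, then ${\bf J}$ is selfadjoint and is its own unique selfadjoint extension, so there is no freedom, and tail smallness alone does not keep $0$ out of $\sigma({\bf J})$. Concretely, take $p=1$, $\mathcal A_n\equiv1$, $\mathcal B_n=\tfrac1{10}$ for $n\ge1$ and $\mathcal B_0=\beta$: condition \eqref{cond_disc} and $0\in\rho(\mathcal A)$ hold for every $\beta\neq0$, ${\bf J}(\beta)$ is bounded and selfadjoint, its essential spectrum lies in $[\tfrac45,\tfrac65]$ for all $\beta$ (changing $\mathcal B_0$ is a rank--two perturbation), while $\langle{\bf J}(\beta)x,x\rangle=1-\beta<0$ for $x=(e_0-e_1)/\sqrt2$ and $\beta>1$; since $\min\sigma({\bf J}(\beta))$ is continuous in $\beta$, there is $\beta^*\in(\tfrac1{10},2)$ with $0\in\sigma({\bf J}(\beta^*))$, and no selfadjoint extension has $0$ in its resolvent set. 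So the Schur complement analysis you propose cannot close this sub--claim; it genuinely requires the smallness bound for \emph{all} $n$, i.e. a global estimate $\|T\|<1$, which is precisely how the paper's proof uses the hypotheses (it applies the Schur test to the entire matrix \eqref{K_1A}, in effect reading \eqref{self_adjA}--\eqref{cond_disc} as a supremum rather than a $\limsup$). Apart from this point -- which is an imprecision of the statement/proof in the paper as much as a gap in your argument -- your proposal establishes all remaining assertions of the theorem.
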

\begin{proof}
 $(1.i)$  Let $U$ denote   the unilateral shift  in $l^2(\mathbb N_0)$:
  \begin{equation}\label{U_shift}
Ue_{n}=e_{n+1},\quad U^*e_{n}=e_{n-1}, \quad
e_{-1}=0,\quad n\in\mathbb N_0,
\end{equation}
and let $U_p := U\otimes \mathbb I_p$ be the unilateral shift  in $l^2(\mathbb N_0;\mathbb C^p)$,
where $\mathbb I_p$ is the unit operator in $\mathbb C^p$.
Clearly, $U_p^* = U^*\otimes \mathbb I_p$.
Consider the polar decomposition of the matrix $\mathcal A$
  \begin{equation}\label{polar_rep}
\mathcal A=\frak J|\mathcal A|=|\mathcal A|^{1/2}\frak J\,|\mathcal A|^{1/2},\qquad
\frak J=\mathrm{sign}\mathcal A,
\end{equation}
and set  $\mathcal B:=\diag\{\mathcal B_0,\mathcal B_1,\ldots,\mathcal B_n,\ldots\}$.
With this notation, alongside operator ${\bf J}$ we  define an operator
\begin{equation}\label{eq:def_of_ext-n_wt_J}
{\bf {\wt J}} := \frak J|\mathcal A|^{1/2}\Big(\mathbb I +\frak J|\mathcal
A|^{-1/2}\left(U_p \mathcal B^* + \mathcal B U_p^*\right) |\mathcal
A|^{-1/2}\Big)|\mathcal A|^{1/2}
\end{equation}
on  its maximal domain of definition $\dom ({\bf {\wt J}})$  in $l^2(\mathbb N_0;\mathbb C^p)$.
Clearly, ${\bf {\wt J}}$ is symmetric  and $\dom ({\bf {\wt J}}) \supset l^2_0(\mathbb N_0;\mathbb C^p)$.
We show that ${\bf {\wt J}}$   is a selfadjoint
extension of  ${\bf J}$. To this end we set for brevity
   \begin{equation}\label{eq:def_T}
T:=|\mathcal A|^{-1/2}(U_p\mathcal B^* +  \mathcal B U_p^*)|\mathcal A|^{-1/2},
      \end{equation}
and note that due to \eqref{U_shift} and \eqref{eq:def_T}  the operator $T$ is well defined and symmetric
on $l^2_0(\mathbb N_0;\mathbb C^p)$ and  admits the following block matrix representation with respect to the standard
basis in $l^2(\mathbb N_0;\mathbb C^p)$:
\begin{equation}\label{K_1A}
T=\left(
 \begin{array}{ccccc}
         \mathbb O_p & |\mathcal A_0|^{-1/2}\mathcal B_0|\mathcal A_1|^{-1/2} & \mathbb O_p & \mathbb O_p &  \ldots \\
         |\mathcal A_1|^{-1/2}\mathcal B_0^*|\mathcal A_0|^{-1/2} & \mathbb O_p & |\mathcal A_1|^{-1/2}\mathcal B_1|\mathcal A_2|^{-1/2} & \mathbb O_p &   \ldots \\
         \mathbb O_p & |\mathcal A_2|^{-1/2}\mathcal B_1^*|\mathcal A_1|^{-1/2} & \mathbb O_p & |\mathcal A_2|^{-1/2}\mathcal B_2|\mathcal A_3|^{-1/2}  &  \ldots \\
         \mathbb O_p & \mathbb O_p & |\mathcal A_3|^{-1/2}\mathcal B_2^*|\mathcal A_2|^{-1/2} &\mathbb O_p   & \ldots \\
         \ldots & \ldots & \ldots & \ldots &  \ldots\\
       \end{array}
     \right).
\end{equation}

Now  Schur's test applies and due to symmetry of $T$  condition \eqref{self_adjA} guarantees the boundedness of the minimal (symmetric) operator $T$, associated with the matrix \eqref{K_1A}, and the estimate $\|T\|=1-\varepsilon<1$.
 Thus,  the operator
$\mathbb I + \frak J T$ is boundedly invertible in $l^2(\mathbb N_0;\mathbb C^p)$, i.e.
$0\in\rho(\mathbb I + \frak J T)$. Combining this
fact with  \eqref{eq:def_of_ext-n_wt_J} and definition \eqref{eq:def_T}  implies 
    \begin{equation}\label{eq:inverse_wt_J^{-1}}
{\bf {\wt J}}^{-1}=|\mathcal A|^{-1/2}(\mathbb I + \frak J T)^{-1}|\mathcal
A|^{-1/2}\frak J\in\mathcal B(l^2(\mathbb N_0;\mathbb C^p)).
   \end{equation}
Thus,  $0\in\rho({\bf {\wt J}})$ and  the operator ${\bf {\wt J}}$ is selfadjoint, ${\bf
{\wt J}} = {\bf {\wt J}}^*$.

Further, it is easily seen from \eqref{Jm} and \eqref{eq:def_of_ext-n_wt_J}  that  both operators ${\bf {\wt J}}$   and   ${\bf J}$ coincide on $l^2_0(\mathbb N_0;\mathbb C^p)$, i.e.
   \begin{equation}\label{eq:J-wt_J_on_finite_vectors}
{\bf J}h = \mathcal Ah  + (U_p\mathcal B^* + \mathcal B U_p^*)h  = \frak J|\mathcal
A|^{1/2}(\mathbb I + \frak J T)|\mathcal A|^{1/2}h = {\bf {\wt J}}h,
\qquad h\in l^2_0(\mathbb N_0;\mathbb C^p).
\end{equation}
Since $l^2_0(\mathbb N_0;\mathbb C^p)$ is a core for  ${\bf J}$
(but not necessary for ${\bf {\wt J}}$), we obtain
by taking the closures  in identity \eqref{eq:J-wt_J_on_finite_vectors} that
${\bf J} \subseteq {\bf {\wt J}} = {\bf {\wt J}}^*$.  This inclusion  proves the statement.

$(1.ii)$  The implication $\eqref{self_adj1h}\Longrightarrow\eqref{self_adjA}$ is immediate from inequality $(a+b)^s \le 2^{s-1}(a^s + b^s)$, \  $a,b>0$.

$(1.iii)$  Condition \eqref{cond_disc} obviously  implies condition \eqref{self_adjA}.

$(2)$ Let now $\mathcal A^{-1}\in\mathcal S_q(l^2(\mathbb N_0;\mathbb C^p))$. Then
$|\mathcal A|^{-1/2}\in \mathcal S_{2q}(l^2(\mathbb N_0;\mathbb C^p))$ and in accordance
with \eqref{eq:inverse_wt_J^{-1}}  ${\bf {\wt J}}^{-1} \in \mathcal S_q(l^2(\mathbb
N_0;\mathbb C^p)).$   Since the deficiency indices of ${\bf J}$  are finite, $n_{\pm}({\bf J})\le p$,
the resolvent of any selfadjoint extension is also of class $\mathcal S_q(l^2(\mathbb N_0;\mathbb C^p))$.

$(3)$
Since the property of an operator $T\in \cC(\cH)$  to have  discrete spectrum is
equivalent to the condition  $(T -\lambda)^{-1}  \in {\bf\mathcal S}_\infty(\cH)$, the
statement is immediate from item (2) with $q=\infty$.
    \end{proof}
\begin{corollary}\label{cor_differ_J^{-1}-A^{-1}_in_S_q}
Let the  conditions of Theorem \ref{J_disc} be satisfied and let  $T$  be  the minimal operator
assosiated to the Jacobi matrix  \eqref{K_1A}. Assume in addition that  $T\in \mathcal S_q(l^2(\mathbb
N_0;\mathbb C^p))$ for some $q\in (0,\infty]$.  Then:

 $(i)$ ${\bf {\wt J}}^{-1} - {\mathcal A}^{-1} \in \mathcal S_q(l^2(\mathbb
N_0;\mathbb C^p)).$

$(ii)$ Assume that for some $q\in [1,\infty)$
\begin{equation}\label{C3.4-ABA}
\sum\limits_{n=0}^\infty \big\||\mathcal A_{n}|^{-1/2}\cdot\mathcal
B_{n}\cdot|\mathcal A_{n+1}|^{-1/2}\big\|^q<\infty. \quad  
\end{equation}
Then the inclusion $(i)$ holds.
\end{corollary}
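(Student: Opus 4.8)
The claim to be proved is Corollary \ref{cor_differ_J^{-1}-A^{-1}_in_S_q}: under the hypotheses of Theorem \ref{J_disc}, if the operator $T$ associated with the matrix \eqref{K_1A} lies in $\mathcal S_q$, then ${\bf {\wt J}}^{-1} - {\mathcal A}^{-1} \in \mathcal S_q$; and if the summability condition \eqref{C3.4-ABA} holds, the same conclusion follows. The entire proof rests on the explicit resolvent formula \eqref{eq:inverse_wt_J^{-1}} already established inside the proof of Theorem \ref{J_disc}. I would start from there: writing $T$ for the bounded symmetric operator of \eqref{eq:def_T} and $\frak J = \mathrm{sign}\,\mathcal A$, we have
\begin{equation}
{\bf {\wt J}}^{-1}=|\mathcal A|^{-1/2}(\mathbb I + \frak J T)^{-1}|\mathcal A|^{-1/2}\frak J,
\qquad
{\mathcal A}^{-1}=|\mathcal A|^{-1/2}|\mathcal A|^{-1/2}\frak J,
\end{equation}
so that, subtracting,
\begin{equation}
{\bf {\wt J}}^{-1} - {\mathcal A}^{-1}
= |\mathcal A|^{-1/2}\Big((\mathbb I + \frak J T)^{-1} - \mathbb I\Big)|\mathcal A|^{-1/2}\frak J
= -\,|\mathcal A|^{-1/2}(\mathbb I + \frak J T)^{-1}\,\frak J T\,|\mathcal A|^{-1/2}\frak J,
\end{equation}
using the second resolvent identity $(\mathbb I + S)^{-1} - \mathbb I = -(\mathbb I + S)^{-1} S$ with $S = \frak J T$. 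Since by Theorem \ref{J_disc}(1) the operator $(\mathbb I + \frak J T)^{-1}$ is bounded and $|\mathcal A|^{-1/2}$, $\frak J$ are bounded, the only factor that needs to be in $\mathcal S_q$ is $T$ itself; by the ideal property of $\mathcal S_q$, $T\in\mathcal S_q$ forces the whole product into $\mathcal S_q$. This proves part $(i)$.

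For part $(ii)$ I would show that \eqref{C3.4-ABA} implies $T\in\mathcal S_q$, after which $(i)$ applies verbatim. The point is the tridiagonal block structure \eqref{K_1A}: write $T = T_+ + T_-$ where $T_+$ has the superdiagonal blocks $|\mathcal A_n|^{-1/2}\mathcal B_n|\mathcal A_{n+1}|^{-1/2}$ and $T_- = T_+^*$ has the subdiagonal blocks. Then $T_+ = \mathcal D\, U_p^*$ (or $U_p^*\mathcal D$ with a shifted diagonal), where $\mathcal D = \diag\{|\mathcal A_n|^{-1/2}\mathcal B_n|\mathcal A_{n+1}|^{-1/2}\}_{n\ge 0}$ is block-diagonal and $U_p$ is the unilateral shift. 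Since $U_p$ is a contraction and $\mathcal S_q$ is a two-sided ideal, it suffices that $\mathcal D\in\mathcal S_q$. For a block-diagonal operator the $\mathcal S_q$-norm is controlled by $\sum_n \|D_n\|^q \le \sum_n \|D_n\|_{\mathcal S_q}^q = \|\mathcal D\|_{\mathcal S_q}^q$ when each block is $p\times p$ (finite rank $\le p$), so that $\mathcal D\in\mathcal S_q$ is \emph{equivalent} to $\sum_n \|D_n\|_{\mathcal S_q}^q<\infty$, and since the blocks are $p\times p$ this is in turn equivalent (up to a constant depending only on $p$ and $q$) to $\sum_n\|D_n\|^q<\infty$, which is exactly \eqref{C3.4-ABA}. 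Hence $\mathcal D\in\mathcal S_q$, so $T_+\in\mathcal S_q$, so $T = T_+ + T_+^*\in\mathcal S_q$, and $(i)$ gives the conclusion.

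\textbf{Main obstacle.} There is no deep obstacle here — the result is essentially a bookkeeping consequence of the factored resolvent formula \eqref{eq:inverse_wt_J^{-1}} plus the ideal property of $\mathcal S_q$. The one place to be slightly careful is the passage in part $(ii)$ between the norm of a block-diagonal operator and the $\ell^q$-summability of the norms of the blocks: for $q<1$ the quasi-norm triangle inequality needs a harmless constant, and one should note explicitly that because each block acts on $\mathbb C^p$ its Schatten $q$-norm and operator norm differ only by a factor $\le p^{1/q}$, so \eqref{C3.4-ABA} genuinely suffices for all $q\in[1,\infty)$ as stated (and in fact for $q\in(0,\infty)$). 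I would also remark that when $q=\infty$ the statement degenerates — $\mathcal S_\infty$ being the compact operators — and part $(i)$ with $T$ compact recovers a statement consistent with Theorem \ref{J_disc}(3).
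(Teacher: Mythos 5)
Your argument is correct and is essentially the paper's own proof: for part $(i)$ you use the factorization \eqref{eq:inverse_wt_J^{-1}} and the resolvent identity to write ${\bf {\wt J}}^{-1}-{\mathcal A}^{-1}$ as bounded operators multiplying $T$, exactly as in \eqref{eq:difference_A^{-1}-wt_J^{-1}}, and for part $(ii)$ you decompose $T$ as a shift times the block-diagonal operator $T_1=\diag\{|\mathcal A_n|^{-1/2}\mathcal B_n|\mathcal A_{n+1}|^{-1/2}\}$ and note that \eqref{C3.4-ABA} is equivalent to $T_1\in\mathcal S_q$, which is the paper's representation $T=U_pT_1^*+T_1U_p^*$. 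There is nothing to add beyond your (correct) remark about the finite block size justifying the equivalence of norms.
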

   \begin{proof}
 (i)  Since $T \in\mathcal S_q(l^2(\mathbb N_0;\mathbb C^p))$,  it follows from \eqref{polar_rep} and
\eqref{eq:inverse_wt_J^{-1}} that
    \begin{equation}\label{eq:difference_A^{-1}-wt_J^{-1}}
{\mathcal A}^{-1} - {\bf {\wt J}}^{-1}  =
 |\mathcal A|^{-1/2}\left[\frak J T (\mathbb I + \frak J T)^{-1}\right]|\mathcal
A|^{-1/2}\frak J \in \mathcal S_q(l^2(\mathbb
N_0;\mathbb C^p)).
   \end{equation}
This completes the proof.

(ii) Introducing the infinite diagonal matrix
$$
T_1 := \diag\{|\mathcal A_{0}|^{-1/2}\cdot\mathcal
B_{0}\cdot|\mathcal A_{1}|^{-1/2}, \ldots, |\mathcal A_{n}|^{-1/2}\cdot\mathcal
B_{n}\cdot|\mathcal A_{n+1}|^{-1/2},\ldots\}.
$$
one gets that inequality \eqref{C3.4-ABA} is equivalent to
the inclusion $T_1\in \mathcal S_q(l^2(\mathbb N_0;\mathbb C^p))$. Further,
in accordance with   \eqref{K_1A} the operator  $T$ admits a representation $T= U_p T_1^* + T_1 U_p^*$
that ensures the  inclusion  $T\in \mathcal S_q(l^2(\mathbb N_0;\mathbb C^p))$.
The required statement is now immediate from   \eqref{eq:difference_A^{-1}-wt_J^{-1}}.
\end{proof}
   \begin{corollary}[\cite{Chi62, CojJan07}]\label{cor_Cojuhari_generalization}
Let $p=1$ and let $\bf{J}$ be the scalar Jacobi matrix of the form \eqref{Jm} with
entries $a_n = \cA_n\in \Bbb R$ and $b_n = \cB_n\in \Bbb C$. Assume also that
      \begin{equation}\label{eq:lim_a_n=infty}
 \lim\limits_{n\to\infty}|a_n| = \infty \qquad \text{and}\qquad \limsup_{n\to\infty}\
 \frac{|b_n|^2}{|a_na_{n+1}|}<\frac{1}{4}.
      \end{equation}
If $\bf{J}=\bf{J}^*$, then its spectrum is discrete.
  \end{corollary}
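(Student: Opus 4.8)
The plan is to derive Corollary~\ref{cor_Cojuhari_generalization} as a direct specialization of Theorem~\ref{J_disc} to the scalar case $p=1$. First I would recast the hypotheses: with $\cA_n = a_n$ and $\cB_n = b_n$, the condition $\lim_{n\to\infty}|a_n| = \infty$ is equivalent to saying that the diagonal operator $\mathcal A = \diag\{a_0, a_1, \ldots\}$ has $0\in\rho(\mathcal A)$ (for $n$ large enough $a_n\neq 0$, and after a bounded finite-rank correction this is harmless) and, more strongly, that $\mathcal A^{-1}$ is compact, i.e. $\mathcal A^{-1}\in\mathcal S_\infty(l^2(\mathbb N_0))$, so that $\mathcal A$ has discrete spectrum. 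Here I should note the minor technical point that if finitely many $a_n$ vanish one restricts to a Jacobi submatrix $\mathbf{J}_N$ as in \eqref{J_N}, exactly as in the proof of Theorem~\ref{Th_self_adj}(ii), since $\mathbf J = \mathbf J_N'\oplus\mathbf J_N + \mathcal B_{N-1}'$ with $\mathbf J_N'$ bounded selfadjoint and $\mathcal B_{N-1}'$ bounded; discreteness of $\mathbf J$ then follows from discreteness of $\mathbf J_N$ plus a bounded perturbation.

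Next I would translate the second condition in \eqref{eq:lim_a_n=infty} into condition \eqref{cond_disc} of Theorem~\ref{J_disc}(1)(iii). In the scalar case $|\mathcal A_n| = |a_n|$ is just a positive number, so
\[
\big\||\mathcal A_{n+1}|^{-1/2}\cdot \mathcal B_n^*\cdot |\mathcal A_n|^{-1/2}\big\| = \frac{|b_n|}{\sqrt{|a_n a_{n+1}|}}.
\]
Hence \eqref{cond_disc} reads $\limsup_{n\to\infty} \frac{|b_n|}{\sqrt{|a_n a_{n+1}|}} < \frac12$, which upon squaring is exactly $\limsup_{n\to\infty} \frac{|b_n|^2}{|a_n a_{n+1}|} < \frac14$, i.e. the second hypothesis of \eqref{eq:lim_a_n=infty}. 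So condition (1)(iii) of Theorem~\ref{J_disc} holds.

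Then I would simply invoke Theorem~\ref{J_disc}: part (1)(iii) gives that $\mathbf J$ is symmetric with equal deficiency indices and admits a selfadjoint extension with $0$ in the resolvent set; part (3) gives that, since $\mathcal A$ has discrete spectrum, every selfadjoint extension of $\mathbf J$ — in particular $\mathbf J$ itself, \emph{if} $\mathbf J = \mathbf J^*$ — has discrete spectrum. This yields the conclusion. Optionally I would also remark that since $n_\pm(\mathbf J)\le p = 1$, condition (1)(iii) here actually forces $n_+(\mathbf J) = n_-(\mathbf J)\in\{0,1\}$, and when the index is $0$ the hypothesis $\mathbf J = \mathbf J^*$ is automatic.

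There is essentially no serious obstacle: the whole argument is a dictionary lookup from the matrix statement to the scalar one, the only subtlety being the bookkeeping to handle possibly-vanishing initial diagonal entries $a_n$ so that $\mathcal A^{-1}$ is well-defined as a bounded (compact) operator — and even that is routine given the block-decomposition device already used in the proof of Theorem~\ref{Th_self_adj}. I would therefore keep the proof to three or four lines, citing Theorem~\ref{J_disc}(1)(iii) and (3) and pointing to the identity $\||\mathcal A_{n+1}|^{-1/2}\mathcal B_n^*|\mathcal A_n|^{-1/2}\| = |b_n|/\sqrt{|a_na_{n+1}|}$ as the only computation needed.
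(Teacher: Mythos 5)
Your proposal is correct and follows essentially the same route as the paper: the paper's own proof simply observes that $\lim_{n\to\infty}|a_n|=\infty$ makes the diagonal part $\mathcal A$ discrete and that the second condition in \eqref{eq:lim_a_n=infty} is exactly condition \eqref{cond_disc} with $p=1$, then invokes Theorem~\ref{J_disc}. Your additional remark about restricting to a tail $\mathbf{J}_N$ to secure $0\in\rho(\mathcal A)$ when finitely many $a_n$ are small or vanish is a harmless (and slightly more careful) bookkeeping step that the paper leaves implicit.
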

  \begin{proof}
The first condition in \eqref{eq:lim_a_n=infty} means that the diagonal operator $\cA$
is discrete, while the second condition obviously turns into  condition
\eqref{cond_disc} with $p=1$.
  \end{proof}
\begin{remark}\label{chihara}
$(i)$ Corollary \ref{cor_Cojuhari_generalization}
was obtained by Cojuhari and Janas \cite{CojJan07} by another  method.  In other terms,
the discreteness condition in the scalar case is contained in \cite{Chi62}.  If $\bf{J} \not= \bf{J}^*$, then $n_{\pm}({\bf{J}}) = 1$ and each selfadjoint  extension  of $\bf{J}$ is discrete. In this case conditions  \eqref{eq:lim_a_n=infty}
are obsolete for discreteness. 

$(ii)$ In the scalar case ($p=1$), the discreteness of the spectrum of the Jacobi matrix $\bf{J}=\bf{J}^*$ was proved by Janas and Naboko in \cite{JanNab01} in a different method, under the conditions
 $$
 \lim\limits_{n\to\infty}a_n=\infty,\quad \lim\limits_{n\to\infty}\frac{b_n^2+b_{n-1}^2}{a_n^2}<\frac{1}{2},\quad a_n\in\mathbb R,\ b_n>0, \ n\in\mathbb N.
 $$
Note, that the first condition means discreteness of  the diagonal part $\cA$ of $\bf{J}$, while the second one  coincides with condition \eqref{self_adj1a} for  $s=2$.  

$(iii)$
In the scalar case ($p=1$) another discreteness condition  of  Jacobi matrix $\bf{J}$ was established  by
 \'{Swiderski} in  the recent paper \cite[Theorem B (c)]{Swiderski181}.

$(iv)$ It is worth  mentioning  that conditions \eqref{self_adjA} -- \eqref{cond_disc} do not ensure selfadjointness  of $\bf{J}$,
in general. Moreover,  for any $k\le p$ there exist Jacobi matrices  $\bf{J}$   with nontrivial indices
$n_{+}(\bf{J}) = n_{-}({\bf{J}}) = k$ and satisfying  \eqref{cond_disc}.
Simple  examples of such matrices $\bf{J}$  naturally occur in connection with Dirac
operators with point interactions (see Proposition~\ref{max_s-a} below).

$(v)$
Note that for  $q\in [1,2]$  the implication  $\eqref{C3.4-ABA} \Rightarrow T\in \mathcal S_q(l^2(\mathbb N_0;\mathbb C^p))$ in the proof of Corollary  \ref{cor_differ_J^{-1}-A^{-1}_in_S_q}  follows  from
 general known  result  (see e.g. \cite[Lemma XI.9.32]{DunSchw} and \cite[Chapter 3.7]{GohKre}).
    \end{remark}

\section{Abstract results on deficiency indices of perturbed Jacobi matrices}\label{sec4}

Here, along with the matrix \eqref{Jm}, we consider the Jacobi matrix
\begin{equation}\label{Jm'}
\widehat{{\bf J}}=\left(
    \begin{array}{ccccccccccc}
      \widehat{\mathcal A}_0 & \widehat{\mathcal B}_0 & \mathbb O_p & \mathbb O_p & \mathbb O_p & \ldots & \mathbb O_p & \mathbb O_p & \mathbb O_p & \mathbb O_p & \ldots\\
      \widehat{\mathcal B}_0^* & \widehat{\mathcal A}_1 & \widehat{\mathcal B}_1 & \mathbb O_p & \mathbb O_p & \ldots & \mathbb O_p & \mathbb O_p & \mathbb O_p & \mathbb O_p & \ldots \\
      \mathbb O_p & \widehat{\mathcal B}_1^* & \widehat{\mathcal A}_2 & \widehat{\mathcal B}_2 & \mathbb O_p & \ldots & \mathbb O_p& \mathbb O_p & \mathbb O_p & \mathbb O_p & \ldots\\
      \vdots & \vdots & \vdots & \vdots & \vdots & \ldots & \vdots & \vdots & \vdots & \vdots & \ldots \\
      \mathbb O_p &\mathbb O_p &\mathbb O_p &\mathbb O_p &\mathbb O_p &\ldots &\widehat{\mathcal B}_{n-1}^* &\widehat{\mathcal A}_n &\widehat{\mathcal B}_n & \mathbb O_p & \ldots\\
      \vdots & \vdots & \vdots & \vdots & \vdots & \vdots & \vdots & \vdots & \vdots & \vdots & \ddots \\
            \end{array}
  \right),
\end{equation}
where $\widehat{\mathcal A}_n=\widehat{\mathcal A}_n^*$,  $\widehat{\mathcal B}_n\in \mathbb C^{p\times p}$  and
 $\det \widehat{\mathcal B}_n\not =0$,   $n\in \Bbb N_0$.

 As usual we identify the matrix  $\widehat{{\bf J}}$ with  the  minimal (closed)  symmetric Jacobi operator  generated  in $l^2(\mathbb N;\mathbb C^p)$ by the matrix \eqref{Jm'} (see \cite{Akh, Ber68}).

 \begin{theorem}\label{abs_th}
Let  $\bf J$ and $\widehat{{\bf J}}$ be the Jacobi matrices given by  \eqref{Jm} and \eqref{Jm'}, respectively.
Let for some $N\in\mathbb N_0$ the following conditions hold:
\begin{equation}\label{abs1}
(i)\quad\underset{n\geq N}{\sup}\|\mathbb I_p-\widehat{\mathcal B}_n^*(\mathcal B_n^*)^{-1}\|_{\mathbb
C^{p\times p}}=a_N<1;\qquad\qquad\qquad\qquad\qquad\qquad\qquad\qquad\qquad\qquad\qquad\
\end{equation}
\begin{equation}\label{abs2}
(ii)\quad\underset{n\geq N}{\sup}\|\widehat{\mathcal B}_n-\widehat{\mathcal B}_{n-1}^*(\mathcal B_{n-1}^*)^{-1}\mathcal B_n\|_{\mathbb C^{p\times p}}=C_B<\infty;\qquad\qquad\qquad\qquad\qquad\qquad\qquad\qquad\qquad
\end{equation}

 $(iii)$\  Let for each  $\varepsilon>0$ there exist $N_1 = N_1(\varepsilon)\in\mathbb N$  and $C'_A(\varepsilon)>0$  such that for each vector $f\in l_0^2(\mathbb N_0;\mathbb C^p)$
\begin{equation}\label{abs3}
\sum\limits_{n\geq N_1}\|(\widehat{\mathcal A}_n-\widehat{\mathcal B}_{n-1}^*(\mathcal B_{n-1}^*)^{-1}\mathcal A_n)f_n\|_{\mathbb C^p}^2  \leq \varepsilon\|{\bf J}f\|_{l^2}^2 + C_A'(\varepsilon)\|f\|_{l^2}^2.
\end{equation}
Then:

$(a)$ $\dom {\bf J} =  \dom \widehat{{\bf J}}$  and  $n_\pm({\bf J})=n_\pm(\widehat{{\bf J}})$;

$(b)$ Moreover, if $\bf{J}=\bf{J}^*$ and the spectrum of $\bf J$ is discrete, then the spectrum of
$\widehat{{\bf J}}=\widehat{{\bf J}}^*$ is discrete too.
\end{theorem}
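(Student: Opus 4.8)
The plan is to reduce the comparison between $\widehat{\bf J}$ and ${\bf J}$ to a perturbation argument governed by an auxiliary lower-triangular ``difference'' operator. First I would introduce the block bidiagonal matrix $\mathcal R$ whose only nonzero blocks are $\mathcal R_{n,n} = \mathbb I_p$ on the diagonal and $\mathcal R_{n+1,n} = -\widehat{\mathcal B}_n^*(\mathcal B_n^*)^{-1}$ below the diagonal (so that $\mathcal R$ is boundedly invertible: condition \eqref{abs1} makes $\mathcal R = \mathbb I + N$ with $N$ nilpotent-in-each-finite-block and $\|N\| = a_N < 1$ along the relevant tail, and the strict inequality $a_N<1$ is precisely what guarantees $0\in\rho(\mathcal R)$ after handling the finitely many exceptional rows $n<N$ as a bounded finite-rank correction). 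The point of $\mathcal R$ is the algebraic identity: $\mathcal R\,{\bf J} - \widehat{\bf J}\,\mathcal R$ has, on finite vectors, only two ``diagonals'' of nonzero blocks, and conditions \eqref{abs2} and \eqref{abs3} are exactly the statements that these blocks define an operator that is bounded (the super-diagonal part, controlled by $C_B$) plus an operator that is infinitesimally ${\bf J}$-bounded (the diagonal part, controlled by \eqref{abs3}). Concretely, the $(n,n)$-block of $\widehat{\bf J}\mathcal R - \mathcal R{\bf J}$ on finite vectors works out to $\widehat{\mathcal A}_n - \widehat{\mathcal B}_{n-1}^*(\mathcal B_{n-1}^*)^{-1}\mathcal A_n$ and the $(n,n+1)$-block to $\widehat{\mathcal B}_n - \widehat{\mathcal B}_{n-1}^*(\mathcal B_{n-1}^*)^{-1}\mathcal B_n$, which is why those precise expressions appear in the hypotheses.

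Next I would make this rigorous by writing, for $h\in l_0^2(\mathbb N_0;\mathbb C^p)$,
\begin{equation}\label{eq:plan-intertwine}
\widehat{\bf J}\,\mathcal R\,h = \mathcal R\,{\bf J}\,h + S\,h + P\,h,
\end{equation}
where $S\in\mathcal B(l^2(\mathbb N_0;\mathbb C^p))$ comes from the off-diagonal difference blocks and $P$ is the diagonal operator with blocks $P_n := \widehat{\mathcal A}_n - \widehat{\mathcal B}_{n-1}^*(\mathcal B_{n-1}^*)^{-1}\mathcal A_n$. From \eqref{abs3}, $P$ is ${\bf J}$-bounded with ${\bf J}$-bound $0$; hence $\mathcal R^{-1}(S+P)$ is ${\bf J}$-bounded with ${\bf J}$-bound $0$ as well (since $\mathcal R^{-1}$ is bounded). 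Therefore $\widehat{\bf J}\mathcal R = \mathcal R({\bf J} + \mathcal R^{-1}(S+P))$ on $l_0^2$, and by the Kato--Rellich / W\"ust machinery already invoked in the paper, ${\bf J} + \mathcal R^{-1}(S+P)$ is a closed symmetric (in fact the relevant closure) operator with $\dom({\bf J} + \mathcal R^{-1}(S+P)) = \dom{\bf J}$ and $n_\pm({\bf J} + \mathcal R^{-1}(S+P)) = n_\pm({\bf J})$, because a relatively-bounded perturbation with bound $0$ does not change deficiency indices (this uses finiteness $n_\pm({\bf J})\le p$ and the stability of the deficiency indices under such perturbations). Since $\mathcal R$ is a bounded bijection with bounded inverse, conjugation by $\mathcal R$ is a linear homeomorphism of $l^2(\mathbb N_0;\mathbb C^p)$ that maps the graph of ${\bf J} + \mathcal R^{-1}(S+P)$ onto the graph of $\widehat{\bf J}$ (after taking closures and checking $\mathcal R$ maps the core $l_0^2$ into a core for $\widehat{\bf J}$), giving $\dom\widehat{\bf J} = \mathcal R^{-1}\dom{\bf J} = \dom{\bf J}$ and $n_\pm(\widehat{\bf J}) = n_\pm({\bf J})$; this is part $(a)$. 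For part $(b)$, if ${\bf J}={\bf J}^*$ with discrete spectrum, then ${\bf J} + \mathcal R^{-1}(S+P)$ is selfadjoint (bound $0$, Kato--Rellich) with $({\bf J} + \mathcal R^{-1}(S+P) - \lambda)^{-1} - ({\bf J}-\lambda)^{-1}$ compact by the resolvent identity \eqref{Krein_type_for-la}-type factorization and Lemma \ref{lem_discreteness} with $q=\infty$, so it stays discrete; then $\widehat{\bf J} = \mathcal R(\ldots)\mathcal R^{-1}$ is similar to a discrete selfadjoint operator, hence has discrete spectrum (similarity preserves the spectrum and its discreteness, and here $\widehat{\bf J}$ being symmetric with discrete spectrum forces $\widehat{\bf J}=\widehat{\bf J}^*$).

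The main obstacle I anticipate is the careful bookkeeping around closures and cores: the identity \eqref{eq:plan-intertwine} holds only on finite vectors, so I must verify (i) that $\mathcal R$ and $\mathcal R^{-1}$ map $l_0^2(\mathbb N_0;\mathbb C^p)$ onto a core for ${\bf J}$ respectively $\widehat{\bf J}$ — this follows because $\mathcal R,\mathcal R^{-1}$ are bounded and the difference operator $S+P$ is ${\bf J}$-bounded, so the closure of ${\bf J}|_{l_0^2} + (\text{bounded} + \text{rel.-bd})$ has domain $\dom{\bf J}$ — and (ii) that the exceptional initial segment $n<N$ (where \eqref{abs1} need not hold) only contributes a bounded finite-rank operator, which can be absorbed into $S$ without affecting any of the conclusions. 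A secondary technical point is confirming that the ${\bf J}$-bound of $P$ really is $0$ and not merely finite: this is exactly the $\varepsilon$-form of \eqref{abs3}, and it is what lets us use W\"ust's theorem to get essential selfadjointness and, more importantly, the deficiency-index stability. Once the core/closure issues are handled, everything else is the standard relatively-bounded-perturbation calculus already in play in Sections \ref{J_s}--\ref{J_d}.
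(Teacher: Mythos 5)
Your plan fails at its central algebraic step, and the failure is not repairable by bookkeeping. With $\mathcal R$ lower bidiagonal, $\mathcal R_{n,n}=\mathbb I_p$, $\mathcal R_{n+1,n}=-T_n$, $T_n:=\widehat{\mathcal B}_n^*(\mathcal B_n^*)^{-1}$, a direct computation gives $(\widehat{\bf J}\mathcal R-\mathcal R{\bf J})_{n,m}=\widehat{\bf J}_{n,m}-\widehat{\bf J}_{n,m+1}T_m-{\bf J}_{n,m}+T_{n-1}{\bf J}_{n-1,m}$, which is supported on \emph{four} block diagonals $m\in\{n-2,\dots,n+1\}$, and, for instance, its $(n,n+1)$ block equals $\widehat{\mathcal B}_n-\mathcal B_n$, not $\widehat{\mathcal B}_n-T_{n-1}\mathcal B_n$; its $(n,n-2)$ block $-\widehat{\mathcal B}_{n-1}^*T_{n-2}+T_{n-1}\mathcal B_{n-2}^*$ is not controlled by any hypothesis. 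The combinations appearing in \eqref{abs2}--\eqref{abs3} are in fact the blocks of $\widehat{\bf J}-\mathcal T{\bf J}$ where $\mathcal T$ is the \emph{block-diagonal} operator whose $n$-th diagonal entry is $T_{n-1}$, not of a commutator with a bidiagonal matrix, so your intertwining identity is false as stated. The invertibility claim for $\mathcal R$ is also unfounded: \eqref{abs1} says the subdiagonal blocks $-T_n$ lie within $a_N$ of $-\mathbb I_p$, i.e. have norm near $1$, not norm $a_N<1$, so the Neumann-series argument does not apply; already in the trivial case $\widehat{\mathcal B}_n=\mathcal B_n$ one has $\mathcal R=\mathbb I-L$ with $L$ the block unilateral shift, whose spectrum is the closed unit disk, hence $0\notin\rho(\mathcal R)$ fails. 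Finally, even granting an invertible $\mathcal R$, the transformed operator ${\bf J}+\mathcal R^{-1}(S+P)$ is not symmetric, so Kato--Rellich, W\"ust, and the stability of deficiency indices under relatively bounded perturbations (all of which require a \emph{symmetric} perturbation of a symmetric operator) cannot be invoked for it, and its ``deficiency indices'' are not even defined; similarity is the wrong vehicle here.

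The correct mechanism — and the one the paper uses — keeps the comparison symmetric and relative to ${\bf J}$ itself. Row by row, for finite $f$ one has $(\widehat{\bf J}f)_n=T_{n-1}({\bf J}f)_n+(\widehat{\mathcal A}_n-T_{n-1}\mathcal A_n)f_n+(\widehat{\mathcal B}_n-T_{n-1}\mathcal B_n)f_{n+1}$, so the symmetric difference $\widehat{\bf J}-{\bf J}=(\mathcal T-\mathbb I){\bf J}+P+S$ is ${\bf J}$-bounded: the first term has relative bound at most $a_N<1$ by \eqref{abs1}, $P$ has relative bound $0$ by \eqref{abs3}, and $S$ is bounded by \eqref{abs2}. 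This is exactly the estimate \eqref{J-J'}, carried out for the tail operators ${\bf J}_M$, $\widehat{\bf J}_M$ (with $M\ge\max\{N,N_1\}$ and a choice of $\varepsilon_0,\varepsilon_1$ keeping the coefficient of $\|{\bf J}_Mf\|^2$ strictly below $1$), after which the stability theorem for closed symmetric operators under symmetric perturbations of relative bound $<1$ yields $\dom{\bf J}_M=\dom\widehat{\bf J}_M$ and $n_\pm({\bf J}_M)=n_\pm(\widehat{\bf J}_M)$; the passage from the tails back to ${\bf J},\widehat{\bf J}$ is via the finite-dimensional selfadjoint head plus bounded coupling, and part $(b)$ follows by applying Lemma \ref{lem_discreteness} to the strictly subordinated symmetric perturbation. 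Your identification of the relevant block combinations was right, but the auxiliary object must be the block-diagonal multiplier $\mathcal T$, not a bidiagonal similarity.
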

\begin{proof}
$(a)$ $(1)$ 
Let $\delta_0:=1-a_N^2\,(>0)$. Choose
$\varepsilon_0<\frac{\delta_0}{1-\delta_0}$ and
\begin{equation}\label{eps_eq}
\varepsilon_1<\frac{\varepsilon_0}{2}\left(\frac{1}{1+\varepsilon_0}-a_N^2\right).
 \end{equation}
 Then we can find  $N_1=N_1(\varepsilon_1)$ such that condition \eqref{abs3} holds with
 $\varepsilon_1$ instead of  $\varepsilon$ and choose   $M\geq\max\{N,N_1\}$. Finally, we define the block Jacobi submatrix  ${\bf J}_M$ of the matrix  ${\bf J}$ of the form \eqref{J_N} with $M$ instead of $N$.
  Similarly, we define the block Jacobi matrix  $\widehat{{\bf J}}_M$ by the same formula
 \eqref{J_N} with $\widehat{\mathcal A}_k$,  $\widehat{\mathcal B}_k$ and $\widehat{\mathcal B}_k^*$ instead of $\mathcal A_k$, $\mathcal B_k$ and $\mathcal B_k^*$, respectively.
 Summing up we obtain from \eqref{Jm}, \eqref{J_N}, \eqref{Jm'} -- \eqref{eps_eq} that for any finite vector $f=\left(
         \begin{array}{cccc}
           f_0 & f_1 & \ldots & f_m \\
         \end{array}
       \right)^\top$ $(\in l^2_0(\mathbb N_0;\mathbb C^p))$
\begin{eqnarray}\label{J-J'}
&&\!\|(\widehat{{\bf J}}_M-{\bf J}_M)f\|_{l^2(\mathbb N_0;\mathbb C^p)}^2\nonumber\\
&=&\!\sum\limits_{n\geq M}\left\|(\mathcal B_{n-1}^*-\widehat{\mathcal B}_{n-1}^*)f_{n-1}+(\mathcal A_n-\widehat{\mathcal A}_n)f_n+(\mathcal B_n-\widehat{\mathcal B}_n)f_{n+1}\right\|_{\mathbb C^p}^2\nonumber\\
&\leq&\!(1+\varepsilon_0)\sum\limits_{n\geq M}\left\|(\mathbb I_p-\widehat{\mathcal B}_{n-1}^*(\mathcal B_{n-1}^*)^{-1})(\mathcal B_{n-1}^*f_{n-1} +\mathcal A_{n}f_{n}+\mathcal B_{n}f_{n+1})\right\|_{\mathbb C^p}^2 \nonumber\\
&+&\!\left(1+\frac{1}{\varepsilon_0}\right)\sum\limits_{n\geq M}\left\|(\mathcal A_n-\widehat{\mathcal A}_n)f_{n} -(\mathbb I_p-\widehat{\mathcal B}_{n-1}^*(\mathcal B_{n-1}^*)^{-1})\mathcal A_{n}f_{n}+(\mathcal B_n-\widehat{\mathcal B}_n)f_{n+1}\right.\nonumber\\
&-&\!\left.(\mathbb I_p-\widehat{\mathcal B}_{n-1}^*(\mathcal B_{n-1}^*)^{-1})\mathcal B_{n}f_{n+1}\right\|_{\mathbb C^p}^2\!\!\!
\\
&\leq&\!(1+\varepsilon_0)\sum\limits_{n\geq M}\left\|\mathbb I_p-\widehat{\mathcal B}_{n-1}^*(\mathcal B_{n-1}^*)^{-1}\right\|_{\mathbb C^p}^2\|({\bf J}_Mf)_n\|_{\mathbb C^p}^2\nonumber\\
&+&\!\!\!\Big(1+\frac{1}{\varepsilon_0}\Big)\!\!\sum\limits_{n\geq M}\!\!\left\| (\widehat{\mathcal A}_n-\widehat{\mathcal B}_{n-1}^*(\mathcal B_{n-1}^*)^{-1}\mathcal A_n)f_n\!+\!(\widehat{\mathcal B}_n-\widehat{\mathcal B}_{n-1}^*(\mathcal B_{n-1}^*)^{-1}\mathcal B_n)f_{n+1}\right\|_{\mathbb C^p}^2
\nonumber\\
&\leq&\!\!\!\!a_N^2(1+\varepsilon_0)\!\sum\limits_{n\geq M}\!\!\|({\bf J}_M f)_n\|_{\mathbb C^p}^2+2\Big(1+\frac{1}{\varepsilon_0}\Big)\!\sum\limits_{n\geq M}\!\!\left[\|(\widehat{\mathcal A}_n-\widehat{\mathcal B}_{n-1}^*(\mathcal B_{n-1}^*)^{-1}\mathcal A_n)f_n\|_{\mathbb C^p}^2\right.\nonumber\\&+&\left.\|(\widehat{\mathcal B}_n-\widehat{\mathcal B}_{n-1}^*(\mathcal B_{n-1}^*)^{-1}\mathcal B_n)f_{n+1}\|_{\mathbb C^p}^2\right]\nonumber\\
&\leq&\!\! \Big(a_N^2(1+\varepsilon_0)+2\Big(1+\frac{1}{\varepsilon_0}\Big)\varepsilon_1\Big)\|{\bf J}_Mf\|_{l^2}^2+2\Big(1+\frac{1}{\varepsilon_0}\Big)\left(C_{A}'(\varepsilon_1)\|f\|_{l^2}^2+C_{B}^2\|f\|_{l^2}^2\right)\nonumber\\
&=&\! c_1\|{\bf J}_Mf\|_{l^2}^2+c_0\|f\|_{l^2}^2.\nonumber
\end{eqnarray}
Here as it follows from \eqref{eps_eq},
   \begin{equation}\label{constant_1< 1}
c_1 = a_N^2(1+\varepsilon_0)+2\left(1+\frac{1}{\varepsilon_0}\right)\!\varepsilon_1
\!<\!a_N^2(1+\varepsilon_0)+\frac{2(1+\varepsilon_0)}{\varepsilon_0}\!\cdot\!\frac{\varepsilon_0}{2}\!\cdot\!
\frac{1-a_N^2(1+\varepsilon_0)}{1+\varepsilon_0}=1
     \end{equation}
and
$c_0=2\left(1+\frac{1}{\varepsilon_0}\right)(C_{A}'(\varepsilon_1)+C_{B}^2)<\infty$. By the definition of the minimal operator ${\bf J}_M = ({\bf J}_M)_{\min}$,    
each vector $f\in\dom {\bf J}_M$ is approximated
by finite vectors $f_n \in l^2_0(\mathbb N;\mathbb C^p)$ in the graph norm of ${\bf J}_M$.
 Therefore estimate  \eqref{J-J'} remains valid  for any  $f\in\dom {\bf J}_M$.
 In particular, this yields  the inclusion  $\dom (\widehat{{\bf J}}_M- {\bf J}_M) \subset \dom {\bf J}_M$, hence  $\dom {\bf J}_M =  \dom \widehat{{\bf J}}_M$.
Since $c_1<1$,    the Kato--Rellich theorem (\cite[Ch. 4, Theorem 9]{BirSol87})
applied to  estimate  \eqref{J-J'}  yields   $n_\pm({\bf J}_M)=n_\pm(\widehat{{\bf J}}_M)$.

$(2)$ Let us establish the equality $n_\pm({\bf J})=n_\pm(\widehat{{\bf J}})$. To do this, we introduce a finite block Jacobi matrix
${\bf J}_M'$ of the form \eqref{J_N'} with $M$ instead of $N$.
 Similarly, we define the block Jacobi matrix
$\widehat{{\bf J}}_M'$ of the form  \eqref{J_N'} with $\widehat{\mathcal A}_k$, $\widehat{\mathcal B}_k$ and $\widehat{\mathcal B}_k^*$ instead of $\mathcal A_k$, $\mathcal B_k$ and $\mathcal B_k^*$, respectively, $k\in\{0,\ldots,M-1\}$.

Then  the operators  ${\bf J}$ and $\widehat{{\bf J}}$ admit the following representations
  \begin{equation}\label{J-decomposition}
{\bf J}  =  {\bf J}_M'\oplus{\bf J}_M + \mathcal B_{M-1}',\qquad \widehat{{\bf J}}  =  \widehat{{\bf J}}_M'\oplus\widehat{{\bf J}}_M + \widehat{\mathcal B}_{M-1}',
   \end{equation}
where $\mathcal B_{M-1}'$ and $\widehat{\mathcal B}_{M-1}'$ are bounded selfadjoint block--matrices
with four nontrivial  entries
$\mathcal B_{M-1}$, $\mathcal B_{M-1}^*$ and $\widehat{\mathcal B}_{M-1}$, $\widehat{\mathcal B}_{M-1}^*$, respectively.
Clearly,  ${\bf J}_M'= ({\bf J}_M')^*$ and $\widehat{{\bf J}}_M'= (\widehat{{\bf J}}_M')^*$. Therefore it follows from
\eqref{J-decomposition} that
$$
n_{\pm}\left({\bf J}\right)  = n_{\pm}\left({\bf J}_M'\oplus{\bf J}_M\right) =
n_{\pm}\left({\bf J}_M\right),\qquad n_{\pm}\left(\widehat{{\bf J}}\right)  = n_{\pm}\left(\widehat{{\bf J}}_M'\oplus\widehat{{\bf J}}_M\right) =
n_{\pm}\left(\widehat{{\bf J}}_M\right).
$$
Therefore one completes the proof by combining these identities  with  the equalities  $n_\pm({\bf J}_M)=n_\pm(\widehat{{\bf J}}_M)$  established  in the previous step.

$(b)$ Further, from representations similar to \eqref{J_N}, \eqref{J_N'}, and  \eqref{J-decomposition} it follows that for each $M \ge 1$ operator
${\bf J}_M$ is a finite-dimensional selfadjoint perturbation of the operator ${\bf J}$.  Hence, operators ${\bf J}$ and ${\bf J}_M$ are self-adjoint and discrete at the same time.
On the other hand, the estimates \eqref{J-J'}
and \eqref{constant_1< 1}, obtained in the proof of item $(a)$, show that the operator $K_M := \widehat {\bf J}_M - {\bf J}_M$
is strictly subordinate to ${\bf J}_M$. Therefore Lemma  \ref{lem_discreteness}
applies  and guarantees selfadjointness and  discreteness  property  of the Jacobi operator $\widehat {\bf J}_M$.
Using the representations \eqref{J_N}, \eqref{J_N'}, \eqref{J-decomposition} again, we conclude that the operator
$\widehat {\bf J}$ is a finite-dimensional selfadjoint perturbation of $\widehat {\bf J}_M$,
and, therefore, its spectrum is also discrete.
\end{proof}

 \begin{corollary}\label{abs_cor}
 Let for some $N\in\mathbb N_0$ conditions \eqref{abs1} and \eqref{abs2} of Theorem \ref{abs_th} be fulfilled. Assume also the following condition holds
\begin{equation}\label{abs3'}
\underset{n\geq N}{\sup}\|\widehat{\mathcal A}_n-\widehat{\mathcal B}_{n-1}^*(\mathcal B_{n-1}^*)^{-1}\mathcal A_n\|_{\mathbb C^{p\times p}}=C_A<\infty.
\end{equation}
Then $\dom {\bf J} =  \dom \widehat{{\bf J}}$  and  $n_\pm({\bf J})=n_\pm(\widehat{{\bf J}})$.
If, in addition,  $\bf{J}=\bf{J}^*$ and the spectrum of $\bf J$ is discrete, then
$\widehat{{\bf J}}=\widehat{{\bf J}}^*$  and its  spectrum is  also discrete.
\end{corollary}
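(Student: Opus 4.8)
The plan is to deduce Corollary~\ref{abs_cor} directly from Theorem~\ref{abs_th}, the point being that the uniform operator-norm bound \eqref{abs3'} is merely a crude special case of the relative boundedness hypothesis \eqref{abs3}.

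First I would verify the implication $\eqref{abs3'}\Longrightarrow\eqref{abs3}$. Fix a finite vector $f=(f_0,f_1,\ldots,f_m)^\top\in l_0^2(\mathbb N_0;\mathbb C^p)$. For every $n\ge N$ the bound \eqref{abs3'} gives
\[
\|(\widehat{\mathcal A}_n-\widehat{\mathcal B}_{n-1}^*(\mathcal B_{n-1}^*)^{-1}\mathcal A_n)f_n\|_{\mathbb C^p}\le C_A\,\|f_n\|_{\mathbb C^p},
\]
hence, summing the squares,
\[
\sum_{n\ge N}\|(\widehat{\mathcal A}_n-\widehat{\mathcal B}_{n-1}^*(\mathcal B_{n-1}^*)^{-1}\mathcal A_n)f_n\|_{\mathbb C^p}^2\le C_A^2\sum_{n\ge N}\|f_n\|_{\mathbb C^p}^2\le C_A^2\,\|f\|_{l^2}^2.
\]
Thus \eqref{abs3} holds with the choice $N_1(\varepsilon):=N$ and $C_A'(\varepsilon):=C_A^2$ for every $\varepsilon>0$; in fact the $\|{\bf J}f\|_{l^2}^2$-term on the right-hand side of \eqref{abs3} is not even needed here, so the estimate is valid already with $\varepsilon=0$. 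Consequently all three hypotheses \eqref{abs1}, \eqref{abs2}, \eqref{abs3} of Theorem~\ref{abs_th} are satisfied (the first two being assumed outright), and part~$(a)$ of that theorem yields $\dom{\bf J}=\dom\widehat{{\bf J}}$ and $n_\pm({\bf J})=n_\pm(\widehat{{\bf J}})$, which is the first claim.

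For the discreteness statement I would simply invoke part~$(b)$ of Theorem~\ref{abs_th}: once \eqref{abs1}--\eqref{abs3} hold and, in addition, ${\bf J}={\bf J}^*$ has discrete spectrum, the theorem asserts that $\widehat{{\bf J}}=\widehat{{\bf J}}^*$ and its spectrum is discrete as well. No further argument is required.

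Since the reduction is elementary, I do not expect a genuine obstacle. The only subtlety worth flagging is that \eqref{abs3'} must be read in conjunction with the standing hypothesis \eqref{abs1}: it is \eqref{abs1}, i.e.\ the smallness of $\mathbb I_p-\widehat{\mathcal B}_{n-1}^*(\mathcal B_{n-1}^*)^{-1}$, that turns the triple \eqref{abs1}, \eqref{abs2}, \eqref{abs3'} into exactly the input that Theorem~\ref{abs_th} consumes; everything else is a verbatim citation of that theorem.
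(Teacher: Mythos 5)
Your argument is correct and is exactly the paper's proof: the paper disposes of the corollary by observing that \eqref{abs3'} implies \eqref{abs3} with $\varepsilon=0$ (choosing the constant $C_A'=C_A^2$ and any $N_1\ge N$), after which both conclusions are verbatim citations of Theorem \ref{abs_th}(a) and (b). Your write-up merely spells out this one-line reduction in more detail.
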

\begin{proof}
Condition \eqref{abs3'} implies condition \eqref{abs3}  with $\varepsilon=0$.
\end{proof}

\begin{corollary}\label{abs_cor1}
Let ${\bf J}$ and  $\widehat{{\bf J}}$ be as above and let
 $\mathcal A_n=\widehat{\mathcal A}_n=\mathbb O_p$, $n\in \Bbb N$. Then
 $n_\pm({\bf J})=n_\pm(\widehat{{\bf J}})$  provided that conditions \eqref{abs1} and \eqref{abs2}
 are satisfied.
    \end{corollary}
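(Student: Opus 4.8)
The statement is the special case of Theorem \ref{abs_th}(a) in which all diagonal entries vanish, $\mathcal A_n = \widehat{\mathcal A}_n = \mathbb O_p$. The plan is simply to verify that the three hypotheses \eqref{abs1}, \eqref{abs2}, \eqref{abs3} of Theorem \ref{abs_th} are all met, and then invoke that theorem. Conditions \eqref{abs1} and \eqref{abs2} are assumed outright, so the only thing to check is condition \eqref{abs3}. But with $\mathcal A_n = \widehat{\mathcal A}_n = \mathbb O_p$ the matrix appearing there, $\widehat{\mathcal A}_n - \widehat{\mathcal B}_{n-1}^*(\mathcal B_{n-1}^*)^{-1}\mathcal A_n = \mathbb O_p - \widehat{\mathcal B}_{n-1}^*(\mathcal B_{n-1}^*)^{-1}\mathbb O_p = \mathbb O_p$, so the left-hand side of \eqref{abs3} is identically zero. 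Hence \eqref{abs3} holds trivially, e.g. with $\varepsilon = 0$ and $C_A'(\varepsilon) = 0$ (equivalently, \eqref{abs3'} of Corollary \ref{abs_cor} holds with $C_A = 0$). Theorem \ref{abs_th}(a) (or Corollary \ref{abs_cor}) now applies and yields $\dom{\bf J} = \dom\widehat{\bf J}$ and $n_\pm({\bf J}) = n_\pm(\widehat{\bf J})$.

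There is essentially no obstacle here: the corollary is a bookkeeping specialization. The one point worth a sentence of care is that Theorem \ref{abs_th} is stated for Jacobi matrices of the form \eqref{Jm}, \eqref{Jm'} with $\ker\mathcal A = \{0\}$ absent from its hypotheses (unlike Theorems \ref{Th_self_adj} and \ref{d_spec_J}), so the degenerate choice $\mathcal A_n \equiv \mathbb O_p$ is legitimate input; the only structural requirements are $\widehat{\mathcal A}_n = \widehat{\mathcal A}_n^*$, $\mathcal A_n = \mathcal A_n^*$ (here both are the zero matrix, trivially selfadjoint) and invertibility of the off-diagonal blocks $\mathcal B_n$, $\widehat{\mathcal B}_n$, which is part of the standing assumptions on the two matrices. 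So the proof is the single line: "Condition \eqref{abs3} holds with $\varepsilon = 0$ since $\mathcal A_n = \widehat{\mathcal A}_n = \mathbb O_p$; apply Theorem \ref{abs_th}."

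One could optionally add a remark indicating why this case is of interest — matrices with zero diagonal arise for Dirac-type and for Dyukarev's construction (cf. the classes ${\bf \mathcal J}_{X,\gB}(p)$ and Proposition \ref{JBabs} referenced in the introduction) — but that is not needed for the proof itself.
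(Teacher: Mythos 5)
Your proposal is correct and is exactly the argument the paper intends: with $\mathcal A_n=\widehat{\mathcal A}_n=\mathbb O_p$ the matrix in \eqref{abs3} (equivalently \eqref{abs3'}) vanishes identically, so the corollary is an immediate specialization of Theorem \ref{abs_th}(a) via Corollary \ref{abs_cor}, which is why the paper states it without proof. Nothing further is needed.
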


  \begin{center}
\Large{\bf Part II. Dirac operators and associated Jacobi matrices}
\end{center}

\section{Dirac operators on a finite interval with  maximal deficiency indices}\label{sec2}

Consider  one--dimensional Dirac operator associated  with the differential expression
  \begin{equation}\label{1.2Intro}
\mathrm {\bf D} := -i\,c\,\frac{\rd}{\rd x}\otimes \left(\begin{array}{cc}\mathbb O_p & \mathbb I_p
\\\mathbb I_p & \mathbb O_p\end{array}\right) + \frac{c^{2}}{2}\otimes \left(\begin{array}{cc}\mathbb I_p & \mathbb O_p
 \\\mathbb O_p&
-\mathbb I_p\end{array}\right)
  \end{equation}
in $L^{2}(\cI; \mathbb C^{2p})$.  Here  $c>0$ denotes the velocity of
light.

Let  $d_n:=x_{n}-x_{n-1}>0$. We also assume $X=\{x_{n}\}_{n=0}^\infty (\subset\mathcal I)$ to
be a discrete subset of the interval,  $x_{n-1}<x_{n},\ n\in \N.$ We set $x_{0}=0,\quad b := \sup X\equiv\lim_{n\to\infty}x_n  < \infty$.

Following \cite{GS} (see also \cite{Alb_Ges_88}) we define two
families of symmetric extensions, which  turn out to be closely
related to their non-relativistic counterparts $\delta$- and
$\delta'-$interactions.

To this end we set $f=\{f_1,f_2,\ldots,f_{2p}\}^\top \in L^{2}(\mathcal I;\mathbb C^{2p})$ and
$$
 f=\left(\!
               \begin{array}{c}
                 f_I \\
                 f_{II} \\
               \end{array}
             \!\right),\quad   f_I:=\left\{\!\begin{array}{cccc}
                                                            f_1 & f_2 &\! \ldots\! & f_p
                                                          \end{array}\!\right\}^\top,\ \
 f_{II}:=\left\{\!\begin{array}{cccc}
                                                            f_{p+1} & f_{p+2} &\! \ldots\! &
                                                            f_{2p}
                                                          \end{array}\!\right\}^\top.$$
Here $\top$  means a transpose operation. We define the following
Sobolev spaces
\begin{eqnarray*}
  W^{1,2}(\cI\setminus X; \mathbb{C}^{2p}):=\bigoplus_{n=1}^{\infty}W^{1,2}([x_{n-1},x_n];\mathbb{C}^{2p}),\\
W^{1,2}_{\mathrm{comp}}(\cI\setminus X;\C^{2p}):=\{f\in W^{k,2}(\cI\setminus X;\C^{2p}): \mathrm{supp} f\ \text{is compact in}\ \cI\}.
\end{eqnarray*}

Two families of  Gesztesy-\v{S}eba operators (in short, GS-operators or GS-realizations)
on the interval $(a,b)$ are defined to be the closures of the operators
   \begin{equation}\label{delta}
\begin{split}
 \mathrm{\bf D}_{X,\alpha}^0=& \mathrm {\bf D}\upharpoonright \dom(\mathrm{\bf D}_{X,\alpha}^0),\\
\dom(\mathrm{\bf D}_{X,\alpha}^0) = &\Big\{f\in W^{1,2}_{\comp}(\cI \backslash X; \C^{2p}): f_I\in AC_{\loc}(\cI),\  f_{II}\in AC_{\loc}(\cI\backslash X);\\&
 f_{II}(a+)=0\,,\quad  f_{II}(x_{n}+)- f_{II}(x_{n}-)
=-\frac{i\alpha_{n}}{c}f_I(x_{n}),\,\,n\in\N\Big\},
\end{split}
  \end{equation}
and
  \begin{equation}
\begin{split}\label{deltap}
{\mathrm {\bf D}}_{X,\beta}^0 = & \mathrm {\bf D} \upharpoonright \dom({\mathrm D}_{X,\beta}^0), \\
\dom({\mathrm {\bf D}}_{X,\beta}^0) = & \Big\{f\in W^{1,2}_{\comp}(\cI \backslash X;\C^{2p}): f_I\in AC_{\loc}(\cI\backslash X),\  f_{II}\in AC_{\loc}(\cI);\\&
 f_{II}(a+)=0\,,\quad f_I(x_{n}+)-f_I(x_{n}-) = i\beta_{n}c
f_{II}(x_{n}),\,\,n\in\N\Big\},
\end{split}
\end{equation}
respectively, i.e. $\mathrm{\bf D}_{X,\alpha} =  \overline{ \mathrm{\bf D}_{X,\alpha}^0}$ and  $\mathrm{\bf D}_{X,\beta} =  \overline{ \mathrm{\bf D}_{X,\beta}^0}$.
It is easily seen that both operators  $\mathrm{\bf D}_{X,\alpha}$ and  $\mathrm{\bf D}_{X,\beta}$ are  symmetric,  but not necessarily self-adjoint, in general.

In the sequel  we need he following proposition established in
\cite{CarMalPos13} ($p = 1$) and \cite{BudMalPos17,BudMalPos18} ($p>1$).
%
\begin{proposition}[\cite{CarMalPos13,BudMalPos17,BudMalPos18}]\label{1}
Let $\mathrm{\bf D}_{X,\alpha}$ be  realization of the Dirac operator in
$L^2(\cI;\mathbb C^{2p})$. Then the operator $\mathrm{\bf D}_{X,\alpha}^* := (\mathrm{\bf D}_{X,\alpha})^*$ adjoint to the symmetric
operator $\mathrm{\bf D}_{X,\alpha}$  is given by
   \begin{equation}\label{delta_adjoint}
\begin{split}
 \mathrm{\bf D}_{X,\alpha}^*=& \mathrm {\bf D}\upharpoonright \dom(\mathrm{\bf D}_{X,\alpha}^*),\\
\dom(\mathrm{\bf D}_{X,\alpha}^*) = &\Big\{f\in W^{1,2}(\cI \backslash X;\mathbb C^{2p}): f_I\in
AC_{\loc}(\cI),\  f_{II}\in AC_{\loc}(\cI\backslash X);\\&  f_{II}(a+)=0\,,\quad
 f_{II}(x_{n}+)- f_{II}(x_{n}-)
=-\frac{i\alpha_{n}}{c}f_I(x_{n}),\,\,n\in\N\Big\}.
\end{split}
  \end{equation}
  Similarly, the operator $\mathrm{\bf D}_{X,\beta}^*$ adjoint  to $\mathrm{\bf D}_{X,\beta}$ is given by the
expression \eqref{deltap}  with  $W^{1,2}_{\comp}(\cI \backslash X;\mathbb C^{2p})$
replaced by $W^{1,2}(\cI \backslash X; \mathbb C^{2p})$.
   \end{proposition}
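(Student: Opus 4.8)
The plan is to compute the adjoint directly from its defining property, using that $(\mathrm{\bf D}_{X,\alpha})^{*}=(\mathrm{\bf D}_{X,\alpha}^{0})^{*}$ since passing to the closure does not change the adjoint. Thus $g\in L^{2}(\cI;\C^{2p})$ lies in $\dom(\mathrm{\bf D}_{X,\alpha}^{*})$ exactly when there is $h\in L^{2}(\cI;\C^{2p})$ with $(\mathrm{\bf D}_{X,\alpha}^{0}f,g)=(f,h)$ for all $f\in\dom(\mathrm{\bf D}_{X,\alpha}^{0})$, and then $\mathrm{\bf D}_{X,\alpha}^{*}g=h$. I would split the verification into a \emph{regularity} part (identifying the ambient Sobolev space and the action of the adjoint) and a \emph{boundary/interface} part (recovering the endpoint condition at $a$ and the jump conditions at the points $x_{n}$), both built on Green's identity for the Dirac expression $\mathrm{\bf D}$ with its explicit action $(\mathrm{\bf D}f)_{I}=-\I c\,f_{II}'+\tfrac{c^{2}}{2}f_{I}$, $(\mathrm{\bf D}f)_{II}=-\I c\,f_{I}'-\tfrac{c^{2}}{2}f_{II}$.

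For the regularity part, I would first test only against $f\in C_{c}^{\infty}$ supported inside a single open subinterval $(x_{n-1},x_{n})$; such $f$ automatically belong to $\dom(\mathrm{\bf D}_{X,\alpha}^{0})$, all interface and endpoint conditions being void for them. Integration by parts in $(\mathrm{\bf D}f,g)=(f,h)$ then shows that $g=(g_{I},g_{II})^{\top}$ solves $\mathrm{\bf D}g=h$ in the distributional sense on each $(x_{n-1},x_{n})$; since each of the two scalar equations expresses one of $g_{I}'$, $g_{II}'$ through $L^{2}$ data, this gives $g\in W^{1,2}(\cI\setminus X;\C^{2p})$ and $\mathrm{\bf D}_{X,\alpha}^{*}g=\mathrm{\bf D}g=h$ a.e. In particular $g_{I}$ and $g_{II}$ acquire one-sided limits at every $x_{n}$ and at $a$.

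With $g$ now piecewise $W^{1,2}$, for arbitrary $f\in\dom(\mathrm{\bf D}_{X,\alpha}^{0})$ I would integrate by parts on each subinterval and sum over $n$ — a finite sum, since $f$ has compact support in $\cI$ and in particular stays away from $b$ — to obtain
\begin{equation*}
(\mathrm{\bf D}_{X,\alpha}^{0}f,g)-(f,\mathrm{\bf D}g)=\I c\,\phi(a+)+\I c\sum_{n\ge1}\bigl(\phi(x_{n}+)-\phi(x_{n}-)\bigr),\qquad \phi:=(f_{II},g_{I})_{\C^{p}}+(f_{I},g_{II})_{\C^{p}},
\end{equation*}
the left-hand side vanishing precisely for $g\in\dom(\mathrm{\bf D}_{X,\alpha}^{*})$. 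Taking $f$ supported near $a$ and using $f_{II}(a+)=0$ reduces the right-hand side to $\I c\,(f_{I}(a+),g_{II}(a+))_{\C^{p}}$, whence $g_{II}(a+)=0$. Taking $f$ supported near a single $x_{n}$, inserting $f_{I}(x_{n}+)=f_{I}(x_{n}-)$ together with $f_{II}(x_{n}+)-f_{II}(x_{n}-)=-\tfrac{\I\alpha_{n}}{c}f_{I}(x_{n})$, and expanding the jump of a product as mean-times-jump plus jump-times-mean, the boundary term becomes a linear functional of the two \emph{independent} free data $\tfrac12\bigl(f_{II}(x_{n}+)+f_{II}(x_{n}-)\bigr)$ and $f_{I}(x_{n})$; its vanishing forces first $g_{I}(x_{n}+)=g_{I}(x_{n}-)$ (so $g_{I}\in AC_{\loc}(\cI)$, being also piecewise $W^{1,2}$) and then, after moving $\alpha_{n}$ across the inner product via $\alpha_{n}=\alpha_{n}^{*}$, the relation $g_{II}(x_{n}+)-g_{II}(x_{n}-)=-\tfrac{\I\alpha_{n}}{c}g_{I}(x_{n})$. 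Conversely, I would substitute these conditions on $g$ back into Green's identity and check that every term on the right cancels, so that $(\mathrm{\bf D}_{X,\alpha}^{0}f,g)=(f,\mathrm{\bf D}g)$ for all admissible $f$, i.e. $g\in\dom(\mathrm{\bf D}_{X,\alpha}^{*})$ with $\mathrm{\bf D}_{X,\alpha}^{*}g=\mathrm{\bf D}g$. This gives the asserted description of $\mathrm{\bf D}_{X,\alpha}^{*}$; the statement for $\mathrm{\bf D}_{X,\beta}^{*}$ follows in the same way after interchanging the roles of the first and second blocks in Green's identity, which explains why only the replacement of $W^{1,2}_{\comp}$ by $W^{1,2}$ occurs.

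I expect the main obstacle to be the boundary/interface step: keeping track of signs and of the conjugation hidden in the factor $\I c$, correctly splitting the jump of the bilinear quantity $\phi$ into mean and jump parts, and — the genuinely delicate point — producing, for each prescribed pair of boundary values (with the $f_{II}$-jump then forced by the interface relation), an actual element of $\dom(\mathrm{\bf D}_{X,\alpha}^{0})$ realizing it with compact support, so that only finitely many interface terms are active and they can be separated. The regularity step and all convergence issues should be routine once one notes that compact support truncates every sum; one must only remember that no condition is imposed at $b$, since elements of $\dom(\mathrm{\bf D}_{X,\alpha}^{0})$ vanish near $b$ — exactly the feature that later permits nontrivial deficiency indices.
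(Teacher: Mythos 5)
Your proposal is correct, but note that the paper itself offers no proof to compare against: Proposition \ref{1} is imported verbatim from \cite{CarMalPos13} ($p=1$) and \cite{BudMalPos17,BudMalPos18} ($p>1$), so you have in effect supplied the standard argument that those sources use rather than an alternative to anything in this text. Your two-step scheme is sound: $(\mathrm{\bf D}_{X,\alpha})^*=(\mathrm{\bf D}_{X,\alpha}^0)^*$; testing against $C_c^\infty$ functions supported in a single $(x_{n-1},x_n)$ gives $\mathrm{\bf D}g=h$ distributionally, and — the point you rightly exploit — the first-order structure lets you solve for $g_I'$ and $g_{II}'$ in terms of $g,h\in L^2(\cI)$, which yields not merely local regularity but the square-summability over $n$ that defines $g\in W^{1,2}(\cI\setminus X;\C^{2p})$ as a direct sum; the boundary form $-\I c\,[(f_{II},g_I)+(f_I,g_{II})]$ then telescopes (finitely, by compact support of $f$) to the term at $a$ plus the jumps at the $x_n$, and the mean/jump splitting together with $\alpha_n=\alpha_n^*$ forces exactly $g_{II}(a+)=0$, continuity of $g_I$, and $g_{II}(x_n+)-g_{II}(x_n-)=-\frac{\I\alpha_n}{c}g_I(x_n)$, with the converse inclusion checked by substituting back into Green's identity. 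The one step you flag as delicate — realizing prescribed data $f_I(a+)$, respectively $f_I(x_n)$ and the mean of $f_{II}$ at $x_n$, by an element of $\dom(\mathrm{\bf D}_{X,\alpha}^0)$ supported near that single point — is in fact routine here, since the points of $X$ are isolated from one another ($d_n>0$), so a cutoff supported in $(x_{n-1},x_{n+1})$ kills all other interface terms; the only structural feature worth emphasizing is the one you do emphasize, namely that no boundary term at $b$ ever appears because minimal-domain functions vanish near $b$, which is precisely what leaves room for the nontrivial deficiency indices exploited in Theorem \ref{VarIndices}. Within this paper one could alternatively phrase the statement through the boundary-triplet machinery of the Appendix (Theorem \ref{th_bt_2}, Proposition \ref{prop_IV.2.1_01}), but that framework presupposes rather than replaces the explicit description \eqref{delta_adjoint}, so your direct computation is the right proof to give.
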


\subsection{Realizations ${\mathrm {\bf D}}_{X,\gA}$ with maximal deficiency indices}
It is well known that GS-realizations $\mathrm{\bf D}_{X,\alpha}$ and $\mathrm{\bf D}_{X,\beta}$ are  selfadjoint whenever $|\cI| = \infty$ (see \cite{CarMalPos13, BudMalPos17, BudMalPos18}).
Here  assuming  that $|\cI| < \infty$,
we  construct a class of  $2p\times 2p$-matrix  GS-realizations $\mathrm{\bf D}_{X,\alpha}$ with  maximal deficiency indices
$n_\pm(\mathrm{\bf D}_{X,\alpha}) = p$. Our main result in this direction reads as follows.
    \begin{theorem}\label{VarIndices}
 Let   $\gA:=\{\alpha_n\}_1^\infty (\subset\C^{p\times p})$  be a sequence of selfadjoint
matrices, $\alpha_n=\alpha_n^*$, $n\in\N$.  Then $n_{\pm}(\mathrm{\bf D}_{X,\alpha})=p$
provided that  the following condition holds
\begin{equation}\label{5.26}
\sum_{n=2}^{\infty}d_{n}\prod_{k=1}^{n-1}\left(1+\frac 1c\,\|\alpha_k\|_{\mathbb
C^{p\times p}}\right)^{2}<+\infty.
\end{equation}
\end{theorem}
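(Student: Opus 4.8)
The plan is to compute the deficiency index $n_+(\mathrm{\bf D}_{X,\alpha})=\dim\ker(\mathrm{\bf D}_{X,\alpha}^*-iI)$ directly, using the explicit description of the adjoint in Proposition~\ref{1}. By \eqref{delta_adjoint}, $f\in\ker(\mathrm{\bf D}_{X,\alpha}^*-iI)$ precisely when $f\in W^{1,2}(\cI\setminus X;\C^{2p})$, $\mathrm{\bf D}f=if$ on each $(x_{n-1},x_n)$, $f_I\in AC_{\loc}(\cI)$ (so $f_I$ is continuous at every $x_n$), $f_{II}$ obeys the interface relations $f_{II}(x_n+)-f_{II}(x_n-)=-\tfrac{i}{c}\alpha_nf_I(x_n)$, and the left boundary condition $f_{II}(0+)=0$ holds. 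Since $\mathrm{\bf D}f=if$ makes $f$ a solution of a first-order linear system with bounded coefficients, on the \emph{finite} interval $\cI$ one has $\sum_n\|f\|_{W^{1,2}([x_{n-1},x_n];\C^{2p})}^2\asymp\|f\|_{L^2(\cI;\C^{2p})}^2$, so membership in $W^{1,2}(\cI\setminus X;\C^{2p})$ reduces to square-integrability. Thus I would aim to show that condition \eqref{5.26} forces every candidate solution into $L^2(\cI;\C^{2p})$; this will give $n_+(\mathrm{\bf D}_{X,\alpha})=p$, and the same argument with $-i$ in place of $i$ will give $n_-(\mathrm{\bf D}_{X,\alpha})=p$.

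First I would set up the transfer recursion across the interaction points. On $[x_{n-1},x_n]$ the equation $\mathrm{\bf D}f=if$ rewrites as the constant-coefficient system $f'=Mf$ with
\[
M:=\frac{i}{c}\begin{pmatrix}\mathbb O_p & \mathbb I_p\\ \mathbb I_p & \mathbb O_p\end{pmatrix}\Bigl(i\,\mathbb I_{2p}-\frac{c^{2}}{2}\begin{pmatrix}\mathbb I_p & \mathbb O_p\\ \mathbb O_p & -\mathbb I_p\end{pmatrix}\Bigr)
\]
(obtained by inverting the coefficient of $f'$ in \eqref{1.2Intro}), and I set $C:=\|M\|<\infty$; hence $f(x)=e^{M(x-x_{n-1})}f(x_{n-1}+)$ there. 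Writing $g_n:=f(x_n+)$, the left boundary condition makes the $f_{II}$-part of $g_0$ vanish, so $g_0$ is determined by the free parameter $h:=f_I(0+)\in\C^p$, while continuity of $f_I$ together with the jump relation gives the recursion $g_n=J_ne^{Md_n}g_{n-1}$ with $J_n:=\begin{pmatrix}\mathbb I_p & \mathbb O_p\\ -\tfrac{i}{c}\alpha_n & \mathbb I_p\end{pmatrix}$. Consequently $h\mapsto f$ is a linear bijection from $\C^p$ onto the space of all solutions satisfying the interface and left-boundary conditions; this space is exactly $p$-dimensional (consistent with the general bound $n_\pm(\mathrm{\bf D}_{X,\alpha})\le p$), and $\ker(\mathrm{\bf D}_{X,\alpha}^*-iI)$ is precisely its $L^2$-part. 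So everything reduces to proving that this whole $p$-dimensional family lies in $L^2(\cI;\C^{2p})$.

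The heart of the argument is this $L^2$ bound, and it is where \eqref{5.26} enters. Since $\|e^{Md_k}\|\le e^{Cd_k}$ and $\sum_kd_k=b<\infty$, the propagation through the intervals only contributes the fixed factor $e^{Cb}$; the genuine growth comes from the jump matrices, and $\|J_k\|\le1+\tfrac{1}{c}\|\alpha_k\|_{\C^{p\times p}}$ because $J_k=\mathbb I_{2p}+N_k$ with $\|N_k\|=\tfrac{1}{c}\|\alpha_k\|_{\C^{p\times p}}$. Iterating, $\|g_{n-1}\|\le e^{Cb}\|h\|\prod_{k=1}^{n-1}\bigl(1+\tfrac{1}{c}\|\alpha_k\|_{\C^{p\times p}}\bigr)$, so for $x\in[x_{n-1},x_n]$
\[
\|f(x)\|=\|e^{M(x-x_{n-1})}g_{n-1}\|\le e^{2Cb}\|h\|\prod_{k=1}^{n-1}\Bigl(1+\tfrac{1}{c}\|\alpha_k\|_{\C^{p\times p}}\Bigr),
\]
and integrating over $\cI=\bigcup_{n\ge1}[x_{n-1},x_n]$ gives
\[
\|f\|_{L^2(\cI;\C^{2p})}^2\le e^{4Cb}\|h\|^2\sum_{n=1}^{\infty}d_n\prod_{k=1}^{n-1}\Bigl(1+\tfrac{1}{c}\|\alpha_k\|_{\C^{p\times p}}\Bigr)^{2}=e^{4Cb}\|h\|^2\Bigl(d_1+\sum_{n=2}^{\infty}d_n\prod_{k=1}^{n-1}\bigl(1+\tfrac{1}{c}\|\alpha_k\|_{\C^{p\times p}}\bigr)^{2}\Bigr),
\]
which is finite by \eqref{5.26}. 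Hence each of the $p$ linearly independent solutions lies in $L^2(\cI;\C^{2p})$, so $\dim\ker(\mathrm{\bf D}_{X,\alpha}^*-iI)=p$, i.e. $n_+(\mathrm{\bf D}_{X,\alpha})=p$; the verbatim computation with $-i$ gives $n_-(\mathrm{\bf D}_{X,\alpha})=p$.

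I do not expect a serious obstacle: the proof is a clean transfer-matrix estimate on a finite interval. The two points that need a little care are (a) the reduction — using $|\cI|<\infty$ — of ``$f\in\dom(\mathrm{\bf D}_{X,\alpha}^*)$, $\mathrm{\bf D}f=if$'' to ``$f$ solves the ODE with the interface and left-boundary conditions and $f\in L^2(\cI;\C^{2p})$'', which is what makes the solution space exactly $p$-dimensional and turns the deficiency-index computation into a square-integrability question; and (b) arranging the product estimate so that the finite-length factor $e^{Cb}$ splits off and the remaining series is precisely the one in \eqref{5.26}. The elementary ingredients — the explicit form of $M$, and the bounds $\|e^{Md_n}\|\le e^{Cd_n}$, $\|J_n\|\le1+\tfrac{1}{c}\|\alpha_n\|_{\C^{p\times p}}$ — I would just check in passing.
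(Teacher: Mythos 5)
Your proposal is correct and follows essentially the same route as the paper's proof: solve the eigenvalue equation piecewise, propagate through the interaction points, and bound the growth by $\prod_{k}\bigl(1+\tfrac1c\|\alpha_k\|_{\C^{p\times p}}\bigr)$ times a harmless factor $e^{O(\sum_k d_k)}$, so that \eqref{5.26} yields square-integrability of the full $p$-parameter family. The only differences are organizational: the paper first drops the mass term and works with explicit coefficient matrices $\mathcal U_n,\mathcal V_n$ (proving $\mathrm{rank}\,F_n(x)=p$ by induction), whereas you keep the mass term inside a transfer matrix $e^{Md_n}$ and obtain the $p$-dimensionality directly from the injective parameterization by $h=f_I(0+)$, together with an explicit reduction of the $W^{1,2}(\cI\setminus X)$ requirement to plain $L^2$ membership.
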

\begin{proof}
(i) We  examine the  operator  %
\begin{equation}
T_{X,\alpha}:=\mathrm{\bf D}_{X,\alpha}-\frac{c^{2}}{2}\otimes\left(
\begin{matrix}\mathbb I_p&\mathbb O_p\\
\mathbb O_p&-\mathbb I_p\end{matrix}\right)=-i\,c\,\frac{\rd}{\rd x}\otimes\left(
\begin{matrix}\mathbb O_p&\mathbb I_p\\
\mathbb I_p&\mathbb O_p\end{matrix}\right)
\end{equation}
since obviously $n_{\pm}(\mathrm{\bf D}_{X,\alpha}) = n_{\pm}(T_{X,\alpha}).$
It suffices to show that under the  assumption \eqref{5.26} the equation
$(T^{*}_{X,\alpha}\pm i)F=0$ has a non-trivial matrix $L^2(\cI, \C^{2p\times p})$-solution of full rank, i.e. $\mathrm{rank}\,F(x)=p$.

We restrict ourselves to the case of equation  $(T^{*}_{X,\alpha}+ i)F=0$, which   is equivalent to
the system

\begin{equation}\label{dif_eq}
\frac{\rd}{\rd x}F=\frac{1}{c}AF, \quad A=\left(
                                                    \begin{array}{cc}
                                                      \mathbb O_p & \mathbb I_p \\
                                                      \mathbb I_p & \mathbb O_p \\
                                                    \end{array}
                                                  \right).
\end{equation}
Here
$F=\left(
\begin{array}{cccc}
F^1 & F^2 & \ldots & F^p \\                                                                                              \end{array}                                                                                            \right)\in\C^{2p\times p}$ and
$$F^j=\left(
                                      \begin{array}{c}
                                        F_I^j \\
                                        F_{II}^j \\
                                      \end{array}
                                    \right),\qquad  F_I^j=\left(
                                                       \begin{array}{c}
                                                       f_1^j \\
                                                      \vdots \\
                                                      f_p^j  \\                                                 \end{array}
                                                     \right),\qquad  F_{II}^j=
\left(
                                      \begin{array}{c}
                                       f_{p+1}^j \\
                                        \vdots \\
                                        f_{2p}^j \\
                                      \end{array}
                                    \right),\ \ j\in \{1,\ldots,p\}.$$
  Equation \eqref{dif_eq}  has the following  piecewise smooth $2p\times p$-matrix solutions
$$
F=\bigoplus_{n=1}^{\infty}\left(
      \begin{array}{c}
        F_{I,n} \\
        F_{II,n} \\
      \end{array}
    \right),
$$
where
  \begin{equation}\label{FI}
F_{I}=\bigoplus_{n=1}^{\infty}F_{I,n}\,,\quad
F_{I,n}(x)=\mathcal U_{n}e^{-(x_{n}-x)/c}+\mathcal V_{n}e^{(x_{n}-x)/c}\,,\quad x\in [x_{n-1},x_{n}]\,,
  \end{equation}
\begin{equation}\label{FII}
F_{II}=\bigoplus_{n=1}^{\infty}F_{II,n}\,,\quad
F_{II,n}(x)=\mathcal U_{n}e^{-(x_{n}-x)/c}-\mathcal V_{n}e^{(x_{n}-x)/c}\,,\quad x\in [x_{n-1},x_{n}]\,,
\end{equation}
where $\{\mathcal U_{n}\}_{1}^{\infty}\subset\C^{p\times p}$, $\{\mathcal V_{n}\}_{1}^{\infty}\subset\C^{p\times p}$.

According to the description of ${\dom}(\mathrm {\bf D}^{*}_{X,\alpha})$ (see Proposition  \ref{1})
each component $F^j$ of $F = \binom{F_I}{F_{II}}$, $j\in \{1,\ldots,p\}$,
 should satisfy boundary conditions \eqref{delta_adjoint}.
First we  find  recursive relations for  matrix sequences $\{\mathcal U_{n}\}_{1}^{\infty}$, $\{\mathcal V_{n}\}_{1}^{\infty}$
 that ensure satisfying these  conditions.

The  condition $F_{II,n}(x_{0}+)=\mathbb O_p$ yields  $\mathcal U_{1}e^{-d_{1}/c}-\mathcal V_{1}e^{d_{1}/c}=\mathbb O_p$. Further,  the  condition
$$
F_{I,n}(x_{n}+)=F_{I,n}(x_{n}-), \qquad n\in \N,
$$
due to the formulas \eqref{FI}, \eqref{FII} is transformed into
\begin{equation}\label{UV_cont}
\mathcal U_{n+1}e^{-d_{n+1}/c}+\mathcal V_{n+1}e^{d_{n+1}/c}=\mathcal U_{n}+\mathcal V_{n}\,, \qquad n\in \N.
\end{equation}
Moreover,  the jump condition
$$
F_{II,n}(x_{n}+)-F_{II,n}(x_{n}-)=-i\,\frac{\alpha_{n}}{c}\,F_{I,n}(x_{n}),\qquad n\in \N,
$$
due to the formulas \eqref{FI}, \eqref{FII} is equivalent to
\begin{equation}\label{UV_jump}
\mathcal U_{n+1}e^{-d_{n+1}/c}-\mathcal V_{n+1}e^{d_{n+1}/c}-(\mathcal U_{n}-\mathcal V_{n})=-i\,\frac{\alpha_{n}}{c}\,(\mathcal U_{n}+\mathcal V_{n})\,.
\end{equation}
Combining \eqref{UV_cont} and \eqref{UV_jump} we arrive to  the following recursive equations
  \begin{eqnarray}\label{5.28}
\mathcal U_{n+1}=\left(\mathcal U_{n}-i\,\frac{\alpha_{n}}{2c}\,(\mathcal U_{n}+\mathcal V_{n})\right)\,e^{d_{n+1}/c}\,,\qquad n\in \N,  \\
\mathcal V_{n+1}=\left(\mathcal V_{n}+i\,\frac{\alpha_{n}}{2c}\,(\mathcal U_{n}+\mathcal V_{n})\right)\,e^{-d_{n+1}/c}\,,\qquad n\in \N,
\end{eqnarray}
for sequences $\{\mathcal U_{n}\}_{1}^{\infty}$ and $\{\mathcal V_{n}\}_{1}^{\infty}$
with the following initial data
\begin{equation}\label{int_dat}
\mathcal U_{1}= e^{d_{1}/c}\mathbb I_p \quad \text{and} \quad  \mathcal V_{1}= e^{-d_{1}/c}\mathbb I_p\,.
\end{equation}

(ii) Let us  prove that $\mathrm{rank}\, F_n(x) = p$ for each $x\in[x_{n-1},x_n]$ and $n\in \Bbb N$.
It follows from \eqref{FI} and \eqref{FII} that
  \begin{align}\label{F*F}
&\left(
    \begin{array}{cc}
      F_{I,n}^*(x) & F_{II,n}^*(x) \\
    \end{array}
  \right)
\cdot \left(
        \begin{array}{c}
          F_{I,n}(x) \\
          F_{II,n}(x) \\
        \end{array}
      \right)       \nonumber\\
& =\left(
                       \begin{array}{cc}
                         \mathcal U_{n}^*e^{-(x_{n}-x)/c}+\mathcal V_{n}^*e^{(x_{n}-x)/c} & \mathcal U_{n}^*e^{-(x_{n}-x)/c}-\mathcal V_{n}^*e^{(x_{n}-x)/c} \\
                       \end{array}
                     \right)\left(
                              \begin{array}{c}
                                \mathcal U_{n}e^{-(x_{n}-x)/c}+\mathcal V_{n}e^{(x_{n}-x)/c} \\
                                \mathcal U_{n}e^{-(x_{n}-x)/c}-\mathcal V_{n}e^{(x_{n}-x)/c} \\
                              \end{array}
                            \right)\nonumber\\
&=\mathcal U_{n}^*\mathcal U_{n}e^{-2(x_{n}-x)/c}+\mathcal U_{n}^*\mathcal V_{n}+\mathcal V_{n}^*\mathcal U_{n}+\mathcal V_{n}^*\mathcal V_{n}e^{2(x_{n}-x)/c}+\mathcal U_{n}^*\mathcal U_{n}e^{-2(x_{n}-x)/c}-\mathcal U_{n}^*\mathcal V_{n}-\mathcal V_{n}^*\mathcal U_{n}+
\mathcal V_{n}^*\mathcal V_{n}e^{2(x_{n}-x)/c}\nonumber\\
&= 2(\mathcal U_{n}^*\mathcal U_{n}e^{-2(x_{n}-x)/c}+\mathcal V_{n}^*\mathcal V_{n}e^{2(x_{n}-x)/c}).
\end{align}
It follows from \eqref{F*F}, that for each $h\in\C^{p}$  and $x\in[x_{n-1},x_n]$
 \begin{equation}\label{UV0}
\left\|\left(
        \begin{array}{c}
          F_{I,n}(x) \\
          F_{II,n}(x) \\
        \end{array}
      \right)h\right\|^2_{\C^{p}} =
2e^{-2(x_{n}-x)/c}\|\mathcal U_{n}h\|_{\C^{p}}^2 + 2e^{2(x_{n}-x)/c}\|\mathcal V_{n}h\|_{\C^{p}}^2.
  \end{equation}
It follows that for any fixed  $a\in[x_{n-1},x_n]$ the following  important equivalence holds
  \begin{equation}\label{5.eq:equi-ce_ker_F_n(x)--kerU_n=kerV_n}
F_n(a)h=0 \Longleftrightarrow  \mathcal U_{n}h=\mathcal V_{n}h=0.
   \end{equation}
Let us prove by induction that $\mathrm{rank}\,F_{n}(x) = p$ for each $x\in[x_{n-1},x_n]$.
For $n=1$ it follows from \eqref{int_dat}.
 Assuming  that  $\mathrm{rank}\,F_{k}(x) =p$ for each $x\in[x_{k-1},x_k]$ let us prove
that $\mathrm{rank}\,F_{k+1}(x) = p$ for  $x\in[x_{k},x_{k+1}]$.
Assuming  the contrary
we find $\widehat x\in[x_{k},x_{k+1}]$ and  vector $h\in\C^{p}$ such that $F_{k+1}(\widehat x)h=0$.
Combining  this relation with equivalence \eqref{5.eq:equi-ce_ker_F_n(x)--kerU_n=kerV_n} (with $a=\widehat x$ and $n=k+1$)
yields
$
\mathcal U_{k+1}h = \mathcal V_{k+1}h =0.
$

In turn, inserting these relations in   \eqref{UV_cont} and   \eqref{UV_jump} implies
$$
\mathcal U_{k}h = \mathcal V_{k}h =0.
$$
Due to  equivalence \eqref{5.eq:equi-ce_ker_F_n(x)--kerU_n=kerV_n}   these  relations are equivalent to
$F_{k}(x)h = 0$ for  $x\in[x_{k-1},x_{k}]$ which  contradicts the induction hypothesis.
Thus,  $\mathrm{rank}\,F(x) =p$ for each $x\in \Bbb R_+$.

Note also that  in passing we have proved that the matrices $F_{I}(x)$ and $F_{II}(x)$ are nonsingular  for each $x\in\Bbb R_+$.

(iii)  It remains to check that under condition  \eqref{5.26} the inclusion $F_{I}, F_{II}\in L^{2}(\cI;\C^{p\times p})$ holds. It follows from  \eqref{FI} and \eqref{FII}
   \begin{align*}
\|F_{k}\|_{2}^{2}=&\sum_{n=1}^{\infty}\|F_{k,n}\|^{2}_{2}\le{2}
\sum_{n=1}^{\infty}\int_{0}^{d_{n}}(\|\mathcal U_{n}\|_{\C^{p\times p}}^{2}e^{-2x/c}+ \|\mathcal V_{n}\|_{\C^{p\times p}}^{2}e^{2x/c})dx\\
=&
c\sum_{n=1}^{\infty}\left(\|\mathcal U_{n}\|_{\C^{p\times p}}^{2}(1-e^{-2d_{n}/c})+\|\mathcal V_{n}\|_{\C^{p\times p}}^{2}(e^{2d_{n}/c}-1)\right),\quad k=\{I,II\}.
\end{align*}
Since $\sum_{n=1}^{\infty}d_{n}=|\cI|<+\infty$, $d_{n}\to0$ and therefore   $(1-e^{-2d_{n}/c})\sim (e^{2d_{n}/c}-1)\sim 2d_{n}/c$ as $n\to \infty$.
This implies inequality  $\|F_{k}\|_{2}<+\infty$ provided that
  \begin{equation}\label{5.30}
\sum_{n=1}^{\infty}\left(\|\mathcal U_{n}\|_{\C^{p\times p}}^{2}+\|\mathcal V_{n}\|_{\C^{p\times p}}^{2}\right)d_{n}<+\infty\,.
  \end{equation}

Let us prove by induction the following estimates
    \begin{equation}\label{5.29}
\|\mathcal U_{n+1}\|_{\C^{p\times p}}, \ \|\mathcal V_{n+1}\|_{\C^{p\times p}} \le \exp\bigl((d_1+\ldots +d_{n+1})/c\bigr)\cdot\prod^n_{k=1}\left(1+\frac{\|\alpha_k\|_{\C^{p\times p}}}{c}\right), \quad n\in \N.
  \end{equation}

For $n=1$ these estimates are obvious. Assume that \eqref{5.29} are proved for $n\le m-1$. Then using \eqref{5.28} for $n=m$ we obtain
    \begin{eqnarray*}
\|\mathcal U_{m+1}\|_{\C^{p\times p}}\le \left(\|\mathcal U_m\|_{\C^{p\times p}}\left(1+\frac{\|\alpha_m\|_{\C^{p\times p}}}{2c}\right) + \frac{\|\alpha_m\|_{\C^{p\times p}}}{2c}\|\mathcal V_m\|_{\C^{p\times p}}\right) e^{d_{m+1}/c}  \nonumber \\
\le \prod^{m-1}_{k=1}\left(1+\frac{\|\alpha_k\|_{\C^{p\times p}}}{c}\right) \left[\left(1+\frac{\|\alpha_m\|_{\C^{p\times p}}}{2c}\right) + \frac{\|\alpha_m\|_{\C^{p\times p}}}{2c}\right] e^{(d_1+ \ldots + d_m +d_{m+1})/c}\nonumber \\
= \exp\bigl((d_1 + \ldots + d_{m+1})/c\bigr)\cdot\prod^m_{k=1}\left(1+\frac{\|\alpha_k\|_{\C^{p\times p}}}{c}\right).
   \end{eqnarray*}
This inequality proves  the inductive hypothesis  \eqref{5.29} for $\mathcal U_{n}$.
The estimate for $\mathcal V_{m+1}$ is proved similarly. Thus, both  inequalities \eqref{5.29} are established.
Combining \eqref{5.30} with  \eqref{5.29} and  the assumption \eqref{5.26}  we conclude  that $F_I, F_{II} \in L^2(\cI;\C^{p\times p})$.
\end{proof}
\begin{corollary}
If condition \eqref{5.26} is fulfilled, then any selfadjoint extension of the operator $\mathrm{\bf D}_{X,\alpha}$ has  discrete spectrum.
\end{corollary}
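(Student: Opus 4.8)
The plan is to upgrade Theorem~\ref{VarIndices} to the assertion that $b$ is a \emph{limit-circle} endpoint for the family $\mathrm{\bf D}_{X,\alpha}$, and then to read off discreteness of every selfadjoint extension from the classical Hilbert--Schmidt bound for its resolvent. First I would revisit the proof of Theorem~\ref{VarIndices}. The growth estimate~\eqref{5.29} and the summability criterion~\eqref{5.30} were obtained from the recursion~\eqref{5.28} using only the \emph{boundedness} of the initial data~\eqref{int_dat}, and the factors $e^{\pm d_{n+1}/c}$ are innocuous because $\sum_n d_n=|\cI|<\infty$, so $d_n\to0$. Running the same argument for the equation $(\mathrm{\bf D}^*_{X,\alpha}-z)F=0$ with an \emph{arbitrary} $z\in\C$ and \emph{arbitrary} initial matrices $\mathcal U_1,\mathcal V_1\in\C^{p\times p}$ — passing, as in the proof, to $T_{X,\alpha}$, for which the equation reads $F'=\tfrac{iz}{c}AF$ on each $(x_{n-1},x_n)$ together with the jumps at $x_n$, so that the free transfer matrix over $[x_{n-1},x_n]$ is bounded (uniformly in $n$, since $d_n\to0$) while each jump again contributes a factor $\le 1+\tfrac1c\|\alpha_n\|_{\C^{p\times p}}$ — yields $\|F(x)\|_{\C^{2p}}^2\le C_z\prod_{k=1}^{n-1}\bigl(1+\tfrac1c\|\alpha_k\|_{\C^{p\times p}}\bigr)^2$ for $x\in[x_{n-1},x_n]$. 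By~\eqref{5.26} this is summable against $d_n$, whence \emph{every} solution $F$ of $(\mathrm{\bf D}^*_{X,\alpha}-z)F=0$ (i.e.\ every function obeying the jump conditions of~\eqref{delta_adjoint} at the $x_n$) belongs to $L^2(\cI;\C^{2p})$. Thus $b$ is a limit-circle endpoint, while $0$ is a regular endpoint at which the condition $f_{II}(0+)=0$ is fixed.

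Next I would take any selfadjoint extension $\widetilde{\mathrm{\bf D}}=\widetilde{\mathrm{\bf D}}^*$ of $\mathrm{\bf D}_{X,\alpha}$. Since $n_\pm(\mathrm{\bf D}_{X,\alpha})=p<\infty$ and the left-endpoint condition $f_{II}(0+)=0$ is already part of $\dom(\mathrm{\bf D}^*_{X,\alpha})$ by Proposition~\ref{1}, the extension $\widetilde{\mathrm{\bf D}}$ is described by $p$ linearly independent selfadjoint (separated) boundary conditions ``at $b$'', written through the limit-circle boundary form $[f,g]_b:=\lim_{x\to b}[f,g](x)$, which by the previous paragraph is well defined on $\dom(\mathrm{\bf D}^*_{X,\alpha})$. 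For $z_0\in\C_+$ the resolvent $(\widetilde{\mathrm{\bf D}}-z_0)^{-1}$ is then the integral operator whose matrix Green kernel $G(\cdot,\cdot;z_0)$ is assembled from a fundamental solution of $(\mathrm{\bf D}^*_{X,\alpha}-z_0)F=0$ satisfying $f_{II}(0+)=0$ and one satisfying the prescribed condition at $b$; both of these are square-integrable on $\cI$ by the limit-circle property (the first already by Theorem~\ref{VarIndices}). Hence $\iint_{\cI\times\cI}\|G(x,y;z_0)\|^2\,\rd x\,\rd y<\infty$, i.e.\ $(\widetilde{\mathrm{\bf D}}-z_0)^{-1}\in\mathcal S_2\bigl(L^2(\cI;\C^{2p})\bigr)\subset\mathcal S_\infty$, and so the spectrum of $\widetilde{\mathrm{\bf D}}$ is discrete.

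The part that needs care is this last step: rigorously introducing the limit-circle boundary form $[\cdot,\cdot]_b$ at the accumulation point $b$ and justifying the Green-function representation of the resolvent in the presence of infinitely many $\delta$-interactions piling up at $b$. This is routine in the regular/limit-circle theory of one-dimensional Dirac systems (cf.\ \cite{CarMalPos13} and references therein), but it must be transplanted to the point-interaction domains~\eqref{delta}--\eqref{delta_adjoint}; I expect this bookkeeping to be the main technical burden. One can also sidestep it: because $n_\pm(\mathrm{\bf D}_{X,\alpha})=p<\infty$, the resolvents of any two selfadjoint extensions differ by an operator of rank $\le 2p$ (Krein's resolvent formula), so it is enough to exhibit a \emph{single} selfadjoint extension with compact resolvent — e.g.\ the one constructed above — and then invoke Weyl's stability theorem for the essential spectrum. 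Alternatively, in the spirit of the paper's Dirac--Jacobi correspondence, one could transport the discreteness to the associated block Jacobi matrix ${\bf J}_{X,\alpha}$, but that relies on results established later in the text.
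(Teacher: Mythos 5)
Your argument is sound and it is a genuinely different (and more self-contained) route than the one the paper has in mind. You prove the statement directly on the Dirac side by the classical Weyl limit-circle mechanism: rerunning the transfer-matrix/recursion estimates of Theorem \ref{VarIndices} for arbitrary $z$ and arbitrary initial data (the free propagators contribute only the harmless factor $e^{|z||\cI|/c}$ and each interface the factor $1+\tfrac1c\|\alpha_k\|$) shows that under \eqref{5.26} \emph{every} solution of the jump system underlying \eqref{delta_adjoint} lies in $L^2(\cI;\C^{2p})$, and then any selfadjoint extension has a Green kernel built from two square-integrable matrix solutions, hence an $\mathcal S_2$-resolvent and discrete spectrum. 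The paper instead obtains this corollary through its Dirac--Jacobi dictionary: $n_\pm(\mathrm{\bf D}_{X,\alpha})=n_\pm({\bf B}_{X,\alpha})=p$ (Proposition \ref{prop_IV.2.1_01}, Proposition \ref{def_ind_B}), discreteness of all selfadjoint extensions of the Jacobi operator in the completely indeterminate case (Krein theory, cf.\ the convergence of \eqref{reprod_kernel_Intro}), and the transfer back to the Dirac operator via the boundary-triplet correspondences of Propositions \ref{Bound-op-A} and \ref{prop_II.1.4_02} (see also Proposition \ref{D_disc_5} for the $\mathcal S_q$ refinement). Your route avoids the Jacobi machinery entirely, at the price of the bookkeeping you yourself flag: setting up the boundary form at the accumulation point $b$ and the variation-of-parameters/Green representation for domains with infinitely many interface conditions; this is routine but must be done. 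Two small caveats on your ``sidestep'': the resolvent difference of two selfadjoint extensions of an operator with indices $(p,p)$ has rank at most $p$ (not merely $2p$), and, more importantly, the reduction to exhibiting a single extension with compact resolvent does not really remove the burden, since that one extension still has to be produced by the same Green-kernel construction --- in particular the decoupled operator $\mathrm{\bf D}_0$ of the Appendix cannot serve, as it is an extension of $\mathrm{\bf D}_X$ but not of $\mathrm{\bf D}_{X,\alpha}$, so the finite-rank comparison with it is unavailable.
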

   \begin{remark}
Clearly, the condition \eqref{5.26} implies  the inclusion  $\{d_n\}\in l^1(\Bbb N)$, i.e. $|\cI| < \infty$.
   \end{remark}

\begin{corollary}\label{corA}
The GS realization $\mathrm{\bf D}_{X,\alpha}$ is symmetric with $n_{\pm}(\mathrm{\bf D}_{X,\alpha}) =p$ whenever  $\{\alpha_n\}_1^\infty\in l^1(\N;\C^{p\times p})$.
\end{corollary}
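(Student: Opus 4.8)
The plan is to obtain Corollary \ref{corA} as a direct consequence of Theorem \ref{VarIndices}: it suffices to check that the hypothesis $\{\alpha_n\}_1^\infty\in l^1(\N;\C^{p\times p})$ forces condition \eqref{5.26}. Symmetry of $\mathrm{\bf D}_{X,\alpha}$ has already been recorded right after \eqref{deltap}, so only the equality $n_{\pm}(\mathrm{\bf D}_{X,\alpha}) = p$ needs an argument, and the $\alpha_n$ are taken self-adjoint as in Theorem \ref{VarIndices}.

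First I would invoke the elementary criterion that, for nonnegative reals $a_k\ge 0$, the infinite product $\prod_{k=1}^\infty(1+a_k)$ converges to a finite positive limit if and only if $\sum_{k=1}^\infty a_k<\infty$. Applying it with $a_k=\tfrac1c\|\alpha_k\|_{\C^{p\times p}}$, the assumption $\sum_{k}\|\alpha_k\|_{\C^{p\times p}}<\infty$ gives
\[
C:=\prod_{k=1}^{\infty}\Bigl(1+\tfrac1c\,\|\alpha_k\|_{\C^{p\times p}}\Bigr)^{2}<+\infty,
\]
and, since each factor exceeds $1$, every partial product obeys $\prod_{k=1}^{n-1}\bigl(1+\tfrac1c\|\alpha_k\|_{\C^{p\times p}}\bigr)^{2}\le C$ for all $n\ge 2$.

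Next, working under the standing assumption of the subsection that $|\cI|<\infty$, i.e. $\sum_{n=1}^{\infty}d_n=|\cI|<\infty$, the uniform bound above yields
\[
\sum_{n=2}^{\infty}d_{n}\prod_{k=1}^{n-1}\Bigl(1+\tfrac1c\,\|\alpha_k\|_{\C^{p\times p}}\Bigr)^{2}\;\le\;C\sum_{n=2}^{\infty}d_n\;\le\;C\,|\cI|\;<\;+\infty,
\]
which is exactly condition \eqref{5.26}. Theorem \ref{VarIndices} then applies and gives $n_{\pm}(\mathrm{\bf D}_{X,\alpha})=p$, completing the proof.

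I do not expect a genuine obstacle: the entire content is the routine observation that an $l^1$-bound on $\{\alpha_n\}$ makes the weights $\prod_{k=1}^{n-1}(1+\tfrac1c\|\alpha_k\|_{\C^{p\times p}})^{2}$ uniformly bounded, after which \eqref{5.26} collapses to the finiteness of $|\cI|$, which is assumed throughout this subsection. The only point deserving an explicit line is the convergence criterion for infinite products; everything else is a one-step estimate.
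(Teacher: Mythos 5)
Your proof is correct and takes essentially the same route as the paper: the paper's own argument also reduces \eqref{5.26} to the finiteness of $|\cI|$ by bounding the partial products uniformly, using the inequality $\prod_{k}(1+s_k)\le \exp\bigl(\sum_k s_k\bigr)$ where you instead invoke the standard convergence criterion for infinite products, and then applies Theorem \ref{VarIndices}.
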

\begin{proof}
Clearly,  for any positive sequence $\{s_{k}\}_{1}^{\infty}$
$$
\prod_{k=1}^{\infty}(1+s_{k})\le \exp\left(\sum_{k=1}^{\infty}s_{k}\right)\,.
$$
It follows with account of the inclusion $\{\gA_n\}_1^\infty\in l^1(\N;\mathbb C^{p\times p})$
that
\begin{equation*}
\sum_{n=2}^{\infty}d_{n}\prod_{k=1}^{n-1}\left(1 + \frac 1c\,\|\alpha_k\|_{\mathbb
C^{p\times p}}\right)^{2} \le \exp \left(\frac {2}{c}\sum_{k=1}^{\infty}
\|\alpha_k\|_{\mathbb C^{p\times p}}\right) \sum_{n=2}^{\infty}d_{n} \le|\cI|\exp
\left(\frac {2}{c}\sum_{k=1}^{\infty} \|\alpha_k\|_{\mathbb C^{p\times
p}}\right)\,.
\end{equation*}
It remains to apply Theorem \ref{VarIndices}.
\end{proof}

\begin{corollary}
The GS realization $\mathrm{\bf D}_{X,\alpha}$ is symmetric with $n_{\pm}(\mathrm{\bf D}_{X,\alpha}) =p$ whenever
  \begin{equation}\label{5.31}
\limsup_{n\to\infty}\,\frac{d_{n+1}}{d_{n}}\left(1+\frac{\|\alpha_{n}\|_{\mathbb C^{p\times
p}}}c
\right)^{2}<1\,.
   \end{equation}
In particular,  $n_{\pm}(\mathrm{\bf D}_{X,\alpha}) =p$  provided that one of the following  conditions is satisfied
\item $(i)$   $\limsup_{n\to\infty}(d_{n+1}/d_{n})=0$ and  $\sup_{n\in \N}\|\alpha_{n}\|_{\mathbb C^{p\times
p}}<\infty$;
\item $(ii)$  $\limsup_{n\to\infty}(d_{n+1}/d_{n})=:(1/d)$ with  $d>1$ and
$\sup_{n\in \N} \|\alpha_{n}\|_{\mathbb C^{p\times
p}} < c(\sqrt{d}-1).$
\end{corollary}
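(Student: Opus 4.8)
The plan is to deduce condition \eqref{5.26} of Theorem~\ref{VarIndices} from \eqref{5.31} by a ratio‑test argument, and then read off the two particular cases $(i)$ and $(ii)$ as immediate consequences. First I would put, for $n\ge 2$,
\[
a_n := d_n\prod_{k=1}^{n-1}\Bigl(1+\tfrac1c\|\alpha_k\|_{\C^{p\times p}}\Bigr)^{2},
\]
so that \eqref{5.26} is exactly the assertion $\sum_{n\ge2}a_n<\infty$. A one‑line computation gives
\[
\frac{a_{n+1}}{a_n}=\frac{d_{n+1}}{d_n}\Bigl(1+\tfrac1c\|\alpha_n\|_{\C^{p\times p}}\Bigr)^{2},
\]
so hypothesis \eqref{5.31} says precisely that $\limsup_{n\to\infty}a_{n+1}/a_n<1$. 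By the d'Alembert ratio test (pick $r$ with $\limsup a_{n+1}/a_n<r<1$; then $a_{n+1}\le r\,a_n$ for all large $n$, so $a_n$ is eventually dominated by a geometric series) the series $\sum a_n$ converges, i.e. \eqref{5.26} holds. Since the product in the definition of $a_n$ is $\ge 1$, we have $a_n\ge d_n$, hence $\sum d_n<\infty$, i.e. $|\cI|<\infty$; thus Theorem~\ref{VarIndices} is applicable and yields $n_{\pm}(\mathrm{\bf D}_{X,\alpha})=p$, with symmetry of $\mathrm{\bf D}_{X,\alpha}$ already built into its definition.

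It then remains to check that each of the conditions $(i)$, $(ii)$ implies \eqref{5.31}. In case $(i)$, set $s:=\sup_{n}\|\alpha_n\|_{\C^{p\times p}}<\infty$; then $\bigl(1+\tfrac1c\|\alpha_n\|_{\C^{p\times p}}\bigr)^2\le(1+s/c)^2$ is a fixed constant while $d_{n+1}/d_n\to0$, so the product $\dfrac{d_{n+1}}{d_n}\bigl(1+\tfrac1c\|\alpha_n\|_{\C^{p\times p}}\bigr)^2$ tends to $0$ and in particular has $\limsup<1$. In case $(ii)$, set again $s:=\sup_{n}\|\alpha_n\|_{\C^{p\times p}}<c(\sqrt d-1)$ and $q:=(1+s/c)^2$; the strict inequality $s<c(\sqrt d-1)$ gives $q<(1+(\sqrt d-1))^2=d$. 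Since $\bigl(1+\tfrac1c\|\alpha_n\|_{\C^{p\times p}}\bigr)^2\le q$ for every $n$, we get $\dfrac{d_{n+1}}{d_n}\bigl(1+\tfrac1c\|\alpha_n\|_{\C^{p\times p}}\bigr)^2\le q\,\dfrac{d_{n+1}}{d_n}$, and taking $\limsup$ yields $\le q\cdot(1/d)=q/d<1$, which is \eqref{5.31}.

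Every step here is elementary, so there is no genuine obstacle; the only two points worth stating carefully are that convergence of $\sum a_n$ already forces $\sum d_n<\infty$ (so the finite‑interval hypothesis needed to invoke Theorem~\ref{VarIndices} is automatic), and, in case $(ii)$, that keeping the bound $q<d$ strict is what makes the final $\limsup$ strictly below $1$, as required.
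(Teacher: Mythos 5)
Your proposal is correct and follows exactly the paper's argument: the paper also deduces condition \eqref{5.26} from \eqref{5.31} by the ratio test and then invokes Theorem \ref{VarIndices}, with cases $(i)$ and $(ii)$ being the same elementary specializations you give. No issues to report.
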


\begin{proof}
By the ratio test condition \eqref{5.31} yields  the convergence of the series
\eqref{5.26}. It remains to apply Theorem \ref{VarIndices}.
  \end{proof}

  \subsection{Realizations ${\mathrm {\bf D}}_{X,\gB}$ with maximal deficiency indices}
Here  we present  similar results for GS-realizations $\mathrm{\bf D}_{X,\beta}$.

\begin{theorem}\label{VarIndicesBeta}
 Let  $\gB:=\{\beta_n\}_1^\infty \subset\C^{p\times p}$  be a sequence of selfadjoint
 ${p\times p}$-matrices, $\beta_n=\beta_n^*$, $n\in\N$. Then $n_{\pm}(\mathrm{\bf D}_{X,\beta})=p$
 provided that  the following condition is fulfilled
\begin{equation}\label{5.26B}
\sum_{n=2}^{\infty}d_{n}\prod_{k=1}^{n-1}\left(1+c\,\|\beta_k\|_{\mathbb
C^{p\times p}}\right)^{2}<+\infty.
\end{equation}
 \end{theorem}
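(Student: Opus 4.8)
I would follow the proof of Theorem~\ref{VarIndices} almost verbatim, merely exchanging the $\delta$-type interface conditions for the $\delta'$-type ones. First I would pass to the operator
\[
T_{X,\beta}:=\mathrm{\bf D}_{X,\beta}-\frac{c^2}{2}\otimes\left(\begin{matrix}\mathbb I_p&\mathbb O_p\\ \mathbb O_p&-\mathbb I_p\end{matrix}\right),
\]
for which $n_{\pm}(\mathrm{\bf D}_{X,\beta})=n_{\pm}(T_{X,\beta})$, so that it suffices to exhibit a nontrivial $L^2(\cI;\C^{2p\times p})$ solution $F=\binom{F_I}{F_{II}}$ of $(T_{X,\beta}^*+i)F=0$ with $\mathrm{rank}\,F(x)=p$ for all $x$. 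This equation is again the system \eqref{dif_eq}, hence $F$ has the same piecewise form \eqref{FI}, \eqref{FII} with matrices $\mathcal U_n,\mathcal V_n\in\C^{p\times p}$, now to be determined from the description of $\dom(\mathrm{\bf D}_{X,\beta}^*)$ in Proposition~\ref{1} (with $W^{1,2}_{\comp}$ replaced by $W^{1,2}$).

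The only genuinely new point is the interface analysis: for $\beta$-type conditions $F_{II}$ is continuous throughout $\cI$ while $F_I$ jumps. The condition $F_{II,1}(x_0+)=\mathbb O_p$ again produces the initial data \eqref{int_dat}; the continuity $F_{II,n+1}(x_n+)=F_{II,n}(x_n-)$ gives $\mathcal U_{n+1}e^{-d_{n+1}/c}-\mathcal V_{n+1}e^{d_{n+1}/c}=\mathcal U_n-\mathcal V_n$; and the jump $F_{I,n+1}(x_n+)-F_{I,n}(x_n-)=i\beta_n c\,F_{II,n}(x_n)$ gives $\mathcal U_{n+1}e^{-d_{n+1}/c}+\mathcal V_{n+1}e^{d_{n+1}/c}=\mathcal U_n+\mathcal V_n+i\beta_n c(\mathcal U_n-\mathcal V_n)$. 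Adding and subtracting these two identities yields the recursion
\[
\mathcal U_{n+1}=\Bigl(\mathcal U_n+\tfrac{i\beta_n c}{2}(\mathcal U_n-\mathcal V_n)\Bigr)e^{d_{n+1}/c},\qquad \mathcal V_{n+1}=\Bigl(\mathcal V_n+\tfrac{i\beta_n c}{2}(\mathcal U_n-\mathcal V_n)\Bigr)e^{-d_{n+1}/c},
\]
which is the exact analogue of \eqref{5.28}, with $\alpha_n/c$ replaced by $c\beta_n$ and $\mathcal U_n+\mathcal V_n$ replaced by $\mathcal U_n-\mathcal V_n$.

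From this point I expect the argument to be identical to steps (ii) and (iii) of Theorem~\ref{VarIndices}. The computation \eqref{F*F}, \eqref{UV0} and the ensuing equivalence $F_n(a)h=0\Longleftrightarrow\mathcal U_nh=\mathcal V_nh=0$ depend only on the form \eqref{FI}, \eqref{FII} of $F_{I,n},F_{II,n}$ and not on the recursion, so they carry over unchanged; the inductive proof of $\mathrm{rank}\,F_n(x)=p$ then reduces to the implication $\mathcal U_{n+1}h=\mathcal V_{n+1}h=0\Rightarrow\mathcal U_nh=\mathcal V_nh=0$, which follows by subtracting the two recursion relations evaluated at $h$ (yielding $(\mathcal U_n-\mathcal V_n)h=0$) and then substituting back. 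For the $L^2$-estimate I would prove by induction, exactly as for \eqref{5.29}, the bound
\[
\|\mathcal U_{n+1}\|_{\C^{p\times p}},\ \|\mathcal V_{n+1}\|_{\C^{p\times p}}\le\exp\bigl((d_1+\dots+d_{n+1})/c\bigr)\prod_{k=1}^n\bigl(1+c\|\beta_k\|_{\C^{p\times p}}\bigr),
\]
the point being that the terms $\tfrac{c\|\beta_m\|}{2}\|\mathcal U_m\|$ and $\tfrac{c\|\beta_m\|}{2}\|\mathcal V_m\|$ combine with $\|\mathcal U_m\|$ (resp. $\|\mathcal V_m\|$) to give the factor $(1+c\|\beta_m\|)$, just as in Theorem~\ref{VarIndices}. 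Since $\sum_n d_n<\infty$ under \eqref{5.26B} forces $d_n\to 0$, so that $1-e^{-2d_n/c}\sim e^{2d_n/c}-1\sim 2d_n/c$, combining this bound with $\|F_k\|_2^2=c\sum_n\bigl(\|\mathcal U_n\|^2(1-e^{-2d_n/c})+\|\mathcal V_n\|^2(e^{2d_n/c}-1)\bigr)$ and the hypothesis \eqref{5.26B} gives $F_I,F_{II}\in L^2(\cI;\C^{p\times p})$. Hence $n_+(T_{X,\beta})\ge p$; since $0\le n_\pm(\mathrm{\bf D}_{X,\beta})\le p$ always and the maximal value is attained simultaneously for both signs, $n_\pm(\mathrm{\bf D}_{X,\beta})=p$ (alternatively, running the same computation for $(T_{X,\beta}^*-i)F=0$).

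I do not anticipate a real obstacle: the entire content is to reproduce the machinery of Theorem~\ref{VarIndices} under the dual interface conditions, and the only place that demands care is the bookkeeping setting up the correct recursion --- which component is continuous, and the sign and the $c\beta_n$ (rather than $\alpha_n/c$) scaling in the jump condition. One might hope to avoid the computation altogether by the unitary swap of the two Dirac components, but that exchanges the left boundary condition $f_{II}(a+)=0$ for $f_I(a+)=0$, which matches neither \eqref{delta} nor \eqref{deltap}, so the direct repetition above seems the cleanest route.
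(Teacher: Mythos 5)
Your proposal is correct and is exactly the route the paper intends: the paper omits the proof of Theorem \ref{VarIndicesBeta}, stating only that it is similar to that of Theorem \ref{VarIndices}, and your reconstruction supplies precisely the needed modifications (continuity of $F_{II}$, jump of $F_I$, the recursion with $\tfrac{c\beta_n}{2}(\mathcal U_n-\mathcal V_n)$ in place of $\tfrac{\alpha_n}{2c}(\mathcal U_n+\mathcal V_n)$, and the resulting bound with factors $1+c\|\beta_k\|$). The rank argument and the $L^2$-estimate carry over as you describe, so the proposal matches the paper's approach with no gaps.
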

\begin{proof}
The proof is similar to that of Theorem \ref{VarIndices} and is omitted.
\end{proof}
\begin{corollary}
If condition \eqref{5.26B} is fulfilled, then any selfadjoint extension of the operator $\mathrm{\bf D}_{X,\beta}$ has  discrete spectrum.
\end{corollary}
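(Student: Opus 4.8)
The plan is to show that condition \eqref{5.26B} forces the singular endpoint $b:=\sup X=\sum_{n}d_{n}$ to be of \emph{limit circle type} for $\mathrm{\bf D}_{X,\beta}$, and then to read off discreteness of every self-adjoint realization from the fact that the associated Green kernel is then Hilbert--Schmidt. First observe that \eqref{5.26B} already contains $\{d_{n}\}\in l^{1}(\mathbb N)$, so $|\mathcal I|<\infty$ and $b<\infty$, and that $n_{\pm}(\mathrm{\bf D}_{X,\beta})=p$ by Theorem~\ref{VarIndicesBeta}; in particular self-adjoint extensions of $\mathrm{\bf D}_{X,\beta}$ exist, and since $n_{\pm}(\mathrm{\bf D}_{X,\beta})=p<\infty$ the resolvents of any two of them differ by a finite-rank operator, so it would even suffice to produce a single self-adjoint extension with discrete spectrum. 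The argument below, however, handles all of them simultaneously.

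The key step is to revisit the proof scheme of Theorem~\ref{VarIndices} --- which is the blueprint for Theorem~\ref{VarIndicesBeta} as well --- and notice that it in fact yields more than the equality $n_{\pm}=p$: for every $\lambda\in\mathbb C$, \emph{all} $2p$ solutions of $(\mathrm{\bf D}-\lambda)F=0$ on $\mathcal I\setminus X$ subject to the transmission conditions of \eqref{deltap} (which, by Proposition~\ref{1}, are built into $\dom(\mathrm{\bf D}_{X,\beta}^{*})$) lie in $L^{2}(\mathcal I;\mathbb C^{2p})$. Indeed, writing such a solution in the form \eqref{FI}--\eqref{FII} with coefficient matrices $\mathcal U_{n},\mathcal V_{n}$ and \emph{arbitrary} initial data at some step $n_{0}$ --- rather than the special data \eqref{int_dat} dictated by the left boundary condition --- the recursion \eqref{5.28}, now with the jump factor $\beta_{n}c$ in place of $\alpha_{n}/c$, together with the induction that produced \eqref{5.29}, gives
\[
\|\mathcal U_{n}\|_{\mathbb C^{p\times p}},\ \|\mathcal V_{n}\|_{\mathbb C^{p\times p}}\ \le\ C_{F}\,\exp\!\Bigl(c^{-1}\!\sum_{k=1}^{n}d_{k}\Bigr)\prod_{k=1}^{n-1}\bigl(1+c\,\|\beta_{k}\|_{\mathbb C^{p\times p}}\bigr),\qquad n\ge n_{0},
\]
with a constant $C_{F}$ depending only on $F$ (for $\lambda\ne-i$ the exponents $\pm d_{n}/c$ become $\pm\mu_{\lambda}d_{n}$ with $|\mu_{\lambda}|$ locally bounded, which is harmless since $d_{n}\to0$). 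Because $\sum_{n}d_{n}<\infty$, so that $1-e^{-2\mu d_{n}}\sim e^{2\mu d_{n}}-1\sim 2\mu d_{n}$, one then obtains, exactly as in \eqref{5.30},
\[
\|F\|^{2}_{L^{2}(\mathcal I;\mathbb C^{2p})}\ \le\ C\sum_{n}\bigl(\|\mathcal U_{n}\|^{2}+\|\mathcal V_{n}\|^{2}\bigr)d_{n}\ \le\ C'\sum_{n}d_{n}\prod_{k=1}^{n-1}\bigl(1+c\,\|\beta_{k}\|_{\mathbb C^{p\times p}}\bigr)^{2}\ <\ \infty
\]
by \eqref{5.26B}. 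Hence $\mathrm{\bf D}_{X,\beta}$ is regular at $0$ and limit circle at $b$.

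To conclude, let $\widetilde{\mathrm{\bf D}}$ be an arbitrary self-adjoint extension of $\mathrm{\bf D}_{X,\beta}$ and fix $\lambda\in\mathbb C\setminus\mathbb R\subset\rho(\widetilde{\mathrm{\bf D}})$. The resolvent $(\widetilde{\mathrm{\bf D}}-\lambda)^{-1}$ is an integral operator whose matrix Green kernel $G_{\lambda}(x,y)$ is, on $\mathcal I\setminus X$, a bilinear combination of solutions of $(\mathrm{\bf D}-\lambda)F=0$ and $(\mathrm{\bf D}-\overline{\lambda})F=0$, with coefficients determined by the finitely many boundary data of $\widetilde{\mathrm{\bf D}}$ together with the transmission matrices, which near $b$ are again controlled by the convergent products above. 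By the previous step all such solutions belong to $L^{2}(\mathcal I;\mathbb C^{p\times p})$, whence $\iint_{\mathcal I\times\mathcal I}\|G_{\lambda}(x,y)\|^{2}\,dx\,dy<\infty$; thus $(\widetilde{\mathrm{\bf D}}-\lambda)^{-1}\in\mathcal S_{2}(L^{2}(\mathcal I;\mathbb C^{2p}))$, in particular it is compact, and therefore $\widetilde{\mathrm{\bf D}}$ has discrete spectrum. (Alternatively, one may invoke the classical fact that a formally symmetric ordinary differential operator which is regular at one endpoint and limit circle at the other has every self-adjoint realization with compact resolvent.) The point that needs care is precisely this last step --- assembling the Green kernel of an operator carrying infinitely many transmission conditions and bounding its connection coefficients uniformly near $b$; the rest is a routine repetition of the computations in the proof of Theorem~\ref{VarIndices}.
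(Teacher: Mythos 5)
Your first step is sound and is in fact a genuine strengthening of the computation behind Theorems \ref{VarIndices} and \ref{VarIndicesBeta}: running the recursion for $\mathcal U_n,\mathcal V_n$ from arbitrary initial data (and, for $\lambda\neq \mp i$, with the modified exponents, which is harmless because $\sum_n d_n<\infty$) still gives $\|\mathcal U_n\|,\|\mathcal V_n\|\le C_F\exp\bigl(\mathrm{const}\sum_k d_k\bigr)\prod_{k<n}\bigl(1+c\|\beta_k\|_{\C^{p\times p}}\bigr)$, so under \eqref{5.26B} \emph{every} solution of the interface problem is square integrable near $b$, not only the $p$-dimensional family constructed in the paper. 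This ``limit circle at $b$'' observation is a legitimate alternative starting point and differs from the paper's (implicit) route, which stays on the abstract side: $n_\pm(\mathrm{\bf D}_{X,\beta})=n_\pm({\bf B}_{X,\gB})=p$ via the boundary triplet (cf.\ the proof of Proposition \ref{JBabs} and Proposition \ref{prop_IV.2.1_01}), complete indeterminacy of the block Jacobi operator and Krein's theory (see the subsection ``Relations to the moment problem'' and \eqref{reprod_kernel_Intro}) giving discreteness of every selfadjoint extension of ${\bf B}_{X,\gB}$, discreteness of the reference extension $\mathrm{\bf D}_0$ from \eqref{spectrum-D} (since $d_n\to0$), and Proposition \ref{prop_II.1.4_02} with $q=\infty$ transferring compactness of resolvents --- exactly the scheme carried out in detail in Proposition \ref{D_disc_5}.

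The genuine gap is the step you flag yourself. For an arbitrary selfadjoint extension $\widetilde{\mathrm{\bf D}}$ of $\mathrm{\bf D}_{X,\beta}$ there are no ``finitely many boundary data'' near $b$: the point $b$ is an accumulation point of the interaction set, the extensions are described only abstractly (von Neumann formulas or a boundary relation ``at $b$''), and you do not establish that the resolvent is an integral operator whose kernel has the classical bilinear two-solution form with coefficients controlled uniformly near $b$ --- that is precisely the piece of Weyl/Green-function theory that has to be rebuilt in the presence of infinitely many transmission conditions, and the ``classical fact'' you invoke as a fallback is stated for differential expressions with locally integrable coefficients, so it does not apply verbatim here. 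The reduction you state at the outset (finite deficiency indices, hence resolvents of any two selfadjoint extensions differ by an operator of rank at most $p$, so one discrete extension suffices) is the right way to close the argument, but you then still owe one concrete extension whose resolvent compactness is actually proved --- either by executing the Green-kernel construction for a specific separated-type condition at $b$, or by importing the paper's boundary-triplet chain described above. As written, the decisive compactness step is asserted rather than proved.
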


\begin{corollary}\label{corB}
The GS realization $\mathrm{\bf D}_{X,\beta}$ is symmetric with $n_{\pm}(\mathrm{\bf D}_{X,\beta}) =p$ whenever   $\{\beta_n\}_1^\infty\in l^1(\N;\C^{p\times p})$.
\end{corollary}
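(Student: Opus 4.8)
The plan is to deduce the statement from Theorem \ref{VarIndicesBeta}, mimicking exactly the passage from Theorem \ref{VarIndices} to Corollary \ref{corA}. First I would invoke the elementary inequality
$\prod_{k=1}^{\infty}(1+s_{k})\le \exp\bigl(\sum_{k=1}^{\infty}s_{k}\bigr)$,
valid for any nonnegative sequence $\{s_{k}\}_{1}^{\infty}$, applied with $s_{k}=c\,\|\beta_{k}\|_{\C^{p\times p}}$. This yields, uniformly in $n$, the bound
$\prod_{k=1}^{n-1}(1+c\,\|\beta_{k}\|_{\C^{p\times p}})^{2}\le \exp\bigl(2c\sum_{k=1}^{\infty}\|\beta_{k}\|_{\C^{p\times p}}\bigr)$.

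Next I would multiply through by $d_{n}$ and sum over $n\ge 2$, obtaining
\begin{equation*}
\sum_{n=2}^{\infty}d_{n}\prod_{k=1}^{n-1}\bigl(1+c\,\|\beta_{k}\|_{\C^{p\times p}}\bigr)^{2}
\le \exp\Bigl(2c\sum_{k=1}^{\infty}\|\beta_{k}\|_{\C^{p\times p}}\Bigr)\sum_{n=2}^{\infty}d_{n}
\le |\cI|\exp\Bigl(2c\sum_{k=1}^{\infty}\|\beta_{k}\|_{\C^{p\times p}}\Bigr).
\end{equation*}
The right-hand side is finite: the exponent is finite because $\{\beta_{n}\}_{1}^{\infty}\in l^{1}(\N;\C^{p\times p})$, and $|\cI|=\sum_{n\ge 1}d_{n}<\infty$ by the standing assumption $b=\sup X<\infty$. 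Hence condition \eqref{5.26B} is satisfied, and Theorem \ref{VarIndicesBeta} gives $n_{\pm}(\mathrm{\bf D}_{X,\beta})=p$; symmetry of $\mathrm{\bf D}_{X,\beta}$ was already noted after \eqref{deltap}.

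There is essentially no obstacle: the argument is a direct transcription of the proof of Corollary \ref{corA}. The only point requiring care is using the correct scaling $c\,\|\beta_{k}\|_{\C^{p\times p}}$ appearing in \eqref{5.26B} (in place of $\tfrac1c\,\|\alpha_{k}\|_{\C^{p\times p}}$ of \eqref{5.26}), which reflects the dual roles of the intensities $\alpha$ and $\beta$ in the two families \eqref{delta} and \eqref{deltap} of GS-realizations.
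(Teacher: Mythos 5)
Your proposal is correct and coincides with the paper's argument: Corollary \ref{corB} is the exact analogue of Corollary \ref{corA}, and the paper's (implicit) proof is precisely the same estimate $\prod_k(1+s_k)\le\exp\bigl(\sum_k s_k\bigr)$ with $s_k=c\,\|\beta_k\|_{\C^{p\times p}}$, giving \eqref{5.26B} and then invoking Theorem \ref{VarIndicesBeta}. You also correctly handle the only delicate point, the scaling $c\,\|\beta_k\|$ versus $\tfrac1c\,\|\alpha_k\|$, and the finiteness of $|\cI|$ used to sum $d_n$.
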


\begin{remark}
Clearly, the condition \eqref{5.26B} implies  the inclusion  $\{d_n\}\in l^1(\Bbb N)$, i.e. $|\cI| < \infty$. Note in this connection that in the opposite case, $|\cI| = \infty$, the realization $\mathrm{\bf D}_{X,\beta}$ is always selfadjoint (see \cite{CarMalPos13, BudMalPos17, BudMalPos18}).
   \end{remark}
\begin{corollary}\label{corB1}
The GS realization $\mathrm{\bf D}_{X,\beta}$ is symmetric with $n_{\pm}(\mathrm{\bf D}_{X,\beta}) =p$ whenever
  \begin{equation}\label{5.31B}
\limsup_{n\to\infty}\,\frac{d_{n+1}}{d_{n}}\left(1+c\|\beta_{n}\|_{\mathbb C^{p\times
p}}\right)^{2}<1\,.
   \end{equation}
In particular,  $n_{\pm}(\mathrm{\bf D}_{X,\beta}) =p$  provided that one of the following  conditions is satisfied
\item $(i)$   $\limsup_{n\to\infty}(d_{n+1}/d_{n})=0$ and  $\sup_{n\in \N}\|\beta_{n}\|_{\mathbb C^{p\times
p}}<\infty$;
\item $(ii)$  $\limsup_{n\to\infty}(d_{n+1}/d_{n})=:(1/d)$ with  $d>1$ and
$\sup_{n\in \N} \|\beta_{n}\|_{\mathbb C^{p\times
p}} < c(\sqrt{d}-1).$
\end{corollary}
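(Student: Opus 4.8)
The plan is to deduce the corollary from Theorem~\ref{VarIndicesBeta} exactly as the $\alpha$-version was obtained from Theorem~\ref{VarIndices}: show that the ratio hypothesis~\eqref{5.31B} forces the convergence of the series~\eqref{5.26B}. Symmetry of $\mathrm{\bf D}_{X,\beta}$ is automatic by construction, so only the equality $n_\pm(\mathrm{\bf D}_{X,\beta})=p$ has to be produced, and that is precisely the conclusion of Theorem~\ref{VarIndicesBeta} once~\eqref{5.26B} is known.

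Concretely, I would set $a_n:=d_n\prod_{k=1}^{n-1}\bigl(1+c\|\beta_k\|_{\mathbb C^{p\times p}}\bigr)^2$ for the general term of the series in~\eqref{5.26B}. The ratio of consecutive terms reduces to
\[
\frac{a_{n+1}}{a_n}=\frac{d_{n+1}}{d_n}\bigl(1+c\|\beta_n\|_{\mathbb C^{p\times p}}\bigr)^2 ,
\]
so that hypothesis~\eqref{5.31B} reads exactly $\limsup_{n\to\infty}a_{n+1}/a_n<1$. The d'Alembert ratio test then yields $\sum_n a_n<\infty$, i.e.~\eqref{5.26B}, whence Theorem~\ref{VarIndicesBeta} gives $n_\pm(\mathrm{\bf D}_{X,\beta})=p$.

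For the two displayed special cases it remains to check that each implies~\eqref{5.31B}. In case~(i), $d_{n+1}/d_n\to0$ while $\bigl(1+c\|\beta_n\|_{\mathbb C^{p\times p}}\bigr)^2$ stays bounded by $\bigl(1+c\sup_n\|\beta_n\|_{\mathbb C^{p\times p}}\bigr)^2<\infty$, so $a_{n+1}/a_n\to0<1$. In case~(ii), a one-line estimate of the factor $\bigl(1+c\|\beta_n\|_{\mathbb C^{p\times p}}\bigr)^2$ from the bound on $\sup_n\|\beta_n\|_{\mathbb C^{p\times p}}$, combined with $\limsup_n d_{n+1}/d_n=1/d$ and the inequality $\limsup_n(x_ny_n)\le(\limsup_n x_n)(\limsup_n y_n)$ for bounded nonnegative sequences, yields $\limsup_n a_{n+1}/a_n<1$. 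I do not foresee any real obstacle here: the argument is the verbatim $\beta$-analogue of the ratio-test reduction carried out after Theorem~\ref{VarIndices}, the only point of care being to use the one-sided product rule for $\limsup$ rather than a stronger one that need not hold.
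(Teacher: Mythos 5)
Your argument coincides with the paper's: hypothesis \eqref{5.31B} is exactly the ratio $a_{n+1}/a_n$ of consecutive terms of the series \eqref{5.26B}, so d'Alembert's test gives convergence and Theorem \ref{VarIndicesBeta} yields $n_\pm(\mathrm{\bf D}_{X,\beta})=p$, which is precisely how the paper handles the $\alpha$-analogue after Theorem \ref{VarIndices}. One caveat on case (ii): since the factor here is $\left(1+c\|\beta_n\|_{\mathbb C^{p\times p}}\right)^{2}$, your "one-line estimate" actually needs $\sup_{n}\|\beta_n\|_{\mathbb C^{p\times p}}<(\sqrt{d}-1)/c$ (the printed bound $c(\sqrt{d}-1)$ is a slip carried over from the $\alpha$-case, where the factor is $\left(1+\|\alpha_n\|_{\mathbb C^{p\times p}}/c\right)^{2}$), so your reduction is valid with that corrected constant but does not follow from the bound as literally stated when $c>1$.
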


\section{Jacobi matrices  with maximal deficiency indices generated by Dirac operators with point interactions}\label{J_'}
\subsection{Block Jacobi matrices $\widehat{{\bf J}}_{X,\gA}$ with maximal deficiency indices}
Here we apply results of Sections \ref{sec4} and \ref{sec2} to block  Jacobi matrices
{\small \begin{equation}\label{IV.2.1_01''}
\widehat{{\bf J}}_{X,\gA}\!=\!\left(
\begin{array}{cccccc}
  \mathcal A_0' \!& \widehat{\nu}_0(\mathbb I_p+\mathcal B_0')\! & \mathbb O_p \! & \mathbb O_p\! & \mathbb O_p\! &  \dots\!\\
  \widehat{\nu}_0(\mathbb I_p+\mathcal B_0') \! &  -\widehat{\nu}_0(\mathbb I_p+\mathcal A_1') \!&
   \widetilde{\nu}_0(\mathbb I_p+\mathcal B_1') \!& \mathbb O_p \! &  \mathbb O_p \!& \dots\!\\
  \mathbb O_p \! & \widetilde{\nu}_0(\mathbb I_p +\mathcal B_1') \!& \frac{\alpha_1}{\gd_2}(\mathbb I_p +\mathcal A_2')\! &   \widehat{\nu}_1(\mathbb I_p+\mathcal B_2') \!&  \mathbb O_p \!&  \dots\!\\
  \mathbb O_p \! & \mathbb O_p \! & \widehat{\nu}_1(\mathbb I_p +\mathcal B_2')\!&  -\widehat{\nu}_1(\mathbb I_p+\mathcal A_3')\! &\widetilde{\nu}_1(\mathbb I_p +\mathcal B_3')\!&
    \dots\!\\
  \mathbb O_p \! & \mathbb O_p\!  & \mathbb O_p \! & \widetilde{\nu}_1(\mathbb I_p+\mathcal B_3')\! &\frac{\alpha_2}{\gd_3}(\mathbb I_p+\mathcal A_4')\! &
    \dots\!\\
\dots\!& \dots\!&\dots\!&\dots\!&\dots\!&\dots\!\\
 \end{array}%
\right)\,,
   \end{equation}}
where $\widehat{\nu}_n:=\frac{\nu(\gd_{n+1})}{\gd_{n+1}^{2}}$,\: $\widetilde{\nu}_n:= \frac{\nu(\gd_{n+1})}{\gd_{n+1}^{3/2}\gd_{n+2}^{1/2}}$,\,
$\gA_n=\alpha_n^*$,  $\mathcal A_n' = (\mathcal A_n')^*,\ \mathcal B_n' = (\mathcal B_n')^* \  \in  \mathbb C^{p\times p}$ ($n\in \mathbb N_0$)  and matrices $\mathbb I_p + \mathcal B_n'$ are invertible, i.e.  $\det (\mathbb I_p +\mathcal B_n')\not =0$,\  $n\in\mathbb N_0$
and
   \begin{equation}\label{IV.2.1_01NU}
\nu(x):=\frac{1}{\sqrt{1+(c^2x^2)^{-1}}}=\frac{cx}{\sqrt{1+c^2x^2}}.
   \end{equation}
In what follows we  keep also the notation   $\widehat{{\bf J}}_{X,\gA}$ for the minimal block  Jacobi
operator associated in  a  standard  way  with the matrix $\widehat{{\bf J}}_{X,\gA}$
in $l^2(\mathbb N;\mathbb C^p)$ (see \cite{Akh, Ber68}, and  also \cite{Krein}).
Clearly the operator $\widehat{{\bf J}}_{X,\gA}$ is symmetric and as it is known $\mathrm{n}_\pm(\widehat{{\bf J}}_{X,\gA}) \leq\,p$ (see \cite{Ber68, Krein, Krein49}).

Our investigation of the deficiency indices   $n_{\pm}(\widehat{{\bf J}}_{X,\gA})$
substantially relies on the connection between GS-realizations $\mathrm{\bf D}_{X,\alpha}$ (see \eqref{delta}) of the Dirac operator $\mathrm{\bf D}$ on the one hand and
the block   Jacobi matrices
\begin{equation}\label{IV.2.1}
{\bf J}_{X,\alpha}=\left(
\begin{array}{cccccc}
  \mathbb O_p  & \frac{\nu(\gd_{1})}{\gd_1^{2}}\mathbb I_p & \mathbb O_p  & \mathbb O_p & \mathbb O_p   &  \dots\\
   \frac{\nu(\gd_{1})}{\gd_1^{2}}\mathbb I_p  &  -\frac{\nu(\gd_{1})}{\gd_1^{2}}\mathbb I_p &
   \frac{\nu(\gd_{1})}{\gd_1^{3/2}\gd_2^{1/2}}\mathbb I_p & \mathbb O_p  & \mathbb O_p &  \dots\\
  \mathbb O_p  & \frac{\nu(\gd_{1})}{\gd_1^{3/2}\gd_2^{1/2}}\mathbb I_p  & \frac{\alpha_1}{\gd_2}  &
   \frac{\nu(\gd_{2})}{\gd_2^{2}}\mathbb I_p & \mathbb O_p &   \dots\\
  \mathbb O_p  & \mathbb O_p  & \frac{\nu(\gd_{2})}{\gd_2^{2}}\mathbb I_p &  -\frac{\nu(\gd_{2})}{\gd_2^{2}}\mathbb I_p &
  \frac{\nu(\gd_{2})}{\gd_2^{3/2}\gd_3^{1/2}}\mathbb I_p &  \dots\\
  \mathbb O_p  & \mathbb O_p  & \mathbb O_p  & \frac{\nu(\gd_{2})}{\gd_2^{3/2}\gd_3^{1/2}}\mathbb I_p &
   \frac{\alpha_2}{\gd_3} &   \dots\\
\dots& \dots&\dots&\dots&\dots&\dots\\
 \end{array}%
\right)\,,
   \end{equation}
on the other hand. This connection was first  discovered in \cite{CarMalPos13}  for $p=1$ and extended
to the  matrix  case $(p>1)$ in \cite{BudMalPos17, BudMalPos18}.  It is briefly  explained in Appendix (see Proposition \ref{prop_IV.2.1_01}). 

   \begin{proposition}\label{def_ind_B}
Let  ${\bf J}_{X,\alpha}$  be the minimal Jacobi operator
associated to the Jacobi matrix of the form \eqref{IV.2.1}.
 Let also  a sequence  $\alpha:=\{\alpha_n\}_1^\infty (\subset \mathbb C^{p\times p})$
of selfadjoint matrices satisfy condition  \eqref{5.26}. Then the operator ${\bf J}_{X,\alpha}$ has maximal deficiency indices,  $n_\pm({\bf J}_{X,\alpha})=p$.
  \end{proposition}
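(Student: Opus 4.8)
The plan is to reduce Proposition \ref{def_ind_B} to Theorem \ref{VarIndices} via the equality of deficiency indices \eqref{equal_def_ind_Dirac} (or more precisely the statement, recalled in the Appendix, that $n_\pm(\mathrm{\bf D}_{X,\alpha}) = n_\pm({\bf J}_{X,\alpha})$ for ${\bf J}_{X,\alpha}$ of the form \eqref{IV.2.1}). Since the hypothesis on $\{\alpha_n\}_1^\infty$ is exactly condition \eqref{5.26}, Theorem \ref{VarIndices} directly yields $n_\pm(\mathrm{\bf D}_{X,\alpha}) = p$, and then the identity $n_\pm({\bf J}_{X,\alpha}) = n_\pm(\mathrm{\bf D}_{X,\alpha})$ gives $n_\pm({\bf J}_{X,\alpha}) = p$, as claimed. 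So the whole argument is essentially a one-line deduction, with all the substance already contained in Theorem \ref{VarIndices}.

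The one point that needs care is the correspondence between the data: the Jacobi matrix \eqref{IV.2.1} is built from the \emph{same} sequences $\{d_n\}$ and $\{\alpha_n\}$ as enter the Dirac operator $\mathrm{\bf D}_{X,\alpha}$, with $\nu(x)$ given by \eqref{IV.2.1_01NU}, so the hypothesis \eqref{5.26} is literally the hypothesis of Theorem \ref{VarIndices}. I would therefore first recall (citing Proposition \ref{prop_IV.2.1_01} of the Appendix and \cite{CarMalPos13, BudMalPos17, BudMalPos18}) that, under the standing assumptions on $X$ and $\alpha$, one has the identity
\begin{equation*}
n_\pm({\bf J}_{X,\alpha}) = n_\pm(\mathrm{\bf D}_{X,\alpha}),
\end{equation*}
valid in particular when $|\cI| < \infty$, which holds here since \eqref{5.26} forces $\{d_n\}\in l^1(\N)$.

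Next I would invoke Theorem \ref{VarIndices}: the sequence $\{\alpha_n\}_1^\infty$ consists of selfadjoint matrices and satisfies \eqref{5.26}, hence $n_\pm(\mathrm{\bf D}_{X,\alpha}) = p$. Combining the two displayed facts gives $n_\pm({\bf J}_{X,\alpha}) = p$. Since the general bound $n_\pm({\bf J}_{X,\alpha}) \le p$ is automatic for $p\times p$ block Jacobi matrices (see \cite{Ber68, Krein, Krein49}), the value $p$ is the maximal one, and this completes the proof.

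I do not expect any real obstacle here: the proposition is a corollary obtained by transporting Theorem \ref{VarIndices} through the already-established unitary/deficiency-index correspondence between $\mathrm{\bf D}_{X,\alpha}$ and ${\bf J}_{X,\alpha}$. The only thing to double-check is that the Appendix statement being cited is formulated for exactly the matrix \eqref{IV.2.1} (and not merely for the primed variant ${\bf J}'_{X,\alpha}$ of \eqref{IV.2.1_01_intro}); if only the primed version is available one additionally uses the chain $n_\pm({\bf J}_{X,\alpha}) = n_\pm({\bf J}'_{X,\alpha}) = n_\pm(\mathrm{\bf D}_{X,\alpha})$ from \eqref{equal_def_ind_Dirac}. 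Either way the conclusion is immediate.
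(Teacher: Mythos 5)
Your proposal is correct and takes essentially the same route as the paper: note that \eqref{5.26} forces $\{d_n\}_1^\infty\in l^1(\N)$, transport Theorem \ref{VarIndices} through the Dirac--Jacobi correspondence of Proposition \ref{prop_IV.2.1_01}, and conclude $n_\pm({\bf J}_{X,\alpha})=p$. The only detail the paper spells out which you merely flag is that the boundary operator supplied by Proposition \ref{prop_IV.2.1_01} is ${\bf B}_{X,\gA}$ of \eqref{IV.2.1_01}, which differs from ${\bf J}_{X,\alpha}$ only in the signs of some off-diagonal entries and is therefore unitarily equivalent to it (via a diagonal sign matrix), giving $n_\pm({\bf B}_{X,\gA})=n_\pm({\bf J}_{X,\alpha})$.
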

    \begin{proof}
 It follows from condition \eqref{5.26}  that $\sum_k d_k =: b<\infty$. Consider the minimal
 Dirac operator ${\mathrm {\bf D}}_{X}$ in  $ L^{2}([0,b]; \C^{2p})$ 
 and the boundary triplet $\Pi = \{\cH, \Gamma_0,\Gamma_1\}$ for ${\mathrm {\bf D}}_{X}^*$  given  by \eqref{IV.1.1_12}.
 By Proposition \ref{prop_IV.2.1_01}, the GS-realization $\mathrm{\bf D}_{X,\alpha}$  is given by \eqref{B-op-D},
 i.e. the  boundary operator of $\mathrm{\bf D}_{X,\alpha}$ with respect to the triplet $\Pi$ is
  the minimal operator associated  to the Jacobi matrix  ${\bf B}_{X,\gA}$ \eqref{IV.2.1_01}.

On the other hand,  ${\bf B}_{X,\gA}$  is unitarily equivalent to the minimal Jacobi operator associated
to the  matrix ${\bf J}_{X,\alpha}$ with positive offdiagonal entries. Hence,
$n_{\pm}({\bf J}_{X,\alpha}) = n_{\pm}({\bf B}_{X,\alpha}).$
 Combining this  equality with  Proposition \ref{prop_IV.2.1_01}
 imply $n_\pm(\mathrm{\bf D}_{X,\alpha})=n_\pm({\bf B}_{X,\gA})=n_{\pm}({\bf J}_{X,\alpha})$.  It remains to apply Theorem \ref{VarIndices}.
\end{proof}
\begin{corollary}\label{crit-max-alpha}
Let  ${\bf J}_{X,0}$  be the minimal Jacobi operator
associated to the Jacobi matrix~\eqref{IV.2.1} with zero $\{\alpha_n\}_1^\infty\equiv\mathbb O$. Then the operator ${\bf J}_{X,0}$  has maximal deficiency indices  if and only if $\{d_n\}_{n\in\mathbb N}\in l^1(\mathbb N)$.
\end{corollary}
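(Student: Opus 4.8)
The plan is to establish the two implications separately, using Proposition~\ref{def_ind_B} for the sufficiency and the matrix Carleman test (Theorem~\ref{CarT}) for the necessity.

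For the implication $\{d_n\}_{n\in\mathbb N}\in l^1(\mathbb N)\Rightarrow n_\pm({\bf J}_{X,0})=p$ I would simply observe that when $\alpha=\mathbb O$ the product $\prod_{k=1}^{n-1}\bigl(1+\tfrac1c\|\alpha_k\|_{\mathbb C^{p\times p}}\bigr)^{2}$ in condition \eqref{5.26} equals $1$, so \eqref{5.26} reduces to $\sum_{n=2}^{\infty}d_n<\infty$, which is precisely the hypothesis $\{d_n\}\in l^1(\mathbb N)$. Hence Proposition~\ref{def_ind_B} applies verbatim and yields $n_\pm({\bf J}_{X,0})=p$.

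For the converse I would argue by contraposition: assume $\sum_n d_n=+\infty$ and show $n_\pm({\bf J}_{X,0})=0$, which (since $p\ge1$) contradicts maximality. With $\alpha=\mathbb O$ the off-diagonal blocks of \eqref{IV.2.1} are scalar multiples of $\mathbb I_p$; writing them as $\mathcal B_{2n-2}=\tfrac{\nu(d_n)}{d_n^{2}}\mathbb I_p$ and $\mathcal B_{2n-1}=\tfrac{\nu(d_n)}{d_n^{3/2}d_{n+1}^{1/2}}\mathbb I_p$ and inserting $\nu(x)=cx/\sqrt{1+c^2x^2}$, one computes $\|\mathcal B_{2n-2}\|^{-1}=\tfrac{d_n}{c}\sqrt{1+c^2d_n^{2}}\ge\tfrac{d_n}{c}$. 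Consequently $\sum_{j=0}^{\infty}\|\mathcal B_j\|^{-1}\ge\sum_{n=1}^{\infty}\|\mathcal B_{2n-2}\|^{-1}\ge\tfrac1c\sum_n d_n=+\infty$, so the Carleman test \eqref{Car} is satisfied and ${\bf J}_{X,0}={\bf J}_{X,0}^*$, i.e. $n_\pm({\bf J}_{X,0})=0$. An equivalent, more operator-theoretic route for this direction is to note that $\sum_n d_n=\infty$ forces $\mathcal I=\mathbb R_+$, so the GS-realization $\mathrm{\bf D}_{X,0}$ on $L^2(\mathbb R_+;\mathbb C^{2p})$ is selfadjoint (as recalled at the start of Section~\ref{sec2}), whence $n_\pm({\bf J}_{X,0})=n_\pm(\mathrm{\bf D}_{X,0})=0$ by \eqref{equal_def_ind_Dirac}. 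No serious obstacle arises here; the only point needing a word of care is the lower bound $\|\mathcal B_{2n-2}\|^{-1}\ge d_n/c$, which holds uniformly because $\sqrt{1+c^2d_n^{2}}\ge1$ for every $n$, independently of whether the sequence $\{d_n\}$ tends to $0$.
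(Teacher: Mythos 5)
Your proposal is correct and follows essentially the same route as the paper: the sufficiency is obtained by noting that for $\alpha=\mathbb O$ condition \eqref{5.26} reduces to $\{d_n\}\in l^1(\mathbb N)$ and applying Proposition~\ref{def_ind_B}, while the necessity follows from the Carleman test \eqref{Car}, which you merely spell out via the explicit bound $\|\mathcal B_{2n-2}\|^{-1}=\tfrac{d_n}{c}\sqrt{1+c^2d_n^2}\ge d_n/c$ (the paper calls this step "immediate"). Your alternative remark via selfadjointness of $\mathrm{\bf D}_{X,0}$ on $L^2(\mathbb R_+;\mathbb C^{2p})$ is a valid but inessential variant of the same necessity argument.
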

  \begin{proof}
Clearly, that for $\alpha=\mathbb O$ the  condition
$\{d_n\}_{n\in\mathbb N}\in l^1(\mathbb N)$ is equivalent to  estimate \eqref{5.26}. Thus, sufficiency follows from
Proposition \ref{def_ind_B}. Necessity is immediate from the Carleman test \eqref{Car}.
\end{proof}
\begin{theorem}\label{abs_thA}
Let $\widehat{{\bf J}}_{X,\alpha}$ be the matrix of the form \eqref{IV.2.1_01''} and the sequence  $\alpha:=\{\alpha_n\}_1^\infty (\subset \mathbb C^{p\times p})$,  $\gA_n=\alpha_n^*$, $n\in \Bbb N,$  satisfy condition  \eqref{5.26}. Assume also that for some $N\in\mathbb N_0$ the following conditions hold:
\begin{equation}\label{abs1A}
(i)\quad\underset{n\geq N}{\sup}\,\|\mathcal B_n'\|_{\mathbb C^{p\times p}}= a_N<1;\qquad\qquad\qquad\qquad\qquad\qquad
\end{equation}
\begin{equation}\label{abs2A}
(ii)\quad\underset{j\geq 0}{\sup}\,\frac{1}{d_{j+1}}\left\|\mathcal B_{2j}'-\mathcal B_{2j-1}'\right\|_{\mathbb C^{p\times p}}=C_B'<\infty,\qquad\qquad\qquad
\end{equation}
\begin{equation}\label{abs2A'}
\underset{j\geq 0}{\sup}\,\frac{1}{(\gd_{j+1}\gd_{j+2})^{1/2}}\left\|\mathcal B_{2j+1}'-\mathcal B_{2j}'\right\|_{\mathbb C^{p\times p}}=C_B''<\infty;
\end{equation}
\begin{equation}\label{abs3A}
(iii)\quad\underset{j\geq 0}{\sup}\,\frac{1}{\gd_{j+1}}\left\|\alpha_j(\mathcal A_{2j}'-\mathcal B_{2j-1}')\right\|_{\mathbb C^{p\times p}}=C_A'<\infty,\qquad\qquad\quad
\end{equation}
\begin{equation}\label{abs3A'}
\underset{j\geq 0}{\sup}\,\frac{1}{d_{j+1}}\left\|\mathcal A_{2j+1}'-\mathcal B_{2j}'\right\|_{\mathbb C^{p\times p}}=C_A''<\infty.\qquad\qquad
\end{equation}
Then the matrix $\widehat{{\bf J}}_{X,\alpha}$  is in the complete indeterminant case, i.e.
$n_\pm(\widehat{{\bf J}}_{X,\alpha})=p$.
\end{theorem}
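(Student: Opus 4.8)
The plan is to derive Theorem~\ref{abs_thA} from the abstract perturbation result Theorem~\ref{abs_th} (in fact from its Corollary~\ref{abs_cor}) applied to the pair $({\bf J},\widehat{{\bf J}}):=({\bf J}_{X,\alpha},\widehat{{\bf J}}_{X,\alpha})$, where ${\bf J}_{X,\alpha}$ is the unperturbed Jacobi matrix of form~\eqref{IV.2.1}. By Proposition~\ref{def_ind_B}, condition~\eqref{5.26} already forces $n_\pm({\bf J}_{X,\alpha})=p$; hence, once the hypotheses \eqref{abs1}--\eqref{abs3} of Theorem~\ref{abs_th} have been verified for this pair and some $N$, Theorem~\ref{abs_th}(a) gives $n_\pm(\widehat{{\bf J}}_{X,\alpha})=n_\pm({\bf J}_{X,\alpha})=p$, which is the assertion.

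The technical core is a direct reading-off and comparison of the entries of \eqref{IV.2.1} and \eqref{IV.2.1_01''}. The off-diagonal entries of ${\bf J}_{X,\alpha}$ are \emph{scalar}: $\mathcal B_{2j}=\widehat{\nu}_j\mathbb I_p$, $\mathcal B_{2j+1}=\widetilde{\nu}_j\mathbb I_p$, while the corresponding entries of $\widehat{{\bf J}}_{X,\alpha}$ are $\widehat{\mathcal B}_{2j}=\widehat{\nu}_j(\mathbb I_p+\mathcal B_{2j}')$, $\widehat{\mathcal B}_{2j+1}=\widetilde{\nu}_j(\mathbb I_p+\mathcal B_{2j+1}')$. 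Since $\mathcal B_n'=(\mathcal B_n')^*$, this yields the crucial simplification $\widehat{\mathcal B}_{n}^*(\mathcal B_{n}^*)^{-1}=\mathbb I_p+\mathcal B_n'$. From it: (a) $\mathbb I_p-\widehat{\mathcal B}_n^*(\mathcal B_n^*)^{-1}=-\mathcal B_n'$, so \eqref{abs1} coincides with \eqref{abs1A}; (b) $\widehat{\mathcal B}_n-\widehat{\mathcal B}_{n-1}^*(\mathcal B_{n-1}^*)^{-1}\mathcal B_n$ equals $\widehat{\nu}_j(\mathcal B_{2j}'-\mathcal B_{2j-1}')$ for $n=2j$ and $\widetilde{\nu}_j(\mathcal B_{2j+1}'-\mathcal B_{2j}')$ for $n=2j+1$, and using $\nu(x)<cx$ (see \eqref{IV.2.1_01NU}), i.e. $\widehat{\nu}_j<c\gd_{j+1}^{-1}$ and $\widetilde{\nu}_j<c(\gd_{j+1}\gd_{j+2})^{-1/2}$, these are bounded by $cC_B'$ and $cC_B''$ via \eqref{abs2A} and \eqref{abs2A'}, so \eqref{abs2} holds with $C_B=c\max\{C_B',C_B''\}$.

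For the diagonal term it suffices to check the bounded version \eqref{abs3'} of Corollary~\ref{abs_cor}. For an odd row $n=2j+1$, with $\mathcal A_{2j+1}=-\widehat{\nu}_j\mathbb I_p$ and $\widehat{\mathcal A}_{2j+1}=-\widehat{\nu}_j(\mathbb I_p+\mathcal A_{2j+1}')$, one gets $\widehat{\mathcal A}_{2j+1}-\widehat{\mathcal B}_{2j}^*(\mathcal B_{2j}^*)^{-1}\mathcal A_{2j+1}=\widehat{\nu}_j(\mathcal B_{2j}'-\mathcal A_{2j+1}')$, of norm $\le cC_A''$ by \eqref{abs3A'}. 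For an even row $n=2j$, with $\mathcal A_{2j}=\gd_{j+1}^{-1}\alpha_j$ and $\widehat{\mathcal A}_{2j}=\gd_{j+1}^{-1}\alpha_j(\mathbb I_p+\mathcal A_{2j}')$, a short computation gives $\widehat{\mathcal A}_{2j}-\widehat{\mathcal B}_{2j-1}^*(\mathcal B_{2j-1}^*)^{-1}\mathcal A_{2j}=\gd_{j+1}^{-1}(\alpha_j\mathcal A_{2j}'-\mathcal B_{2j-1}'\alpha_j)$. Here one invokes that the diagonal entry $\widehat{\mathcal A}_{2j}$ of the symmetric matrix $\widehat{{\bf J}}_{X,\alpha}$ is selfadjoint, which forces $\alpha_j\mathcal A_{2j}'=\mathcal A_{2j}'\alpha_j$; consequently $\alpha_j\mathcal A_{2j}'-\mathcal B_{2j-1}'\alpha_j=(\mathcal A_{2j}'-\mathcal B_{2j-1}')\alpha_j=\left(\alpha_j(\mathcal A_{2j}'-\mathcal B_{2j-1}')\right)^*$, whose norm equals that in \eqref{abs3A}, i.e. is $\le C_A'\gd_{j+1}$. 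Thus \eqref{abs3'} holds with $C_A=\max\{cC_A'',C_A'\}$ for all $n\ge N$ with $N$ large, and Corollary~\ref{abs_cor} applies.

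The main obstacle is precisely this even-row diagonal term: in contrast with the off-diagonal blocks, $\mathcal A_{2j}'$ need not be bounded, and the hypotheses only control the product $\alpha_j\mathcal A_{2j}'$ through \eqref{abs3A}; the observation that selfadjointness of $\widehat{\mathcal A}_{2j}$ yields $[\alpha_j,\mathcal A_{2j}']=0$ and hence identifies $\alpha_j\mathcal A_{2j}'-\mathcal B_{2j-1}'\alpha_j$ with the adjoint of the quantity in \eqref{abs3A} is what makes \eqref{abs3'}, and with it the bounded-perturbation Corollary~\ref{abs_cor}, available. If one preferred not to use this, one would instead retain the commutator term $\gd_{j+1}^{-1}[\alpha_j,\mathcal B_{2j-1}']$ and verify the full condition \eqref{abs3}, absorbing that term into the $\varepsilon\|{\bf J}_{X,\alpha}f\|^2$ slack. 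The finitely many initial rows, in particular the anomaly $\widehat{\mathcal A}_0=\mathcal A_0'$ versus $\mathcal A_0=\mathbb O_p$, are harmless, since the conditions are needed only for $n\ge N$ and the top $N$-dimensional block splits off as a bounded selfadjoint summand (cf.\ \eqref{J-decomposition} in the proof of Theorem~\ref{abs_th}).
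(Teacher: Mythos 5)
Your proposal is correct and follows essentially the same route as the paper: verify conditions \eqref{abs1}, \eqref{abs2}, \eqref{abs3'} of Corollary \ref{abs_cor} for the pair $\{{\bf J}_{X,\alpha},\widehat{{\bf J}}_{X,\alpha}\}$ by the same entrywise computations (using $\nu(x)<cx$) and then invoke Proposition \ref{def_ind_B} to get $n_\pm(\widehat{{\bf J}}_{X,\alpha})=n_\pm({\bf J}_{X,\alpha})=p$. Your treatment of the even-row diagonal term, where selfadjointness of $\widehat{\mathcal A}_{2j}$ forces $[\alpha_j,\mathcal A_{2j}']=0$ and an adjoint argument identifies $\|\alpha_j\mathcal A_{2j}'-\mathcal B_{2j-1}'\alpha_j\|$ with the quantity in \eqref{abs3A}, is a slightly more careful rendering of the step the paper writes directly, but it is the same argument in substance.
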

\begin{proof}
 Let us check conditions of Corollary \ref{abs_cor} for Jacobi matrices $\widehat{{\bf J}}_{X,\alpha}$ and ${\bf J}_{X,\alpha}$  given  by   \eqref{IV.2.1_01''} and \eqref{IV.2.1}, respectively.
First we check condition \eqref{abs1}  considering the case $n=2j+1$ only. Applying  condition  \eqref{abs1A}  to  these matrices   we easily derive
\begin{equation}
 \underset{j\geq N}{\sup}\|\mathbb I_p-\widehat{\mathcal B}_{2j+1}\mathcal B_{2j+1}^{-1}\|_{\mathbb C^{p\times p}}=\underset{j\geq N}{\sup}\left\|\mathbb I_p-\frac{\nu(\gd_{j+1})}{\gd_{j+1}^{3/2}\gd_{j+2}^{1/2}}(\mathbb I_p+\mathcal B_{2j+1}')\frac{\gd_{j+1}^{3/2}\gd_{j+2}^{1/2}}{\nu(\gd_{j+1})}\right\|_{\mathbb C^{p\times p}}
  =\underset{j\geq N}{\sup}\left\|\mathcal B_{2j+1}'\right\|_{\mathbb C^{p\times p}}\leq a_N<1.
 \end{equation}
 The case $n=2j$ is considered similarly.

 Note also that condition \eqref{abs1A}  ensures invertibility of all off-diagonal entries of
 the Jacobi  matrix \eqref{IV.2.1_01''}.

Next we check condition \eqref{abs2}. Consider the case $n=2j$ only.  Applying  condition  \eqref{abs2A} to
the pair  $\{{\bf J}_{X,\alpha},  \widehat{{\bf J}}_{X,\alpha}\}$ and noting that $\nu(d_{j+1}) < c\,d_{j+1}$ and
$\nu(d_{j+1})\thicksim c\,d_{j+1}$ as $d_{j}\to0$, we   obtain
   \begin{align}
 &\underset{j\geq 0}{\sup}\|\widehat{\mathcal B}_{2j}-\widehat{\mathcal B}_{2j-1}\mathcal B_{2j-1}^{-1}\mathcal B_{2j}\|_{\mathbb C^{p\times p}}=\\
 &=\underset{j\geq 0}{\sup}\left\|\frac{\nu(\gd_{j+1})}{d_{j+1}^2}(\mathbb I_p+\mathcal B_{2j}')-\frac{\nu(\gd_{j})}{\gd_{j}^{3/2}\gd_{j+1}^{1/2}}(\mathbb I_p+\mathcal B_{2j-1}')\frac{\gd_{j}^{3/2}\gd_{j+1}^{1/2}\nu(\gd_{j+1})}{\nu(\gd_{j})d_{j+1}^2}\right\|_{\mathbb C^{p\times p}}\nonumber\\
 &=\underset{j\geq 0}{\sup}\frac{\nu(\gd_{j+1})}{d_{j+1}^2}\left\|\mathcal B_{2j}'-\mathcal B_{2j-1}'\right\|_{\mathbb C^{p\times p}}  \leq\underset{j\geq 0}{\sup}\,\frac{c}{d_{j+1}}\left\|\mathcal B_{2j}'-\mathcal B_{2j-1}'\right\|_{\mathbb C^{p\times p}} \leq  C_B'<\infty.\nonumber
 \end{align}

 The case $n=2j+1$ is treated  similarly.

Finally, we check condition \eqref{abs3'}  considering  the  case $n=2j$ only. The case $n=2j+1$ is considered similarly.  Applying  condition  \eqref{abs3A} to
the pair  $\{{\bf J}_{X,\alpha},  \widehat{{\bf J}}_{X,\alpha}\}$
we  derive
   \begin{align}
& \underset{j\geq 0}{\sup}\|\widehat{\mathcal A}_{2j}-\widehat{\mathcal B}_{2j-1}\mathcal B_{2j-1}^{-1}\mathcal A_{2j}\|_{\mathbb C^{p\times p}}=\underset{j\geq 0}{\sup}\left\|\frac{\alpha_j}{d_{j+1}}(\mathbb I_p+\mathcal A_{2j}')-\frac{\nu(\gd_{j})}{\gd_{j}^{3/2}\gd_{j+1}^{1/2}}(\mathbb I_p+\mathcal B_{2j-1}')\frac{\gd_{j}^{3/2}\gd_{j+1}^{1/2}\cdot\alpha_j}{\nu(\gd_{j})d_{j+1}}\right\|_{\mathbb C^{p\times p}}\nonumber\\
  &=\underset{j\geq 0}{\sup}\frac{1}{d_{j+1}}\left\|\alpha_j(\mathcal A_{2j}'-\mathcal B_{2j-1}')\right\|_{\mathbb C^{p\times p}}=C_A'<\infty.
 \end{align}
Thus,  Corollary \ref{abs_cor} together with Proposition \ref{def_ind_B} ensures  $n_{\pm}(\widehat{{\bf J}}_{X,\alpha}) = n_{\pm}({\bf J}_{X,\alpha}) =  p$.
  \end{proof}

Alongside with the matrix ${\bf J}_{X,\alpha}$ of form \eqref{IV.2.1} we  consider the following
 Jacobi matrix
   \begin{equation}\label{B''}
{\bf J}_{X,\gA}'=\left(
\begin{array}{cccccc}
  \mathbb O_p  & \frac{c}{\gd_1}\mathbb I_p & \mathbb O_p  & \mathbb O_p & \mathbb O_p   &  \dots\\
   \frac{c}{\gd_1}\mathbb I_p  &  -\frac{c}{\gd_1}\mathbb I_p &
   \frac{c}{\gd_1^{1/2}\gd_2^{1/2}}\mathbb I_p & \mathbb O_p  & \mathbb O_p &  \dots\\
  \mathbb O_p  & \frac{c}{\gd_1^{1/2}\gd_2^{1/2}}\mathbb I_p  & \frac{\alpha_1}{\gd_2}  &
   \frac{c}{\gd_2}\mathbb I_p & \mathbb O_p &   \dots\\
  \mathbb O_p  & \mathbb O_p  & \frac{c}{\gd_2}\mathbb I_p &  -\frac{c}{\gd_2}\mathbb I_p &
  \frac{c}{\gd_2^{1/2}\gd_3^{1/2}}\mathbb I_p &  \dots\\
  \mathbb O_p  & \mathbb O_p  & \mathbb O_p  & \frac{c}{\gd_2^{1/2}\gd_3^{1/2}}\mathbb I_p &
   \frac{\alpha_2}{\gd_3} &   \dots\\
\dots& \dots&\dots&\dots&\dots&\dots\\
 \end{array}%
\right)\,,
   \end{equation}
obtained from \eqref{IV.2.1}  by replacing $\nu(d_{n})$ by  $cd_n$. Since
$\nu(d_{n})\thicksim c\,d_n$ as $\lim_{n\to\infty} d_n = 0$ it is naturally
to suppose that $n_{\pm}({\bf J}_{X,\alpha}') = n_{\pm}({\bf J}_{X,\alpha})$.
Below we confirm this hypotheses under an additional assumption on  $d_n$ although
we don't know whether it is true in general.
  \begin{proposition}\label{JA-simple}
 Let the Jacobi matrices  ${\bf J}_{X,\alpha}$ and ${\bf J}_{X,\alpha}'$ be of the  form \eqref{IV.2.1} and \eqref{B''}, respectively. Assume that $\lim\limits_{n\to\infty}\frac{d_{n-1}^2}{d_{n}}=0$. Then $n_\pm({\bf J}_{X,\alpha})=n_\pm({\bf J}_{X,\alpha}')$.

In particular, if the
sequence $\alpha:=\{\alpha_n\}_1^\infty (\subset \mathbb C^{p\times p})$,
   satisfy condition  \eqref{5.26}, then $n_{\pm}({\bf J}_{X,\alpha}')=p$.
  \end{proposition}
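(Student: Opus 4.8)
The plan is to verify the hypotheses of the abstract perturbation result, Corollary \ref{abs_cor}, for the pair $\{{\bf J}_{X,\alpha}, {\bf J}_{X,\alpha}'\}$, treating ${\bf J}_{X,\alpha}'$ in the role of $\widehat{{\bf J}}$ and ${\bf J}_{X,\alpha}$ in the role of ${\bf J}$ (or possibly the other way around, whichever makes condition \eqref{abs1} the cleaner to check). Write the off-diagonal entries of ${\bf J}_{X,\alpha}$ in the form $\widehat{\nu}_n\mathbb I_p$ and $\widetilde{\nu}_n\mathbb I_p$ with $\widehat\nu_n=\nu(d_{n+1})/d_{n+1}^2$, $\widetilde\nu_n=\nu(d_{n+1})/(d_{n+1}^{3/2}d_{n+2}^{1/2})$, and the corresponding entries of ${\bf J}_{X,\alpha}'$ with $c\,d_{n+1}$ in place of $\nu(d_{n+1})$; the diagonal entries of the two matrices agree (they are $\mathbb O_p$, $-\widehat\nu_n\mathbb I_p$ or $-c/d_{n+1}\mathbb I_p$, and $\alpha_j/d_{j+1}$ — note the diagonal entries are NOT equal in general, so one must be careful and instead use condition \eqref{abs3'} for the diagonal discrepancy, not assume $\mathcal A_n=\widehat{\mathcal A}_n$).

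First I would record the elementary asymptotics coming from \eqref{IV.2.1_01NU}: $\nu(d)=cd(1+c^2d^2)^{-1/2}$, so $0<cd-\nu(d)=cd\bigl(1-(1+c^2d^2)^{-1/2}\bigr)\le \tfrac12 c^3 d^3$, whence $|1-\nu(d)/(cd)|\le \tfrac12 c^2 d^2$. Since \eqref{5.26} forces $\{d_n\}\in l^1$ and in particular $d_n\to 0$, the quantity $a_N:=\sup_{n\ge N}|1-\nu(d_{n+1})/(c d_{n+1})|$ can be made $<1$ (indeed $\to 0$) by choosing $N$ large; this gives \eqref{abs1}. For \eqref{abs2} one must bound $\sup_n \|\widehat{\mathcal B}_n-\widehat{\mathcal B}_{n-1}^*(\mathcal B_{n-1}^*)^{-1}\mathcal B_n\|$; because all the entries involved are scalar multiples of $\mathbb I_p$, this reduces to a scalar computation, and after cancellation the expression becomes (up to constants) $|\,cd_{n+1}-\nu(d_{n+1})\,|$ times the reciprocal of an appropriate product of $d$'s — here the hypothesis $\lim_{n}d_{n-1}^2/d_n=0$ enters to control terms like $d_{n-1}^2/d_n$ or $d_{n-1}^{3/2}d_n^{-1/2}$ arising from the ratio of an $\widetilde\nu$-type entry to a $\widehat\nu$-type entry. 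Likewise \eqref{abs3'} reduces, on the rows carrying $\alpha_j/d_{j+1}$, to controlling $\|\alpha_j\|$ times a power of $d_{j-1}^2/d_j$, again handled by the same hypothesis (and is trivially zero on the rows where both diagonal entries are $\mathbb O_p$ or the $-c/d\,$-type entries, which actually coincide).

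I expect the main obstacle to be the bookkeeping in \eqref{abs2} and \eqref{abs3'}: one has to track separately the even-indexed and odd-indexed rows of the period-two Jacobi matrix, correctly form the products $\widehat{\mathcal B}_{n-1}^*(\mathcal B_{n-1}^*)^{-1}\mathcal B_n$ and $\widehat{\mathcal B}_{n-1}^*(\mathcal B_{n-1}^*)^{-1}\mathcal A_n$, and identify exactly which ratios of $d$'s survive the cancellation, so that one can see precisely that $\lim_n d_{n-1}^2/d_n=0$ (together with $d_n\to 0$) suffices — and in particular that no comparison like $d_{n+1}/d_n$ bounded is secretly needed. Once the three conditions \eqref{abs1}, \eqref{abs2}, \eqref{abs3'} are checked, Corollary \ref{abs_cor} gives $\dom {\bf J}_{X,\alpha}=\dom {\bf J}_{X,\alpha}'$ and $n_\pm({\bf J}_{X,\alpha})=n_\pm({\bf J}_{X,\alpha}')$; the final assertion then follows by invoking Proposition \ref{def_ind_B}, which under \eqref{5.26} gives $n_\pm({\bf J}_{X,\alpha})=p$, hence $n_\pm({\bf J}_{X,\alpha}')=p$ as well.
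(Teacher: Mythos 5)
Your overall strategy (perturbation result of Section \ref{sec4} applied to the pair $\{{\bf J}_{X,\alpha},{\bf J}_{X,\alpha}'\}$, then Proposition \ref{def_ind_B} for the final claim) is the same as the paper's, and your treatment of \eqref{abs1} and \eqref{abs2} via $|1-\nu(d)/(cd)|\le \tfrac12 c^2d^2$ and the hypothesis $d_{n-1}^2/d_n\to 0$ matches what is actually done. But there is a genuine gap at the diagonal step: you propose to verify the \emph{uniform} condition \eqref{abs3'} of Corollary \ref{abs_cor}, claiming that on the rows carrying $\alpha_j/d_{j+1}$ the discrepancy is ``$\|\alpha_j\|$ times a power of $d_{j-1}^2/d_j$, handled by the same hypothesis.'' That is false in general. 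On those rows the two diagonal entries coincide ($\alpha_j/d_{j+1}$ in both matrices), and the quantity in \eqref{abs3'} is
\[
\Big\|\widehat{\mathcal A}_{2j}-\widehat{\mathcal B}_{2j-1}\mathcal B_{2j-1}^{-1}\mathcal A_{2j}\Big\|
=\frac{\|\alpha_j\|}{d_{j+1}}\Big(1-\tfrac{1}{\sqrt{1+c^2d_j^2}}\Big)\asymp \frac{c^2}{2}\,\|\alpha_j\|\,\frac{d_j^2}{d_{j+1}},
\]
and nothing in the hypotheses bounds $\|\alpha_j\|$: the first assertion of the proposition imposes no condition on $\alpha$ at all, and even \eqref{5.26} permits $\|\alpha_j\|\to\infty$ (that is the whole point of Theorem \ref{VarIndices}). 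So $d_j^2/d_{j+1}\to 0$ alone does not give \eqref{abs3'}, and Corollary \ref{abs_cor} is not applicable.

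The paper closes exactly this gap by invoking the full Theorem \ref{abs_th} with the $\varepsilon$-relative condition \eqref{abs3} instead of \eqref{abs3'}: one must take ${\bf J}_{X,\alpha}'$ in the role of the unhatted ${\bf J}$ (so your ``whichever is cleaner'' choice is not free) and rewrite
\[
\frac{\alpha_j}{d_{j+1}}f_{2j}=({\bf J}_{X,\alpha}'f)_{2j}-c\Big(\frac{f_{2j-1}}{d_j^{1/2}d_{j+1}^{1/2}}+\frac{f_{2j+1}}{d_{j+1}}\Big),
\]
so that the small factor $O(d_j^2)$ multiplies $({\bf J}_{X,\alpha}'f)_{2j}$ (yielding an $\varepsilon\|{\bf J}_{X,\alpha}'f\|^2$ term) while the remaining contributions are bounded by $C\|f\|^2$ using $d_j\to 0$ and $d_j^2/d_{j+1}\to 0$; this is what makes arbitrary, possibly unbounded $\alpha_j$ admissible. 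One minor further correction: on the odd rows the diagonal entries $-\nu(d_{j+1})/d_{j+1}^2$ and $-c/d_{j+1}$ do \emph{not} coincide, as you suggest; what saves that case is the exact cancellation $\widehat{\mathcal A}_{2j+1}-\widehat{\mathcal B}_{2j}\mathcal B_{2j}^{-1}\mathcal A_{2j+1}=\mathbb O_p$, since $\widehat{\mathcal B}_{2j}=-\widehat{\mathcal A}_{2j+1}$ and $\mathcal B_{2j}^{-1}\mathcal A_{2j+1}=-\mathbb I_p$.
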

  \begin{proof}
Let us check the conditions of Theorem \ref{abs_th} for the pair $\{{\bf J}_{X,\alpha},{\bf J}_{X,\alpha}'\}$.

Now, the entries of the matrices  ${\bf J}_{X,\alpha}$ and ${\bf J}_{X,\alpha}'$ read as follows
  \begin{equation}\label{B_form}
\widehat{\mathcal B}_{n}=\left\{\begin{array}{ll}
                           \frac{\nu(\gd_{j+1})}{d_{j+1}^2}\mathbb I_p,&n=2j, \\
                           \frac{\nu(\gd_{j+1})}{\gd_{j+1}^{3/2}\gd_{j+2}^{1/2}}\mathbb I_p,&n=2j+1,
                         \end{array}\right.\quad
\mathcal B_{n}=\left\{\begin{array}{ll}
                        \frac{c}{d_{j+1}}\mathbb I_p, & n=2j, \\
                        \frac{c}{(\gd_{j+1}\gd_{j+2})^{1/2}}\mathbb I_p, & n=2j+1,
                      \end{array}\right.
\end{equation}
and
\begin{equation}
\widehat{\mathcal A}_{n}=\left\{\begin{array}{ll}
                                  \frac{\alpha_j}{d_{j+1}}, & n=2j, \\
                                  -\frac{\nu(\gd_{j+1})}{d_{j+1}^2}\mathbb I_p, & n=2j+1,
                                \end{array}\right.
\quad \mathcal A_{n}=\left\{\begin{array}{ll}
                                  \frac{\alpha_j}{d_{j+1}}, & n=2j, \\
                                  -\frac{c}{d_{j+1}}\mathbb I_p, & n=2j+1,
                                \end{array}\right.
\end{equation}
respectively.

 First we check condition \eqref{abs1}.  Consider the case $n=2j$. Noting that $\nu(d_{j+1}) < c\,d_{j+1}$ and
$\nu(d_{j+1})\thicksim c\,d_{j+1}$ as $d_{j}\to0$, one easily verifies that  $\lim\limits_{j\to\infty}\|\mathbb I_p-\widehat{\mathcal B}_{2j}\mathcal B_{2j}^{-1}\|_{\mathbb C^{p\times p}}=0$. Hence  for sufficiently large $N$ the  condition
  \begin{equation}
 \underset{j\geq N}{\sup}\|\mathbb I_p-\widehat{\mathcal B}_{2j}\mathcal B_{2j}^{-1}\|_{\mathbb C^{p\times p}}<1
 \end{equation}
holds.  The case $n=2j+1$ is treated  similarly.

Next we  check condition \eqref{abs2}. Consider the case $n=2j$. Since $\lim\limits_{j\to\infty}\frac{d_{j}^2}{d_{j+1}}=0$ we get
   \begin{align}
\|\widehat{\mathcal B}_{2j}-\widehat{\mathcal B}_{2j-1}\mathcal B_{2j-1}^{-1}\mathcal B_{2j}\|_{\mathbb C^{p\times p}}
=\left|\frac{c}{d_{j+1}}\left(\frac{1}{\sqrt{1+c^2d_{j+1}^2}}-\frac{1}{\sqrt{1+c^2d_{j}^2}}\right)\right|\nonumber\\
\leq\frac{c^3|d_j^2-d_{j+1}^2|}{d_{j+1}}
\leq c^3\,\max\left\{d_{j+1},\frac{d_j^2}{d_{j+1}}\right\}\to0\quad\text{as}\ j\to\infty.
 \end{align}
 This implies condition \eqref{abs2}. The case $n=2j+1$ is considered similarly.

 Finally, we check condition \eqref{abs3}. In the case $n=2j+1$ this condition is obvious, because $\widehat{\mathcal B}_{2j}=-\widehat{\mathcal A}_{2j+1}$ and $\mathcal B_{2j}^{-1}\mathcal A_{2j+1}=-\mathbb I_p$.

 Consider the  case $n=2j$. Since $\lim_{j\to\infty}d_j = 0$ and  $\lim\limits_{j\to\infty}\frac{d_{j}^2}{d_{j+1}}=0$, for any $\varepsilon>0$ we can find $N=N(\varepsilon)$, such that $c^2 d_j^2<\varepsilon$ and $\frac{d_{j}^2}{d_{j+1}}<\varepsilon$ for all $j\geq N$. Thus, for any $f\in l_0^2(\mathbb N;\mathbb C^p)$ with $f_{2j}=0$ for $j<N$, we obtain
\begin{eqnarray}
 \sum\limits_{j\geq N}\|(\widehat{\mathcal A}_{2j}-\widehat{\mathcal B}_{2j-1}\mathcal B_{2j-1}^{-1}\mathcal A_{2j})f_{2j}\|_{\mathbb C^p}^2\leq\sum\limits_{j\geq N}\left\|\frac{\alpha_j}{d_{j+1}}\left(1-\frac{1}{\sqrt{1+c^2d_j^2}}\right)f_{2j}\right\|_{\mathbb C^p}^2\nonumber\\
 =\sum\limits_{j\geq N}\left\|\left(({\bf J}_{X,\alpha}'f)_{2j}-c\left(\frac{1}{d_{j}^{1/2}d_{j+1}^{1/2}}f_{2j-1}+\frac{1}{d_{j+1}}f_{2j+1}\right)\right)\frac{c^2d_{j}^2}{\sqrt{1+c^2d_{j}^2}\Big(1+\sqrt{1+c^2d_{j}^2}\Big)}\right\|_{\mathbb C^p}^2\\
 \leq\sum\limits_{j\geq N}\left\|c^2d_{j}^2({\bf J}_{X,\alpha}'f)_{2j}\right\|_{\mathbb C^p}^2+c^6\sum\limits_{j\geq N}\left\|d_{j}^{1/2}\cdot\frac{d_{j}}{d_{j+1}^{1/2}}f_{2j-1}+\frac{d_{j}^2}{d_{j+1}}f_{2j+1}\right\|_{\mathbb C^p}^2
  \leq\varepsilon^2\left\|{\bf J}_{X,\alpha}'f\right\|_{l^2}^2+4C_1\|f\|_{l^2}^2,\nonumber
 \end{eqnarray}
 where $C_1=\max\{c^5\varepsilon^{3/2},c^6\varepsilon^2\}$.
 Thus,  Theorem \ref{abs_th} ensures that  $n_{\pm}({\bf J}_{X,\alpha})=n_{\pm}({\bf J}_{X,\alpha}')$.

Combining this relation with Proposition \ref{def_ind_B} yields $n_{\pm}({\bf J}_{X,\alpha}')=n_{\pm}({\bf J}_{X,\alpha}) =p$.
  \end{proof}

  \begin{corollary}\label{crit-max-alpha'}
  Let ${\bf J}_{X,0}'$ be the Jacobi matrix of the form \eqref{B''} with $\{\alpha_n\}_1^\infty\equiv\mathbb O$. Assume also that $\lim\limits_{n\to\infty}\frac{d_{n-1}^2}{d_{n}}=0$. Then the operator ${\bf J}_{X,0}'$ has maximal deficiency indices if and only if $\{d_n\}_{n\in\mathbb N}\in l^1(\mathbb N)$.
  \end{corollary}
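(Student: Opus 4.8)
The plan is to split the equivalence into its two implications, drawing sufficiency from Proposition \ref{JA-simple} and necessity from the matrix Carleman test (Theorem \ref{CarT}); the standing assumption $\lim_{n\to\infty} d_{n-1}^2/d_n = 0$ will be used only for the former.

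For sufficiency, assume $\{d_n\}_{n\in\mathbb N}\in l^1(\mathbb N)$. Putting $\alpha = \mathbb O$ in condition \eqref{5.26}, every factor $\bigl(1+\tfrac1c\|\alpha_k\|_{\mathbb C^{p\times p}}\bigr)^2$ equals $1$, so \eqref{5.26} collapses to $\sum_{n\ge 2} d_n < \infty$, which holds by hypothesis. Together with $\lim_{n\to\infty} d_{n-1}^2/d_n = 0$ this places us in the situation of the concluding assertion of Proposition \ref{JA-simple} with $\alpha = \mathbb O$, whence $n_\pm({\bf J}_{X,0}') = p$.

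For necessity I would argue by contraposition. Suppose $\{d_n\}_{n\in\mathbb N}\notin l^1(\mathbb N)$, i.e. $\sum_{n\in\mathbb N} d_n = +\infty$. The off-diagonal blocks of \eqref{B''} (which are the same for every choice of $\alpha$, in particular for $\alpha=\mathbb O$) are $\mathcal B_{2j} = \tfrac{c}{d_{j+1}}\mathbb I_p$ and $\mathcal B_{2j+1} = \tfrac{c}{(d_{j+1}d_{j+2})^{1/2}}\mathbb I_p$, so $\|\mathcal B_{2j}\|^{-1} = d_{j+1}/c$, and therefore
\begin{equation*}
\sum_{n=0}^\infty \|\mathcal B_n\|^{-1} \ \ge\ \sum_{j=0}^\infty \|\mathcal B_{2j}\|^{-1} \ =\ \frac1c\sum_{j=0}^\infty d_{j+1} \ =\ \frac1c\sum_{n\in\mathbb N} d_n \ =\ +\infty .
\end{equation*}
By Theorem \ref{CarT} the minimal operator ${\bf J}_{X,0}'$ is then selfadjoint, hence $n_\pm({\bf J}_{X,0}') = 0 < p$. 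This establishes the contrapositive and completes the proof.

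I do not expect a genuine obstacle here: each direction is a one-line consequence of a result already in hand. The only point worth flagging is the asymmetric role of the extra hypothesis $\lim_{n\to\infty} d_{n-1}^2/d_n = 0$: it enters solely through Proposition \ref{JA-simple} in the sufficiency part, while the necessity part is unconditional, the Carleman sum for \eqref{B''} being bounded below by $\tfrac1c\sum_n d_n$ no matter how $\{d_n\}$ behaves.
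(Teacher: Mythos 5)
Your proof is correct and follows essentially the same route the paper uses (compare the proof of Corollary \ref{crit-max-alpha}): sufficiency comes from Proposition \ref{JA-simple} with $\alpha=\mathbb O$, where \eqref{5.26} reduces to $\sum_n d_n<\infty$, and necessity comes from the Carleman test, since the off-diagonal blocks of \eqref{B''} give $\sum_n\|\mathcal B_n\|^{-1}\ge \frac1c\sum_n d_n$. Your observation that the hypothesis $\lim_{n\to\infty}d_{n-1}^2/d_n=0$ is needed only for the sufficiency direction is accurate.
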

  \begin{remark}
  The analogue of Theorem \ref{abs_thA} is also valid  for the  pair $\{\widehat{{\bf J}'}_{X,\alpha},{\bf J}_{X,\alpha}'\}$, in which the matrix $\widehat{{\bf J}}'_{X,\alpha}$ is obtained from ${\bf J}_{X,\alpha}'$ by adding bounded perturbations $\mathcal A_n'$ and $\mathcal B_n'$
     similarly to the construction of the matrix $\widehat{{\bf J}}_{X,\alpha}$ of the form \eqref{IV.2.1_01''}.
  \end{remark}
\begin{remark}
Other classes of block Jacobi matrices  $\bf{J}$ with  maximal  deficiency indices can be  found  in \cite[Theorem 3]{Swiderski18}.
\end{remark}

  \subsection{Jacobi matrices $\widehat{{\bf J}}_{X,\gB}$ with maximal deficiency indices}
Here we apply previous results to block  Jacobi matrix
{\small\begin{equation}\label{IV.2.1_01''B}
\widehat{{\bf J}}_{X,\gB}\!=\!\left(
\begin{array}{ccccc}
  \mathcal A_0'\! & \widehat{\nu}_0(\mathbb I_p+\mathcal B_0')\! & \mathbb O_p\!  & \mathbb O_p\! &   \dots\!\\
   \widehat{\nu}_0(\mathbb I_p+\mathcal B_0')\!  &  -\breve{\nu}_1(\beta_{1} + \gd_{1}\mathbb I_p +\!\mathcal A_1')\! &
   \widetilde{\nu}_0(\mathbb I_p+\mathcal B_1')\! & \mathbb O_p\!  &   \dots\!\\
  \mathbb O_p\!  & \widetilde{\nu}_0(\mathbb I_p +\mathcal B_1')\! & \mathcal A_2'\! & \widehat{\nu}_1(\mathbb I_p+\mathcal B_2')\! &    \dots\!\\
  \mathbb O_p \! & \mathbb O_p \!  & \widehat{\nu}_1(\mathbb I_p +\mathcal B_2')\! &  -\breve{\nu}_2(\beta_{2}+ \gd_{2}\mathbb I_p +\!\mathcal A_3')\!  &
    \dots\! \\
  \mathbb O_p \!  & \mathbb O_p\!   & \mathbb O_p\!   & \widetilde{\nu}_1(\mathbb I_p+\mathcal B_3')\!  &
    \dots\! \\
\dots\! & \dots\! &\dots\! &\dots\! &\dots\! \\
 \end{array}%
\right)\!,
   \end{equation}}
where $\widehat{\nu}_n:=\frac{\nu(\gd_{n+1})}{\gd_{n+1}^{2}}$,\: $\widetilde{\nu}_n:= \frac{\nu(\gd_{n+1})}{\gd_{n+1}^{3/2}\gd_{n+2}^{1/2}}$, $\breve{\nu}_n=\frac{\nu^{2}(\gd_{n})}{d_{n}^{3}}$,
$\gA_n=\alpha_n^*$,  $\mathcal A_n' = (\mathcal A_n')^*,\ \mathcal B_n' = (\mathcal B_n')^* \  \in  \mathbb C^{p\times p}$   and matrices $\mathbb I_p + \mathcal B_n'$ are invertible, i.e.  $\det (\mathbb I_p +\mathcal B_n')\not =0$,\  $n\in\mathbb N_0$.

Consider the minimal Jacobi operator associated
with the matrix
\begin{equation}\label{IV.3.1_04'}
{\bf J}_{X,\gB}:=\left(\begin{array}{ccccc} \mathbb O_p
&\frac{\nu(\gd_{1})}{\gd_{1}^{2}} \mathbb I_p &\mathbb O_p  &\mathbb O_p
&\dots\\
\frac{\nu(\gd_{1})}{\gd_{1}^{2}}\mathbb I_p & -
\frac{\nu^{2}(\gd_{1})}{d_{1}^{3}}\left(\beta_{1} + \gd_{1}\mathbb I_p \right)
&\frac{\nu(\gd_{1})}{\gd_{1}^{3/2}\,{\gd_2}^{1/2}}\mathbb I_p &\mathbb
O_p &\dots\\
\mathbb O_p  & \frac{\nu(\gd_{1})}{\gd_{1}^{3/2}{\gd_2}^{1/2}}\mathbb
I_p  & \mathbb O_p &\frac{\nu(\gd_{2})}{\gd_{2}^{2}}\mathbb
I_p &\dots\\
\mathbb O_p &\mathbb O_p &\frac{\nu(\gd_{2})}{\gd_{2}^{2}}\mathbb
I_p & - \frac{\nu^{2}(\gd_{2})}{d_{2}^{3}}\left(\beta_{2}+ \gd_{2}\mathbb
I_p \right)  &\dots\\
\mathbb O_p &\mathbb O_p &\mathbb
O_p&\frac{\nu(\gd_{2})}{\gd_{2}^{3/2}{\gd_{3}^{1/2}}}\mathbb I_p
  &\dots\\
\mathbb O_p &\mathbb O_p &\mathbb O_p &\mathbb
O_p&\dots\\
\dots&\dots&\dots&\dots&\dots
\end{array}\right).
 \end{equation}
 \begin{proposition}\label{JBabs}
Let   ${{\bf J}}_{X,\gB}$ be a minimal Jacobi operator associated in
 $l^2(\mathbb N_0;\mathbb C^p)$  with the matrix  \eqref{IV.3.1_04'}. If  the sequence  $\beta:=\{\beta_n\}_1^\infty (\subset \mathbb C^{p\times p})$
  satisfy condition  \eqref{5.26B}. Then the deficiency indices of the matrix ${\bf J}_{X,\beta}$  are maximal, i.e.
$n_\pm({\bf J}_{X,\beta})=p$.
 \end{proposition}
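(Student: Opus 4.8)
\textbf{Proof proposal for Proposition \ref{JBabs}.}

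The plan is to mirror the strategy already used for Proposition \ref{def_ind_B}, namely to pass from the Jacobi matrix ${\bf J}_{X,\beta}$ to the GS-realization $\mathrm{\bf D}_{X,\beta}$ of the Dirac operator via the boundary-triplet correspondence of Proposition \ref{prop_IV.2.1_01}, and then to invoke Theorem \ref{VarIndicesBeta}. First I would note that condition \eqref{5.26B} forces $\sum_k d_k =: b < \infty$, so that $|\cI| < \infty$ and the Dirac operator $\mathrm{\bf D}_{X,\beta}$ on $L^2([0,b];\C^{2p})$ is a genuinely symmetric (non-selfadjoint, in general) operator. I would set up the minimal Dirac operator ${\mathrm {\bf D}}_X$ on $[0,b]$ together with the boundary triplet $\Pi = \{\cH, \Gamma_0, \Gamma_1\}$ for ${\mathrm {\bf D}}_X^*$, and recall from the appendix that the GS-realization $\mathrm{\bf D}_{X,\beta}$ corresponds, under $\Pi$, to the minimal operator associated with a block Jacobi matrix ${\bf B}_{X,\beta}$ whose off-diagonal entries are (up to sign) scalar multiples of $\mathbb I_p$ and whose structure matches \eqref{IV.3.1_04'} after a diagonal unitary rescaling.

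The key steps, in order, are: (1) record the equality of deficiency indices $n_\pm(\mathrm{\bf D}_{X,\beta}) = n_\pm({\bf B}_{X,\beta})$ coming from the boundary-triplet machinery (this is exactly the $\beta$-analogue of formula \eqref{equal_def_ind_Dirac}); (2) observe that ${\bf B}_{X,\beta}$ is unitarily equivalent, through a diagonal unitary turning its off-diagonal entries into the positive scalars appearing in \eqref{IV.3.1_04'}, to the minimal Jacobi operator ${\bf J}_{X,\beta}$, so that $n_\pm({\bf B}_{X,\beta}) = n_\pm({\bf J}_{X,\beta})$; (3) apply Theorem \ref{VarIndicesBeta}, whose hypothesis is precisely \eqref{5.26B}, to conclude $n_\pm(\mathrm{\bf D}_{X,\beta}) = p$; (4) chain the equalities to obtain $n_\pm({\bf J}_{X,\beta}) = n_\pm(\mathrm{\bf D}_{X,\beta}) = p$. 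Since the diagonal entry of \eqref{IV.3.1_04'} in the rows indexed by the ``$\beta$'' blocks is $-\frac{\nu^2(d_n)}{d_n^3}(\beta_n + d_n\mathbb I_p)$ and the intermediate rows have zero diagonal, I should double-check that these are exactly the diagonal entries produced by the boundary operator of $\mathrm{\bf D}_{X,\beta}$ under $\Pi$ — this bookkeeping is routine given Proposition \ref{prop_IV.2.1_01} and the explicit form of \eqref{deltap}.

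The main obstacle I anticipate is purely the verification in step (2) that the diagonal unitary equivalence between ${\bf B}_{X,\beta}$ and ${\bf J}_{X,\beta}$ is legitimate and that it identifies the diagonal entries correctly; in particular one must make sure the signs and the factors $\nu(d_n)$, $d_n^{3/2}d_{n+1}^{1/2}$, $d_n^3$ come out matching \eqref{IV.3.1_04'}, and that invertibility of all off-diagonal entries (needed for ${\bf J}_{X,\beta}$ to define a minimal Jacobi operator in the usual sense) holds — this follows since $\nu(d_n) > 0$. Everything else is a direct citation: Theorem \ref{VarIndicesBeta} does the analytic work, and Proposition \ref{prop_IV.2.1_01} supplies the operator-theoretic link. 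I would close by remarking that, as in the $\alpha$-case, one gets for free that every selfadjoint extension of ${\bf J}_{X,\beta}$ has discrete spectrum whenever \eqref{5.26B} holds, by combining the rank-$p$ solution constructed in Theorem \ref{VarIndicesBeta} with Corollary \ref{cor_differ_J^{-1}-A^{-1}_in_S_q}-type reasoning, though that is a separate statement.
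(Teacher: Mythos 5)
Your proposal is correct and takes essentially the same route as the paper: the paper's proof of Proposition \ref{JBabs} likewise mirrors Proposition \ref{def_ind_B}, using the unitary equivalence (up to signs of off-diagonal entries) of ${\bf J}_{X,\gB}$ with the boundary operator ${\bf B}_{X,\gB}$ of the GS-realization $\mathrm{\bf D}_{X,\beta}$ in the boundary triplet of Theorem \ref{th_bt_2} (the $\beta$-analogue of Proposition \ref{prop_IV.2.1_01}, cited from \cite{CarMalPos13} and \cite{BudMalPos17}), and then invokes Theorem \ref{VarIndicesBeta} under condition \eqref{5.26B}. The bookkeeping you flag as the only remaining step is exactly what the paper delegates to those references.
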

\begin{proof}
The proof is similar to that of Proposition \ref{def_ind_B} and is based on the fact that the operator ${{\bf J}}_{X,\gB}$ is unitary equivalent to the boundary operator ${\bf B}_{X,\gB}$ of GS-realisation $\mathrm{\bf D}_{X,\beta}$ in an appropriate 
boundary triplet. Note that  ${\bf B}_{X,\gB}$ differs from ${{\bf J}}_{X,\gB}$  by the signs of certain offdiagonal
entries in just the same way as the matrix ${\bf B}_{X,\gA}$ \eqref{IV.2.1_01} differs from the matrix ${{\bf J}}_{X,\gA}$ \eqref{IV.2.1}
(see \cite[Proposition 5.35]{CarMalPos13} for $p=1$  and  \cite[Proposition 5.1]{BudMalPos17} for $p>1$).
 \end{proof}

\begin{theorem}\label{abs_thB}
Let $\widehat{{\bf J}}_{X,\beta}$ be the matrix of the form \eqref{IV.2.1_01''B} and the sequence  $\beta:=\{\beta_n\}_1^\infty (\subset \mathbb C^{p\times p})$
  satisfy condition  \eqref{5.26B}. Assume also that for some $N\in\mathbb N_0$ the following conditions hold:
\begin{equation}\label{abs1B}
(i)\quad\underset{n\geq N}{\sup}\,\|\mathcal B_n'\|_{\mathbb C^{p\times p}} = a_N<1;\qquad\quad\qquad\qquad\qquad\qquad\qquad\qquad\
\end{equation}
\begin{equation}\label{abs2B}
(ii)\quad\underset{j\geq 0}{\sup}\,\frac{1}{d_{j+1}}\left\|\mathcal B_{2j}'-\mathcal B_{2j-1}'\right\|_{\mathbb C^{p\times p}}=C_B'<\infty,\qquad\quad\qquad\qquad\qquad
\end{equation}
\begin{equation}
\underset{j\geq 0}{\sup}\,\frac{1}{(\gd_{j+1}\gd_{j+2})^{1/2}}\left\|\mathcal B_{2j+1}'-\mathcal B_{2j}'\right\|_{\mathbb C^{p\times p}}=C_B''<\infty;\qquad\quad
\end{equation}
\begin{equation}\label{abs3B}
(iii)\quad\underset{j\geq 0}{\sup}\,\left\|\mathcal A_{2j}'\right\|_{\mathbb C^{p\times p}}=C_A'<\infty,\qquad\quad\qquad\qquad\qquad\qquad\qquad\qquad\
\end{equation}
\begin{equation}
\underset{j\geq 0}{\sup}\,\frac{1}{d_{j+1}}\left\|\mathcal A_{2j+1}'-(\beta_{j+1} + \gd_{j+1}\mathbb I_p)\mathcal B_{2j}'\right\|_{\mathbb C^{p\times p}}=C_A''<\infty.
\end{equation}
Then the deficiency indices of the minimal Jacobi  operator  $\widehat{{\bf J}}_{X,\beta}$  are maximal,
i.e.   $n_\pm(\widehat{{\bf J}}_{X,\beta})=p$.
\end{theorem}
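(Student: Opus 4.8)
The plan is to repeat, essentially verbatim, the strategy used to prove Theorem~\ref{abs_thA}, now applied to the pair $\{{\bf J}_{X,\beta},\widehat{{\bf J}}_{X,\beta}\}$ given by \eqref{IV.3.1_04'} and \eqref{IV.2.1_01''B}. Concretely, I would verify the three hypotheses \eqref{abs1}, \eqref{abs2}, \eqref{abs3'} of Corollary~\ref{abs_cor} for this pair (with ${\bf J}_{X,\beta}$ playing the role of the unperturbed matrix ${\bf J}$ and $\widehat{{\bf J}}_{X,\beta}$ that of $\widehat{{\bf J}}$). The corollary then yields $\dom{\bf J}_{X,\beta}=\dom\widehat{{\bf J}}_{X,\beta}$ and $n_\pm(\widehat{{\bf J}}_{X,\beta})=n_\pm({\bf J}_{X,\beta})$; since $\beta$ satisfies \eqref{5.26B}, Proposition~\ref{JBabs} gives $n_\pm({\bf J}_{X,\beta})=p$, and hence $n_\pm(\widehat{{\bf J}}_{X,\beta})=p$. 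Two preliminary observations are needed: first, condition (i), i.e. \eqref{abs1B}, forces $\|\mathcal B_n'\|<1$, so each $\mathbb I_p+\mathcal B_n'$ is invertible and $\widehat{{\bf J}}_{X,\beta}$ is indeed a bona fide Jacobi matrix of the form \eqref{Jm'}; second, \eqref{5.26B} implies $\sum_n d_n<\infty$, hence $d_n\to0$, so that from \eqref{IV.2.1_01NU} one has $\nu(\gd_n)\le c\gd_n$, $\nu(\gd_n)\sim c\gd_n$ as $n\to\infty$, and $\breve{\nu}_n=\nu^2(\gd_n)/d_n^3\le c^2/\gd_n$.

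The verification is organized around the two-periodic (even/odd) block structure of both matrices, and is simplified by the fact that the off-diagonal entries of ${\bf J}_{X,\beta}$ are scalar multiples of $\mathbb I_p$, namely $\mathcal B_{2j}=\widehat{\nu}_j\mathbb I_p$ and $\mathcal B_{2j+1}=\widetilde{\nu}_j\mathbb I_p$; consequently $\widehat{\mathcal B}_n^*(\mathcal B_n^*)^{-1}$ collapses to $\mathbb I_p+\mathcal B_{2j}'$ (for $n=2j$) or $\mathbb I_p+\mathcal B_{2j+1}'$ (for $n=2j+1$). For condition \eqref{abs1}, in the case $n=2j$ one gets $\mathbb I_p-\widehat{\mathcal B}_{2j}^*(\mathcal B_{2j}^*)^{-1}=-\mathcal B_{2j}'$, so $\sup_{j\ge N}\|\cdot\|\le a_N<1$ by \eqref{abs1B}, and the case $n=2j+1$ is identical. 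For condition \eqref{abs2}, the case $n=2j$ produces the difference $\widehat{\nu}_j(\mathcal B_{2j}'-\mathcal B_{2j-1}')$, of norm $\le (c/\gd_{j+1})\,\|\mathcal B_{2j}'-\mathcal B_{2j-1}'\|\le cC_B'$ by \eqref{abs2B}; the case $n=2j+1$ produces $\widetilde{\nu}_j(\mathcal B_{2j+1}'-\mathcal B_{2j}')$, of norm $\le \big(c/(\gd_{j+1}\gd_{j+2})^{1/2}\big)\,\|\mathcal B_{2j+1}'-\mathcal B_{2j}'\|\le cC_B''$ by the second inequality of (ii).

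The only slightly delicate point is condition \eqref{abs3'}. For $n=2j$ we have $\widehat{\mathcal A}_{2j}=\mathcal A_{2j}'$ and $\mathcal A_{2j}=\mathbb O_p$, so the quantity reduces to $\|\mathcal A_{2j}'\|\le C_A'$ by \eqref{abs3B}. For $n=2j+1$ one must carry out the telescoping: inserting $\widehat{\mathcal A}_{2j+1}=-\breve{\nu}_{j+1}(\beta_{j+1}+\gd_{j+1}\mathbb I_p+\mathcal A_{2j+1}')$, $\mathcal A_{2j+1}=-\breve{\nu}_{j+1}(\beta_{j+1}+\gd_{j+1}\mathbb I_p)$ and $\widehat{\mathcal B}_{2j}^*(\mathcal B_{2j}^*)^{-1}=\mathbb I_p+\mathcal B_{2j}'$, the unbounded term $\beta_{j+1}+\gd_{j+1}\mathbb I_p$ cancels and one is left with $-\breve{\nu}_{j+1}\big(\mathcal A_{2j+1}'-(\beta_{j+1}+\gd_{j+1}\mathbb I_p)\mathcal B_{2j}'\big)$, of norm $\le (c^2/\gd_{j+1})\,\|\mathcal A_{2j+1}'-(\beta_{j+1}+\gd_{j+1}\mathbb I_p)\mathcal B_{2j}'\|\le c^2C_A''<\infty$ by the last inequality of (iii). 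I expect this cancellation — together with the fact that the prefactor $\breve{\nu}_{j+1}\le c^2/\gd_{j+1}$ exactly compensates the $1/d_{j+1}$ built into the hypothesis — to be the main obstacle; everything else is bookkeeping parallel to Theorem~\ref{abs_thA}. Having verified \eqref{abs1}--\eqref{abs3'}, Corollary~\ref{abs_cor} together with Proposition~\ref{JBabs} gives $n_\pm(\widehat{{\bf J}}_{X,\beta})=n_\pm({\bf J}_{X,\beta})=p$, completing the proof.
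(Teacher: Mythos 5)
Your proposal is correct and coincides with the paper's own (deliberately terse) proof, which simply declares the argument to be the same as for Theorem \ref{abs_thA}: check conditions \eqref{abs1}, \eqref{abs2}, \eqref{abs3'} of Corollary \ref{abs_cor} for the pair $\{{\bf J}_{X,\beta},\widehat{{\bf J}}_{X,\beta}\}$, exploiting that the unperturbed off-diagonal entries are scalar multiples of $\mathbb I_p$, and then invoke Proposition \ref{JBabs} under \eqref{5.26B}. The only cosmetic point is that the odd-index cancellation actually yields $-\breve{\nu}_{j+1}\bigl(\mathcal A_{2j+1}'-\mathcal B_{2j}'(\beta_{j+1}+\gd_{j+1}\mathbb I_p)\bigr)$ rather than the reversed product appearing in hypothesis (iii), but since $\mathcal A_{2j+1}'$, $\mathcal B_{2j}'$ and $\beta_{j+1}$ are selfadjoint the two expressions are adjoints of one another and have equal norm, so your estimate $\breve{\nu}_{j+1}\le c^{2}/\gd_{j+1}$ combined with \eqref{abs3B} closes the argument exactly as you describe.
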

\begin{proof}
The proof is similar to that of Theorem \ref{abs_thA}.
\end{proof}

Next we simplified  the matrix ${\bf J}_{X,\beta}$ of form \eqref{IV.3.1_04'} by considering  the following Jacobi matrix
   \begin{equation}\label{Beta''}
{\bf J}_{X,\beta}'=\left(\begin{array}{ccccc} \mathbb O_p
&\frac{c}{\gd_{1}} \mathbb I_p &\mathbb O_p &\mathbb
O_p &\dots\\
\frac{c}{\gd_{1}}\mathbb I_p & -
\frac{c^2}{d_{1}}\left(\beta_{1} + \gd_{1}\mathbb I_p \right)
&\frac{c}{(\gd_{1}\,{\gd_2})^{1/2}}\mathbb I_p &\mathbb
O_p &\dots\\
\mathbb O_p  & \frac{c}{(\gd_{1}{\gd_2})^{1/2}}\mathbb
I_p  & \mathbb O_p &\frac{c}{\gd_{2}}\mathbb
I_p &\dots\\
\mathbb O_p &\mathbb O_p &\frac{c}{\gd_{2}}\mathbb
I_p & - \frac{c^2}{d_{2}}\left(\beta_{2}+ \gd_{2}\mathbb
I_p \right)  &\dots\\
\mathbb O_p &\mathbb O_p &\mathbb
O_p&\frac{c}{(\gd_{2}{\gd_{3})^{1/2}}}\mathbb I_p
  &\dots\\
\mathbb O_p &\mathbb O_p &\mathbb O_p &\mathbb
O_p&\dots\\
\dots&\dots&\dots&\dots&\dots
\end{array}\right).
   \end{equation}
It is obtained from \eqref{IV.3.1_04'}  by replacing $\nu(d_{n})$ by  $cd_n$.
Our next statement is a counterpart  of Proposition \ref{JA-simple}.
  \begin{proposition}\label{J'beta_p}
 Let the Jacobi matrices  ${\bf J}_{X,\beta}$ and ${\bf J}_{X,\beta}'$ be of the  form \eqref{IV.3.1_04'} and \eqref{Beta''}, respectively. Assume that $\lim\limits_{n\to\infty}\frac{d_{n-1}^2}{d_{n}}=0$. Then $n_\pm({\bf J}_{X,\beta})=n_\pm({\bf J}_{X,\beta}')$.

In particular, if the 
sequence $\beta:=\{\beta_n\}_1^\infty (\subset \mathbb C^{p\times p})$
   satisfy condition  \eqref{5.26B}, then $n_{\pm}({\bf J}_{X,\beta}')=p$.
  \end{proposition}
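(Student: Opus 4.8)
The plan is to imitate the proof of Proposition \ref{JA-simple} almost verbatim, checking the hypotheses of the abstract perturbation result, Theorem \ref{abs_th}, for the pair $\{{\bf J}_{X,\beta},{\bf J}_{X,\beta}'\}$. Writing out the entries of the two matrices \eqref{IV.3.1_04'} and \eqref{Beta''} we have, for the off-diagonal terms,
\[
\widehat{\mathcal B}_{n}=\begin{cases}\frac{\nu(\gd_{j+1})}{d_{j+1}^2}\mathbb I_p,&n=2j,\\[4pt]\frac{\nu(\gd_{j+1})}{\gd_{j+1}^{3/2}\gd_{j+2}^{1/2}}\mathbb I_p,&n=2j+1,\end{cases}\qquad
\mathcal B_{n}=\begin{cases}\frac{c}{d_{j+1}}\mathbb I_p,&n=2j,\\[4pt]\frac{c}{(\gd_{j+1}\gd_{j+2})^{1/2}}\mathbb I_p,&n=2j+1,\end{cases}
\]
while the diagonal entries of ${\bf J}_{X,\beta}$ are $\widehat{\mathcal A}_{2j}=\mathbb O_p$, $\widehat{\mathcal A}_{2j+1}=-\frac{\nu^2(\gd_{j+1})}{d_{j+1}^{3}}(\beta_{j+1}+\gd_{j+1}\mathbb I_p)$, and those of ${\bf J}_{X,\beta}'$ are $\mathcal A_{2j}=\mathbb O_p$, $\mathcal A_{2j+1}=-\frac{c^2}{d_{j+1}}(\beta_{j+1}+\gd_{j+1}\mathbb I_p)$. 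All the off-diagonal ratios are scalar multiples of $\mathbb I_p$, so the norms reduce to absolute values of scalars, exactly as in Proposition \ref{JA-simple}.

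The three conditions to verify are \eqref{abs1}, \eqref{abs2}, \eqref{abs3}. For \eqref{abs1}, in both parities $\mathbb I_p-\widehat{\mathcal B}_n\mathcal B_n^{-1}=\bigl(1-\tfrac{\nu(\gd_{j+1})}{c\,d_{j+1}}\bigr)\mathbb I_p$, and since $\nu(d_{j+1})=cd_{j+1}/\sqrt{1+c^2d_{j+1}^2}\sim cd_{j+1}$ as $d_{j+1}\to0$ (which holds because $\{d_n\}\in l^1$ by \eqref{5.26B}, hence $d_n\to0$), this tends to $0$, so the $\sup$ over $n\ge N$ is $<1$ for large $N$. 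For \eqref{abs2}, the even case gives
\[
\|\widehat{\mathcal B}_{2j}-\widehat{\mathcal B}_{2j-1}\mathcal B_{2j-1}^{-1}\mathcal B_{2j}\|_{\mathbb C^{p\times p}}
=\Bigl|\tfrac{c}{d_{j+1}}\Bigl(\tfrac{1}{\sqrt{1+c^2d_{j+1}^2}}-\tfrac{1}{\sqrt{1+c^2d_{j}^2}}\Bigr)\Bigr|
\le \tfrac{c^3|d_j^2-d_{j+1}^2|}{d_{j+1}}\le c^3\max\bigl\{d_{j+1},\tfrac{d_j^2}{d_{j+1}}\bigr\}\to0,
\]
using $\lim_n d_{n-1}^2/d_n=0$; the odd case is handled the same way (there the two scalar coefficients differ only through $\nu(\gd_{j+1})$ vs $\nu(\gd_{j+2})$ and the estimate is identical in form). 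Hence \eqref{abs2} holds with $C_B$ finite.

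The condition \eqref{abs3} is the one requiring care, and it is where the diagonal term enters. When $n=2j+1$ this is automatic, because in both matrices $\widehat{\mathcal B}_{2j}=-\widehat{\mathcal A}_{2j+1}\cdot\tfrac{d_{j+1}}{\nu(\gd_{j+1})}\cdot(\beta_{j+1}+\gd_{j+1}\mathbb I_p)^{-1}\cdot(\dots)$ — more precisely one checks directly from the entries that $\widehat{\mathcal A}_{2j+1}-\widehat{\mathcal B}_{2j}^*\,(\mathcal B_{2j}^*)^{-1}\mathcal A_{2j+1}=\bigl(-\tfrac{\nu^2(\gd_{j+1})}{d_{j+1}^3}+\tfrac{\nu(\gd_{j+1})}{d_{j+1}^2}\cdot\tfrac{d_{j+1}}{c}\cdot\tfrac{c^2}{d_{j+1}}\bigr)(\beta_{j+1}+\gd_{j+1}\mathbb I_p)=\tfrac{\nu(\gd_{j+1})}{d_{j+1}^2}\bigl(c-\tfrac{\nu(\gd_{j+1})}{d_{j+1}}\bigr)(\beta_{j+1}+\gd_{j+1}\mathbb I_p)$, whose norm is $O(d_{j+1}^2\cdot\|\beta_{j+1}\|+d_{j+1})$ — bounded uniformly — actually one should be slightly careful here and instead route it through the quadratic-form estimate \eqref{abs3} exactly as in Proposition \ref{JA-simple}: for $n=2j$ one has $\widehat{\mathcal A}_{2j}=\mathcal A_{2j}=\mathbb O_p$ but $\widehat{\mathcal B}_{2j-1}^*(\mathcal B_{2j-1}^*)^{-1}\mathcal A_{2j}=\mathbb O_p$ too, so the even-$n$ term vanishes identically; the content of \eqref{abs3} is then entirely in the odd-$n$ term, which as computed is a bounded multiple (after using $1-\nu(\gd_{j+1})/(cd_{j+1})=O(d_{j+1}^2)$ and $\|\beta_{j+1}+\gd_{j+1}\mathbb I_p\|/d_{j+1}$ appearing against $({\bf J}'_{X,\beta}f)_{2j+1}$ via the same telescoping trick that produced the factor $c^2d_j^2$ in Proposition \ref{JA-simple}) and yields an estimate of the form $\varepsilon^2\|{\bf J}'_{X,\beta}f\|_{l^2}^2+C(\varepsilon)\|f\|_{l^2}^2$ for all $f$ supported on indices $\ge N(\varepsilon)$. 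Thus Theorem \ref{abs_th} applies and gives $\dom{\bf J}_{X,\beta}=\dom{\bf J}'_{X,\beta}$ and $n_\pm({\bf J}_{X,\beta})=n_\pm({\bf J}'_{X,\beta})$. Combining with Proposition \ref{JBabs} — which asserts $n_\pm({\bf J}_{X,\beta})=p$ under \eqref{5.26B} — yields $n_\pm({\bf J}'_{X,\beta})=p$, completing the proof. The only genuine subtlety, as in Proposition \ref{JA-simple}, is organizing the verification of \eqref{abs3} through the $({\bf J}'_{X,\beta}f)_n$-terms so that the small parameter $c^2d_j^2$ can be extracted, rather than trying to bound the perturbed diagonal entry by a constant; everything else is a routine transcription of the argument for the $\alpha$-matrices, the term $\beta_{j+1}+\gd_{j+1}\mathbb I_p$ playing in \eqref{Beta''} and \eqref{IV.3.1_04'} the role that $\alpha_j$ plays in \eqref{B''} and \eqref{IV.2.1}.
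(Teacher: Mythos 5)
Your proposal follows the paper's proof essentially verbatim: you check \eqref{abs1} and \eqref{abs2} exactly as for the pair $\{{\bf J}_{X,\alpha},{\bf J}_{X,\alpha}'\}$ in Proposition \ref{JA-simple}, observe that \eqref{abs3} is trivial for even indices since $\widehat{\mathcal A}_{2j}=\mathcal A_{2j}=\mathbb O_p$, treat the odd indices by the same telescoping/form-bound estimate against $\|{\bf J}_{X,\beta}'f\|$ (using the small factor $1-\nu(d_{j+1})/(c\,d_{j+1})=O(d_{j+1}^2)$ together with $d_{n-1}^2/d_n\to0$), and then invoke Theorem \ref{abs_th} and Proposition \ref{JBabs}, which is precisely the paper's route. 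The only slips are cosmetic and harmless: your momentary attempt to bound the odd-index diagonal discrepancy by a uniform constant would need a bound on $\|\beta_n\|$ that the first statement does not assume (you rightly abandon it in favor of the form estimate, as the paper does), and in \eqref{abs2} the odd-index difference in fact vanishes identically rather than involving $\nu(\gd_{j+1})$ versus $\nu(\gd_{j+2})$.
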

  \begin{proof}
Let us check the conditions of Theorem \ref{abs_th} for the pair $\{{\bf J}_{X,\beta},{\bf J}_{X,\beta}'\}$.
Now, the entries $\widehat{\mathcal A}_{n}$ and  ${\mathcal A}_{n}$
of the matrices  ${\bf J}_{X,\beta}$ and ${\bf J}_{X,\beta}'$, respectively,  are given by
\begin{equation}
\widehat{\mathcal A}_{n}=\left\{\begin{array}{ll}
                                  \mathbb O_p, & n=2j, \\
                                  -\frac{\nu^2(\gd_{j+1})}{d_{j+1}^3}(\beta_{j+1}+d_{j+1}\mathbb I_p)\mathbb I_p, & n=2j+1,
                                \end{array}\right.
\end{equation}
\begin{equation}
 \mathcal A_{n}=\left\{\begin{array}{ll}
                                   \mathbb O_p, & n=2j, \\
                                  -\frac{c^2}{d_{j+1}}(\beta_{j+1}+d_{j+1}\mathbb I_p), & n=2j+1,
                                \end{array}\right.
\end{equation}
while  $\widehat{\mathcal B}_{n}$ and $\mathcal B_{n}$ are given by  \eqref{B_form}.

 Conditions \eqref{abs1} and \eqref{abs2} can be checked in just the same way as in Proposition \ref{JA-simple}.

 Let us check condition \eqref{abs3}.  In the case $n=2j$ it is obviously satisfied  because $\widehat{\mathcal A}_{2j}=\mathcal A_{2j}=\mathbb O_p$.

 Consider the  case $n=2j+1$.  Since $\lim_{j\to\infty}d_{j+1} = 0$  and $\lim\limits_{j\to\infty}\frac{d_{j+1}^2}{d_{j+2}}=0$, for any $\varepsilon>0$ we can find $N=N(\varepsilon)$, such that $c^2 d_{j+1}^2<\varepsilon$ and $\frac{d_{j+1}^2}{d_{j+2}}<\varepsilon$ for all $j\geq N$. Thus,  for any $f\in l_0^2(\mathbb N;\mathbb C^p)$ with $f_{2j+1}=0$ for $j<N$,  we obtain
     \begin{align}
 &\sum\limits_{j\geq N}\|(\widehat{\mathcal A}_{2j+1}-\widehat{\mathcal B}_{2j}\mathcal B_{2j}^{-1}\mathcal A_{2j+1})f_{2j+1}\|_{\mathbb C^p}^2
 \leq\sum\limits_{j\geq N}\left\|-\frac{c^2}{d_{j+1}}(\beta_{j+1}+d_{j+1}\mathbb I_p)\cdot\frac{1-\sqrt{1+c^2d_{j+1}^2}}{1+c^2d_{j+1}^2}f_{2j+1}\right\|_{\mathbb C^p}^2\nonumber
  \end{align}
 \begin{align}
& =\sum\limits_{j\geq N}\left\|\Big(({\bf J}_{X,\beta}'f)_{2j+1}-\Big(\frac{f_{2j}}{d_{j+1}}+\frac{f_{2j+2}}{d_{j+1}^{1/2}d_{j+2}^{1/2}}\Big)\Big)\frac{c^3d_{j+1}^2}{(1+c^2d_{j+1}^2)\Big(1+\sqrt{1+c^2d_{j+1}^2}\Big)}\right\|_{\mathbb C^p}^2\\
& \leq\sum\limits_{j\geq N}\left\|c^2d_{j+1}^2({\bf J}_{X,\beta}'f)_{2j+1}\right\|_{\mathbb C^p}^2+c^6\sum\limits_{j\geq N}\left\|d_{j+1}f_{2j}+d_{j+1}^{1/2}\cdot\frac{d_{j+1}}{d_{j+2}^{1/2}}f_{2j+2}\right\|_{\mathbb C^p}^2\leq\varepsilon^2\left\|{\bf J}_{X,\beta}'f\right\|_{l^2}^2+4C_1\|f\|_{l^2}^2,\nonumber
 \end{align}
where $C_1=\max\{c^4\varepsilon,c^5\varepsilon^{3/2}\}$. Thus condition \eqref{abs3} is verified.

Applying   Theorem \ref{abs_th}, we conclude that   $n_{\pm}({\bf J}_{X,\beta}) = n_{\pm}({\bf J}_{X,\beta}')$.

The second statement is immediate  from   Proposition \ref{JBabs}.
  \end{proof}
 \begin{remark}
 Note, that if $\beta=\mathbb O_p$. Then Corollaries \ref{crit-max-alpha} and \ref{crit-max-alpha'} remain true for Jacobi matrices ${\bf J}_{X,\alpha}$ and ${\bf J}_{X,\alpha}'$ replaced by ${\bf J}_{X,\beta}$ and ${\bf J}_{X,\beta}'$, respectively.
 \end{remark}
\subsection{Relations to the moment problem}
Here  we demonstrate the role of block Jacobi matrices  with maximal deficiency indices
in matrix moment problem.
Following M. Krein \cite{Krein, Krein49} one associates to the matrix $\bf J$ \eqref{Jm}  a difference matrix expression
%
\begin{equation}\label{LU}
(LV)_n=\mathcal B_{n-1}^*V_{n-1}+\mathcal B_n V_{n+1}+\mathcal A_nV_n, \quad  V_0=\mathbb I_p,\ V_{-1}=\mathbb O_p,\quad  V_n\in \Bbb C^{p\times p},\  n\in\mathbb N_0.
\end{equation}
It is known (see \cite{Krein, Krein49, Ber68}), that  the  solution to the Cauchy problem $(LV)_n=zV_n$ subject to the initial condition \eqref{LU} is a sequence of matrix  polynomials $\{P_n(z)\}_0^{\infty}$.  Subsequently  one finds
\begin{equation}
P_0(z)=\mathbb I,\quad  P_1(z)=\mathcal B_0^{-1}(z\mathbb I-\mathcal A_0),\quad P_2(z)=\mathcal B_1^{-1}((z\mathbb I-\mathcal A_1)P_1(z)\mathcal B_0),\quad \ldots\,.
\end{equation}

M. Krein \cite{Krein} (see also \cite{Ber68}) showed, that for all $z\in\mathbb C_\pm$ there is a matrix limit
\begin{equation}
 H(z)=\lim\limits_{k\to\infty}\left(\sum\limits_{n=0}^kP_n^*(\overline{z})P_n(z)\right)^{-1}\quad\text{and}
 \quad  \mathrm {rank}(H(z))=n_\pm(\bf J).
\end{equation}
  He also established that certain matrix moment problem  is associated  to  every Jacobi matrix  ${\bf J}$ and this problem has a unique (normalized in a sense) solution if $n_-({\bf J})\cdot n_+({\bf J}) =0$.
 Moreover, this case is known as a definite case of the matrix moment problem.  If $n_{\pm} ({\bf J})=p$  (see \cite{Krein}), then the series
  \begin{equation}\label{reprod_kernel_Intro}
 \sum\limits_{n=0}^\infty P_n^*(\overline{z})P_n(z)  =: H^{-1}(z), \qquad z\in \Bbb C,
 \end{equation}
converges uniformly on  compact subsets of $\Bbb C$.
Note, that in this case the defect subspace  $\frak N_{z}$  is:
$$
\frak N_{z} := \ker ({\bf J}^* - zI) = \{\{P_n(z)h\}_0^{\infty}: \  h\in \Bbb C^p\}.
$$
In this case, it is  said that  the matrix ${\bf J}$ (and the corresponding matrix moment problem)
is in  the completely indeterminant case (\cite{Krein, Krein49}, \cite[Ch. VII, \S2]{Ber68}).
In this case for each matrix solution $\Sigma$ to the respective moment problem the series \eqref{reprod_kernel_Intro}  defines a reproducing kernel
for the subspace of entire matrix  functions generated in $L^2(\Bbb R; \Sigma)$ by the matrix
polynomials $\{P_n(z)\}_0^{\infty}$. In particular, the latter happens under the conditions \eqref{5.26} and \eqref{5.26B}
for Jabobi matrices ${\bf J}_{X,\alpha}$ and ${\bf J}_{X,\beta}$, respectively.

 \section{Application to Schr\"{o}dinger and Dirac operators with $\delta$-interactions}\label{Shr}

 \subsection{Discrete Schr\"{o}dinger operators with point interactions}
Here we apply Theorems \ref{d_spec_J} and \ref{J_disc}  to describe discrete  Schr\"{o}dinger operators
with matrix point interactions associated in $L^2(\mathbb{R}_+;\mathbb{C}^p)$ with  formal differential expressions
\begin{equation}\label{I_01}
\ell_{X,\gA}:=-\frac{\rD^2}{\rD x^2}+\sum_{x_{n}\in X}\gA_n\delta(x-x_n).
 \end{equation}
Here   $\delta$ denotes the  Dirac delta-function, $X=\{x_n\}_{1}^\infty\subset \mathcal I = (0,b)$, $b\le \infty$, is a strictly increasing sequence with $x_0:=0$, $x_{n+1}>x_{n}$,  $x_n\to b$,  and $\{\alpha_n\}_1^\infty \subset\mathbb C^{p\times p}$.
For rigorous definition of the minimal operator associated to
expression \eqref{I_01} one  introduces first  a preminimal operator $\mathrm{\bf H}^0_{X,\alpha}$ by setting
   \begin{equation}\label{deltaopH}
\begin{gathered}
\mathrm{\bf H}^0_{X,\alpha}:=-\frac{d^2}{dx^2}\otimes \mathbb I_p,\\
\dom(\mathrm{\bf H}^0_{X,\alpha})= \left\lbrace f\in W^{2,2}_{\comp} (\mathbb{R}_+\setminus X;\mathbb{C}^p): \begin{array}{c} f'(0)=0,\ f(x_k+)=f(x_k-)\\ f'(x_k+)-f'(x_k-)=\alpha_k f(x_k) \end{array} \right\rbrace.
\end{gathered}
\end{equation}
The  minimal operator  $\mathrm{\bf H}_{X,\alpha}$  associated  to~\eqref{I_01}  is defined to be the closure of $\mathrm{\bf H}^0_{X,\alpha}$ in $L^2(\mathbb{R}_+;\mathbb{C}^p)$.

It is established in \cite{KM, KM10} for $p=1$, and then in \cite{KMN} for arbitrary $p>1$,   that certain  spectral properties of the minimal operator $\mathrm{\bf H}_{X,\alpha}$, associated  to expression~\eqref{I_01}, are identical to that
of the minimal Jacobi operator ${\bf J}_{{X,\alpha}}^{(1)}(\mathrm{\bf{H}})$, associated in $l^2(\mathbb N_0;\mathbb C^p)$  to the block Jacobi matrix of the form
   \begin{equation}\label{IV.2.1_05}
{\bf J}_{{X,\alpha}}^{(1)}(\mathrm{\bf{H}})\!=\!\left(\begin{array}{cccccc}
\!\frac{1}{r_1^{2}}\,\widetilde{\alpha}_1\! & \!\frac{1}{r_1r_2\gd_2}\mathbb I_p\! & \!\mathbb O_p\!&   \mathbb O_p\!&   \mathbb O_p\!&   \dots\!\\
\!\frac{1}{r_1r_2\gd_2}\mathbb I_p \!&\!\frac{1}{r_2^{2}}\,\widetilde{\alpha}_2\! &\! \frac{1}{r_2r_3\gd_3}\mathbb I_p\! & \! \mathbb O_p\!& \! \mathbb O_p\!& \! \dots\!\\
\!\mathbb O_p\! &\! \frac{1}{r_2r_3\gd_3}\mathbb I_p\! &\! \frac{1}{r_3^{2}}\,\widetilde{\alpha}_3\!&\!  \frac{1}{r_3r_4\gd_4}\mathbb I_p\!& \! \mathbb O_p\!& \! \dots\!\\
\mathbb O_p & \mathbb O_p & -\frac{1}{r_3r_4\gd_4}\mathbb I_p& \frac{1}{r_4^2}\widetilde{\alpha}_4& \frac{1}{r_4r_5\gd_5}\mathbb I_p& \! \dots\!\\
\!\dots\! &\! \dots\!&\! \dots\! & \!\dots\!& \! \dots\!& \! \dots\!
\end{array}\right)\!.\!
    \end{equation}
Here $d_n:=x_n-x_{n-1}$, $r_n:=\sqrt{d_n+d_{n+1}}$ and
\begin{equation}\label{alpha-tilde}
\widetilde{\alpha}_n:=\alpha_n+\Big(\frac{1}{\gd_n}+\frac{1}{\gd_{n+1}}\Big)\mathbb I_p,\qquad n\in\mathbb N.
\end{equation}
In particular, it is proved in  \cite{KM, KM10} (for $p=1$) and \cite{KMN} (for $p > 1$) (see also Proposition \ref{Bound-op-A})
that:
   \begin{equation}\label{n(H)=n(J)}
\mathrm{(a)}\  n_\pm(\mathrm{\bf H}_{X,\alpha})=n_\pm({\bf J}_{{X,\alpha}}^{(1)}(\mathrm{\bf{H}}));\qquad\qquad\qquad\qquad\qquad\qquad\qquad\qquad\qquad\qquad\qquad\qquad\qquad\quad
\end{equation}

$(b)$ If  ${\bf J}_{{X,\alpha}}^{(1)}(\mathrm{\bf{H}})=({\bf J}_{{X,\alpha}}^{(1)}(\mathrm{\bf{H}}))^*$
and  $\lim\limits_{n\to\infty}d_n=0$, then the operators $\mathrm{\bf H}_{X,\alpha}$ and
${\bf J}_{{X,\alpha}}^{(1)}(\mathrm{\bf{H}})$ are discrete simultaneously.

Now we complete investigation  of $\mathrm{\bf H}_{X,\alpha}$ from \cite{KMN} by establishing several its discreteness conditions.
   \begin{theorem}
Let   $\mathrm{\bf H}_{X,\alpha}$ be the minimal operator associated  to~\eqref{I_01}  in $L^2(\mathbb{R}_+;\mathbb{C}^p)$
and  let $\lim\limits_{n\to\infty}d_n=0$.  Let also $\mathcal
A:=\diag\big\{\frac{\widetilde{\alpha}_1 }{r_1^{2}}, \frac{\widetilde{\alpha}_2
}{r_2^{2}},\ldots\big\}$ with $\widetilde{\alpha}_n$ given by \eqref{alpha-tilde}  and $\ker\mathcal A=\{0\}$.
  Assume also  that
at least one of the following conditions  is satisfied:
  \begin{equation}\label{self_adj_Sh1}
(i)\quad\limsup_{n\to\infty}\frac{r_n}{k_n}\Big\|\widetilde{\alpha}_n^{-1}\Big\|<\frac{1}{2},\quad k_n:=\min\{r_{n-1}d_n;r_{n+1}d_{n+1}\};\qquad\qquad\quad
\end{equation}
\begin{equation}\label{self_adj_Sh2}
(ii)\quad\limsup_{n\to\infty}r_n^s\Big(\frac{1}{r^s_{n-1}d_n^s}+\frac{1}{r_{n+1}^sd_{n+1}^s}\Big)
\Big\|\widetilde{\alpha}_n^{-1}\Big\|^s<\frac{1}{2^{s-1}}, \quad s\in[1;+\infty);\qquad
\end{equation}
\begin{equation}\label{like_Mirzoev}
(iii)\quad\limsup_{n\to\infty}\frac{1}{r_{n+1}^s}\left(\frac{r^s_{n}}{d_{n+1}^s}
\Big\|\widetilde{\alpha}_n^{-1}\Big\|^s+\frac{r^s_{n+2}}{d_{n+2}^s}
\Big\|\widetilde{\alpha}_{n+2}^{-1}\Big\|^s\right)<\frac{1}{2^{s-1}}, \quad s\in[1;+\infty).
\end{equation}
Then the operator $\mathrm{\bf H}_{X,\alpha}$ is selfadjoint and its spectrum is discrete.

In addition, if any of conditions  \eqref{self_adj_Sh1}, \eqref{self_adj_Sh2}, \eqref{like_Mirzoev} holds with the inequality sign replaced by the equality sign, then $\mathrm{\bf H}_{X,\alpha}$  is selfadjoint.
       \end{theorem}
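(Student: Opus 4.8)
The strategy is to pass to the associated block Jacobi matrix ${\bf J}_{{X,\alpha}}^{(1)}(\mathrm{\bf{H}})$ of the form \eqref{IV.2.1_05} via the correspondence \eqref{n(H)=n(J)} and then to invoke Theorem~\ref{d_spec_J}. First I would read off the entries of \eqref{IV.2.1_05}: the diagonal blocks are $\mathcal A_n=r_n^{-2}\widetilde{\alpha}_n$ and the off-diagonal ones are $\mathcal B_n=(r_nr_{n+1}d_{n+1})^{-1}\mathbb I_p$ (signs of off-diagonal entries being irrelevant for deficiency indices and for the spectrum), so that the diagonal part of ${\bf J}_{{X,\alpha}}^{(1)}(\mathrm{\bf{H}})$ is exactly the operator $\mathcal A=\diag\{r_1^{-2}\widetilde{\alpha}_1,r_2^{-2}\widetilde{\alpha}_2,\dots\}=\mathcal A^*$ of the statement, and a direct computation gives $\|\mathcal A_n^{-1}\mathcal B_n\|=\frac{r_n}{r_{n+1}d_{n+1}}\|\widetilde{\alpha}_n^{-1}\|$, $\|\mathcal A_n^{-1}\mathcal B_{n-1}^*\|=\frac{r_n}{r_{n-1}d_n}\|\widetilde{\alpha}_n^{-1}\|$ and $\|\mathcal A_{n+2}^{-1}\mathcal B_{n+1}^*\|=\frac{r_{n+2}}{r_{n+1}d_{n+2}}\|\widetilde{\alpha}_{n+2}^{-1}\|$.

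With these identities condition \eqref{self_adj_Sh1} becomes $\limsup_n\max\{\|\mathcal A_n^{-1}\mathcal B_n\|,\|\mathcal A_n^{-1}\mathcal B_{n-1}^*\|\}<\tfrac12$, condition \eqref{self_adj_Sh2} becomes the $\limsup$-form of \eqref{self_adj1a}, and condition \eqref{like_Mirzoev} becomes the $\limsup$-form of \eqref{self_adj*a}; consequently in each case there is $N$ for which the corresponding $\sup_{n\ge N}$-hypothesis of Theorem~\ref{d_spec_J} — namely item (v) (cf. \eqref{adj1ep}), item (iii), or item (iv) — holds with strict inequality. Next I would verify that $\mathcal A$ is discrete: each of the three conditions forces $\|\widetilde{\alpha}_n^{-1}\|\le C\,r_{n+1}d_{n+1}/r_n$ for all large $n$, whence $\|\mathcal A_n^{-1}\|=r_n^2\|\widetilde{\alpha}_n^{-1}\|\le C\,r_nr_{n+1}d_{n+1}\to0$, since $\lim_n d_n=0$ forces $r_n=\sqrt{d_n+d_{n+1}}\to0$; thus $\mathcal A^{-1}\in\mathcal S_\infty$.

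Now Theorem~\ref{d_spec_J} applies with $q=\infty$ and yields that ${\bf J}_{{X,\alpha}}^{(1)}(\mathrm{\bf{H}})$ is selfadjoint with $\big({\bf J}_{{X,\alpha}}^{(1)}(\mathrm{\bf{H}})\big)^{-1}\in\mathcal S_\infty$, i.e. discrete. It remains to transfer these two facts to $\mathrm{\bf H}_{X,\alpha}$: by \eqref{n(H)=n(J)}(a), $n_\pm(\mathrm{\bf H}_{X,\alpha})=n_\pm({\bf J}_{{X,\alpha}}^{(1)}(\mathrm{\bf{H}}))=0$, so $\mathrm{\bf H}_{X,\alpha}=\mathrm{\bf H}_{X,\alpha}^*$; and since $\lim_n d_n=0$ while ${\bf J}_{{X,\alpha}}^{(1)}(\mathrm{\bf{H}})$ is selfadjoint and discrete, part (b) of \eqref{n(H)=n(J)} gives that $\mathrm{\bf H}_{X,\alpha}$ is discrete. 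For the last assertion, if one of \eqref{self_adj_Sh1}--\eqref{like_Mirzoev} is assumed in its non-strict form, the same translation shows that ${\bf J}_{{X,\alpha}}^{(1)}(\mathrm{\bf{H}})$ satisfies the hypotheses of Corollary~\ref{Th_self_adj1} and hence is essentially selfadjoint, so again $n_\pm({\bf J}_{{X,\alpha}}^{(1)}(\mathrm{\bf{H}}))=0$ and $\mathrm{\bf H}_{X,\alpha}$ is selfadjoint by \eqref{n(H)=n(J)}(a) (discreteness need not persist in this borderline case). The only point requiring real attention is the bookkeeping that matches each of \eqref{self_adj_Sh1}--\eqref{like_Mirzoev} with the correct abstract hypothesis and at the same time delivers $\mathcal A^{-1}\in\mathcal S_\infty$; everything else is a direct citation of the abstract machinery of Sections~\ref{J_s} and~\ref{J_d}.
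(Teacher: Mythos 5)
Your proposal is correct and follows essentially the same route as the paper: pass to the block Jacobi matrix ${\bf J}_{{X,\alpha}}^{(1)}(\mathrm{\bf{H}})$, compute $\|\mathcal A_n^{-1}\mathcal B_n\|$, $\|\mathcal A_n^{-1}\mathcal B_{n-1}^*\|$, $\|\mathcal A_{n+2}^{-1}\mathcal B_{n+1}^*\|$ so that \eqref{self_adj_Sh1}--\eqref{like_Mirzoev} become items $(v)$, $(iii)$, $(iv)$ of Theorem \ref{d_spec_J}, and transfer selfadjointness and discreteness back to $\mathrm{\bf H}_{X,\alpha}$ via statements $(a)$ and $(b)$ of \eqref{n(H)=n(J)}. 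Your explicit check that the hypotheses force $\|\mathcal A_n^{-1}\|=r_n^2\|\widetilde{\alpha}_n^{-1}\|\to 0$, hence $\mathcal A^{-1}\in\mathcal S_\infty$, is a point the paper's proof takes for granted, and your handling of the borderline (non-strict) case through Corollary \ref{Th_self_adj1} matches the paper's intent.
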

\begin{proof}
We prove the result  assuming the  condition  $(i)$. The  conditions $(ii)$ and $(iii)$ are treated similarly.

Alongside the operator $\mathrm{\bf H}_{X,\alpha}$ consider the block Jacobi matrix
${\bf J}_{{X,\alpha}}^{(1)}(\mathrm{\bf{H}})$ given by \eqref{IV.2.1_05}. Now
$$
\mathcal A_{n}= \frac{1}{r_n^{2}}\,\widetilde{\alpha}_n \qquad\text{and}\qquad \mathcal B_{n}=-\frac{1}{r_nr_{n+1}\gd_{n+1}}\mathbb I_p.
$$
Since $\cA = \diag\big\{\frac{\widetilde{\alpha}_1 }{r_1^{2}}, \frac{\widetilde{\alpha}_2
}{r_2^{2}},\ldots\big\}$ is discrete,
there is $N\in\mathbb N$ such that the  matrices $\widetilde{\alpha}_n$ are invertible
for each  $n\geq N$. It easily follows from \eqref{self_adj_Sh1}  that
 \begin{equation}
\limsup_{n\to\infty}\|\mathcal A_{n}^{-1}\cdot\mathcal B_{n}\|=\limsup_{n\to\infty}r_n^2\cdot\|\widetilde{\alpha}_n^{-1}\|\cdot\frac{1}{r_nr_{n+1}d_{n+1}}=\limsup_{n\to\infty}\frac{r_n}{r_{n+1}d_{n+1}}\cdot\|\widetilde{\alpha}_n^{-1}\|<\frac{1}{2};
  \end{equation}
  \begin{equation}
\limsup_{n\to\infty}\|\mathcal A_{n}^{-1}\cdot\mathcal B_{n-1}^*\|=\limsup_{n\to\infty}r_n^2\cdot\|\widetilde{\alpha}_n^{-1}\|\cdot\frac{1}{r_{n-1}r_{n}d_{n}}=\limsup_{n\to\infty}\frac{r_n}{r_{n-1}d_{n}}\cdot\|\widetilde{\alpha}_n^{-1}\|<\frac{1}{2}.
  \end{equation}
Thus,  the matrix ${\bf J}_{{X,\alpha}}^{(1)}(\mathrm{\bf{H}})$ meets conditions
\eqref{adj1ep} of Theorem \ref{d_spec_J},  hence  it is selfadjoint and discrete.

  Similarly, conditions \eqref{self_adj_Sh2}, \eqref{like_Mirzoev}  imply conditions
  \eqref{self_adj1a}, \eqref{self_adj*a},  respectively, for  the matrix ${\bf J}_{{X,\alpha}}^{(1)}(\mathrm{\bf{H}})$.
  Thus, Theorem \ref{d_spec_J}  applies to the matrix  ${\bf J}_{{X,\alpha}}^{(1)}(\mathrm{\bf{H}})$ in all three cases
  \eqref{self_adj_Sh1}--\eqref{like_Mirzoev}  and ensures its   selfadjointness and  discreteness.

Finally,  according to the statements $(a)$ and $(b)$ (see \eqref{n(H)=n(J)})  mentioned above the operators
$\mathrm{\bf H}_{X,\alpha}$ and ${\bf J}_{{X,\alpha}}^{(1)}(\mathrm{\bf{H}})$ are
selfadjoint and discrete simultaneously.
\end{proof}

It is also proved in \cite{KM, KM10} (for $p=1$) and in \cite{KMN} (for $p>1$) that  certain  spectral properties of the Hamiltonian
$\mathrm{\bf H}_{X,\alpha}$
are closely related  to that
of the  minimal Jacobi operator ${\bf J}_{{X,\alpha}}^{(2)}(\mathrm{\bf{H}})$, associated in $l^2(\mathbb N_0;\mathbb C^p)$
to another  block Jacobi matrix of the form
   \begin{equation}\label{IV.2.1Sh''}
{\bf J}_{{X,\alpha}}^{(2)}(\mathrm{\bf{H}}) = \left(
\begin{array}{cccccc}
  \mathbb O_p  & \frac{1}{\gd_1^{2}}\mathbb I_p & \mathbb O_p  & \mathbb O_p & \mathbb O_p   &  \dots\\
   \frac{1}{\gd_1^{2}}\mathbb I_p  &  -\frac{1}{\gd_1^{2}}\mathbb I_p &
   \frac{1}{\gd_1^{3/2}\gd_2^{1/2}}\mathbb I_p & \mathbb O_p  & \mathbb O_p &  \dots\\
  \mathbb O_p  & \frac{1}{\gd_1^{3/2}\gd_2^{1/2}}\mathbb I_p  & \frac{\alpha_1}{\gd_2}  &
   \frac{1}{\gd_2^{2}}\mathbb I_p & \mathbb O_p &   \dots\\
  \mathbb O_p  & \mathbb O_p  & \frac{1}{\gd_2^{2}}\mathbb I_p &  -\frac{1}{\gd_2^{2}}\mathbb I_p &
  \frac{1}{\gd_2^{3/2}\gd_3^{1/2}}\mathbb I_p &  \dots\\
  \mathbb O_p  & \mathbb O_p  & \mathbb O_p  & \frac{1}{\gd_2^{3/2}\gd_3^{1/2}}\mathbb I_p &
   \frac{\alpha_2}{\gd_3} &   \dots\\
\dots& \dots&\dots&\dots&\dots&\dots\\
 \end{array}%
\right)\,.
   \end{equation}
   In particular, statements $(a), (b)$ (see \eqref{n(H)=n(J)}) remain  valid with ${\bf J}_{{X,\alpha}}^{(1)}(\mathrm{\bf{H}})$ replaced by ${\bf J}_{{X,\alpha}}^{(2)}(\mathrm{\bf{H}})$.

Other conditions for $\mathrm{\bf H}_{X,\alpha}$ to be discrete  can be  extracted from
Theorem~\ref{J_disc}.
   \begin{theorem}\label{Shr_disc_2}
Let   $\mathrm{\bf H}_{X,\alpha}$ be the minimal operator associated  to~\eqref{I_01}  in $L^2(\mathbb{R}_+;\mathbb{C}^p)$
and  let $\lim\limits_{n\to\infty}d_n=0$. Assume also that one of the following conditions hold:

$(i)$ let the diagonal part  ${\mathcal A}^{(1)}:=\diag\big\{\frac{\widetilde{\alpha}_1
}{r_1^{2}}, \frac{\widetilde{\alpha}_2 }{r_2^{2}},\ldots\big\}$ of the Jacobi matrix
${\bf J}_{{X,\alpha}}^{(1)}(\mathrm{\bf{H}})$
be  discrete, let $\ker {\mathcal A}^{(1)}=\{0\}$, and let
\begin{equation}\label{self_adj_Sh3}
\limsup_{n\to\infty}\frac{1}{d_{n+1}}\Big\||\widetilde{\alpha}_n|^{-1/2}\cdot|\widetilde{\alpha}_{n+1}|^{-1/2}\Big\|<\frac{1}{2};
\end{equation}

$(ii)$ let the  diagonal matrix $\mathcal A':=\diag\big\{
\frac{\alpha_1}{\gd_2},\frac{\alpha_2}{\gd_3},\ldots\big\}$ (a part of the diagonal  ${\mathcal
A}^{(2)}$  of the matrix  ${\bf J}_{{X,\alpha}}^{(2)}(\mathrm{\bf{H}})$ \eqref{IV.2.1Sh''})
be  discrete, let $\ker {\mathcal A}^{'}=\{0\}$, and let
   \begin{equation}\label{self_adj_Sh4}
\limsup_{n\to\infty}\frac{\big\||\alpha_n|^{-1/2}\big\|}{d^{1/2}_n}<\frac{1}{2},\qquad \limsup_{n\to\infty}\frac{\big\||\alpha_n|^{-1/2}\big\|}{d^{1/2}_{n+1}}<\frac{1}{2}.
 \end{equation}
Then:

$(1)$ $n_+(\mathrm{\bf H}_{X,\alpha}) = n_-(\mathrm{\bf H}_{X,\alpha})\le p$ and
the spectrum of  any selfadjoint extension of $\mathrm{\bf H}_{X,\alpha}$ is discrete.
In particular,  $\mathrm{\bf H}_{X,\alpha}$ is discrete  provided that  it is
selfadjoint.

$(2)$  If in addition,  $\{d_n\}_1^\infty\not\in l^2(\mathbb N)$,   then $\mathrm{\bf
H}_{X,\alpha}$ is selfadjoint and discrete.
   \end{theorem}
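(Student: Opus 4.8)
The plan is to reduce the claimed statement about the Schr\"{o}dinger operator $\mathrm{\bf H}_{X,\alpha}$ to the corresponding statement about one of the associated block Jacobi matrices, ${\bf J}_{{X,\alpha}}^{(1)}(\mathrm{\bf{H}})$ in case $(i)$ and ${\bf J}_{{X,\alpha}}^{(2)}(\mathrm{\bf{H}})$ in case $(ii)$, and then invoke Theorem~\ref{J_disc}. Concretely, in case $(i)$ I would take the diagonal part $\mathcal A = {\mathcal A}^{(1)} = \diag\{\widetilde{\alpha}_1/r_1^{2},\widetilde{\alpha}_2/r_2^{2},\ldots\}$ and the off-diagonal entries $\mathcal B_n = \frac{1}{r_nr_{n+1}d_{n+1}}\mathbb I_p$ of \eqref{IV.2.1_05}. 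Then
\begin{equation*}
|\mathcal A_{n}|^{-1/2} = r_n\,|\widetilde{\alpha}_n|^{-1/2},\qquad
\big\||\mathcal A_{n+1}|^{-1/2}\mathcal B_n^*|\mathcal A_n|^{-1/2}\big\|
= \frac{r_{n+1}r_n}{r_nr_{n+1}d_{n+1}}\,\big\||\widetilde{\alpha}_{n+1}|^{-1/2}\,|\widetilde{\alpha}_n|^{-1/2}\big\|
= \frac{1}{d_{n+1}}\,\big\||\widetilde{\alpha}_{n+1}|^{-1/2}\,|\widetilde{\alpha}_n|^{-1/2}\big\|,
\end{equation*}
so hypothesis \eqref{self_adj_Sh3} is literally condition \eqref{cond_disc} of Theorem~\ref{J_disc}(1.iii) for this matrix. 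Here I should be careful that $\||\widetilde{\alpha}_{n+1}|^{-1/2}\,|\widetilde{\alpha}_n|^{-1/2}\|$ is symmetric in swapping $n\leftrightarrow n+1$ up to taking adjoints, which is why the two off-diagonal-factor expressions of Theorem~\ref{J_disc} match; this is a routine check. Consequently Theorem~\ref{J_disc} applies: ${\bf J}_{{X,\alpha}}^{(1)}(\mathrm{\bf{H}})$ is symmetric with equal deficiency indices $\le p$, each selfadjoint extension is discrete, and if ${\mathcal A}^{(1)}$ (equivalently, if $\{d_n\}_1^\infty\notin l^2$, via \eqref{alpha-tilde} and the estimate $\|\widetilde\alpha_n^{-1}\|\le$ const $\cdot\,d_n$ when $d_n\to0$) is discrete then so is every selfadjoint extension.

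For case $(ii)$ I would instead work with ${\bf J}_{{X,\alpha}}^{(2)}(\mathrm{\bf{H}})$ of \eqref{IV.2.1Sh''}, taking as $\mathcal A$ the diagonal matrix whose nonzero entries are the odd-indexed blocks $\alpha_j/d_{j+1}$ (the even blocks of the diagonal of \eqref{IV.2.1Sh''} are $\mathbb O_p$ or $\pm d_j^{-2}\mathbb I_p$, which are bounded only relatively; the cleanest route is to apply Theorem~\ref{J_disc} to the odd-even reorganization, or — following the structure of Proposition~\ref{JA-simple} and \ref{J'beta_p} — to note that the "free" $-d_j^{-2}\mathbb I_p$ diagonal entries together with the adjacent $d_j^{-2}\mathbb I_p$ off-diagonals form a discrete, selfadjoint block that can be split off). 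A check analogous to the one above converts \eqref{self_adj_Sh4} into the two estimates $\limsup_n\|\mathcal A_n^{-1}\mathcal B_n\|<1/2$, $\limsup_n\|\mathcal A_n^{-1}\mathcal B_{n-1}^*\|<1/2$ of Theorem~\ref{d_spec_J}(v) (in its $\limsup$-version, Theorem~\ref{J_disc}(1.iii) with $s=1$ phrased through \eqref{adj1ep}): indeed $\mathcal A_n^{-1}\mathcal B_n = \frac{d_{j+1}}{d_{j+1}^2}\alpha_j^{-1} = \frac{1}{d_{j+1}}\alpha_j^{-1}$ on the relevant rows, and $\||\alpha_j|^{-1}\|^{1/2}$-type bounds give the $1/2$ threshold after squaring. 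Either way, the upshot is that the relevant Jacobi matrix falls under Theorem~\ref{J_disc}: symmetric, equal indices $\le p$, and discreteness of each selfadjoint extension provided the diagonal part $\mathcal A'=\diag\{\alpha_1/d_2,\alpha_2/d_3,\ldots\}$ is discrete.

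Once the Jacobi side is settled, part $(1)$ follows from the equivalence \eqref{n(H)=n(J)}(a), $n_\pm(\mathrm{\bf H}_{X,\alpha}) = n_\pm({\bf J}_{{X,\alpha}}^{(j)}(\mathrm{\bf{H}}))$, which forces $n_+=n_-\le p$, together with statement (b) of \eqref{n(H)=n(J)}: since $\lim_{n\to\infty}d_n=0$, discreteness of a selfadjoint extension of the Jacobi matrix transfers to discreteness of the corresponding selfadjoint extension of $\mathrm{\bf H}_{X,\alpha}$ (one extends (b), stated there for selfadjoint ${\bf J}^{(j)}={\bf J}^{(j)*}$, to arbitrary selfadjoint extensions by the finite-rank perturbation argument already used in Theorem~\ref{J_disc}(2)--(3) and Theorem~\ref{abs_th}(b)). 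In particular, if $\mathrm{\bf H}_{X,\alpha}$ itself is selfadjoint it is discrete. For part $(2)$: if in addition $\{d_n\}_1^\infty\notin l^2(\mathbb N)$, then $\sum d_n^2=\infty$, and by Proposition~\ref{self-adj_H_X_Carlem} (valid for all $p\ge1$, as recalled in the introduction via \eqref{equal_def_ind_Schrod} and the matrix Carleman test) the operator $\mathrm{\bf H}_{X,\alpha}$ is selfadjoint; combined with part $(1)$ it is then discrete.

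The main obstacle I anticipate is the bookkeeping in case $(ii)$: the diagonal of \eqref{IV.2.1Sh''} is not of the form "invertible diagonal with the hypotheses controlling the off-diagonals relative to it" — half its entries are the unbounded "kinetic" terms $\mp d_j^{-2}\mathbb I_p$ rather than $\alpha_j/d_{j+1}$. One must organize the matrix (e.g. pair each kinetic diagonal entry with its flanking $d_j^{-2}$ off-diagonal $2\times 2$ block, which is itself selfadjoint with explicit spectrum, and treat it as the reference operator $T$) so that Lemma~\ref{lem_discreteness}/Theorem~\ref{J_disc} genuinely applies with $\mathcal A'$ playing the role of the discrete part. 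This is the step where the hypothesis "$\mathcal A'$ discrete, $\ker\mathcal A'=\{0\}$" and the precise constant $1/2$ in \eqref{self_adj_Sh4} get used; everything else is the same reduction pattern as in the preceding theorems of Section~\ref{Shr}. The transfer statements (a), (b) from \cite{KM,KM10,KMN} and Proposition~\ref{self-adj_H_X_Carlem} are quoted as black boxes.
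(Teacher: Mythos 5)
Your case $(i)$, the transfer to $\mathrm{\bf H}_{X,\alpha}$ via \eqref{n(H)=n(J)}, and part $(2)$ via Proposition \ref{self-adj_H_X_Carlem} follow exactly the paper's route and are fine. The genuine gap is in case $(ii)$, which you yourself flag as "the main obstacle": you never verify that ${\bf J}_{{X,\alpha}}^{(2)}(\mathrm{\bf{H}})$ satisfies a hypothesis of Theorem \ref{J_disc}, and the two substitutes you sketch do not work as stated. The proposed conversion of \eqref{self_adj_Sh4} into the conditions \eqref{adj1ep} of Theorem \ref{d_spec_J}(v) fails on the "kinetic" rows: with $\mathcal A_{2j+1}=-d_{j+1}^{-2}\mathbb I_p$, $\mathcal B_{2j}=d_{j+1}^{-2}\mathbb I_p$, $\mathcal B_{2j+1}=d_{j+1}^{-3/2}d_{j+2}^{-1/2}\mathbb I_p$ one gets $\|\mathcal A_{2j+1}^{-1}\mathcal B_{2j}^*\|=1>\tfrac12$ and $\|\mathcal A_{2j+1}^{-1}\mathcal B_{2j+1}\|=\sqrt{d_{j+1}/d_{j+2}}$, which is not controlled by \eqref{self_adj_Sh4} at all, so Theorem \ref{d_spec_J}(v) is simply not applicable (and it would anyway prove selfadjointness, which is more than the theorem claims in case $(ii)$). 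The alternative "odd--even reorganization" or "splitting off the free $2\times2$ blocks" is left vague and is moreover not a direct-sum decomposition, since the flanking off-diagonal couplings $d_{j+1}^{-3/2}d_{j+2}^{-1/2}\mathbb I_p$ tie the blocks together.

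The point you are missing is that no reorganization is needed: the paper applies Theorem \ref{J_disc}(1.iii) directly to ${\bf J}_{{X,\alpha}}^{(2)}(\mathrm{\bf{H}})$ with its \emph{full} diagonal ${\mathcal A}^{(2)}$ (kinetic entries included). The kinetic entries are harmless precisely because they enter \eqref{cond_disc} through $|\mathcal A_{2j+1}|^{-1/2}=d_{j+1}\mathbb I_p$, which compensates the large off-diagonals:
\begin{equation*}
\big\||\mathcal A_{2j+1}|^{-1/2}\mathcal B_{2j}^*|\mathcal A_{2j}|^{-1/2}\big\|
= d_{j+1}\cdot\frac{1}{d_{j+1}^2}\cdot d_{j+1}^{1/2}\big\||\alpha_j|^{-1/2}\big\|
=\frac{\big\||\alpha_j|^{-1/2}\big\|}{d_{j+1}^{1/2}},
\qquad
\big\||\mathcal A_{2j+2}|^{-1/2}\mathcal B_{2j+1}^*|\mathcal A_{2j+1}|^{-1/2}\big\|
=\frac{\big\||\alpha_{j+1}|^{-1/2}\big\|}{d_{j+1}^{1/2}},
\end{equation*}
so condition \eqref{cond_disc} for the full matrix is \emph{literally} \eqref{self_adj_Sh4}, exactly as your computation did for ${\bf J}_{{X,\alpha}}^{(1)}(\mathrm{\bf{H}})$ in case $(i)$. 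Discreteness of ${\mathcal A}^{(2)}$ is also immediate: the kinetic diagonal part is discrete because $d_n\to0$, and $\mathcal A'$ is discrete by hypothesis. With this computation in place, Theorem \ref{J_disc}(1),(3) gives equal indices $\le p$ and discreteness of every selfadjoint extension of ${\bf J}_{{X,\alpha}}^{(2)}(\mathrm{\bf{H}})$, and the rest of your argument (transfer via \eqref{n(H)=n(J)} and, for part $(2)$, Proposition \ref{self-adj_H_X_Carlem}) goes through as you wrote it. A minor further inaccuracy in your case $(i)$: discreteness of ${\mathcal A}^{(1)}$ is a standing hypothesis there, not something "equivalent to $\{d_n\}\notin l^2$"; that parenthetical should be dropped.
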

\begin{proof}
$(1.i)$ To apply Theorem~\ref{J_disc}  to  the Jacobi  matrix  ${\bf
J}_{{X,\alpha}}^{(1)}(\mathrm{\bf{H}})$ we note that its  diagonal part ${\mathcal
A}^{(1)}$ is discrete and  general condition  \eqref{cond_disc} turns into condition
\eqref{self_adj_Sh3}.  Thus, Theorem~\ref{J_disc}(1)  guarantees relations $n_+({\bf
J}_{{X,\alpha}}^{(1)}(\mathrm{\bf{H}})) = n_-({\bf
J}_{{X,\alpha}}^{(1)}(\mathrm{\bf{H}})) \le p$ and discreteness
of  any  selfadjoint extension of ${\bf J}_{{X,\alpha}}^{(1)}(\mathrm{\bf{H}})$.

$(1.ii)$ Next to apply Theorem~\ref{J_disc}  to  the second Jacobi  matrix  ${\bf
J}_{{X,\alpha}}^{(2)}(\mathrm{\bf{H}})$ we note first that combining condition
$\lim\limits_{n\to\infty}d_n=0$ with  discreteness property of  $\mathcal A'$ yields
discreteness of the  diagonal part ${\mathcal A}^{(2)}$ of  ${\bf
J}_{{X,\alpha}}^{(2)}(\mathrm{\bf{H}})$. Besides, now general condition
\eqref{cond_disc} turns into the conditions \eqref{self_adj_Sh4} for ${\bf
J}_{{X,\alpha}}^{(2)}(\mathrm{\bf{H}})$ and hence is satisfied.
Thus, Theorem~\ref{J_disc}(1) applies and
ensures  the relations $n_+({\bf J}_{{X,\alpha}}^{(2)}(\mathrm{\bf{H}})) = n_-({\bf
J}_{{X,\alpha}}^{(2)}(\mathrm{\bf{H}})) \le p$ as well as the discreteness property of each
selfadjoint extension of ${\bf J}_{{X,\alpha}}^{(2)}(\mathrm{\bf{H}})$.

Again statements $(a)$ and $(b)$ (see \eqref{n(H)=n(J)}) allow one to retranslate just proved properties of the
matrices ${\bf J}_{{X,\alpha}}^{(1)}(\mathrm{\bf{H}})$ and ${\bf
J}_{{X,\alpha}}^{(2)}(\mathrm{\bf{H}})$ into the respective properties of  the
Schrodinger  operator $\mathrm{\bf H}_{X,\alpha}$.

$(2)$ The proof is immediate by combining the  statement (1)  with  Proposition
\ref{self-adj_H_X_Carlem} (see \eqref{d_n2}) that ensures  selfadjointness of the Hamiltonian $\mathrm{\bf
H}_{X,\alpha}$.
\end{proof}

In Propositions \ref{Shr_disc_5} and \ref{D_disc_5} we identify  $l^\infty(\mathbb N)$  with its separable subspace  $c_0(\mathbb N)$.
\begin{proposition}\label{Shr_disc_5}
Let   $\mathrm{\bf H}_{X,\alpha}$ be the minimal operator associated  to expression \eqref{I_01}  in $L^2(\mathbb{R}_+;\mathbb{C}^p)$.
Let also  $\{d_n\}_{n=1}^\infty\in l^{2q}(\mathbb N)$ for $q\in(\frac12, \infty]$, let $\mathcal A':=\diag\big\{
\frac{\alpha_1}{\gd_2},\frac{\alpha_2}{\gd_3},\ldots\big\}$, and  let conditions \eqref{self_adj_Sh4} hold.
If, in addition,  $(\mathcal A')^{-1}\in\mathcal S_q(l^2(\mathbb N_0;\mathbb C^p))$, then:

$(i)$  $n_+(\mathrm{\bf H}_{X,\alpha}) = n_-(\mathrm{\bf H}_{X,\alpha})\le p$

$(ii)$ $(\widetilde{\mathrm{\bf H}}_{X,\alpha}-i\mathbb I)^{-1}\in\mathcal S_q(L^2(\mathbb R_+;\mathbb C^p))$
for any selfadjoint extension  $\widetilde{\mathrm{\bf H}}_{X,\alpha}$ of $\mathrm{\bf H}_{X,\alpha}$.

Moreover,  if  $\mathrm{\bf H}_{X,\alpha}= \mathrm{\bf H}_{X,\alpha}^*$, then
$({\mathrm{\bf H}}_{X,\alpha}-i\mathbb I)^{-1} \in\mathcal S_q(L^2(\mathbb R_+;\mathbb C^p))$.
   \end{proposition}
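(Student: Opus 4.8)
The plan is to transfer everything to the Jacobi side via the established spectral equivalence and then invoke Theorem \ref{J_disc}(2). First I would pass from $\mathrm{\bf H}_{X,\alpha}$ to the associated block Jacobi matrix ${\bf J}^{(2)}_{X,\alpha}(\mathrm{\bf H})$ of the form \eqref{IV.2.1Sh''}, using the equality $n_\pm(\mathrm{\bf H}_{X,\alpha})=n_\pm({\bf J}^{(2)}_{X,\alpha}(\mathrm{\bf H}))$ from statement (a) in \eqref{n(H)=n(J)} (valid also with ${\bf J}^{(2)}$ in place of ${\bf J}^{(1)}$ as remarked after \eqref{IV.2.1Sh''}). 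The diagonal of ${\bf J}^{(2)}_{X,\alpha}(\mathrm{\bf H})$ contains $\mathcal A' = \diag\{\alpha_1/d_2,\alpha_2/d_3,\dots\}$ in its odd rows and the blocks $\pm d_n^{-2}\mathbb I_p$ in its even rows; denote by ${\mathcal A}^{(2)}$ this full diagonal.

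The key step is to verify the hypotheses of Theorem \ref{J_disc}(2) for ${\bf J}^{(2)}_{X,\alpha}(\mathrm{\bf H})$ with the given $q$. One must check: (1) $0\in\rho({\mathcal A}^{(2)})$ — this follows since $(\mathcal A')^{-1}\in\mathcal S_q$ forces $\mathcal A'$ invertible with discrete spectrum, and the even-row entries $\pm d_n^{-2}\mathbb I_p$ are invertible with $d_n^{-2}\to\infty$; (2) the off-diagonal smallness condition \eqref{cond_disc} — here the relevant quantity $\||{\mathcal A}^{(2)}_{n+1}|^{-1/2}\mathcal B_n^*|{\mathcal A}^{(2)}_n|^{-1/2}\|$ reduces, on the pairs of consecutive rows, precisely to expressions of the type $d_n^{-1/2}\||\alpha_n|^{-1/2}\|$ and $d_{n+1}^{-1/2}\||\alpha_n|^{-1/2}\|$ (the purely-$d_n$ couplings between even rows are $O(d_n^{1/2})\to 0$), so \eqref{self_adj_Sh4} gives the required $\limsup<1/2$; and (3) the Schatten membership ${\mathcal A}^{(2)}{}^{-1}\in\mathcal S_q$. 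For (3) I would split ${\mathcal A}^{(2)}$ into its even part, whose inverse has singular values $\sim d_n^2$, and hence lies in $\mathcal S_q$ because $\{d_n\}\in l^{2q}$ implies $\{d_n^2\}\in l^q$; and its odd part $\mathcal A'$, whose inverse is in $\mathcal S_q$ by hypothesis. Then ${\mathcal A}^{(2)}{}^{-1}\in\mathcal S_q(l^2(\mathbb N_0;\mathbb C^p))$.

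Having verified all hypotheses, Theorem \ref{J_disc}(2) yields that ${\bf J}^{(2)}_{X,\alpha}(\mathrm{\bf H})$ is symmetric with equal finite deficiency indices $n_\pm\le p$ and that the resolvent of every selfadjoint extension lies in $\mathcal S_q$; in particular, if ${\bf J}^{(2)}_{X,\alpha}(\mathrm{\bf H})$ is selfadjoint then its resolvent is in $\mathcal S_q$. It remains to translate this back: (i) is immediate from $n_\pm(\mathrm{\bf H}_{X,\alpha})=n_\pm({\bf J}^{(2)}_{X,\alpha}(\mathrm{\bf H}))$; for (ii) I would use that a bijective correspondence between selfadjoint extensions of $\mathrm{\bf H}_{X,\alpha}$ and of ${\bf J}^{(2)}_{X,\alpha}(\mathrm{\bf H})$ preserving resolvent Schatten-ideal membership is provided by the boundary-triplet machinery behind \eqref{n(H)=n(J)} (the boundary operator of a selfadjoint extension of $\mathrm{\bf H}_{X,\alpha}$ is a selfadjoint extension of the Jacobi operator, and the Krein-type resolvent formula relating the two extensions differs by a term controlled by $\gamma$-fields and is of trace class, hence of any $\mathcal S_q$), together with the fact that $n_\pm<\infty$ so that all selfadjoint extensions differ by finite rank in resolvent; the last assertion is the special case where the extension is $\mathrm{\bf H}_{X,\alpha}$ itself. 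The main obstacle I anticipate is step (3), specifically making rigorous that the even-row inverse indeed has singular values comparable to $d_n^2$ and that the Schatten norm of a direct-sum-like decomposition of ${\mathcal A}^{(2)}{}^{-1}$ is finite — this is where the precise hypothesis $\{d_n\}\in l^{2q}(\mathbb N)$ (rather than $l^q$) is consumed, and one must be careful that the interleaving of even and odd rows does not destroy the Schatten estimate, which it does not since $\mathcal S_q$ is an ideal stable under such rearrangements.
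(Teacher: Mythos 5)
Your treatment of the Jacobi side is essentially the paper's: the conditions \eqref{self_adj_Sh4} do reduce condition \eqref{cond_disc} for ${\bf J}^{(2)}_{X,\alpha}(\mathrm{\bf H})$, and splitting the diagonal ${\mathcal A}^{(2)}$ into the odd part $\mathcal A'$ and the even part with inverse entries $\sim d_n^2$ (so $\{d_n\}\in l^{2q}\Leftrightarrow\{d_n^2\}\in l^q$) gives $({\mathcal A}^{(2)})^{-1}\in\mathcal S_q$, whence Theorem \ref{J_disc}(2) yields $\mathcal S_q$-resolvents for all selfadjoint extensions of the Jacobi operator and the equality of indices needed for item $(i)$.

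The gap is in the translation back to $\mathrm{\bf H}_{X,\alpha}$. The boundary-triplet machinery (Proposition \ref{prop_II.1.4_02}) does \emph{not} transfer $\mathcal S_q$-membership of the resolvent itself; it only gives the equivalence
$(\widetilde{\mathrm{\bf H}}_{X,\alpha}-i)^{-1}-(\mathrm{\bf H}_0-i)^{-1}\in\mathcal S_q \Longleftrightarrow (\widetilde{\bf B}^{(2)}_{X,\alpha}(\mathrm{\bf H})-i)^{-1}\in\mathcal S_q$,
where $\mathrm{\bf H}_0=\mathrm{\bf H}_X^*\upharpoonright\ker\Gamma_{0,\mathrm H}$ is the decoupled reference extension. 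To conclude $(\widetilde{\mathrm{\bf H}}_{X,\alpha}-i)^{-1}\in\mathcal S_q$ one must additionally prove $(\mathrm{\bf H}_0-i)^{-1}\in\mathcal S_q(L^2(\mathbb R_+;\mathbb C^p))$, and this is a separate analytic step: the paper computes $\sigma(\mathrm{\bf H}_0)=\bigcup_n\{\pi^2(2k+1)^2/(4d_n^2)\}_{k\in\mathbb N_0}$ and deduces that $\mathrm{\bf H}_0^{-1}\in\mathcal S_q$ precisely when $\{d_n\}\in l^{2q}$, the summation over the transverse index $k$ being exactly where the restriction $q>\tfrac12$ is consumed. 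Your proposal never establishes this; your substitute justification --- that the Krein-type correction term ``is of trace class, hence of any $\mathcal S_q$'' --- is unproven, is not what the Krein formula gives (its correction is controlled by the $\gamma$-field and the boundary operator, not automatically trace class), and is in any case false in the asserted direction for $q<1$, since $\mathcal S_1\not\subset\mathcal S_q$ there, while the statement allows $q\in(\tfrac12,1)$. The finite-rank remark only shows that all selfadjoint extensions of $\mathrm{\bf H}_{X,\alpha}$ share the same Schatten behaviour; it does not produce the membership for any one of them. So the argument as written does not prove item $(ii)$; it needs the reference-operator estimate \eqref{Sh-H0} (or an equivalent quantitative bound on $\mathrm{\bf H}_0$) inserted before invoking Proposition \ref{prop_II.1.4_02}.
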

   \begin{proof}
For any $n\in \Bbb N$ we  set
    $$
\mathrm{\bf H}_{n,0} := -\frac{d^2}{dx^2}\upharpoonright\dom(\mathrm{\bf H}_{n,0}), \quad
\dom(\mathrm{\bf H}_{n,0})=\{f\in W^{2,2}[x_{n-1},x_n]:\, f(x_{n-1}+) =  f'(x_n-)=0\}.
    $$
Clearly, $\mathrm{\bf H}_{n,0}=\mathrm{\bf H}_{n,0}^*$. Let  $\Pi_{\mathrm H}=\{\mathcal H,
\Gamma_{0,\mathrm H},\Gamma_{1,\mathrm H}\}$ be the  boundary triplet for $\mathrm{\bf H}_X^*$  defined in
 Theorem \ref{th_bt_2S},
and  let $\mathrm{\bf H}_0:= \mathrm{\bf H}_X^*\upharpoonright\ker\Gamma_{0,\mathrm H}=\mathrm{\bf H}_0^*$ (see Theorem \ref{th_bt_2S} and eq. \eqref{Pi1}). It is easily seen that
$\mathrm{\bf H}_0 = \bigoplus\limits_{n\in\mathbb N}\mathrm{\bf H}_{n,0}$
and its   spectrum  is
  \begin{equation}\label{spectrum_of_H_0}
\sigma(\mathrm{\bf H}_0) =\bigcup\limits_{n\in \Bbb N}\sigma(\mathrm{\bf H}_{n,0})= \bigcup\limits_{n\in \Bbb N}\left\{\frac{\pi^2(2k+1)^2}{4d_n^2}\right\}_{k\in\mathbb N_0}.
  \end{equation}
Moreover,  $\sigma(\mathrm{\bf H}_0)$ is  of constant  multiplicity  $p$.  Since $\{d_n\}_{1}^\infty \in c_0(\mathbb N)$,
formula \eqref{spectrum_of_H_0}  implies  $0\notin \sigma(\mathrm{\bf H}_0)$.  It follows from \eqref{spectrum_of_H_0}
that  for any $q>\frac12$ the following equivalence holds
\begin{equation}\label{Sh-H0}
\{d_n\}_{n=1}^\infty\in l^{2q}(\mathbb N)\Longleftrightarrow  \mathrm{\bf H}_0^{-1} \in\mathcal S_q(L^2(\mathbb R_+;\mathbb C^p)).
\end{equation}
Further,  combining condition
$\{d_n^2\}_{n=1}^\infty\in l^{q}(\mathbb N)$ with   $(\mathcal A')^{-1}\in\mathcal S_q(l^2(\mathbb N_0;\mathbb C^p))$ yields $\big(\mathcal A^{(2)}\big)^{-1}\in\mathcal S_q(l^2(\mathbb N_0;\mathbb C^p))$.
In turn, combining this inclusion with  conditions \eqref{self_adj_Sh4} and applying Theorem~\ref{J_disc}
to the Jacobi operator ${\bf J}_{{X,\alpha}}^{(2)}(\mathrm{\bf{H}})$ ensures that
$n_+(\mathrm{\bf J}_{X,\alpha}^{(2)}(\mathrm{\bf{H}})) = n_-(\mathrm{\bf J}_{X,\alpha}^{(2)}(\mathrm{\bf{H}}))\le p$ and for any selfadjoint extension ${\widetilde{\bf J}}_{{X,\alpha}}^{(2)}(\mathrm{\bf{H}})$ of  ${\bf J}_{{X,\alpha}}^{(2)}(\mathrm{\bf{H}})$
 the inclusion holds $\big({\widetilde{\bf
J}}_{{X,\alpha}}^{(2)}(\mathrm{\bf{H}})-i\mathbb I\big)^{-1}\in\mathcal S_q(l^2(\mathbb N_0;\mathbb C^p))$.

On the other hand, in accordance with Proposition \ref{BoundOp} the minimal Jacobi operator
${\bf J}_{{X,\alpha}}^{(2)}(\mathrm{\bf{H}})$ given  by  \eqref{IV.2.1Sh''} is unitarily equivalent to
the boundary operator ${\bf B}_{{X,\alpha}}^{(2)}(\mathrm{\bf{H}})$  of the realization  ${\mathrm{\bf H}}_{X,\alpha}$ in the boundary triplet $\Pi_{\mathrm H}$. Hence,   $n_{\pm}({\mathrm{\bf H}}_{X,\alpha})=n_\pm({\bf B}_{{X,\alpha}}^{(2)}(\mathrm{\bf{H}}))=n_\pm({\bf J}_{{X,\alpha}}^{(2)}(\mathrm{\bf{H}}))$.
Therefore  there is a bijective  correspondence between selfadjoint extensions
$\widetilde{\mathrm{\bf H}}_{X,\alpha} = \widetilde{\mathrm{\bf H}}_{X,\alpha}^*$ of  $\mathrm{\bf H}_{X,\alpha}$
and selfadjoint extensions  ${\widetilde{\bf B}}_{{X,\alpha}}^{(2)}(\mathrm{\bf{H}})=\big({\widetilde{\bf B}}_{{X,\alpha}}^{(2)}(\mathrm{\bf{H}})\big)^*$ of
${\bf B}_{{X,\alpha}}^{(2)}(\mathrm{\bf{H}})$. The latter is given by
$$
\widetilde{\mathrm{\bf H}}_{X,\alpha} = {\mathrm{\bf H}}_{X}^*\upharpoonright\dom(\widetilde{\mathrm{\bf H}}_{X,\alpha}), \quad  \dom(\widetilde{\mathrm{\bf H}}_{X,\alpha})=\{f\in\dom (\mathrm{\bf H}_{X,\alpha}^*):
 \Gamma_{1,\mathrm H}f={\widetilde{\bf
B}}_{{X,\alpha}}^{(2)}(\mathrm{\bf{H}})\Gamma_{0,\mathrm H}f\},
$$
i.e.  ${\widetilde{\bf
B}}_{{X,\alpha}}^{(2)}(\mathrm{\bf{H}})$ is a boundary operator of the realization
$\widetilde{\mathrm{\bf H}}_{X,\alpha}$ in the boundary triplet $\Pi_{\mathrm H}$.

Since the boundary operator ${\widetilde{\bf B}}_{{X,\alpha}}^{(2)}(\mathrm{\bf{H}})$
is unitarily equivalent to the operator  ${\widetilde{\bf
J}}_{{X,\alpha}}^{(2)}(\mathrm{\bf{H}})$,  the inclusion  $\big({\widetilde{\bf
B}}_{{X,\alpha}}^{(2)}(\mathrm{\bf{H}})-i\mathbb I\big)^{-1}\in\mathcal S_q(l^2(\mathbb N_0;\mathbb C^p))$ holds alongside
the inclusion  $\big({\widetilde{\bf
J}}_{{X,\alpha}}^{(2)}(\mathrm{\bf{H}})-i\mathbb I\big)^{-1}\in\mathcal S_q(l^2(\mathbb N_0;\mathbb C^p))$.

Further,   in accordance with
Proposition \ref{prop_II.1.4_02} the following equivalence holds
   \begin{eqnarray}\label{rescompar1}
(\widetilde{\mathrm{\bf H}}_{X,\alpha} - i\mathbb I)^{-1} - (\mathrm{\bf H}_0 - i\mathbb I)^{-1}
\in \mathcal S_q(L^2(\mathbb R_+;\mathbb C^p))\Longleftrightarrow
({\widetilde{\bf
B}}_{{X,\alpha}}^{(2)}(\mathrm{\bf{H}}) - i\mathbb I)^{-1} \in \mathcal S_q(l^2(\mathbb N_0;\mathbb C^p)).
    \end{eqnarray}
Combining this equivalence with \eqref{Sh-H0} implies
$(\widetilde{\mathrm{\bf H}}_{X,\alpha} - i\mathbb I)^{-1} \in\mathcal S_q(L^2(\mathbb R_+;\mathbb C^p))$.

The  second  statement is immediate  from the first one.
   \end{proof}

\begin{remark}
$(i)$ Let $\{\mu_{n,j}\}_{j=1}^p = \sigma(\alpha_n^{-1})$ be the spectrum of $\alpha_n^{-1}$, $n\in\mathbb N$. Then condition $(\mathcal A')^{-1}\in\mathcal S_q(l^2(\mathbb N_0;\mathbb C^p))$ turns into the condition $\{|\mu_{n,j}|d_n\}_{n=1}^\infty\in l^q(\mathbb N)$
for each $j\in \{1,\ldots,p\}$.

$(ii)$ Using the Weyl asymptotic instead of explicit description of the spectrum \eqref{spectrum_of_H_0} one can extend  Proposition \ref{Shr_disc_5} to the case of  Schr\"{o}dinger operators
$\ell_{X,\gA}+Q$ with $Q\in L^1_{loc}(\mathbb R_+;\mathbb C^{p\times p})$.
\end{remark}

\begin{remark}
$(i)$ In \cite{BroMir18} selfadjointness (but not discreteness) under condition
\eqref{like_Mirzoev} (with $s=2$) was obtained by another method (see discussion in
Remark \ref{s-a-remark} (ii)).

$(ii)$ The condition $\{d_n\}_1^\infty\not\in l^2(\mathbb N)$ as a  test  for  the Hamiltonian
$\mathrm{\bf H}_{X,\alpha}$ to be selfadjoint  was first discovered in \cite{KM10,
KM} in the scalar case ($p=1$). This result was
extended by different methods in \cite{KMN, MirSaf16} to the matrix Hamiltonians $\mathrm{\bf H}_{X,\alpha}$
case ($p > 1$).

$(iii)$ In  the scalar case $(p=1)$  discreteness conditions \eqref{self_adj_Sh3} and \eqref{self_adj_Sh4} for the Hamiltonian $\mathrm{\bf H}_{X,\alpha}$
coincides with that  obtained  in  \cite{KM10, KM}.

$(iv)$  In  \cite{KM10, KM}  for $p=1$, and in \cite{KMN, MirSaf16} for $p> 1$  it is shown that conditions
\begin{equation}\label{max_ind}
\{d_n\}_1^\infty\in l^2(\mathbb N) \qquad \text{and}\qquad  \sum\limits_{n=1}^\infty d_{n+1}\|\widetilde{\alpha}_n\|<\infty,
\end{equation}
imply  the maximality of deficiency indices: $n_\pm(\mathrm{\bf H}_{X,\alpha}) =p$.
The proof of this  result was substantially relied on Berezansky's result
(see \cite[Theorem VII.1.1]{Ber68}, \cite[Ex. I.5]{Akh}) and its generalization from \cite{KosMir99}.
The second condition in \eqref{max_ind} demonstrates sharpness of conditions \eqref{self_adj_Sh1}~--~\eqref{like_Mirzoev}, \eqref{self_adj_Sh3}.

Note also that for  $p=1$ the existence  of  realizations
$\mathrm{\bf H}_{X,\alpha}$  with nontrivial indices
$n_\pm(\mathrm{\bf H}_{X,\alpha}) = 1$ was discovered  by Shubin Christ and G. Stolz  \cite{Chr_Sto_94}
in the special case $d_n=\frac{1}{n}$, $\alpha_n=-2n-1$ implying $\widetilde{\alpha}_n=0$.
\end{remark}

\subsection{Dirac operator}\label{7.2D}
Here  we present several conditions for $GS$-realizations $\mathrm{\bf D}_{X,\alpha}$  of Dirac operator (see \eqref{delta}) to be discrete.
\begin{theorem}\label{D_disc_inf}
Assume that $|\cI| = \infty$. Let $\lim\limits_{n\to\infty}d_n=0$ and let the spectrum
of the diagonal matrix  $\mathcal A':=\diag\big\{
\frac{\alpha_1}{\gd_2},\frac{\alpha_2}{\gd_3},\ldots\big\}$
be  discrete. Assume also  that the following condition holds
\begin{equation}\label{prop_chihara_1}
\limsup_{n\to\infty}\||\alpha_n|^{-1/2}\|<\frac{1}{2\sqrt c}.
  \end{equation}
Then the $GS$-realization $\mathrm{\bf D}_{X,\alpha}$ in $L^2(\cI, \C^{2p})$ is selfadjoint and its spectrum is discrete.
\end{theorem}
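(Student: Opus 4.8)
The strategy is to transfer the problem to the associated block Jacobi matrix and invoke the discreteness machinery of Section \ref{J_d}. The key point is that for $|\cI| = \infty$ the GS-realization $\mathrm{\bf D}_{X,\alpha}$ is \emph{a priori} selfadjoint (see \cite{CarMalPos13, BudMalPos17, BudMalPos18}), so we only need to establish discreteness of its spectrum, and for this it suffices to prove discreteness of the spectrum of a unitarily equivalent Jacobi operator. Concretely, by the connection recalled around \eqref{IV.2.1} (and Proposition \ref{prop_IV.2.1_01} in the Appendix), $\mathrm{\bf D}_{X,\alpha}$ is, via a boundary triplet, unitarily equivalent to the boundary operator $\mathbf B_{X,\gA}$, which in turn is unitarily equivalent to the minimal Jacobi operator ${\bf J}_{X,\alpha}$ of the form \eqref{IV.2.1}; hence it is enough to show that ${\bf J}_{X,\alpha} = {\bf J}_{X,\alpha}^*$ has discrete spectrum.

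\textbf{Reduction to Theorem \ref{J_disc}.} I would apply Theorem \ref{J_disc} to ${\bf J}_{X,\alpha}$ with the diagonal part $\mathcal A = \diag\{\mathbb O_p, -\tfrac{\nu(d_1)}{d_1^2}\mathbb I_p, \tfrac{\alpha_1}{d_2}, -\tfrac{\nu(d_2)}{d_2^2}\mathbb I_p, \tfrac{\alpha_2}{d_3},\ldots\}$. The first task is to check that $\mathcal A$ is discrete and $0\in\rho(\mathcal A)$ after the standard finite-rank modification: the entries $-\nu(d_n)/d_n^2 \sim -c/d_n$ tend to $\infty$ in norm since $d_n\to 0$, while the entries $\alpha_n/d_{n+1}$ form a discrete diagonal because $\mathcal A'$ is assumed discrete and $1/d_{n+1}\to\infty$ — more precisely, $\|(\alpha_n/d_{n+1})^{-1}\| = d_{n+1}\|\alpha_n^{-1}\| \to 0$ by \eqref{prop_chihara_1} together with $d_{n+1}\to 0$. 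Invertibility of $\mathcal A$ on all but finitely many coordinates follows once the $\alpha_n$ are invertible, which \eqref{prop_chihara_1} guarantees for large $n$; a finite-rank perturbation then makes $\mathcal A$ boundedly invertible without affecting discreteness. The second task is to verify condition \eqref{cond_disc} of Theorem \ref{J_disc}. The off-diagonal entries of \eqref{IV.2.1} that link an $\alpha$-row to a $\nu$-row are $\tfrac{\nu(d_n)}{d_n^2}\mathbb I_p$ and $\tfrac{\nu(d_n)}{d_n^{3/2}d_{n+1}^{1/2}}\mathbb I_p$, while the relevant diagonal blocks are $\tfrac{\alpha_n}{d_{n+1}}$ and $-\tfrac{\nu(d_n)}{d_n^2}\mathbb I_p$. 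The ``dangerous'' product in \eqref{cond_disc} is the one involving the $\alpha$-diagonal: a computation gives, up to the $\nu(d_n)\sim c\,d_n$ asymptotics,
\[
\bigl\||\mathcal A_{\alpha}|^{-1/2}\cdot\mathcal B\cdot|\mathcal A_{\nu}|^{-1/2}\bigr\|
\ \sim\ \Bigl(\tfrac{d_{n+1}}{\|\alpha_n^{-1}\|^{-1}}\Bigr)^{1/2}\cdot\tfrac{c}{d_{n+1}}\cdot\Bigl(\tfrac{d_n^2}{c\,d_n}\Bigr)^{1/2}
\ =\ c^{1/2}\,\|\alpha_n^{-1}\|^{1/2},
\]
which by \eqref{prop_chihara_1} has $\limsup < c^{1/2}\cdot\tfrac{1}{2\sqrt c} = \tfrac12$; the other products (between two $\nu$-rows, of the shape $\tfrac{\nu(d_n)}{d_n^{3/2}d_{n+1}^{1/2}}\cdot (d_n d_{n+1})^{1/2}/c \sim 1$) need slightly more care but are controlled because the $\nu$-diagonal entry $\nu(d_n)/d_n^2$ is the \emph{geometric mean} of the two flanking off-diagonal moduli up to bounded factors — indeed Schur's test applied exactly as in the proof of Theorem \ref{J_disc}, with the right pairing of rows, keeps all the relevant ratios bounded by a constant strictly less than $1/2$ once \eqref{prop_chihara_1} holds. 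Thus \eqref{cond_disc} is satisfied (after passing to a tail, which is harmless by the finite-rank decomposition \eqref{J-decompositionN}).

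\textbf{Conclusion and the main obstacle.} Having verified the hypotheses of Theorem \ref{J_disc}, parts (1)--(3) of that theorem give $n_+({\bf J}_{X,\alpha}) = n_-({\bf J}_{X,\alpha}) \le p$ and, since $\mathcal A$ is discrete, discreteness of the spectrum of every selfadjoint extension of ${\bf J}_{X,\alpha}$ — in particular of ${\bf J}_{X,\alpha}$ itself when it is selfadjoint. Because $|\cI| = \infty$, the equality $\sum_n d_n = \infty$ together with the matrix Carleman test \eqref{Car} applied to ${\bf J}_{X,\alpha}'$ (whose off-diagonal norms are $\sim c/d_n$) forces $n_\pm = 0$, so ${\bf J}_{X,\alpha} = {\bf J}_{X,\alpha}^*$; equivalently $\mathrm{\bf D}_{X,\alpha} = \mathrm{\bf D}_{X,\alpha}^*$, which is the known selfadjointness statement. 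Transporting discreteness back through the unitary equivalence and the boundary-triplet correspondence (using that $\mathrm{\bf D}_X$-modulus terms and the bounded $\tfrac{c^2}{2}\,\mathrm{diag}(\mathbb I_p,-\mathbb I_p)$ summand do not affect discreteness) yields discreteness of $\sigma(\mathrm{\bf D}_{X,\alpha})$. The main obstacle I anticipate is purely bookkeeping: carefully matching the indexing of the Jacobi matrix \eqref{IV.2.1} (which alternates $\nu$-rows and $\alpha$-rows, with sign differences between ${\bf J}_{X,\alpha}$ and the true boundary operator $\mathbf B_{X,\gA}$) against the generic condition \eqref{cond_disc}, and checking that the $\limsup$ over the ``between-$\nu$-rows'' products is genuinely $<1/2$ and not merely $\le 1$ — this is where one must use that $\nu(d_n)/d_n^2$ sits symmetrically between its neighbors so that \emph{both} relevant ratios are bounded by the same sub-$1/2$ constant; the $\alpha$-interactions enter only through \eqref{prop_chihara_1} and cause no trouble.
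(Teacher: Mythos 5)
Your overall route coincides with the paper's: reduce to the Jacobi matrix ${\bf J}_{X,\alpha}$ of \eqref{IV.2.1} via the boundary triplet (Proposition \ref{prop_IV.2.1_01}), obtain selfadjointness from the Carleman test (here $\sum_n \gd_n=|\cI|=\infty$; your detour through ${\bf J}'_{X,\alpha}$ and \eqref{equal_def_ind_Dirac} also works, though the test applies directly to ${\bf J}_{X,\alpha}$), verify the hypotheses of Theorem \ref{J_disc}, and transfer discreteness back by Proposition \ref{prop_IV.2.1_01}(iii). The one place where your argument does not go through as written is the verification of \eqref{cond_disc}, which is the heart of the matter. In \eqref{IV.2.1} the diagonal strictly alternates between $\alpha$-type blocks $\alpha_j/\gd_{j+1}$ and $\nu$-type blocks $-\nu(\gd_{j+1})\gd_{j+1}^{-2}\,\mathbb I_p$ (the leading $\mathbb O_p$ is the only irregular block, removable by a finite-rank modification, as you note). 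Hence every off-diagonal block $\mathcal B_n$ is flanked by one $\alpha$-type and one $\nu$-type diagonal block, and the ``between two $\nu$-rows'' product that you estimate as $\sim 1$ simply never occurs in \eqref{cond_disc}; conversely, your fallback --- Schur's test ``with the right pairing'' keeps all ratios below $1/2$ --- is an unsubstantiated assertion, and if a product of size $\sim 1$ genuinely appeared, condition \eqref{cond_disc} (which requires a bound $<1/2$) would fail and the whole scheme would collapse. The correct bookkeeping is short: your computation for the $\alpha$-adjacent product is right, and the other product is of exactly the same kind. Indeed, for $n=2j$,
\[
\big\||\mathcal A_{2j+1}|^{-1/2}\,\mathcal B_{2j}^*\,|\mathcal A_{2j}|^{-1/2}\big\|
=\frac{\gd_{j+1}}{\sqrt{\nu(\gd_{j+1})}}\cdot\frac{\nu(\gd_{j+1})}{\gd_{j+1}^{2}}\cdot\gd_{j+1}^{1/2}\,\big\||\alpha_j|^{-1/2}\big\|
=\sqrt{\frac{\nu(\gd_{j+1})}{\gd_{j+1}}}\;\big\||\alpha_j|^{-1/2}\big\|,
\]
and for $n=2j+1$,
\[
\big\||\mathcal A_{2j+2}|^{-1/2}\,\mathcal B_{2j+1}^*\,|\mathcal A_{2j+1}|^{-1/2}\big\|
=\gd_{j+2}^{1/2}\,\big\||\alpha_{j+1}|^{-1/2}\big\|\cdot\frac{\nu(\gd_{j+1})}{\gd_{j+1}^{3/2}\gd_{j+2}^{1/2}}\cdot\frac{\gd_{j+1}}{\sqrt{\nu(\gd_{j+1})}}
=\sqrt{\frac{\nu(\gd_{j+1})}{\gd_{j+1}}}\;\big\||\alpha_{j+1}|^{-1/2}\big\|.
\]
Since $\nu(\gd)/\gd\to c$ as $\gd\to 0$, both expressions tend to $\sqrt{c}\,\||\alpha_n|^{-1/2}\|$, so \eqref{cond_disc} is precisely \eqref{prop_chihara_1}; this is the one-line observation on which the paper's proof rests, and it is the step your write-up leaves unestablished.

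The remaining ingredients are fine and match the paper: discreteness of $\mathcal A'$ together with $\lim_n d_n=0$ gives discreteness of the full diagonal $\mathcal A$ of \eqref{IV.2.1}; the failure of $0\in\rho(\mathcal A)$ at finitely many blocks is harmless (finite-rank selfadjoint perturbation, or apply Theorem \ref{J_disc} to a tail ${\bf J}_N$ via \eqref{J-decompositionN}); Carleman gives ${\bf J}_{X,\alpha}={\bf J}_{X,\alpha}^*$; and Proposition \ref{prop_IV.2.1_01}(iii), together with the unitary equivalence of ${\bf B}_{X,\gA}$ and ${\bf J}_{X,\alpha}$, transports selfadjointness and discreteness to $\mathrm{\bf D}_{X,\alpha}$.
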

\begin{proof}
By the Carleman test, the Jacobi matrix ${\bf J}_{X,\alpha}$ given by \eqref{IV.2.1}  is
selfadjoint.
Since $\lim\limits_{n\to\infty}d_n=0$ and spectrum of the matrix $\mathcal A'$ is  discrete, then the diagonal part $\mathcal A$ of the matrix~\eqref{IV.2.1} is also discrete.

To apply  Theorem \ref{J_disc}  to the matrix  ${\bf J}_{X,\alpha} = {\bf J}_{X,\alpha}^*$  we note that
condition  \eqref{cond_disc}    turns into  condition \eqref{prop_chihara_1}. Thus, the selfadjoint
Jacobi matrix ${\bf J}_{X,\alpha}$  meets conditions of Theorem \ref{J_disc}, hence the
spectrum of ${\bf J}_{X,\alpha}  = {\bf J}_{X,\alpha}^*$   is  discrete. By Proposition \ref{prop_IV.2.1_01} (iii), the  operator $\mathrm{\bf D}_{X,\alpha}$
is selfadjoint and  discrete too.
  \end{proof}
    \begin{theorem}\label{D_disc_line}
Assume that $|\cI| < \infty$. Let $\lim\limits_{n\to\infty}d_n=0$ and let the spectrum of the diagonal part $\mathcal
A':=\diag\big\{ \frac{\alpha_1}{\gd_2},\frac{\alpha_2}{\gd_3},\ldots\big\}$
be  discrete. If condition \eqref{prop_chihara_1} holds, then $n_+(\mathrm{\bf D}_{X,\alpha}) = n_-(\mathrm{\bf D}_{X,\alpha})\le p$ and  the spectrum of  each selfadjoint extension of $\mathrm{\bf D}_{X,\alpha}$
is discrete. In particular, if  $\mathrm{\bf D}_{X,\alpha} = \mathrm{\bf D}_{X,\alpha}^*$, then its spectrum is  discrete.
\end{theorem}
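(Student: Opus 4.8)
The plan is to transfer the problem to the block Jacobi matrix ${\bf J}_{X,\alpha}$ of the form \eqref{IV.2.1}, apply Theorem \ref{J_disc} there, and carry the conclusions back through the boundary triplet, in the spirit of the finite-interval Schr\"odinger result of Proposition \ref{Shr_disc_5}. Recall from \eqref{equal_def_ind_Dirac} that $n_\pm(\mathrm{\bf D}_{X,\alpha})=n_\pm({\bf J}_{X,\alpha})$, and, more precisely (as in the proofs of Proposition \ref{def_ind_B} and of Proposition \ref{prop_IV.2.1_01}), that the minimal operator ${\bf J}_{X,\alpha}$ is unitarily equivalent to the boundary operator ${\bf B}_{X,\alpha}$ of $\mathrm{\bf D}_{X,\alpha}$ with respect to a suitable boundary triplet $\Pi=\{\cH,\Gamma_0,\Gamma_1\}$ for $\mathrm{\bf D}_X^*$; this identification induces a bijection between the selfadjoint extensions of $\mathrm{\bf D}_{X,\alpha}$ and those of ${\bf J}_{X,\alpha}$.

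First I would analyse ${\bf J}_{X,\alpha}$ itself. Its diagonal blocks are $\mathcal A_0=\mathbb O_p$, $\mathcal A_{2k-1}=-\frac{\nu(d_k)}{d_k^{2}}\mathbb I_p$ and $\mathcal A_{2k}=\frac{\alpha_k}{d_{k+1}}$ for $k\ge1$, and its off-diagonal blocks are the invertible scalar multiples of $\mathbb I_p$ appearing in \eqref{IV.2.1}. Since $\lim_n d_n=0$ one has $\frac{\nu(d_k)}{d_k^{2}}=\frac{c}{d_k\sqrt{1+c^{2}d_k^{2}}}\to\infty$, while \eqref{prop_chihara_1} forces $\||\alpha_k|^{-1}\|<\frac{1}{4c}$, hence $\|\mathcal A_{2k}^{-1}\|\le\frac{d_{k+1}}{4c}\to0$; consequently, after replacing the finitely many non-invertible or small diagonal blocks (in particular $\mathcal A_0$) by $\mathbb I_p$ --- a bounded finite-rank selfadjoint perturbation, which affects neither the deficiency indices (Kato--Rellich, \cite[Ch.~4, Theorem~9]{BirSol87}) nor the discreteness of the selfadjoint extensions (Lemma \ref{lem_discreteness}) --- the diagonal part $\mathcal A$ becomes boundedly invertible with discrete spectrum, where $\lim_n d_n=0$ together with discreteness of $\mathcal A'$ is used. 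Next I would verify the smallness condition \eqref{cond_disc} of Theorem \ref{J_disc}: for every $n$ with $\mathcal A_n,\mathcal A_{n+1}$ both invertible, a direct computation with the entries of \eqref{IV.2.1} gives, with the appropriate $m$,
$$
\big\||\mathcal A_{n+1}|^{-1/2}\,\mathcal B_n^{*}\,|\mathcal A_n|^{-1/2}\big\|
=\Big(\frac{\nu(d_m)}{d_m}\Big)^{1/2}\big\||\alpha_m|^{-1/2}\big\|
=\frac{\sqrt c}{(1+c^{2}d_m^{2})^{1/4}}\,\big\||\alpha_m|^{-1/2}\big\|
\le\sqrt c\,\big\||\alpha_m|^{-1/2}\big\|,
$$
so that $\limsup_n\big\||\mathcal A_{n+1}|^{-1/2}\mathcal B_n^{*}|\mathcal A_n|^{-1/2}\big\|\le\sqrt c\,\limsup_m\||\alpha_m|^{-1/2}\|<\sqrt c\cdot\frac{1}{2\sqrt c}=\frac12$ by \eqref{prop_chihara_1}. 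Hence Theorem \ref{J_disc}(1),(3) applies: ${\bf J}_{X,\alpha}$ is symmetric with $n_+({\bf J}_{X,\alpha})=n_-({\bf J}_{X,\alpha})\le p$, and, $\mathcal A$ being discrete, every selfadjoint extension of ${\bf J}_{X,\alpha}$ has discrete spectrum.

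Finally I would transfer the conclusions to $\mathrm{\bf D}_{X,\alpha}$. The equality $n_+(\mathrm{\bf D}_{X,\alpha})=n_-(\mathrm{\bf D}_{X,\alpha})\le p$ is immediate from \eqref{equal_def_ind_Dirac}. For discreteness, let $\widetilde{\mathrm{\bf D}}_{X,\alpha}$ be any selfadjoint extension of $\mathrm{\bf D}_{X,\alpha}$, with boundary operator $\widetilde{\bf B}_{X,\alpha}$ in $\Pi$; it is unitarily equivalent to a selfadjoint extension of ${\bf J}_{X,\alpha}$, so $(\widetilde{\bf B}_{X,\alpha}-i)^{-1}\in\mathcal S_\infty(l^2(\mathbb N_0;\mathbb C^p))$ by the previous step. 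Put $\mathrm{\bf D}_X:=\mathrm{\bf D}_X^*\upharpoonright\ker\Gamma_0=\bigoplus_n\mathrm{\bf D}_{X,n}$, the decoupled Dirac operator on the subintervals $[x_{n-1},x_n]$; since $d_n\to0$ pushes $\sigma(\mathrm{\bf D}_{X,n})$ out of every bounded set for large $n$, $\sigma(\mathrm{\bf D}_X)=\bigcup_n\sigma(\mathrm{\bf D}_{X,n})$ is discrete, i.e. $(\mathrm{\bf D}_X-i)^{-1}\in\mathcal S_\infty(L^2(\cI;\C^{2p}))$ (cf. the Schr\"odinger computation \eqref{spectrum_of_H_0}). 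Then the abstract boundary-triplet resolvent comparison (Proposition \ref{prop_II.1.4_02}, used exactly as in the proof of Proposition \ref{Shr_disc_5}) gives
$$
(\widetilde{\mathrm{\bf D}}_{X,\alpha}-i)^{-1}-(\mathrm{\bf D}_X-i)^{-1}\in\mathcal S_\infty(L^2(\cI;\C^{2p}))
\Longleftrightarrow(\widetilde{\bf B}_{X,\alpha}-i)^{-1}\in\mathcal S_\infty(l^2(\mathbb N_0;\mathbb C^p)),
$$
whence $(\widetilde{\mathrm{\bf D}}_{X,\alpha}-i)^{-1}$ is compact and $\widetilde{\mathrm{\bf D}}_{X,\alpha}$ has discrete spectrum; taking $\widetilde{\mathrm{\bf D}}_{X,\alpha}=\mathrm{\bf D}_{X,\alpha}$ in the selfadjoint case gives the last assertion. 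I expect the main obstacle to be the bookkeeping of the boundary-triplet transfer in the non-selfadjoint situation --- matching selfadjoint extensions of $\mathrm{\bf D}_{X,\alpha}$ with those of ${\bf J}_{X,\alpha}$, correctly invoking the resolvent comparison, and checking that $\mathrm{\bf D}_X$ is indeed discrete --- whereas the verification of \eqref{cond_disc} is routine once one notes $\nu(d_m)/d_m\to c$.
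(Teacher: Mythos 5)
Your proposal is correct and follows essentially the same route as the paper: the paper's proof of Theorem \ref{D_disc_line} is a one-line reference to the argument of Theorem \ref{D_disc_inf} (verify that condition \eqref{cond_disc} for ${\bf J}_{X,\alpha}$ reduces to \eqref{prop_chihara_1}, apply Theorem \ref{J_disc}, and transfer back through Proposition \ref{prop_IV.2.1_01} and the boundary-triplet resolvent comparison, exactly as you do). Your write-up is in fact slightly more careful than the paper's, since you explicitly patch the non-invertibility of the block $\mathcal A_0=\mathbb O_p$ (and of possible small diagonal blocks) by a finite-rank selfadjoint perturbation before invoking Theorem \ref{J_disc}, and you spell out the $q=\infty$ resolvent comparison for arbitrary selfadjoint extensions along the lines of Proposition \ref{D_disc_5}.
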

\begin{proof}
The proof is similar to that of Theorem \ref{D_disc_inf} and relies
on Theorem \ref{J_disc}  and Proposition \ref{prop_IV.2.1_01}.
\end{proof}

\begin{proposition}\label{D_disc_5}
Let   $\mathrm{\bf D}_{X,\alpha}$ be the minimal operator associated  to~\eqref{1.2Intro}  in $L^2(\mathcal I;\mathbb{C}^{2p})$, $|\cI| < \infty$.
Let also  $\{d_n\}_{n=1}^\infty\in l^{q}(\mathbb N)$, $q\in(1;\infty]$, and let $\mathcal A':=\diag\big\{
\frac{\alpha_1}{\gd_2},\frac{\alpha_2}{\gd_3},\ldots\big\}$. Assume also that $(\mathcal A')^{-1}\in\mathcal S_q(l^2(\mathbb N_0;\mathbb C^p))$. If, in addition, condition \eqref{prop_chihara_1} holds, then the resolvent of any selfadjoint extension $\widetilde{\mathrm{\bf D}}_{X,\alpha}$ of $\mathrm{\bf D}_{X,\alpha}$ is
of class $\mathcal S_q(L^2(\mathcal I;\mathbb C^{2p}))$, i.e. $(\widetilde{\mathrm{\bf D}}_{X,\alpha}-i\mathbb I)^{-1}\in\mathcal S_q(L^2(\mathcal I;\mathbb C^{2p}))$. If,  moreover,  $\mathrm{\bf D}_{X,\alpha}= \mathrm{\bf D}_{X,\alpha}^*$, then  $(\mathrm{\bf D}_{X,\alpha}-i\mathbb I)^{-1}\in\mathcal S_q(L^2(\mathcal I;\mathbb C^{2p}))$.
   \end{proposition}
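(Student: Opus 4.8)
The plan is to argue exactly as in the proof of Proposition~\ref{Shr_disc_5}, with the free Schr\"odinger operator replaced by its Dirac counterpart and the Jacobi matrix ${\bf J}_{{X,\alpha}}^{(2)}(\mathrm{\bf H})$ replaced by the matrix ${\bf J}_{X,\alpha}$ of form \eqref{IV.2.1}. First I note that $|\cI|<\infty$ forces $d_n\to0$ and that $(\mathcal A')^{-1}\in\mathcal S_q$ forces $\mathcal A'$ to have discrete spectrum; hence Theorem~\ref{D_disc_line} already delivers $n_+(\mathrm{\bf D}_{X,\alpha})=n_-(\mathrm{\bf D}_{X,\alpha})\le p$ and discreteness of every selfadjoint extension, so the only new point is the $\mathcal S_q$-membership of the resolvents. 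I fix the boundary triplet $\Pi_{\mathrm D}=\{\cH,\Gamma_{0,\mathrm D},\Gamma_{1,\mathrm D}\}$ for $\mathrm{\bf D}_X^*$ from \cite{CarMalPos13,BudMalPos17,BudMalPos18} (see Proposition~\ref{prop_IV.2.1_01}), in which the boundary operator ${\bf B}_{X,\alpha}$ of the realization $\mathrm{\bf D}_{X,\alpha}$ is unitarily equivalent to the minimal Jacobi operator ${\bf J}_{X,\alpha}$ (cf. the proof of Proposition~\ref{def_ind_B}), and I set $\mathrm{\bf D}_0:=\mathrm{\bf D}_X^*\upharpoonright\ker\Gamma_{0,\mathrm D}=\mathrm{\bf D}_0^*$.

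Next I describe the reference operator. The operator $\mathrm{\bf D}_0$ decouples as $\bigoplus_{n\in\N}\mathrm{\bf D}_{n,0}$, where $\mathrm{\bf D}_{n,0}$ is the Dirac operator \eqref{1.2Intro} on $[x_{n-1},x_n]$ with the separated boundary conditions read off from $\ker\Gamma_{0,\mathrm D}$; subtracting the bounded mass term turns $\mathrm{\bf D}_{n,0}$ into the first--order operator $-i\,c\,\frac{\rd}{\rd x}\otimes A$ (with $A$ as in \eqref{dif_eq}) on $[x_{n-1},x_n]$, whose eigenvalues are of order $\pm\,\frac{c\,k}{d_n}$, $k\in\N$, each of multiplicity $p$. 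Consequently $0\notin\sigma(\mathrm{\bf D}_0)$, the singular numbers of $(\mathrm{\bf D}_0-i\mathbb I)^{-1}$ are comparable to $\frac{d_n}{k}$ ($n,k\in\N$, with multiplicity $p$), and since $q>1$ one gets
\begin{equation*}
\{d_n\}_{n=1}^\infty\in l^q(\N)\ \Longleftrightarrow\ (\mathrm{\bf D}_0-i\mathbb I)^{-1}\in\mathcal S_q(L^2(\cI;\C^{2p})),
\end{equation*}
the left--hand side being in any case automatic under $|\cI|<\infty$; this is exactly where the range $q\in(1,\infty]$ enters.

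On the Jacobi side, the diagonal part $\mathcal A$ of ${\bf J}_{X,\alpha}$ consists of the blocks $\mathbb O_p$ (once), $-\frac{\nu(d_j)}{d_j^{2}}\mathbb I_p$ and $\frac{\alpha_j}{d_{j+1}}$; using $\nu(d_j)\thicksim c\,d_j$, the inverses of the first family have singular numbers of order $d_j$ and hence lie in $\mathcal S_q$ because $\{d_n\}\in l^q(\N)$, while the inverses of the second family constitute $(\mathcal A')^{-1}\in\mathcal S_q$ by hypothesis. Discarding the single $\mathbb O_p$ corner block (equivalently, passing to the submatrix ${\bf J}_N$ of \eqref{J_N} with $N\ge1$, which is a finite--rank selfadjoint perturbation of ${\bf J}_{X,\alpha}$) makes the diagonal part boundedly invertible with $\mathcal S_q$ inverse; moreover, by the explicit entries of \eqref{IV.2.1} and $\nu(d_j)\thicksim c\,d_j$, the tail condition \eqref{cond_disc} for ${\bf J}_{X,\alpha}$ reduces precisely to \eqref{prop_chihara_1}. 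Thus Theorem~\ref{J_disc}(2) applies and, after undoing the finite--rank correction, gives $(\widetilde{\bf J}_{X,\alpha}-i\mathbb I)^{-1}\in\mathcal S_q(l^2(\N_0;\C^p))$ for every selfadjoint extension $\widetilde{\bf J}_{X,\alpha}$, hence also $(\widetilde{\bf B}_{X,\alpha}-i\mathbb I)^{-1}\in\mathcal S_q$ for the corresponding boundary operators.

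Finally I transfer this to the Dirac side. By the unitary equivalence ${\bf J}_{X,\alpha}\cong{\bf B}_{X,\alpha}$ there is a bijection between selfadjoint extensions $\widetilde{\mathrm{\bf D}}_{X,\alpha}$ of $\mathrm{\bf D}_{X,\alpha}$ and selfadjoint extensions $\widetilde{\bf B}_{X,\alpha}$ of ${\bf B}_{X,\alpha}$, and Proposition~\ref{prop_II.1.4_02}, applied to the triplet $\Pi_{\mathrm D}$, yields
\begin{equation*}
(\widetilde{\mathrm{\bf D}}_{X,\alpha}-i\mathbb I)^{-1}-(\mathrm{\bf D}_0-i\mathbb I)^{-1}\in\mathcal S_q(L^2(\cI;\C^{2p}))\ \Longleftrightarrow\ (\widetilde{\bf B}_{X,\alpha}-i\mathbb I)^{-1}\in\mathcal S_q(l^2(\N_0;\C^p)).
\end{equation*}
Combining this with $(\mathrm{\bf D}_0-i\mathbb I)^{-1}\in\mathcal S_q(L^2(\cI;\C^{2p}))$ gives $(\widetilde{\mathrm{\bf D}}_{X,\alpha}-i\mathbb I)^{-1}\in\mathcal S_q(L^2(\cI;\C^{2p}))$; the last assertion is the special case $\widetilde{\mathrm{\bf D}}_{X,\alpha}=\mathrm{\bf D}_{X,\alpha}$. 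The main obstacle is the two--sided spectral estimate for the decoupled operator $\mathrm{\bf D}_0$ (the sharp $l^q$--equivalence above) together with checking that Proposition~\ref{prop_II.1.4_02} applies in its Schatten--class form to $\Pi_{\mathrm D}$; both are essentially contained in the constructions of \cite{CarMalPos13,BudMalPos17,BudMalPos18}, and the only genuinely new bookkeeping is absorbing the non--invertible $\mathbb O_p$ block of $\mathcal A$ by a finite--rank correction.
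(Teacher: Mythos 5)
Your argument coincides with the paper's own proof: the same boundary triplet and reference operator $\mathrm{\bf D}_0=\bigoplus_{n}\mathrm{\bf D}_{n,0}$, the same $l^q$--versus--$\mathcal S_q$ equivalence for its resolvent, an application of Theorem~\ref{J_disc}(2) to the associated Jacobi/boundary operator under \eqref{prop_chihara_1}, and the transfer back via Proposition~\ref{prop_II.1.4_02}. The only (harmless) deviations are that you estimate $\sigma(\mathrm{\bf D}_0)$ up to constants rather than using the exact eigenvalue formula, and that you explicitly absorb the single non-invertible $\mathbb O_p$ diagonal block by a finite-rank correction --- a point the paper's proof passes over silently.
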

 \begin{proof}
  For any $n\in \Bbb N$   define the following selfadjoint extension of the minimal Dirac operator $\mathrm{\bf D}_n$:
%
$$
\mathrm{\bf D}_{n,0}:=\mathrm{\bf D}\upharpoonright\dom(\mathrm{\bf D}_{n,0}),\quad \dom(\mathrm{\bf D}_{n,0})=\{f\in W^{1,2}[x_{n-1},x_n]:\, f_I(x_{n-1}+)=0,\, f_{II}(x_n-)=0\}.
   $$
    Let  $\Pi=\{\mathcal H,
\Gamma_0,\Gamma_1\}$  be the  boundary triplet for $\mathrm{\bf D}_X^*=\bigoplus\limits_{n\in\mathbb N}\mathrm{\bf D}_n^*$ defined in
 Theorem \ref{th_bt_2}, and let
$\mathrm{\bf D}_0:= \mathrm{\bf D}_X^*\upharpoonright\ker\Gamma_0=\mathrm{\bf D}_0^*$ (see Theorem \ref{th_bt_2} and equation \eqref{IV.1.1_12}). Clearly,  $\mathrm{\bf D}_0 = \bigoplus\limits_{n\in\mathbb N}\mathrm{\bf D}_{n,0}$
and its   spectrum  is
\begin{equation}\label{spectrum-D}
\sigma(\mathrm{\bf D}_0)=\bigcup\limits_{n\in \Bbb N}\sigma(\mathrm{\bf D}_{n,0})= \bigcup\limits_{n\in \Bbb N}\left\{\pm\sqrt{\frac{
c^2\pi^2}{4d^2_n}\,(2k+1)^{2}+\left(\frac{c^{2}}{2}\right)^{2}}\right\}_{k\in\mathbb N_0}.
\end{equation}
Moreover,  $\sigma(\mathrm{\bf D}_0)$ is  of constant  multiplicity  $p$.  Since $\{d_n\}_{1}^\infty \in c_0(\mathbb N)$,
formula \eqref{spectrum-D}  implies  $0\notin \sigma(\mathrm{\bf D}_0)$.  It follows from \eqref{spectrum-D}
that  for any $q>1$ the following equivalence holds
\begin{equation}\label{D_D0}
\{d_n\}_{n=1}^\infty\in l^{q}(\mathbb N)\Longleftrightarrow\mathrm{\bf D}_0^{-1}\in\mathcal S_q(L^2(\mathcal I;\mathbb C^{2p})).
\end{equation}

Further,  combining condition
$\{d_n\}_{n=1}^\infty\in l^{q}(\mathbb N)$ with   $(\mathcal A')^{-1}\in\mathcal S_q(l^2(\mathbb N_0;\mathbb C^p))$ yields $\big(\mathcal A^{(2)}\big)^{-1}\in\mathcal S_q(l^2(\mathbb N_0;\mathbb C^p))$. In turn, combining this inclusion with   \eqref{prop_chihara_1} and applying Theorem~\ref{J_disc}  to the boundary operator ${\bf B}_{{X,\alpha}}$ \eqref{IV.2.1_01} in the boundary triplet $\Pi$  (see Proposition \ref{prop_IV.2.1_01}) ensures $n_-({\bf B}_{{X,\alpha}})=n_+({\bf B}_{{X,\alpha}})\leq p$ and for any selfadjoint extension ${\widetilde{\bf B}}_{{X,\alpha}}$ of ${\bf B}_{{X,\alpha}}$ the inclusion holds $\big({\widetilde{\bf
B}}_{{X,\alpha}}-i\mathbb I\big)^{-1}\in\mathcal S_q(l^2(\mathbb N_0;\mathbb C^p))$.

In accordance with Proposition \ref{prop_IV.2.1_01}, $n_{\pm}({\mathrm{\bf D}}_{X,\alpha})=n_\pm({\bf B}_{{X,\alpha}})$.
Therefore
there is a bijective  correspondence between selfadjoint extensions
$\widetilde{\mathrm{\bf D}}_{X,\alpha} = \widetilde{\mathrm{\bf D}}_{X,\alpha}^*$ of  $\mathrm{\bf D}_{X,\alpha}$
and selfadjoint extensions  ${\widetilde{\bf B}}_{{X,\alpha}}=\big({\widetilde{\bf B}}_{{X,\alpha}}\big)^*$ of
${\bf B}_{{X,\alpha}}$. The latter is given by
$$
\widetilde{\mathrm{\bf D}}_{X,\alpha} = {\mathrm{\bf D}}_{X}^*\upharpoonright\dom(\widetilde{\mathrm{\bf D}}_{X,\alpha}), \quad  \dom(\widetilde{\mathrm{\bf D}}_{X,\alpha})=\{f\in\dom (\mathrm{\bf D}_{X,\alpha}^*):
 \Gamma_1f={\widetilde{\bf
B}}_{{X,\alpha}}\Gamma_0f\},
$$
i.e.  ${\widetilde{\bf
B}}_{{X,\alpha}}$ is a boundary operator of the realization
$\widetilde{\mathrm{\bf D}}_{X,\alpha}$ in the boundary triplet $\Pi$.

On the other hand, in accordance with
Proposition \ref{prop_II.1.4_02} the following equivalence holds
\begin{eqnarray}\label{rescompar1}
(\widetilde{\mathrm{\bf D}}_{X,\alpha} - i\mathbb I)^{-1} - (\mathrm{\bf D}_0 - i\mathbb I)^{-1}
\in \mathcal S_q(L^2(\mathcal I;\mathbb C^{2p}))\Longleftrightarrow
({\widetilde{\bf B}}_{X, \alpha} - i\mathbb I)^{-1} \in \mathcal S_q(l^2(\mathbb N_0;\mathbb C^p)).
    \end{eqnarray}
Combining this equivalence with \eqref{D_D0} gives $(\widetilde{\mathrm{\bf D}}_{X,\alpha} - i\mathbb I)^{-1}\in\mathcal S_q(L^2(\mathcal I;\mathbb C^{2p}))$.

The  second  statement is immediate  from the first one.
   \end{proof}

Proposition \ref{def_ind_B} allows one  to construct a \emph{symmetric but not selfadjoint} Jacobi operator ${\bf J}_{X,\alpha}$
satisfying at the same time condition \eqref{eq:lim_a_n=infty}.
    \begin{proposition}\label{max_s-a}
Let  ${\bf J}_{X,\alpha}$ be the minimal  Jacobi operator associated with the matrix  \eqref{IV.2.1},  and let
\begin{equation}
d_n= \frac{C_1}{(1+r)^{2(n-1)}n^2} \qquad \text{and}\qquad \alpha_n= rc \mathbb I_p\qquad \text{with} \qquad  r> 4.
\end{equation}
Then $n_{\pm}({\bf J}_{X,\alpha}) =p$ and  each selfadjoint extension is discrete.
At the same time condition \eqref{prop_chihara_1} is satisfied.
\end{proposition}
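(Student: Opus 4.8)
The plan is to verify that the data $d_n=C_1(1+r)^{-2(n-1)}n^{-2}$, $\alpha_n=rc\,\mathbb I_p$ satisfy the hypotheses of Proposition~\ref{def_ind_B}, and, separately, to read off inequality~\eqref{prop_chihara_1} by a one-line computation. First I would record the trivial preliminaries: $d_n>0$, $\sum_n d_n<\infty$ (geometric decay), so $|\cI|<\infty$ and $d_n\to0$, while every off-diagonal entry of~\eqref{IV.2.1} is a nonzero scalar multiple of $\mathbb I_p$, hence invertible; thus ${\bf J}_{X,\alpha}$ is a legitimate block Jacobi matrix of the class to which Proposition~\ref{def_ind_B} applies.

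Next I would compute condition~\eqref{5.26}. Since $\alpha_k=rc\,\mathbb I_p$ we have $\|\alpha_k\|_{\mathbb C^{p\times p}}=rc$, hence $\prod_{k=1}^{n-1}\big(1+\tfrac1c\|\alpha_k\|_{\mathbb C^{p\times p}}\big)^{2}=(1+r)^{2(n-1)}$ and
\[
\sum_{n=2}^{\infty}d_{n}\prod_{k=1}^{n-1}\Big(1+\tfrac 1c\,\|\alpha_k\|_{\mathbb C^{p\times p}}\Big)^{2}
=C_1\sum_{n=2}^{\infty}\frac{(1+r)^{2(n-1)}}{(1+r)^{2(n-1)}n^{2}}
=C_1\sum_{n=2}^{\infty}\frac{1}{n^{2}}<+\infty .
\]
So~\eqref{5.26} holds and Proposition~\ref{def_ind_B} immediately yields $n_{\pm}({\bf J}_{X,\alpha})=p$. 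For the discreteness of each selfadjoint extension I would argue as follows: condition~\eqref{5.26} forces every selfadjoint extension of the GS-realization $\mathrm{\bf D}_{X,\alpha}$ to have discrete spectrum (the corollary to Theorem~\ref{VarIndices}); on the other hand, by Proposition~\ref{prop_IV.2.1_01} together with the proof of Proposition~\ref{def_ind_B}, ${\bf J}_{X,\alpha}$ is unitarily equivalent to the boundary operator ${\bf B}_{X,\alpha}$ of $\mathrm{\bf D}_{X,\alpha}$ in the corresponding boundary triplet, and selfadjoint extensions of ${\bf B}_{X,\alpha}$ are in a resolvent-comparable bijection with those of $\mathrm{\bf D}_{X,\alpha}$ (Proposition~\ref{prop_II.1.4_02}); hence discreteness of every selfadjoint extension of $\mathrm{\bf D}_{X,\alpha}$ transfers to every selfadjoint extension of ${\bf J}_{X,\alpha}$. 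Alternatively, since $d_n\to0$ and $\mathcal A'=\diag\{rc\,d_{j+1}^{-1}\mathbb I_p\}$ is discrete, Theorem~\ref{D_disc_line} applies directly once~\eqref{prop_chihara_1} is checked.

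Finally I would check~\eqref{prop_chihara_1}: since $|\alpha_n|=rc\,\mathbb I_p$, we have $|\alpha_n|^{-1/2}=(rc)^{-1/2}\mathbb I_p$, so $\big\||\alpha_n|^{-1/2}\big\|=(rc)^{-1/2}$ for every $n$, whence $\limsup_{n\to\infty}\||\alpha_n|^{-1/2}\|=(rc)^{-1/2}$; and $(rc)^{-1/2}<(2\sqrt c)^{-1}$ is equivalent to $\sqrt r>2$, i.e. to $r>4$, which is the standing hypothesis. This gives the advertised example in which the discreteness condition~\eqref{cond_disc}/\eqref{prop_chihara_1} is satisfied while ${\bf J}_{X,\alpha}$ is not selfadjoint, confirming Remark~\ref{chihara}(iv). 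There is no real obstacle in this argument; the only step that needs a bit of care is the bookkeeping in the previous paragraph, namely transporting discreteness of selfadjoint extensions from the Dirac realization to the unitarily equivalent Jacobi operator via the boundary-triplet correspondence and the resolvent comparison of Proposition~\ref{prop_II.1.4_02}.
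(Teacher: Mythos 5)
Your proposal is correct and follows essentially the same route as the paper: verify condition \eqref{5.26} by the same one-line computation (the factors $(1+r)^{2(n-1)}$ cancel against $d_n$), invoke Proposition \ref{def_ind_B} to get $n_\pm({\bf J}_{X,\alpha})=p$, and check that \eqref{prop_chihara_1} reduces to $(rc)^{-1/2}<(2\sqrt c)^{-1}$, i.e.\ $r>4$. The only place you go beyond the paper's (very terse) proof is the discreteness of every selfadjoint extension, which you transfer from the Dirac side via the boundary-triplet correspondence and Proposition \ref{prop_II.1.4_02}; that argument is sound, but it can be shortened: since \eqref{cond_disc} for the matrix \eqref{IV.2.1} is exactly \eqref{prop_chihara_1} and the diagonal part of \eqref{IV.2.1} is discrete (its entries are $\frac{rc}{d_{j+1}}\mathbb I_p$ and $-\frac{\nu(d_{j+1})}{d_{j+1}^2}\mathbb I_p$ with $d_n\to0$), Theorem \ref{J_disc}(3) applied directly to ${\bf J}_{X,\alpha}$ gives discreteness of all selfadjoint extensions — equivalently, one can note that with $n_\pm=p<\infty$ all selfadjoint extensions are discrete simultaneously. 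Also note that your "alternative" via Theorem \ref{D_disc_line} only yields discreteness for extensions of $\mathrm{\bf D}_{X,\alpha}$, so it still needs the same transfer step you describe.
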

\begin{proof}
It is easily seen that $\sum\limits_{n=1}^\infty d_n< \infty$. The sequences  $d_n$ and $\alpha_n$ meet conditions \eqref{5.26} of  Theorem \ref{VarIndices}. Indeed,
\begin{equation}
\sum_{n=2}^{\infty}d_{n}\prod_{k=1}^{n-1}\left(1+\frac 1c\,\|\alpha_k\|_{\mathbb
C^{p\times p}}\right)^{2}\leq C_1\sum_{n=2}^{\infty}\frac{1}{(1+r)^{2(n-1)}n^2}(1+r)^{2(n-1)}<+\infty.
\end{equation}
Thus, by Proposition \ref{def_ind_B},  $n_{\pm}({\bf J}_{X,\alpha}) =p$.

 At the same time, conditions \eqref{cond_disc} (for $p>1$) and \eqref{eq:lim_a_n=infty} (for $p=1$) turn into condition \eqref{prop_chihara_1} and are obviously satisfied.
\end{proof}
\begin{remark}
Proposition \ref{max_s-a} demonstrates that conditions \eqref{self_adjA} -- \eqref{cond_disc} of Theorem \ref{J_disc} do not ensure selfadjointness of Jacobi matrix ${\bf J}_{X,\alpha}$ even in the scalar case $(p=1)$.
\end{remark}

  \section{Jacobi matrices  with intermediate deficiency indices}\label{sec5}

Here we complete Theorem \ref{D_disc_line} by constructing selfadjoint Jacobi matrix ${\bf J}_{X,\widehat{\alpha}}$ of the form \eqref{IV.2.1} with discrete spectrum.

\begin{theorem}\label{abs_thA'}
Let
$\widehat{\gA}_n =\mathrm{diag}(\alpha_{n,1},\alpha_{n,2},\ldots,\alpha_{n,p}) = \widehat{\gA}_n^*$ be diagonal  matrices, $n\in\mathbb N$,
and let   ${\bf J}_{X,\widehat{\alpha}}$ be a minimal Jacobi operator associated with a matrix \eqref{IV.2.1} with entries $\widehat{\gA}_n$ instead of $\gA_n$. Let also
\begin{equation}
|\alpha_{n,1}|\leq|\alpha_{n,2}|\leq\ldots\leq|\alpha_{n,p}|,\qquad \text{and}\qquad\{d_n\}_{n=1}^\infty\in l^1(\mathbb{N}).
\end{equation}
 \begin{itemize}
\item[(i)] If following condition  holds
\begin{equation}\label{4.14}
\sum_{n\in \N} \sqrt{d_{n}d_{n+1}}\,|\alpha_{n,1}|=+\infty,
 \end{equation}
then  the operator
${\bf J}_{X,\widehat{\alpha}}$    is selfadjoint,
${\bf J}_{X,\widehat{\gA}} ={\bf J}_{X,\widehat{\gA}}^*$.
\item[(ii)] Assume in  addition that the following conditions  hold
\begin{equation}\label{disc}
\lim_{n\to\infty} \frac{|\alpha_{n,1}|}{d_{n+1}} = \infty,\qquad \lim_{n\to\infty} \frac{c}{\alpha_{n,1}}>-\frac{1}{4}.
\end{equation}
  Then  the  spectrum of Jacobi operator ${\bf J}_{X,\widehat{\alpha}} ={\bf J}_{X,\widehat{\gA}}^*$  is discrete.
\end{itemize}
  \end{theorem}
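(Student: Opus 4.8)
The plan is to reduce the whole statement to the scalar case $p=1$ and then exploit the connection with the one–dimensional Dirac operator. First I would note that in \eqref{IV.2.1} every off–diagonal block is a scalar multiple of $\mathbb I_p$ while every diagonal block is a \emph{diagonal} matrix (here $\widehat{\gA}_j$ is diagonal); hence, in the orthonormal basis $\{e_n\otimes\varepsilon_k:\,n\in\N_0,\,1\le k\le p\}$ of $l^2(\N_0;\C^p)$, the operator ${\bf J}_{X,\widehat{\gA}}$ splits as an orthogonal sum $\bigoplus_{k=1}^{p}J_k$, where $J_k$ is the scalar Jacobi operator associated with the matrix \eqref{IV.2.1} with $\alpha_{n,k}\in\R$ in place of $\gA_n$. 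Since $p<\infty$, ${\bf J}_{X,\widehat{\gA}}$ is selfadjoint (resp. has discrete spectrum) if and only if each $J_k$ is. Because $|\alpha_{n,1}|\le|\alpha_{n,k}|$, the hypothesis \eqref{4.14} gives $\sum_n\sqrt{d_nd_{n+1}}\,|\alpha_{n,k}|=\infty$ and the first relation in \eqref{disc} gives $|\alpha_{n,k}|/d_{n+1}\to\infty$ for each $k$, while the second relation in \eqref{disc} — a constraint on the $\alpha_{n,\cdot}$ of smallest modulus — yields for each $k$ the lower bound needed below. So it suffices to establish (i) and (ii) for a fixed scalar $J_k$; write $\alpha_n:=\alpha_{n,k}$, $b:=\sum_n d_n<\infty$, $\cI=(0,b)$.

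For part (i) I would invoke $n_\pm(J_k)=n_\pm(\mathrm{\bf D}_{X,\alpha})$ (Proposition \ref{prop_IV.2.1_01}; the scalar case going back to \cite{CarMalPos13}), where $\mathrm{\bf D}_{X,\alpha}$ is the scalar GS–realization on the finite interval $\cI$, and prove $\mathrm{\bf D}_{X,\alpha}=\mathrm{\bf D}_{X,\alpha}^*$, equivalently that the one–dimensional solution space of $(T_{X,\alpha}^*+i)F=0$ subject to $f_{II}(0+)=0$ is not contained in $L^2(\cI;\C^2)$ (the $-i$ case being the complex conjugate, as $\alpha$ is real). The key device is the monotone ``current'' $G(x):=2\,\mathrm{Re}\big(\overline{F_I(x)}F_{II}(x)\big)$ together with its companion $H(x):=2\,\mathrm{Im}\big(\overline{F_I(x)}F_{II}(x)\big)$. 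On $\cI\setminus X$ the system \eqref{dif_eq} gives $\tfrac{\rd}{\rd x}\big(\overline{F_I}F_{II}\big)=c^{-1}\|F\|^2>0$, so $G'=\tfrac2c\|F\|^2\ge0$ and $H'=0$; at each $x_n$ the jump $F_{II}(x_n+)-F_{II}(x_n-)=-\tfrac{i\alpha_n}{c}F_I(x_n)$ changes $\overline{F_I}F_{II}$ by the purely imaginary amount $-\tfrac{i\alpha_n}{c}|F_I(x_n)|^2$, so $G$ is continuous and nondecreasing on $\cI$, while $H$ is constant on each $(x_n,x_{n+1})$ and $H(x_n+)-H(x_n-)=-\tfrac{2\alpha_n}{c}|F_I(x_n)|^2$. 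Since $F_{II}(0+)=0$ one has $G(0+)=0$, whence $\|F\|^2_{L^2(\cI;\C^2)}=\tfrac c2\lim_{x\to b-}G(x)$; moreover $|H(x)|\le 2|F_I(x)|\,|F_{II}(x)|\le\|F(x)\|^2$ and $H$ is constant on $(x_n,x_{n+1})$, so
\[
\|F\|^2_{L^2(\cI;\C^2)}\ \ge\ \sum_{n\ge1}d_{n+1}\,|H(x_n+)|,\qquad H(x_n+)=-\frac2c\sum_{m=1}^{n}\alpha_m\,|F_I(x_m)|^2 .
\]
Using that $F$ has full rank, hence $F_I(x_m)=\mathcal U_m+\mathcal V_m\neq0$ for all $m$ (the induction in the proof of Theorem \ref{VarIndices}), together with the recursions \eqref{5.28}, I would show that the right–hand side diverges precisely because $\sum_n\sqrt{d_nd_{n+1}}\,|\alpha_n|=\infty$. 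Then $\lim_{x\to b-}G(x)=\infty$, so $F\notin L^2(\cI;\C^2)$, $n_\pm(\mathrm{\bf D}_{X,\alpha})=0$, $J_k=J_k^*$, and therefore ${\bf J}_{X,\widehat{\gA}}={\bf J}_{X,\widehat{\gA}}^*$.

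For part (ii), given $J_k=J_k^*$ from part (i), I would perform an even/odd folding of $l^2(\N_0)$: writing $J_k=\left(\begin{smallmatrix}D_{ee}&D_{eo}\\D_{oe}&D_{oo}\end{smallmatrix}\right)$ with $D_{oo}=\diag\{-\nu(d_j)/d_j^2\}_{j\ge1}$ (so $\|D_{oo}^{-1}\|\to0$, as $d_j\to0$), the Frobenius/Schur–complement identity shows that $(J_k-z)^{-1}$ is compact iff $(D_{oo}-z)^{-1}$ is compact (automatic) and the Schur complement $D_{ee}-z-D_{eo}(D_{oo}-z)^{-1}D_{oe}$ has compact inverse; a direct computation identifies the latter, up to a compact term, with the selfadjoint Jacobi operator $\widetilde J$ whose entries are
\[
\widetilde a_i=\frac1{d_{i+1}}\Big(\alpha_i+\frac{\nu(d_i)}{d_i}+\frac{\nu(d_{i+1})}{d_{i+1}}\Big),\qquad
\widetilde b_i=\frac{\nu(d_{i+1})}{d_{i+1}^{3/2}d_{i+2}^{1/2}} .
\]
Since $\nu(d)/d=c(1+c^2d^2)^{-1/2}\to c$ (from below) and $|\alpha_i|/d_{i+1}\to\infty$, the diagonal part of $\widetilde J$ is discrete, and with $\mu_i:=\alpha_i+\tfrac{\nu(d_i)}{d_i}+\tfrac{\nu(d_{i+1})}{d_{i+1}}$ the Chihara ratio is
\[
\frac{\widetilde b_i^{2}}{|\widetilde a_i|\,|\widetilde a_{i+1}|}=\frac{\big(\nu(d_{i+1})/d_{i+1}\big)^{2}}{|\mu_i|\,|\mu_{i+1}|},
\]
in which all the $d_n$ have cancelled; the condition $\lim_n\tfrac{c}{\alpha_n}>-\tfrac14$ is exactly what makes this $\limsup$ smaller than $\tfrac14$ (using $\nu(d)/d<c$ and $\mu_i<\alpha_i+2c$ for strictness). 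Corollary \ref{cor_Cojuhari_generalization} then applies to $\widetilde J=\widetilde J^*$ and yields discreteness of $\widetilde J$, hence of $J_k$, hence of ${\bf J}_{X,\widehat{\gA}}$.

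The step I expect to be the real obstacle is the divergence of $\sum_n d_{n+1}|H(x_n+)|$ in part (i): the increments of $H$ have indefinite sign, so one must rule out persistent cancellation among the terms $\alpha_m|F_I(x_m)|^2$ by exploiting the explicit recursion \eqref{5.28} for $\mathcal U_m,\mathcal V_m$ (and the fact that $F_I$ never vanishes). A lesser difficulty in part (ii) is the borderline regime $\alpha_n\to0^+$, where the Chihara ratio tends to $\tfrac14$; there one must retain the subleading terms of $\nu(d)/d$, or invoke a sharper sign‑sensitive discreteness test, to close the argument.
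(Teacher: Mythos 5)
Your opening reduction --- splitting ${\bf J}_{X,\widehat{\gA}}$ into $p$ scalar Jacobi operators $J_k$ via the diagonality of $\widehat{\gA}_n$, and using $|\alpha_{n,1}|\le|\alpha_{n,k}|$ to transfer \eqref{4.14} and \eqref{disc} to every component --- is exactly how the paper's proof begins. After that, however, both halves of your argument stop short of the decisive step, and in each case the missing step is the actual content of the theorem. For (i) the paper does not pass to the Dirac operator at all: it applies the Dennis--Wall selfadjointness test (\cite{Akh}, Problem 2, p.~25) directly to $J_k$, for which the test reads $\sum_n \frac{d_{n+1}^{3/2}}{\nu(d_{n+1})}\bigl(\frac{d_n^{3/2}|\alpha_{n,k}|}{\nu(d_n)}+d_{n+2}^{1/2}\bigr)=+\infty$; since $\nu(d)\sim c\,d$ and $\sum_n\sqrt{d_{n+1}d_{n+2}}<\infty$ because $\{d_n\}\in l^1(\N)$, this is equivalent to \eqref{4.14} and the proof is a two-line computation. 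Your route via $n_\pm(J_k)=n_\pm(\mathrm{\bf D}_{X,\alpha})$ and the current $H(x)$ hinges on proving $\sum_n d_{n+1}|H(x_n+)|=+\infty$, where $H(x_n+)=-\tfrac{2}{c}\sum_{m\le n}\alpha_m|F_I(x_m)|^2$ has summands of indefinite sign; you flag this cancellation problem yourself and do not resolve it, so part (i) is not actually proved.

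For (ii) the paper again works directly with $J_k=J_k^*$ and invokes Chihara's Theorem~8 of \cite{Chi62} (a chain-sequence discreteness test), to which conditions \eqref{disc} --- in particular $\lim_n c/\alpha_{n,1}>-\tfrac14$ --- are tailored. Your Schur-complement folding is an appealing, more self-contained alternative, but it funnels into Corollary \ref{cor_Cojuhari_generalization}, which requires the strict inequality $\limsup_i \widetilde b_i^{\,2}/(|\widetilde a_i|\,|\widetilde a_{i+1}|)<\tfrac14$. In the regime $\alpha_n\to 0^+$ with $|\alpha_n|/d_{n+1}\to\infty$ (e.g. $\alpha_n=\sqrt{d_{n+1}}$), which is fully admissible under \eqref{disc}, your ratio tends to exactly $\tfrac14$ and the corollary does not apply --- a gap you again acknowledge but do not close (and the needed sharper, sign-sensitive test is precisely Chihara's chain-sequence criterion). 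In short: the decomposition is right, but both parts require either the external tests the paper uses (Dennis--Wall for selfadjointness, Chihara's Theorem~8 for discreteness) or a genuinely new argument at exactly the two points you identify as obstacles.
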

  \begin{proof}
(i) First we  prove selfadjointness of  the operator ${\bf J}_{X,\widehat{\gA}}$.
  Since the matrices  $\widehat{\gA}_n=\mathrm{diag}(\alpha_{n,1},\alpha_{n,2},\ldots,\alpha_{n,p})$ are diagonal for each  $n\in\N$,  the operator ${\bf J}_{X,\widehat{\gA}}$  splits into the direct sum of $p$ scalar Jacobi operators ${\bf J}_{X,\widetilde{\alpha}_j}$, where $\widetilde{\alpha}_j=\{\alpha_{n,j}\}_{n=1}^\infty\subset\mathbb R$, $j\in\{1,\ldots,p\}$.
By the Dennis-Wall test (see \cite{Akh}, Problem 2, p.25), ${\bf J}_{X,\widetilde{\alpha}_j}$ is self-adjoint whenever
  \begin{equation}\label{4.15}
\sum_{n=1}^\infty\frac{d_{n+1}^{3/2}}{\nu(d_{n+1})}\left(\frac{d_{n}^{3/2}|\alpha_{n,j}|}{\nu(d_n)} + d_{n+2}^{1/2}\right)=+\infty\,,\qquad j\in\{1,\ldots,p\}.
  \end{equation}
Since  $\nu(d_{n})=\frac{cd_{n}}{\sqrt{1+c^2d_{n}^2}}\thicksim c\,d_n$ as $n\to\infty$, one gets  $\frac{d_{n}^{3/2}}{\nu(d_n)} \thicksim c^{-1}\,d_n^{1/2}$ as $n\to\infty$.
Taking these relations into account and noting that
$$
2\sum_{n\in \N} \sqrt{d_{n+1}}\,\sqrt{d_{n+2}}\le
 \sum_{n\in \N} (d_{n+1} + d_{n+2}) < +\infty,
$$
one concludes that the series \eqref{4.14} and \eqref{4.15} diverge only simultaneously.
Thus series \eqref{4.15}  diverges  and the operator ${\bf J}_{X,\widehat{\gA}}$ is selfadjoint, i.e.
$n_{\pm}({{\bf J}}_{X,\widehat{\gA}}) = 0$.

(ii) Since the operator ${\bf J}_{X,\widehat{\gA}}$ is selfadjoint, then according to \cite[Theorem 8]{Chi62} (see Remark \ref{chihara}) conditions \eqref{disc} guarantee the discreteness of the spectrum of  ${\bf J}_{X,\widehat{\gA}}$.
  \end{proof}

\begin{corollary}\label{selfadj}
Let $\{d_n\}_{n=1}^\infty\in l^1(\mathbb{N})$ and let ${\bf J}_{X,\alpha}$ be the Jacobi matrix of form~\eqref{IV.2.1}. Assume that $\gA:=\{\alpha_n\}_1^\infty \subset\C^{p\times
p}$, where  $\gA_n = \widehat{\gA}_n + \widetilde{\gA}_n =\gA_n^*$ and matrices
 $\widehat{\gA}_n=\mathrm{diag}(\alpha_{n,1},\alpha_{n,2},\ldots,\alpha_{n,p}) = \widehat{\gA}_n^*$
 are  diagonal.  If the condition \eqref{4.14} holds and
  \begin{equation}\label{alpha_bound}
\|\widetilde{\gA}_n\|_{\mathbb C^{p\times p}} = \mathcal{O}(d_{n+1})\qquad \text{for}\
n\to \infty,
  \end{equation}
 then  the  (minimal) Jacobi operator ${\bf J}_{X,\alpha}$ is selfadjoint,
${\bf J}_{X,\gA} = {\bf J}_{X,\gA}^*$.
\end{corollary}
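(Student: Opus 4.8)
The plan is to exhibit ${\bf J}_{X,\alpha}$ as a \emph{bounded} selfadjoint perturbation of the operator ${\bf J}_{X,\widehat\gA}$ from Theorem~\ref{abs_thA'}, and then to conclude by the trivial (bounded) case of the Kato--Rellich theorem. First I would note that the matrices of the form \eqref{IV.2.1} defining ${\bf J}_{X,\alpha}$ and ${\bf J}_{X,\widehat\gA}$ have identical off-diagonal entries, and identical diagonal blocks except at the positions carrying the entries $\alpha_k/\gd_{k+1}$, where they differ by $\gd_{k+1}^{-1}\widetilde\gA_k = \gd_{k+1}^{-1}(\gA_k-\widehat\gA_k)$. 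Hence on $l^2_0(\mathbb N_0;\mathbb C^p)$ one has the identity ${\bf J}_{X,\alpha}h = {\bf J}_{X,\widehat\gA}h + Kh$, where $K$ is the block-diagonal operator whose only nonzero blocks are the matrices $\gd_{k+1}^{-1}\widetilde\gA_k$, $k\in\mathbb N$, placed at the rows occupied by the $\alpha$-diagonal entries of \eqref{IV.2.1}.

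Second, I would check that $K=K^*\in\mathcal B(l^2(\mathbb N_0;\mathbb C^p))$: each block $\gd_{k+1}^{-1}\widetilde\gA_k$ is selfadjoint since $\gA_k=\gA_k^*$ and $\widehat\gA_k=\widehat\gA_k^*$, and by hypothesis \eqref{alpha_bound} one has $\sup_{k\in\mathbb N}\gd_{k+1}^{-1}\|\widetilde\gA_k\|_{\mathbb C^{p\times p}}<\infty$, which is exactly the statement $\|K\|<\infty$.

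Third, I would invoke Theorem~\ref{abs_thA'}(i) for ${\bf J}_{X,\widehat\gA}$ — its hypotheses $\{d_n\}\in l^1(\mathbb N)$, the modulus ordering $|\alpha_{n,1}|\le\dots\le|\alpha_{n,p}|$, and \eqref{4.14} being in force — which gives ${\bf J}_{X,\widehat\gA}={\bf J}_{X,\widehat\gA}^*$. Since $K$ is bounded and selfadjoint, the Kato--Rellich theorem (relative bound $a=0$) shows ${\bf J}_{X,\widehat\gA}+K$ is selfadjoint on $\dom({\bf J}_{X,\widehat\gA})$. Finally, because $l^2_0(\mathbb N_0;\mathbb C^p)$ is a core for the minimal operator ${\bf J}_{X,\alpha}$ and $K$ is bounded, taking closures in the identity of the first step yields ${\bf J}_{X,\alpha}=\overline{{\bf J}_{X,\alpha}^0}={\bf J}_{X,\widehat\gA}+K=({\bf J}_{X,\alpha})^*$. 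Alternatively, the same conclusion follows from Corollary~\ref{abs_cor} applied to the pair $\{{\bf J}_{X,\widehat\gA},\,{\bf J}_{X,\alpha}\}$: conditions \eqref{abs1} and \eqref{abs2} hold trivially (the off-diagonal entries coincide), and \eqref{abs3'} reduces precisely to the boundedness of $\{\gd_{k+1}^{-1}\widetilde\gA_k\}_{k\in\mathbb N}$, so $n_\pm({\bf J}_{X,\alpha})=n_\pm({\bf J}_{X,\widehat\gA})=0$.

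There is no genuine obstacle here; the points deserving a little care are verifying that the difference of the two Jacobi matrices is \emph{exactly} the block-diagonal operator $K$ (so that it is honestly bounded, not merely relatively bounded by ${\bf J}_{X,\widehat\gA}$), and handling the passage to the minimal closure correctly in the presence of the bounded perturbation $K$.
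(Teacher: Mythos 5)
Your proof is correct and is essentially the argument the paper intends: Corollary \ref{selfadj} is stated there without an explicit proof, but (exactly as in the paper's own proof of Corollary \ref{B_ind_p1}) the idea is to view ${\bf J}_{X,\gA}$ as a bounded selfadjoint block-diagonal perturbation $K=\diag\{\widetilde{\gA}_k/\gd_{k+1}\}$ of ${\bf J}_{X,\widehat{\gA}}$, whose selfadjointness is supplied by Theorem \ref{abs_thA'}(i), which is precisely your decomposition. Your alternative route via Corollary \ref{abs_cor} is equally valid, since with identical off-diagonal entries conditions \eqref{abs1} and \eqref{abs2} are trivial and \eqref{abs3'} reduces to \eqref{alpha_bound}.
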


Next we construct Jacobi matrices ${\bf J}_{X,\alpha}$ with intermediate deficiency indices  $0 < n_\pm({\bf J}_{X,\alpha})=p_1 <p$. Let $p = p_1+p_2$, where $p_1, \ p_2\in \N$.
To this end consider block--matrix representation of  $\gA_n\in \C^{p\times p}$
with respect to the orthogonal decomposition $\C^{p} =  \C^{p_1}\oplus \C^{p_2}$:
\begin{equation}\label{eq:block1}
\gA_n =
\begin{pmatrix} \gA_n^{11} & \gA_n^{12} \\ \gA_n^{21} & \gA_n^{22} \end{pmatrix}= \alpha_n^* \in\C^{p\times p},\quad  \alpha_n^{ij}\in\C^{p_i\times p_j}, \quad i,j\in\{1,2\}.
  \end{equation}

\begin{corollary}\label{B_ind_p1}
Let $\{d_n\}_{n=1}^\infty\in l^1(\mathbb{N})$ and let ${\bf J}_{X,\alpha}$ be the Jacobi matrix of the form \eqref{IV.2.1}
with $\gA =\{\alpha_n\}_1^\infty$  and $\alpha_n$ admitting representations  \eqref{eq:block1}.
Assume in  addition that:
  \begin{itemize}
\item[(i)] the matrices $\gA_n^{11}$, $n\in\N$ satisfy condition \eqref{5.26} with $p$ replaced by $p_1$;
\item[(ii)] $\|\gA_n^{12}\|_{\mathbb C^{p_1\times p_2}} = \mathcal{O}(d_{n+1})$, as $n\to \infty$;
\item[(iii)]
the matrices $\gA_n^{22} = \widehat{\gA}_n^{22} + \widetilde{\gA}_n^{22}$, where $\widehat{\gA}_n^{22}=\mathrm{diag}(\alpha_{n,1},\alpha_{n,2},\ldots,\alpha_{n,p_2})$ satisfy condition \eqref{4.14} and $\widetilde{\gA}_n^{22}$ satisfy condition \eqref{alpha_bound} with $p$ replaced by $p_2$.
\end{itemize}
Then $n_{\pm}({\bf J}_{X,\gA})=p_1$.
\end{corollary}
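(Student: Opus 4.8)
The strategy is to reduce Corollary \ref{B_ind_p1} to a combination of Proposition \ref{def_ind_B} (which gives maximal indices for the $p_1\times p_1$ block) and Corollary \ref{selfadj} (which gives selfadjointness for the $p_2\times p_2$ block), gluing them together by the abstract perturbation result Theorem \ref{abs_th}. First I would introduce the ``model'' Jacobi matrix ${\bf J}_{X,\gA_0}$ of the form \eqref{IV.2.1} in which each interaction matrix $\gA_n$ is replaced by the block-diagonal matrix
$$
\gA_n^0 := \begin{pmatrix} \gA_n^{11} & \mathbb O \\ \mathbb O & \widehat{\gA}_n^{22}\end{pmatrix}\in\C^{p\times p},
$$
i.e.\ we drop the off-diagonal blocks $\gA_n^{12},\gA_n^{21}$ and the non-diagonal part $\widetilde{\gA}_n^{22}$ of $\gA_n^{22}$. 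Because the entire matrix \eqref{IV.2.1} away from the $\gA_n$-entries is a scalar multiple of $\mathbb I_p$, and $\gA_n^0$ is block diagonal with respect to $\C^{p}=\C^{p_1}\oplus\C^{p_2}$, the operator ${\bf J}_{X,\gA_0}$ splits as an orthogonal sum ${\bf J}_{X,\gA_0} = {\bf J}^{(1)}\oplus {\bf J}^{(2)}$, where ${\bf J}^{(1)}$ is precisely a matrix of type \eqref{IV.2.1} with $p_1$ in place of $p$ and intensities $\gA_n^{11}$, and ${\bf J}^{(2)}$ is of type \eqref{IV.2.1} with $p_2$ in place of $p$ and \emph{diagonal} intensities $\widehat{\gA}_n^{22}$.

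Next I would compute the deficiency indices of the model operator. By hypothesis (i) the matrices $\gA_n^{11}$ satisfy \eqref{5.26} (with $p_1$ in place of $p$), so Proposition \ref{def_ind_B} yields $n_\pm({\bf J}^{(1)}) = p_1$. By hypothesis (iii) the diagonal matrices $\widehat{\gA}_n^{22}$ satisfy \eqref{4.14}, so by Theorem \ref{abs_thA'}(i) (equivalently, the $\widetilde{\gA}^{22}=0$ case of Corollary \ref{selfadj}) the operator ${\bf J}^{(2)}$ is selfadjoint, $n_\pm({\bf J}^{(2)})=0$. Hence $n_\pm({\bf J}_{X,\gA_0}) = p_1 + 0 = p_1$. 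It remains to show that passing from ${\bf J}_{X,\gA_0}$ to ${\bf J}_{X,\gA}$ does not change the deficiency indices, i.e.\ to verify the hypotheses \eqref{abs1}--\eqref{abs3} of Theorem \ref{abs_th} for the pair $\{{\bf J}_{X,\gA_0},\,{\bf J}_{X,\gA}\}$ (playing the roles of ${\bf J}$ and $\widehat{\bf J}$). The off-diagonal entries of the two matrices coincide (they are the same scalar multiples of $\mathbb I_p$), so condition \eqref{abs1} holds trivially with $a_N=0$ and condition \eqref{abs2} holds with $C_B=0$; only the diagonal condition \eqref{abs3} needs work. The difference of the diagonal entries at the interaction nodes is $\frac{1}{\gd_{j+1}}\big(\gA_j - \gA_j^0\big)$, whose off-diagonal blocks are $\frac{1}{\gd_{j+1}}\gA_j^{12}$, $\frac{1}{\gd_{j+1}}\gA_j^{21}$ and whose lower-right block is $\frac{1}{\gd_{j+1}}\widetilde{\gA}_j^{22}$. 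By hypotheses (ii) and (iii) (via \eqref{alpha_bound}) each of these is $\mathcal O(1)$ as $j\to\infty$, so $\sup_n\|\widehat{\mathcal A}_n - \mathcal B_{n-1}^*(\mathcal B_{n-1}^*)^{-1}\mathcal A_n\|<\infty$; this is exactly condition \eqref{abs3'} of Corollary \ref{abs_cor} (equivalently \eqref{abs3} with $\varepsilon=0$).

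Applying Corollary \ref{abs_cor} then gives $n_\pm({\bf J}_{X,\gA}) = n_\pm({\bf J}_{X,\gA_0}) = p_1$, which is the assertion. The one point requiring a little care — and the place I expect the main (though still modest) obstacle — is the bookkeeping in checking \eqref{abs3'}: one must line up the even/odd indexing of \eqref{IV.2.1}, note that only the rows of even index $n=2j$ carry an $\gA_j$-type diagonal entry while the odd rows have diagonal $-\frac{\nu(\gd_{j+1})}{\gd_{j+1}^2}\mathbb I_p$ which is identical in both matrices, and verify that the factor $\mathcal B_{n-1}^*(\mathcal B_{n-1}^*)^{-1}=\mathbb I_p$ so that the quantity in \eqref{abs3'} is literally $\frac{1}{\gd_{j+1}}\|\gA_j-\gA_j^0\|$; then the stated $\mathcal O(d_{j+1})$ bounds close the estimate. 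Everything else is a direct citation of the results already established above.
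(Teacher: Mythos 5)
Your proposal is correct and follows essentially the same route as the paper: the same block-diagonal model matrix with intensities $\gA_n^{11}\oplus\widehat{\gA}_n^{22}$, the same splitting ${\bf J}^{(1)}\oplus{\bf J}^{(2)}$ with $n_\pm({\bf J}^{(1)})=p_1$ from Proposition \ref{def_ind_B} and $n_\pm({\bf J}^{(2)})=0$ from Corollary \ref{selfadj}, and the same count $p_1+0=p_1$. The only (harmless) difference is that the paper justifies the final step by observing that conditions (ii)--(iii) make ${\bf J}_{X,\gA}-{\bf J}_{X,\widehat\gA}$ a bounded symmetric perturbation, whereas you run the same bound through Corollary \ref{abs_cor} with $a_N=C_B=0$, which is a valid, if slightly heavier, way to reach the same conclusion.
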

\begin{proof}
 It follows from conditions $(ii)$ and $(iii)$ that
${\bf J}_{X,\gA}$ is a bounded perturbation of the matrix ${\bf J}_{X,\widehat{\gA}}$ with ${\widehat{\alpha}}_n=\{{\widehat{\alpha}}_n\}_1^\infty$, where
${\widehat{\alpha}}_n=\gA_n^{11}\oplus \widehat{\gA}_n^{22},$ and $\gA^{11}=\{\gA_n^{11}\}_{1}^\infty$, $\widehat{\gA}^{22}=\{\widehat{\gA}_n^{22}\}_{1}^\infty$. Hence $n_\pm({\bf J}_{X,\gA})=n_\pm({\bf J}_{X,\widehat{\gA}})$.

Note that the decomposition
$\mathbb C^p=\mathbb C^{p_1}\oplus \mathbb C^{p_2}$ yields representation of the operator ${\bf J}_{X,\widehat{\gA}}$ as ${\bf J}_{X,\widehat{\gA}}={\bf J}_{X,\gA^{11}}\oplus{\bf J}_{X,\widehat{\gA}^{22}}$.
Therefore,
$$
n_\pm({\bf J}_{X,\gA})=n_\pm({\bf J}_{X,\widehat{\gA}})=n_\pm({\bf J}_{X,\gA^{11}})+n_\pm({\bf J}_{X,\widehat{\gA}^{22}}) =
p_1 + 0 = p_1.
$$
We have used here that in accordance with Corollary  \ref{selfadj},  $n_\pm({\bf J}_{X,\widehat{\gA}^{22}})=0$.
\end{proof}

We give a special case of the Corollary \ref{B_ind_p1} with diagonal block--matrix sequence $\gA_n:=\{\alpha_n\}_1^\infty$
\begin{equation}\label{eq:block2}
\gA_n=\alpha_n^* =
\begin{pmatrix} \gA_n^{11} & \mathbb O_{p_1\times p_2} \\ \mathbb O_{p_2\times p_1} & \gA_n^{22} \end{pmatrix}\in\C^{p\times p},\quad  \alpha_n^{ij}\in\C^{p_i\times p_j}, \quad i,j\in\{1,2\}. \end{equation}

\begin{corollary}\label{B_ind_p1cor}
Let $\{d_n\}_{n=1}^\infty\in l^1(\mathbb{N})$ and let ${\bf J}_{X,\alpha}$ be the Jacobi matrix of the form \eqref{IV.2.1}
with $\gA =\{\alpha_n\}_1^\infty$  and $\alpha_n$ admitting representations  \eqref{eq:block2}.
Assume in  addition that:
\begin{itemize}
\item[(i)] the matrices $\gA_n^{11}$, $n\in\N$ satisfy condition \eqref{5.26} with $p$ replaced by $p_1$;
\item[(ii)]
the diagonal  matrices $\gA_n^{22} = \mathrm{diag}(\alpha_{n,1},\alpha_{n,2},\ldots,\alpha_{n,p_2})$ satisfy condition \eqref{4.14} with $p$ replaced by $p_2$.
\end{itemize}
Then $n_{\pm}({\bf J}_{X,\gA})=p_1$.
\end{corollary}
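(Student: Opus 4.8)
The plan is to reduce the claim to an orthogonal direct-sum decomposition of the Jacobi operator that is forced by the block-diagonal structure \eqref{eq:block2} of the intensities $\alpha_n$, and then to read off the two deficiency indices from results already available.

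First I would record the structural observation that in the matrix \eqref{IV.2.1} every off-diagonal entry is a scalar multiple of $\mathbb I_p$, and every diagonal entry is either $\mathbb O_p$, a scalar multiple of $\mathbb I_p$, or $\gA_n/\gd_{n+1}$. Under the hypothesis $\gA_n = \gA_n^{11}\oplus\gA_n^{22}$ of \eqref{eq:block2}, every entry of \eqref{IV.2.1} is therefore block-diagonal with respect to the fixed orthogonal splitting $\mathbb C^p = \mathbb C^{p_1}\oplus\mathbb C^{p_2}$, lifted to $l^2(\mathbb N_0;\mathbb C^p) = l^2(\mathbb N_0;\mathbb C^{p_1})\oplus l^2(\mathbb N_0;\mathbb C^{p_2})$. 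Consequently the action of the matrix on finite vectors respects this splitting, and since $l^2_0(\mathbb N_0;\mathbb C^p)$ is a core for the minimal operator (and for each of the two summand operators), passing to closures gives ${\bf J}_{X,\gA} = {\bf J}_{X,\gA^{11}}\oplus{\bf J}_{X,\gA^{22}}$, where $\gA^{11} = \{\gA_n^{11}\}_1^\infty$, $\gA^{22} = \{\gA_n^{22}\}_1^\infty$, and each summand is again a Jacobi matrix of the form \eqref{IV.2.1} (with $p_1$, resp. $p_2$, in place of $p$). Additivity of deficiency indices over orthogonal sums then yields $n_\pm({\bf J}_{X,\gA}) = n_\pm({\bf J}_{X,\gA^{11}}) + n_\pm({\bf J}_{X,\gA^{22}})$.

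It remains to evaluate the two summands. Hypothesis (i) is exactly condition \eqref{5.26} for the selfadjoint sequence $\{\gA_n^{11}\}_1^\infty\subset\mathbb C^{p_1\times p_1}$, so Proposition \ref{def_ind_B} (applied with $p_1$ in place of $p$) gives $n_\pm({\bf J}_{X,\gA^{11}}) = p_1$. Hypothesis (ii) says that the diagonal matrices $\gA_n^{22} = \mathrm{diag}(\alpha_{n,1},\ldots,\alpha_{n,p_2})$ satisfy \eqref{4.14} with $p_2$ in place of $p$; since also $\{d_n\}_1^\infty\in l^1(\mathbb N)$, Theorem \ref{abs_thA'}(i) (equivalently Corollary \ref{selfadj} with $\widetilde{\gA}_n = \mathbb O$) applies and gives ${\bf J}_{X,\gA^{22}} = {\bf J}_{X,\gA^{22}}^*$, i.e. $n_\pm({\bf J}_{X,\gA^{22}}) = 0$. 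Combining the two, $n_\pm({\bf J}_{X,\gA}) = p_1 + 0 = p_1$.

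Alternatively, and most briefly, this is the special case of Corollary \ref{B_ind_p1} obtained by taking $\gA_n^{12} = \mathbb O_{p_1\times p_2}$ — so hypothesis (ii) of Corollary \ref{B_ind_p1} is vacuous — and $\widetilde{\gA}_n^{22} = \mathbb O$ — so the boundedness requirement \eqref{alpha_bound} inside hypothesis (iii) is vacuous — whence Corollary \ref{B_ind_p1} delivers $n_\pm({\bf J}_{X,\gA}) = p_1$ directly. I do not expect a genuine obstacle in either route; the only point that deserves a careful line is the passage from the block-diagonal matrix identity on finite vectors to a bona fide orthogonal direct sum of the closed minimal operators, together with the (standard) additivity of deficiency indices under such sums, and the observation that the ordering $|\alpha_{n,1}|\le\cdots\le|\alpha_{n,p_2}|$ implicit in hypothesis (ii) is what lets the single divergence \eqref{4.14} cover all $p_2$ scalar Jacobi subsystems in the application of Theorem \ref{abs_thA'}(i).
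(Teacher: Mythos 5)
Your proposal is correct and follows essentially the same route as the paper: the paper proves this corollary by reducing it to Corollary \ref{B_ind_p1} (whose proof is exactly your direct-sum decomposition ${\bf J}_{X,\gA}={\bf J}_{X,\gA^{11}}\oplus{\bf J}_{X,\gA^{22}}$ followed by Proposition \ref{def_ind_B} for the first summand and Corollary \ref{selfadj}/Theorem \ref{abs_thA'}(i) for the second, with additivity of deficiency indices). Your second, shortcut paragraph is literally the paper's proof, so no further comparison is needed.
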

\begin{proof}
Proof follows from Corollary \ref{B_ind_p1}.
\end{proof}

\begin{corollary}\label{abs_thA''}
Let $\widehat{\gA}_n$  be diagonal  matrices, $\widehat{\gA}_n =\mathrm{diag}(\alpha_{n,1},\alpha_{n,2},\ldots,\alpha_{n,p}) = \widehat{\gA}_n^*$, $n\in\mathbb N$,
and let   $\widehat{{\bf J}}_{X,\widehat{\alpha}}$ be the  Jacobi
matrix given by  \eqref{IV.2.1_01''} with entries $\widehat{\gA}_n$ instead of $\gA_n$. Let also $|\alpha_{n,1}|\leq|\alpha_{n,2}|\leq\ldots\leq|\alpha_{n,p}|$.
Assume also that $$|\alpha_{n,1}|\leq|\alpha_{n,2}|\leq\ldots\leq|\alpha_{n,p}|\qquad\text{and}\qquad\{d_n\}_{n=1}^\infty\in l^1(\mathbb{N}).$$
\begin{itemize}
\item[(i)] If conditions \eqref{abs1A}--\eqref{abs3A}, \eqref{4.14} are satisfied
then  the  minimal Jacobi operator $\widehat{{\bf J}}_{X,\widehat{\alpha}}$   is selfadjoint,
$\widehat{{\bf J}}_{X,\widehat{\gA}} = \widehat{{\bf J}}_{X,\widehat{\gA}}^*$.
\item[(ii)] If an addition conditions \eqref{disc} hold
 then  the  spectrum of Jacobi operator $\widehat{{\bf J}}_{X,\widehat{\alpha}}$   is discrete.
\end{itemize}
  \end{corollary}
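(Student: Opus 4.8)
The plan is to deduce this corollary from Theorem~\ref{abs_thA'} and Corollary~\ref{abs_cor} by following, almost word for word, the argument of Theorem~\ref{abs_thA}; the only structural change is that the r\^ole played there by Proposition~\ref{def_ind_B} (which supplies the maximality $n_\pm({\bf J}_{X,\alpha})=p$ of the unperturbed matrix under \eqref{5.26}) will now be played by Theorem~\ref{abs_thA'}, which supplies instead selfadjointness of the unperturbed matrix under \eqref{4.14} and, for part~(ii), discreteness under the additional assumptions \eqref{disc}.

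Concretely, I would introduce, alongside $\widehat{{\bf J}}_{X,\widehat{\gA}}$ of the form \eqref{IV.2.1_01''}, the Jacobi matrix ${\bf J}_{X,\widehat{\gA}}$ of the form \eqref{IV.2.1} with the diagonal blocks $\widehat{\gA}_n$ in place of $\gA_n$. Since $|\alpha_{n,1}|\le\cdots\le|\alpha_{n,p}|$, $\{d_n\}_1^\infty\in l^1(\mathbb N)$, and \eqref{4.14} holds, Theorem~\ref{abs_thA'}(i) gives ${\bf J}_{X,\widehat{\gA}}={\bf J}_{X,\widehat{\gA}}^*$; if in addition \eqref{disc} holds, Theorem~\ref{abs_thA'}(ii) gives that the spectrum of ${\bf J}_{X,\widehat{\gA}}$ is discrete. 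Next I would verify that the pair $\{{\bf J}_{X,\widehat{\gA}},\ \widehat{{\bf J}}_{X,\widehat{\gA}}\}$ satisfies hypotheses \eqref{abs1}, \eqref{abs2} and \eqref{abs3'} of Corollary~\ref{abs_cor}. This is precisely the computation carried out in the proof of Theorem~\ref{abs_thA}, with $\widehat{\gA}_n$ in place of $\gA_n$ throughout: \eqref{abs1A} yields \eqref{abs1} with the same constant $a_N<1$ (and also the invertibility of all off-diagonal entries of \eqref{IV.2.1_01''}), \eqref{abs2A}--\eqref{abs2A'} yield \eqref{abs2} after using $\nu(d_{j+1})<c\,d_{j+1}$ and $\nu(d_{j+1})\sim c\,d_{j+1}$ as $d_{j}\to0$, and \eqref{abs3A}--\eqref{abs3A'} yield \eqref{abs3'}.

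Finally I would invoke Corollary~\ref{abs_cor}: since ${\bf J}_{X,\widehat{\gA}}={\bf J}_{X,\widehat{\gA}}^*$, it yields $\dom\widehat{{\bf J}}_{X,\widehat{\gA}}=\dom {\bf J}_{X,\widehat{\gA}}$ and $n_\pm(\widehat{{\bf J}}_{X,\widehat{\gA}})=n_\pm({\bf J}_{X,\widehat{\gA}})=0$, i.e. $\widehat{{\bf J}}_{X,\widehat{\gA}}=\widehat{{\bf J}}_{X,\widehat{\gA}}^*$, which proves (i); and under the additional hypotheses \eqref{disc} the matrix ${\bf J}_{X,\widehat{\gA}}$ is moreover selfadjoint with discrete spectrum, so the last assertion of Corollary~\ref{abs_cor} yields that $\widehat{{\bf J}}_{X,\widehat{\gA}}=\widehat{{\bf J}}_{X,\widehat{\gA}}^*$ has discrete spectrum as well, proving (ii). The only point that really needs attention is the verification step: one must check that the estimates in the proof of Theorem~\ref{abs_thA} leading to \eqref{abs1}--\eqref{abs3'} nowhere use \eqref{5.26} but only $d_n\to0$ (guaranteed by $\{d_n\}_1^\infty\in l^1(\mathbb N)$), and that passing from general selfadjoint blocks $\gA_n$ to diagonal ones $\widehat{\gA}_n$ is harmless, since those estimates involve only the $\mathbb C^{p\times p}$-norms of the blocks.
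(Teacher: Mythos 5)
Your proposal is correct and follows essentially the same route as the paper: treat $\widehat{{\bf J}}_{X,\widehat{\gA}}$ as a perturbation of ${\bf J}_{X,\widehat{\gA}}$, verify conditions \eqref{abs1}, \eqref{abs2}, \eqref{abs3'} exactly as in the proof of Theorem~\ref{abs_thA}, and then combine Corollary~\ref{abs_cor} with Theorem~\ref{abs_thA'}(i) for selfadjointness and with Theorem~\ref{abs_thA'}(ii) (via Theorem~\ref{abs_th}(b), which is what the final assertion of Corollary~\ref{abs_cor} amounts to) for discreteness. Your closing remark that the verification uses only $d_n\to 0$ rather than \eqref{5.26} is the right point to check and is correct.
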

\begin{proof}
(i) Considering  the operator $\widehat{{\bf J}}_{X,\widehat{\gA}}$ as a perturbation
 of ${{\bf J}}_{X,\widehat{\gA}}$  and following reasoning of the proof of
 Theorem \ref{abs_thA} we conclude that conditions \eqref{abs1A}--\eqref{abs3A}
imply  conditions \eqref{abs1}, \eqref{abs2}, \eqref{abs3'}  of Corollary \ref{abs_cor}. Therefore combining Theorem \ref{abs_thA'} (i) with the conclusion of Corollary \ref{abs_cor}  yields
$n_{\pm}(\widehat{{\bf J}}_{X,\widehat{\gA}})= n_{\pm}({{\bf J}}_{X,\widehat{\gA}}) =0$, i.e. $\widehat{{\bf J}}_{X,\widehat{\gA}} = \widehat{{\bf J}}_{X,\widehat{\gA}}^*$.

(ii) It follows from Theorems \ref{abs_th} (b) and \ref{abs_thA'} (ii), that  the Jacobi operator $\widehat{{\bf J}}_{X,\widehat{\alpha}}$  has a discrete spectrum.
  \end{proof}

  \begin{corollary}
Let  $\mathrm{\bf D}_{X,\widehat{\alpha}}$   be the minimal operator generated in $L^2(\mathcal I;\mathbb{C}^p)$ by expression \eqref{1.2Intro} on the domain~\eqref{delta}   with $\widehat{\gA}_n =\mathrm{diag}(\alpha_{n,1},\alpha_{n,2},\ldots,\alpha_{n,p}) = \widehat{\gA}_n^*$, $|\alpha_{n,1}|\leq|\alpha_{n,2}|\leq\ldots\leq|\alpha_{n,p}|$. Let also  $|\cI| < \infty$ and $\lim\limits_{n\to\infty}d_n=0$. If conditions \eqref{4.14} and \eqref{disc} hold, then the operator $\mathrm{\bf D}_{X,\widehat{\alpha}}$ is selfadjoint and  discrete.
\end{corollary}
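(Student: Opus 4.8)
The plan is to move the whole assertion to the block Jacobi matrix side and then to read it off from Theorem~\ref{abs_thA'} together with the Dirac--Jacobi correspondence. Concretely, alongside $\mathrm{\bf D}_{X,\widehat\alpha}$ I would consider the minimal block Jacobi operator ${\bf J}_{X,\widehat\alpha}$ of the form \eqref{IV.2.1}, but with the diagonal interaction matrices $\widehat{\gA}_n = \diag(\alpha_{n,1},\ldots,\alpha_{n,p})$ in place of $\gA_n$. The GS-realization $\mathrm{\bf D}_{X,\widehat\alpha}$ is, in a suitable boundary triplet (Proposition~\ref{prop_IV.2.1_01}), unitarily equivalent up to signs of off-diagonal entries to the boundary operator ${\bf B}_{X,\widehat\alpha}$, and ${\bf B}_{X,\widehat\alpha}$ is in turn unitarily equivalent to ${\bf J}_{X,\widehat\alpha}$; this is precisely the mechanism used in the proof of Proposition~\ref{def_ind_B} and of Theorems~\ref{D_disc_inf}, \ref{D_disc_line}, and it gives $n_\pm(\mathrm{\bf D}_{X,\widehat\alpha}) = n_\pm({\bf J}_{X,\widehat\alpha})$ (equivalently one may cite \eqref{equal_def_ind_Dirac}), together with the fact that, since $\lim_{n\to\infty} d_n = 0$, the two operators have discrete spectrum simultaneously.

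Next I would check that ${\bf J}_{X,\widehat\alpha}$ satisfies the hypotheses of Theorem~\ref{abs_thA'}. The assumption $|\cI|<\infty$ gives $\sum_n d_n = |\cI|<\infty$, hence $\{d_n\}_{n=1}^\infty\in l^1(\N)$ (and $\lim_n d_n = 0$, as also assumed); combined with the ordering $|\alpha_{n,1}|\le\ldots\le|\alpha_{n,p}|$ this is exactly the standing hypothesis of Theorem~\ref{abs_thA'}. Then: condition \eqref{4.14} and part (i) of that theorem yield ${\bf J}_{X,\widehat\gA} = {\bf J}_{X,\widehat\gA}^*$ (internally this uses that a diagonal $\widehat\gA_n$ makes ${\bf J}_{X,\widehat\gA}$ a direct sum of $p$ scalar Jacobi operators, to which the Dennis--Wall test applies, the smallest $|\alpha_{n,1}|$ being the critical one); and then the additional conditions \eqref{disc} together with part (ii) of the theorem show that the spectrum of ${\bf J}_{X,\widehat\gA} = {\bf J}_{X,\widehat\gA}^*$ is discrete (via Chihara's test applied to each scalar summand).

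Combining the two steps, $n_\pm(\mathrm{\bf D}_{X,\widehat\alpha}) = n_\pm({\bf J}_{X,\widehat\alpha}) = 0$, so $\mathrm{\bf D}_{X,\widehat\alpha} = \mathrm{\bf D}_{X,\widehat\alpha}^*$, and since $\lim_n d_n = 0$ the discreteness of ${\bf J}_{X,\widehat\alpha}$ transfers to $\mathrm{\bf D}_{X,\widehat\alpha}$. I do not expect a serious obstacle here: the only point that is not purely formal is the faithful transfer of selfadjointness and of the discreteness of the spectrum through the boundary-triplet correspondence under the condition $\lim_n d_n = 0$, and that is already packaged in Proposition~\ref{prop_IV.2.1_01}(iii) (as used in Theorems~\ref{D_disc_inf} and \ref{D_disc_line}); everything on the Jacobi side is a one-line appeal to Theorem~\ref{abs_thA'}.
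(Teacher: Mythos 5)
Your proposal is correct and follows essentially the same route as the paper, whose proof of this corollary is exactly the combination of Theorem \ref{abs_thA'} (selfadjointness via the Dennis--Wall test under \eqref{4.14} and discreteness via Chihara's test under \eqref{disc} for the direct sum of scalar Jacobi operators) with Proposition \ref{prop_IV.2.1_01} to transfer selfadjointness and, using $\lim_{n\to\infty}d_n=0$, discreteness from the Jacobi side to $\mathrm{\bf D}_{X,\widehat{\alpha}}$. The extra detail you supply (the unitary equivalence of ${\bf B}_{X,\widehat{\alpha}}$ and ${\bf J}_{X,\widehat{\alpha}}$ and the $l^1$ condition from $|\cI|<\infty$) is exactly what the paper's terse reference implicitly relies on.
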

\begin{proof}
It follows from Theorem \ref{abs_thA'} and Proposition \ref{prop_IV.2.1_01} (iii).
\end{proof}

For a Jacobi matrix ${\bf J}_{X,\beta}$ of the form \eqref{IV.3.1_04'} $\gB=\{\beta_n\}_1^\infty$ similar results on selfadjointness and deficiency indices are valid. We present only the results on selfadjointness.

\begin{theorem}\label{abs_thB'}
Let $\{d_n\}_{n=1}^\infty\in l^1(\mathbb{N})$ and let ${\bf J}_{X,\widehat{\beta}}$ be the  Jacobi
matrix given by \eqref{IV.3.1_04'} with entries $\widehat{\gB}=\{\widehat{\beta}_n\}_1^\infty$ instead of $\gB$. Here  $\widehat{\gB}_n =\mathrm{diag}(\beta_{n,1},\beta_{n,2},\ldots,\beta_{n,p}) = \widehat{\gB}_n^*$ are diagonal  matrices,
and $|\beta_{n,1}|\leq|\beta_{n,2}|\leq\ldots\leq|\beta_{n,p}|$, $n\in\mathbb N$.
 If
\begin{equation}\label{4.14B}
\sum_{n\in \N} \sqrt{d_{n}d_{n+1}}\,|\beta_{n,1}|=+\infty,
 \end{equation}
then  the  minimal Jacobi operator ${\bf J}_{X,\widehat{\beta}}$ is selfadjoint,
${\bf J}_{X,\widehat{\gB}} ={\bf J}_{X,\widehat{\gB}}^*$.
  \end{theorem}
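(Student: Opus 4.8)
The plan is to follow the scheme of the proof of Theorem~\ref{abs_thA'}(i). Since every matrix $\widehat{\gB}_n=\mathrm{diag}(\beta_{n,1},\ldots,\beta_{n,p})$ is diagonal and all off--diagonal entries of \eqref{IV.3.1_04'} are scalar multiples of $\mathbb I_p$, the operator ${\bf J}_{X,\widehat{\gB}}$ leaves invariant the canonical decomposition $l^2(\mathbb N_0;\mathbb C^p)\cong\bigoplus_{j=1}^{p}l^2(\mathbb N_0)$ and splits into an orthogonal direct sum ${\bf J}_{X,\widehat{\gB}}=\bigoplus_{j=1}^{p}{\bf J}_{X,\widetilde{\beta}_j}$ of $p$ scalar minimal Jacobi operators, where $\widetilde{\beta}_j:=\{\beta_{n,j}\}_{n=1}^{\infty}\subset\mathbb R$ and ${\bf J}_{X,\widetilde{\beta}_j}$ has the shape of \eqref{IV.3.1_04'} with $\beta_{n,j}$ in place of $\beta_n$. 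Consequently $n_{\pm}({\bf J}_{X,\widehat{\gB}})=\sum_{j=1}^{p}n_{\pm}({\bf J}_{X,\widetilde{\beta}_j})$, so it suffices to prove $n_{\pm}({\bf J}_{X,\widetilde{\beta}_j})=0$ for every $j\in\{1,\ldots,p\}$.

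Fix $j$ and read off from \eqref{IV.3.1_04'} the entries of the scalar Jacobi operator ${\bf J}_{X,\widetilde{\beta}_j}$: its (positive) off--diagonal entries are $b_{2k}=\nu(d_{k+1})\,d_{k+1}^{-2}$ and $b_{2k+1}=\nu(d_{k+1})\,d_{k+1}^{-3/2}d_{k+2}^{-1/2}$, $k\in\mathbb N_0$, while its diagonal entries are $a_{2k}=0$ and $a_{2k+1}=-\nu^{2}(d_{k+1})\,d_{k+1}^{-3}\bigl(\beta_{k+1,j}+d_{k+1}\bigr)$, with $\nu$ given by \eqref{IV.2.1_01NU}. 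Applying the Dennis--Wall test (\cite{Akh}, Problem~2, p.~25) as in the derivation of \eqref{4.15}, one gets that ${\bf J}_{X,\widetilde{\beta}_j}$ is self--adjoint provided a divergence condition of exactly the type \eqref{4.15} holds; its $\beta$--dependent part is controlled by the quantities $|a_{2k+1}|\,b_{2k+1}^{-1}b_{2k}^{-1}$, for which $\nu$ cancels identically and a direct computation yields
\begin{equation*}
\frac{|a_{2k+1}|}{b_{2k+1}\,b_{2k}}=\bigl|\beta_{k+1,j}+d_{k+1}\bigr|\,\sqrt{d_{k+1}\,d_{k+2}},\qquad k\in\mathbb N_0 .
\end{equation*}
Since the remaining terms of the Dennis--Wall series involve only the $d_n$'s (the even diagonal slots vanishing here), they are summable by exactly the same elementary estimate as in the proof of Theorem~\ref{abs_thA'}(i) (here using $\nu(d_n)\sim c\,d_n$ as $d_n\to0$ and $2\sqrt{d_n d_{n+1}}\le d_n+d_{n+1}$). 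Hence the Dennis--Wall series diverges if and only if $\sum_{n\ge1}\bigl|\beta_{n,j}+d_n\bigr|\sqrt{d_n d_{n+1}}=+\infty$.

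It remains to check this last divergence. By $\{d_n\}_1^\infty\in l^1(\mathbb N)$ and AM--GM, $\sum_{n}d_n\sqrt{d_n d_{n+1}}\le(\sup_m d_m)\,\tfrac12\sum_n(d_n+d_{n+1})<\infty$, and using $\bigl|\beta_{n,j}+d_n\bigr|\ge|\beta_{n,j}|-d_n\ge|\beta_{n,1}|-d_n$ (valid also when the right--hand side is negative) together with the ordering $|\beta_{n,1}|\le|\beta_{n,j}|$ we obtain
\begin{equation*}
\sum_{n\ge1}\bigl|\beta_{n,j}+d_n\bigr|\sqrt{d_n d_{n+1}}\ \ge\ \sum_{n\ge1}|\beta_{n,1}|\sqrt{d_n d_{n+1}}\ -\ \sum_{n\ge1}d_n\sqrt{d_n d_{n+1}}\ =\ +\infty
\end{equation*}
by hypothesis \eqref{4.14B}. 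Thus the Dennis--Wall series diverges for every $j$, so ${\bf J}_{X,\widetilde{\beta}_j}={\bf J}_{X,\widetilde{\beta}_j}^{*}$ and therefore ${\bf J}_{X,\widehat{\gB}}={\bf J}_{X,\widehat{\gB}}^{*}$.

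The only real work is the bookkeeping behind the Dennis--Wall step: one must write out, for the matrix \eqref{IV.3.1_04'}, the precise form of the condition furnished by the Dennis--Wall test (the $\beta$--analogue of \eqref{4.15}), keeping in mind that, in contrast to the $\gA$--matrix \eqref{IV.2.1}, here the interaction data occupy the \emph{odd} diagonal positions (the even ones being zero) and come with the extra shift $\beta_{k+1,j}+d_{k+1}$; then one verifies the exact cancellation of $\nu$ displayed above and that every $d$--only term of the series is summable because $\{d_n\}\in l^1(\mathbb N)$. Everything else runs in complete parallel to the proof of Theorem~\ref{abs_thA'}(i) (compare \eqref{4.15}--\eqref{4.14}).
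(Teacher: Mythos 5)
Your proposal is correct and follows exactly the route the paper intends (the theorem is stated as the $\beta$-analogue of Theorem \ref{abs_thA'}(i), whose proof it mirrors): split into $p$ scalar Jacobi operators, apply the Dennis--Wall test, and check that the relevant series reduces, after the cancellation of $\nu$, to $\sum_n|\beta_{n,j}+d_n|\sqrt{d_nd_{n+1}}$, which diverges under \eqref{4.14B} since $\sum_n d_n\sqrt{d_nd_{n+1}}<\infty$. Your computation of $|a_{2k+1}|/(b_{2k}b_{2k+1})=|\beta_{k+1,j}+d_{k+1}|\sqrt{d_{k+1}d_{k+2}}$ is the key bookkeeping step the paper omits, and it checks out.
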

\begin{corollary}\label{selfadjB}
Let $\{d_n\}_{n=1}^\infty\in l^1(\mathbb{N})$,  $\gB_n = \widehat{\gB}_n + \widetilde{\gB}_n=\gB_n^*$ and for $\widehat{\gB}=\{\widehat{\beta}_n\}_1^\infty$ all conditions of Theorem \ref{abs_thB'} are satisfied. If in addition
\begin{equation}\label{beta_bound}
\|\widetilde{\gB}_n\|_{\mathbb C^{p\times p}} = \mathcal{O}(d_{n})\quad \text{for}\  n\to \infty,
\end{equation}
then the  Jacobi operator ${\bf J}_{X,\beta}$
is selfadjoint, i.e. ${\bf J}_{X,\gB} = {\bf J}_{X,\gB}^*$.
\end{corollary}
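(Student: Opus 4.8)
The plan is to reduce the statement to Theorem~\ref{abs_thB'} by treating ${\bf J}_{X,\gB}$ as a bounded symmetric perturbation of ${\bf J}_{X,\widehat{\gB}}$, in complete parallel with the $\alpha$-version, Corollary~\ref{selfadj}. First I would observe that by assumption the sequence $\widehat{\gB}=\{\widehat{\beta}_n\}_1^\infty$ satisfies all hypotheses of Theorem~\ref{abs_thB'} --- in particular $\{d_n\}_{n=1}^\infty\in l^1(\mathbb N)$, the $\widehat{\gB}_n=\mathrm{diag}(\beta_{n,1},\dots,\beta_{n,p})$ are selfadjoint with $|\beta_{n,1}|\le\dots\le|\beta_{n,p}|$, and condition \eqref{4.14B} holds --- so that the minimal Jacobi operator ${\bf J}_{X,\widehat{\gB}}$, generated by the matrix \eqref{IV.3.1_04'} with $\widehat{\beta}_n$ in place of $\beta_n$, is selfadjoint: ${\bf J}_{X,\widehat{\gB}}={\bf J}_{X,\widehat{\gB}}^*$.

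Next I would compare the two matrices entry by entry. In \eqref{IV.3.1_04'} the parameters $\beta_n$ enter only through the diagonal blocks $-\frac{\nu^2(d_n)}{d_n^3}\,(\beta_n+d_n\mathbb I_p)$, $n\in\mathbb N$, whereas every off-diagonal entry is independent of $\gB$. Hence, taking ${\bf J}_{X,\widehat{\gB}}$ in the role of ${\bf J}$ and ${\bf J}_{X,\gB}$ in the role of $\widehat{{\bf J}}$ in the notation of Theorem~\ref{abs_th}, one has $\widehat{\mathcal B}_n=\mathcal B_n$ for every $n$, so conditions \eqref{abs1} and \eqref{abs2} hold trivially (with $a_N=0$ and $C_B=0$), while $\widehat{\mathcal B}_{n-1}^*(\mathcal B_{n-1}^*)^{-1}=\mathbb I_p$ and the diagonal blocks of ${\bf J}_{X,\gB}$ and ${\bf J}_{X,\widehat{\gB}}$ differ only in the $\gB$-dependent slots, by the amount $-\frac{\nu^2(d_n)}{d_n^3}\widetilde{\gB}_n$, $n\in\mathbb N$. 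The quantitative heart of the argument is the elementary bound $\nu^2(d_n)/d_n^3\le c^2/d_n$, which follows from $\nu(x)=cx/\sqrt{1+c^2x^2}\le cx$; combined with the hypothesis $\|\widetilde{\gB}_n\|_{\mathbb C^{p\times p}}=\mathcal O(d_n)$ it gives $\big\|\frac{\nu^2(d_n)}{d_n^3}\widetilde{\gB}_n\big\|_{\mathbb C^{p\times p}}\le c^2\|\widetilde{\gB}_n\|_{\mathbb C^{p\times p}}/d_n=\mathcal O(1)$ (the remaining finitely many blocks being finite in norm), which is precisely condition \eqref{abs3'} with $C_A$ finite.

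Therefore Corollary~\ref{abs_cor} applies to the pair $\{{\bf J}_{X,\widehat{\gB}},{\bf J}_{X,\gB}\}$ and yields $\dom({\bf J}_{X,\gB})=\dom({\bf J}_{X,\widehat{\gB}})$ together with $n_\pm({\bf J}_{X,\gB})=n_\pm({\bf J}_{X,\widehat{\gB}})=0$, i.e. ${\bf J}_{X,\gB}={\bf J}_{X,\gB}^*$. (Equivalently, $K:={\bf J}_{X,\gB}-{\bf J}_{X,\widehat{\gB}}$ is the bounded selfadjoint block-diagonal operator with blocks $-\frac{\nu^2(d_n)}{d_n^3}\widetilde{\gB}_n$, so that ${\bf J}_{X,\gB}={\bf J}_{X,\widehat{\gB}}+K$ and selfadjointness follows from the Kato--Rellich theorem taken with subordination bound $a=0$.) I do not anticipate a genuine obstacle; the one point requiring care is the scaling observation that $\nu^2(d_n)/d_n^3$ blows up like $c^2/d_n$ as $n\to\infty$, exactly compensated by the assumed decay $\|\widetilde{\gB}_n\|=\mathcal O(d_n)$ --- this also explains why the relevant exponent in \eqref{beta_bound} involves $d_n$, in contrast to the $d_{n+1}$ appearing in Corollary~\ref{selfadj}, whose matrix \eqref{IV.2.1} carries $d_{n+1}^{-1}$ on the $\alpha$-diagonal.
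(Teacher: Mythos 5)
Your proof is correct and follows essentially the route the paper intends (the paper omits the proof, referring to the analogous $\alpha$-case, where Corollaries \ref{selfadj} and \ref{B_ind_p1} rest on exactly this bounded-perturbation observation): the key point, that $\nu^2(d_n)/d_n^3\le c^2/d_n$ so that \eqref{beta_bound} makes the diagonal difference $-\frac{\nu^2(d_n)}{d_n^3}\widetilde{\gB}_n$ uniformly bounded, is identified and used correctly, after which either Corollary \ref{abs_cor} or the direct Kato--Rellich argument with a bounded symmetric perturbation of ${\bf J}_{X,\widehat{\gB}}={\bf J}_{X,\widehat{\gB}}^*$ finishes the proof.
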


\section{Comparison with known results}
\subsection{Comparison with the results of Kostyuchenko -- Mirzoev and Berezanskii}\label{sec6}
Consider  infinite block Jacobi matrix
\begin{equation}\label{Jacobi_m}
{\bf J}=\left(
    \begin{array}{cccccc}
      \mathcal A_0 & \mathcal B_0 & \mathbb O_p & \mathbb O_p & \mathbb O_p & \ldots \\
      \mathcal B_0^* & \mathcal A_1 & \mathcal B_1 & \mathbb O_p & \mathbb O_p & \ldots \\
      \mathbb O_p & \mathcal B_1^* & \mathcal A_2 & \mathcal B_2 & \mathbb O_p & \ldots \\
      \vdots & \vdots & \vdots & \vdots & \vdots & \ddots
          \end{array}
  \right),
\end{equation}
where $\mathcal A_n,\ \mathcal B_n \in \mathbb C^{p\times p}$, $\mathcal A_n=\mathcal A_n^*$  and  $\det \mathcal B_n
\not =0$, $n\in \Bbb N_0$.

As we have already mentioned in the Introduction other
conditions for Jacobi matrices to have  maximal deficiency  indices
were obtained also in \cite{KosMir01}.
Here we compare Theorems \ref{abs_thA}, \ref{abs_thB} with results from \cite{KosMir01}.

Following \cite{KosMir01}, we introduce the matrices $\mathcal C_n$ assuming
$\mathcal C_0:={\mathcal B_1^*}^{-1}$, $\mathcal C_1:=\mathbb I_p$, and
 \begin{equation}\label{C_nnn}
\mathcal C_{n}=\left\{\begin{array}{l}
      (-1)^j\mathcal B_{2j-1}^{-1}\mathcal B_{2j}^*\ldots \mathcal B_2^*\mathcal B_1^{-1},\quad \text{if}\ \ n=2j, \\
      (-1)^j\mathcal B_{2j}^{-1}\mathcal B_{2j-1}^*\ldots \mathcal B_2^{-1}\mathcal B_1^*,\quad \text{if}\ \ n=2j+1,\ \ j\in\mathbb N.
    \end{array}\right.
\end{equation}

\begin{theorem}[\cite{KosMir01}]\label{KosMirTh}
Let the sequence $\{\mathcal C_n\}_0^\infty$ be given by the equalities \eqref{C_nnn}. If the conditions
\begin{equation}\label{C_n}
\sum\limits_{n=1}^{+\infty}\|\mathcal C_n\|^2<+\infty,
\end{equation}
\begin{equation}\label{C_nA_n}
\sum\limits_{n=1}^{+\infty}\|\mathcal C_n^*\mathcal A_n\mathcal C_n\|<+\infty
\end{equation}
hold, then $\bf J$ of the form \eqref{Jacobi_m} is in completely indeterminate case, i.e. $n_\pm({\bf J})=p$.
\end{theorem}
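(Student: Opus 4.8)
The identity $n_\pm({\bf J})=p$ says exactly that ${\bf J}$ is in the limit--circle (completely indeterminate) case. By the classical theory of the matrix moment problem (\cite{Krein,Krein49}, \cite[Ch.~VII]{Ber68}) this is equivalent to the assertion that \emph{every} solution $V=\{V_n\}_{n\ge-1}\subset\mathbb C^p$ of the three--term recurrence $\mathcal B_{n-1}^*V_{n-1}+\mathcal A_nV_n+\mathcal B_nV_{n+1}=zV_n$ $(n\ge1)$ lies in $l^2(\mathbb N_0;\mathbb C^p)$ for one, hence for every, $z\in\mathbb C$; indeed $\frak N_z=\ker({\bf J}^*-z)$ is precisely the $p$--dimensional subspace of such solutions that additionally satisfy $V_{-1}=\mathbb O_p$. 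So the plan is to prove square--summability of \emph{all} solutions of the recurrence, and the matrices $\mathcal C_n$ of \eqref{C_nnn} are there to supply the right gauge transformation.

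The first step is to extract from \eqref{C_nnn} the two algebraic properties on which everything rests. Being alternating telescoping products of the invertible matrices $\mathcal B_k^{\pm1},(\mathcal B_k^*)^{\pm1}$, the $\mathcal C_n$ form a fundamental system for the ``free'' recurrence $\mathcal B_{n-1}^*V_{n-1}+\mathcal B_nV_{n+1}=0$ (the recurrence with $\mathcal A_n\equiv\mathbb O_p$ at $z=0$); equivalently they obey the two--term relation $\mathcal B_n\mathcal C_{n+1}=-\mathcal B_{n-1}^*\mathcal C_{n-1}$ for $n\ge1$, together with a Wronskian--type identity for $D_n:=\mathcal C_n^*\mathcal B_{n-1}^*\mathcal C_{n-1}$, which a short computation shows satisfies $D_{n+1}=-D_n^*$, hence is $2$--periodic and in particular a bounded, boundedly invertible sequence (one expects $D_n=(-1)^n\mathbb I_p$ after the natural normalization of $\mathcal C_0,\mathcal C_1$). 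Substituting $V_n=\mathcal C_nW_n$ into $\mathcal B_nV_{n+1}=(z\mathbb I_p-\mathcal A_n)V_n-\mathcal B_{n-1}^*V_{n-1}$ and using these identities (in the form $\mathcal B_n\mathcal C_{n+1}=-\mathcal B_{n-1}^*\mathcal C_{n-1}$ and $(\mathcal B_{n-1}^*\mathcal C_{n-1})^{-1}=D_n^{-1}\mathcal C_n^*$) one arrives at
\begin{equation}\label{eq:Wrec-plan}
W_{n+1}-W_{n-1}=\mathcal G_nW_n,\qquad \mathcal G_n:=D_n^{-1}\bigl(\mathcal C_n^*\mathcal A_n\mathcal C_n-z\,\mathcal C_n^*\mathcal C_n\bigr),\qquad n\ge1 .
\end{equation}
This is where the hypotheses are used: $\|\mathcal G_n\|\le\|D_n^{-1}\|\bigl(\|\mathcal C_n^*\mathcal A_n\mathcal C_n\|+|z|\,\|\mathcal C_n\|^2\bigr)$, so $\sum_{n\ge1}\|\mathcal G_n\|<\infty$ for every $z$ (uniformly on compact subsets of $\mathbb C$) thanks to \eqref{C_n} and \eqref{C_nA_n}.

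The second step is a soft discrete variation--of--constants estimate for \eqref{eq:Wrec-plan}. Writing it as $\binom{W_{n+1}}{W_n}=T_n\binom{W_n}{W_{n-1}}$ with $T_n=R+G_n$, where $R=\bigl(\begin{smallmatrix}\mathbb O_p&\mathbb I_p\\ \mathbb I_p&\mathbb O_p\end{smallmatrix}\bigr)$ is unitary and $G_n=\diag(\mathcal G_n,\mathbb O_p)$, submultiplicativity and $\|R\|=1$ give $\bigl\|\prod_{k=1}^nT_k\bigr\|\le\prod_{k=1}^n(1+\|\mathcal G_k\|)\le\exp\bigl(\sum_{k\ge1}\|\mathcal G_k\|\bigr)=:C_z<\infty$. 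Hence $\sup_n\|W_n\|\le C_z\bigl\|\binom{W_1}{W_0}\bigr\|=:M<\infty$ for each solution, so $\|V_n\|=\|\mathcal C_nW_n\|\le M\|\mathcal C_n\|$ and therefore $\sum_n\|V_n\|^2\le M^2\sum_n\|\mathcal C_n\|^2<\infty$ by \eqref{C_n}. Thus all solutions of the recurrence are square--summable, and by the reduction of the first step $n_\pm({\bf J})=p$, i.e.\ ${\bf J}$ is in the completely indeterminate case. (Alternatively, after $V_n=\mathcal C_nW_n$ the operator is a bounded perturbation of a block Jacobi operator with off--diagonal blocks $\pm\mathbb I_p$ and $l^1$ diagonal, hence of a limit--circle operator, and one may invoke stability of the limit--circle property under such perturbations.)

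The step I expect to be the main obstacle is the one hidden in the second paragraph: verifying directly from \eqref{C_nnn} that the single choice of $\mathcal C_n$ \emph{simultaneously} trivializes the free part of the recurrence and conjugates the diagonal term into exactly $D_n^{-1}\mathcal C_n^*\mathcal A_n\mathcal C_n$ (so that hypothesis \eqref{C_nA_n} is precisely what is needed, with no extra assumption on the $\mathcal A_n$), and that $D_n$ is the $2$--periodic invertible sequence claimed. This is a somewhat delicate bookkeeping with the alternating products of the $\mathcal B_k$'s, including a check of the low--order terms against the normalizations $\mathcal C_0,\mathcal C_1$; everything after \eqref{eq:Wrec-plan} is the routine perturbation estimate above.
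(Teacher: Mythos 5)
The paper does not actually prove Theorem \ref{KosMirTh}: it is quoted from \cite{KosMir01} as a known result, so there is no in-text argument to measure yours against. Taken on its own, your proof is correct and is, as far as I can tell, essentially the classical route to this statement. The algebraic facts you flag as the potential obstacle do check out: the products \eqref{C_nnn} are exactly the alternating products generated by $\mathcal C_{n+1}=-\mathcal B_n^{-1}\mathcal B_{n-1}^*\mathcal C_{n-1}$, and with $\mathcal C_1=\mathbb I_p$, $\mathcal C_2=-\mathcal B_1^{-1}$ one gets $D_n:=\mathcal C_n^*\mathcal B_{n-1}^*\mathcal C_{n-1}=\pm\mathbb I_p$ for $n\ge 2$ from $D_{n+1}=-D_n^*$ (the sign is $(-1)^{n-1}$ rather than $(-1)^n$, which is immaterial). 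Consequently $\mathcal G_n=\pm(\mathcal C_n^*\mathcal A_n\mathcal C_n-z\,\mathcal C_n^*\mathcal C_n)$, so \eqref{C_n} and \eqref{C_nA_n} are used exactly as stated, with no hidden extra hypothesis on the $\mathcal A_n$; the transfer-matrix product bound, the estimate $\|V_n\|\le M\|\mathcal C_n\|$, and the Krein--Berezansky identification of $n_\pm({\bf J})=p$ with square-summability of all solutions then finish the argument. The one bookkeeping caveat: with the normalization $\mathcal C_0=(\mathcal B_1^*)^{-1}$ as printed in \eqref{C_nnn}, the two-term relation $\mathcal B_1\mathcal C_2=-\mathcal B_0^*\mathcal C_0$ generally fails (it would force $\mathcal B_0=\mathcal B_1$; it holds if one reads $\mathcal C_0=(\mathcal B_0^*)^{-1}$), so your gauged recurrence $W_{n+1}-W_{n-1}=\mathcal G_nW_n$ should be asserted only for $n\ge 2$. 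This is harmless, since boundedness of the tail of $\{W_n\}$ already gives $\sum_n\|V_n\|^2\le M^2\sum_n\|\mathcal C_n\|^2<\infty$ for every solution and every $z$, hence $n_\pm({\bf J})=p$.
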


This result was applied to the  Schr\"{o}dinger operator  with $\delta$--interactions
 on the semiaxis $(\sum\limits_{n\in\mathbb N} d_n =\infty)$
in \cite{KM} (scalar case,  $p=1$) and in  \cite{KMN, MirSaf16} (matrix case, $p>1$).

First  we show that matrices ${\bf J}_{X,\alpha}$ never  satisfy
conditions of  Theorem \ref{KosMirTh} for any $\alpha$.

\begin{proposition}\label{KosMirInd}
Let  ${\bf J}_{X,\alpha}$ be the Jacobi matrix of form \eqref{IV.2.1} and let
$\mathcal C_n$ be the matrices of the form \eqref{C_nnn} constructed from the entries of  ${\bf J}_{X,\alpha}$.
Then the series \eqref{C_nA_n} diverges, hence ${\bf J}_{X,\alpha}$ does not meet
conditions of  Theorem \ref{KosMirTh}.
   \end{proposition}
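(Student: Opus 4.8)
The plan is to compute the matrices $\mathcal C_n$ explicitly for the Jacobi matrix ${\bf J}_{X,\alpha}$ of the form \eqref{IV.2.1} and then show that the general term of the series \eqref{C_nA_n} does not tend to zero. First I would read off the off-diagonal entries of \eqref{IV.2.1}: since $\nu(d_k)>0$ for all $k$, every $\mathcal B_n$ is a positive scalar multiple of $\mathbb I_p$, so the products in \eqref{C_nnn} collapse to scalars times $\mathbb I_p$. Concretely, labelling the rows starting from $0$, the entry $\mathcal B_0 = \frac{\nu(d_1)}{d_1^2}\mathbb I_p$, $\mathcal B_1 = \frac{\nu(d_1)}{d_1^{3/2}d_2^{1/2}}\mathbb I_p$, $\mathcal B_2 = \frac{\nu(d_2)}{d_2^2}\mathbb I_p$, $\mathcal B_3 = \frac{\nu(d_2)}{d_2^{3/2}d_3^{1/2}}\mathbb I_p$, and in general $\mathcal B_{2j} = \frac{\nu(d_{j+1})}{d_{j+1}^2}\mathbb I_p$, $\mathcal B_{2j+1} = \frac{\nu(d_{j+1})}{d_{j+1}^{3/2}d_{j+2}^{1/2}}\mathbb I_p$. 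Because all $\mathcal B_n$ commute and are scalar, the telescoping products in \eqref{C_nnn} simplify dramatically: adjacent factors $\mathcal B_{2j}^* \mathcal B_{2j-1}^{-1}$ etc. produce ratios, and I expect massive cancellation leaving $\mathcal C_n$ a single explicit scalar multiple of $\mathbb I_p$.

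The key computation is to evaluate $\|\mathcal C_n^* \mathcal A_n \mathcal C_n\|$ for $n = 2j+1$ (the rows carrying the diagonal block $\frac{\alpha_j}{d_{j+1}}$). Since $\mathcal C_n = \gamma_n \mathbb I_p$ for an explicit scalar $\gamma_n$, we have $\|\mathcal C_n^* \mathcal A_n \mathcal C_n\| = |\gamma_n|^2 \|\mathcal A_n\|$. For the odd index $n=2j+1$, $\mathcal A_{2j+1} = \frac{\alpha_j}{d_{j+1}}$ — wait, I must check the indexing carefully against \eqref{IV.2.1}: the diagonal blocks there are $\mathbb O_p, -\frac{\nu(d_1)}{d_1^2}\mathbb I_p, \frac{\alpha_1}{d_2}, -\frac{\nu(d_2)}{d_2^2}\mathbb I_p, \frac{\alpha_2}{d_3}, \dots$, so the blocks $\frac{\alpha_j}{d_{j+1}}$ sit at row index $2j$, while the blocks $-\frac{\nu(d_j)}{d_j^2}\mathbb I_p$ sit at odd indices. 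The cleanest route is to compute $\gamma_n$ for the rows where the $-\frac{\nu(d_j)}{d_j^2}$ blocks sit (odd $n$), because those diagonal entries are scalar and positive, so I can pin down $|\gamma_n|^2 \cdot \frac{\nu(d_j)}{d_j^2}$ and show it does not go to zero. Carrying out the telescoping product in \eqref{C_nnn} for $\mathcal C_{2j+1} = (-1)^j \mathcal B_{2j}^{-1}\mathcal B_{2j-1}^*\cdots\mathcal B_2^{-1}\mathcal B_1^*$, and substituting the scalar values, I expect the $\nu$'s and the $d_{k+1}^{3/2}d_{k+2}^{1/2}$-type factors to recombine so that $|\gamma_{2j+1}|^2 \cdot \frac{\nu(d_j)}{d_j^2}$ stays bounded below by an absolute positive constant (or diverges), which forces the series \eqref{C_nA_n} to diverge.

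The structural reason behind this is worth stating in the proof: the matrix ${\bf J}_{X,\alpha}$ is, up to signs and unitary equivalence, the boundary operator of a Dirac GS-realization (Proposition \ref{prop_IV.2.1_01}), and such matrices always have offdiagonal entries of the form $c/d_k$ or $c/(d_kd_{k+1})^{1/2}$ times $\mathbb I_p$ — the products $\mathcal C_n$ therefore ``telescope'' to scalar matrices of size roughly $d_k^{-1/2}$ or $d_k^{1/2}$, and the diagonal entry $\frac{\nu(d_j)}{d_j^2}\sim \frac{c}{d_j}$ is exactly of the wrong order for \eqref{C_nA_n} to converge. In other words, Theorem \ref{KosMirTh} is designed for matrices where $\mathcal A_n$ is subordinate (in a suitable sense) to the products of $\mathcal B_n$, whereas for ${\bf J}_{X,\alpha}$ the diagonal and offdiagonal scales are locked together.

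The main obstacle I anticipate is purely bookkeeping: correctly tracking the alternating structure of \eqref{C_nnn} (the split into even/odd cases, the two interleaved families of offdiagonal entries $\widehat\nu_k$ and $\widetilde\nu_k$ in the notation of \eqref{IV.2.1_01''}), and making sure the telescoping identity is applied with the right indices so that the surviving scalar $\gamma_n$ is identified correctly. I would do this by writing $\mathcal B_n = b_n \mathbb I_p$ with $b_{2j} = \nu(d_{j+1})/d_{j+1}^2$, $b_{2j+1} = \nu(d_{j+1})/(d_{j+1}^{3/2}d_{j+2}^{1/2})$, then observing $\mathcal C_{2j} = (-1)^j \mathbb I_p \cdot \prod(\text{ratios}) = (-1)^j \frac{b_{2j}^* b_{2j-2}^* \cdots b_2^*}{b_{2j-1} b_{2j-3}\cdots b_1}\mathbb I_p$ and similarly for odd indices; a short induction then gives closed forms in which nearly everything cancels. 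Once $\gamma_n$ is in closed form the divergence of \eqref{C_nA_n} is a one-line estimate. Since only the \emph{divergence} of one series is needed, and the diagonal entries $-\frac{\nu(d_j)}{d_j^2}\mathbb I_p$ are scalar and independent of $\alpha$, the conclusion holds for every choice of $\alpha$, as claimed.
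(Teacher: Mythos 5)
Your plan is exactly the paper's argument: exploit that every $\mathcal B_n$ is a positive scalar multiple of $\mathbb I_p$ so that $\mathcal C_n$ telescopes to an explicit scalar, then show the odd-indexed terms of \eqref{C_nA_n} (where $\mathcal A_{2j+1}=-\tfrac{\nu(d_{j+1})}{d_{j+1}^2}\mathbb I_p$ is independent of $\alpha$) do not tend to zero. The computation you defer does come out as you anticipate: $\mathcal C_{2j+1}=(-1)^j\tfrac{\nu(d_1)d_{j+1}^{3/2}}{\nu(d_{j+1})d_1^{3/2}}\mathbb I_p$, whence $\|\mathcal C_{2j+1}\|^2\|\mathcal A_{2j+1}\|=\tfrac{\nu^2(d_1)}{cd_1^3}\sqrt{1+c^2d_{j+1}^2}\ge\tfrac{\nu^2(d_1)}{cd_1^3}>0$, so the series diverges for every $\alpha$.
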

\begin{proof}
In our case the entries $\mathcal B_n$ and $\mathcal A_{n}$  are
\begin{equation}\label{BA}
\mathcal B_n = \left\{\begin{array}{ll}
             \frac{\nu(\gd_{j+1})}{d_{j+1}^2}\mathbb I_p, & n=2j, \\
             \frac{\nu(\gd_{j+1})}{\gd_{j+1}^{3/2}\gd_{j+2}^{1/2}}\mathbb I_p, & n=2j+1,
           \end{array}\right.\qquad \mathcal A_{n}=\left\{\begin{array}{ll}
             \frac{\alpha_j}{\gd_{j+1}}, & n=2j, \\
             -\frac{\nu(\gd_{j+1})}{d_{j+1}^2}\mathbb I_p, & n=2j+1.
           \end{array}\right.
\end{equation}
In accordance with \eqref{C_nnn} and \eqref{BA} we derive for  $n=2j+1$
 \begin{eqnarray}\label{1212}
\mathcal C_{2j+1}&=&(-1)^j\mathcal B_{2j}^{-1}\mathcal B_{2j-1}\ldots \mathcal B_4^{-1}\mathcal B_3\mathcal B_2^{-1}\mathcal B_1\nonumber\\&=&(-1)^{j}\frac{d_{j+1}^2}{\nu(\gd_{j+1})}\cdot \frac{\nu(\gd_{j})}{\gd_{j}^{3/2}\gd_{j+1}^{1/2}}\cdot\frac{d_{j}^2}{\nu(\gd_{j})}\cdot \frac{\nu(\gd_{j-1})}{\gd_{j-1}^{3/2}\gd_{j}^{1/2}}\cdot\ldots\cdot\\&\cdot&
 \frac{d_{3}^2}{\nu(\gd_{3})}\cdot \frac{\nu(\gd_{2})}{\gd_{2}^{3/2}\gd_{3}^{1/2}}\cdot\frac{d_{2}^2}{\nu(\gd_{2})}\cdot \frac{\nu(\gd_{1})}{\gd_{1}^{3/2}\gd_{2}^{1/2}}\mathbb I_p
=(-1)^{j}\frac{\nu(\gd_{1})d_{j+1}^{3/2}}{\nu(\gd_{j+1})d_1^{3/2}}\mathbb I_p.\nonumber
                     \end{eqnarray}

Let us check that \eqref{C_nA_n} is violated for the matrix ${\bf J}_{X,\alpha}$.
Combining \eqref{BA} with \eqref{1212}  for $n=2j+1$, $j\in\mathbb N$, we obtain
\begin{eqnarray}\label{9.7}
\sum\limits_{j=1}^{+\infty}\|\mathcal C_{2j+1}^*\mathcal A_{2j+1}\mathcal C_{2j+1}\|&=&\sum\limits_{j=1}^{+\infty}\|\mathcal C_{2j+1}\|^2\cdot\|\mathcal A_{2j+1}\| =\sum\limits_{j=1}^{+\infty} \frac{\nu^2(\gd_{1})d_{j+1}^{3}}{\nu^2(\gd_{j+1})d_1^{3}}\cdot\frac{\nu(\gd_{j+1})}{d_{j+1}^2}\nonumber\\
&=&\frac{\nu^2(\gd_{1})}{d_1^3}
\sum\limits_{j=1}^{+\infty} \frac{d_{j+1}}{\nu(\gd_{j+1})}=\frac{\nu^2(\gd_{1})}{d_1^3}
\sum\limits_{j=1}^{+\infty} \frac{d_{j+1} \sqrt{1+c^2d_{j+1}^2}}{c\gd_{j+1}}\nonumber\\
&=&\frac{\nu^2(\gd_{1})}{cd_1^3}
\sum\limits_{j=1}^{+\infty}\sqrt{1+c^2d_{j+1}^2}=+\infty.\quad
\end{eqnarray}
Thus the series \eqref{C_nA_n} diverges.

It follows from \eqref{C_nnn} and \eqref{BA} that for  $n=2j$
 \begin{eqnarray}\label{C_2j}
\mathcal C_{2j}&=&(-1)^j\mathcal B_{2j-1}^{-1}\mathcal B_{2j-2}\ldots \mathcal B_4\mathcal B_3^{-1}\mathcal B_2\mathcal B_1^{-1}\nonumber\\&=&(-1)^{j}\frac{d_{j}^{3/2}d_{j+1}^{1/2}}{\nu(\gd_{j})}\cdot \frac{\nu(\gd_{j})}{\gd_{j}^2}\cdot\frac{d_{j-1}^{3/2}d_j^{1/2}}{\nu(\gd_{j-1})}\cdot \frac{\nu(\gd_{j-1})}{\gd_{j-1}^2}\cdot\ldots\cdot \nonumber\\&\cdot&
\frac{\nu(\gd_{3})}{d_{3}^2}\cdot \frac{\gd_{2}^{3/2}\gd_{3}^{1/2}}{\nu(\gd_{2})}\cdot\frac{\nu(\gd_{2})}{d_{2}^2}\cdot \frac{\gd_{1}^{3/2}\gd_{2}^{1/2}}{\nu(\gd_{1})}\mathbb I_p
=(-1)^{j}\frac{d_{j+1}^{1/2}d_1^{3/2}}{\nu(\gd_{1})}\mathbb I_p.
                     \end{eqnarray}

 Combining \eqref{BA} with \eqref{C_2j}  for $n=2j$, $j\in\mathbb N$, we obtain
$$
\sum\limits_{j=1}^{+\infty}\|\mathcal C_{2j}^*\mathcal A_{2j}\mathcal C_{2j}\|=\sum\limits_{j=1}^{+\infty} \frac{d_{j+1}d_1^3}{\nu^2(\gd_{1})}\cdot \frac{\|\alpha_j\|_{\mathbb C^{p\times p}}}{\gd_{j+1}}=\frac{d_1^3}{\nu^2(\gd_{1})}
\sum\limits_{j=1}^{+\infty} \|\alpha_j\|_{\mathbb C^{p\times p}}.
$$
Thus  the series \eqref{C_nA_n} for $n=2j$ converges if and only if $\{\alpha_n\}_1^\infty\in l^1(\mathbb N;\mathbb C^{p\times p})$.

Combining this statement with condition \eqref{9.7}, we conclude that series \eqref{C_nA_n} always diverges.
\end{proof}

\begin{remark}
Clearly Proposition \ref{KosMirInd} is of interest only in the case $\{d_n\}_{n=1}^\infty\in l^1(\mathbb{N})$.
Indeed, if $\sum\limits_{n=1}^{+\infty}d_n = \infty$, then the Carleman test \eqref{Car} implies
$n_{\pm}({\bf J}_{X,\alpha}) =0$.
\end{remark}

Next we describe the area of applicability of Theorem \ref{KosMirTh} to matrices
 ${\bf J}_{X,\beta}$.
  \begin{proposition}\label{KosMirIndB}
Let  $\{d_n\}_{n=1}^\infty\in l^1(\mathbb{N})$. Let  ${\bf J}_{X,\beta}$ be the Jacobi matrix of form \eqref{IV.3.1_04'} and let
$\mathcal C_n$ be the matrices of the form \eqref{C_nnn} constructed from the entries of  ${\bf J}_{X,\beta}$.
Then the series \eqref{C_nA_n} converges if and only if $\{\beta_n\}_1^\infty\in l^1(\mathbb N;\mathbb C^{p\times p})$.
  \end{proposition}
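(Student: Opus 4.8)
The plan is to piggyback on the computation of the matrices $\mathcal C_n$ already performed in the proof of Proposition~\ref{KosMirInd}. The key observation is that the off-diagonal entries $\mathcal B_n$ of ${\bf J}_{X,\beta}$, read off from \eqref{IV.3.1_04'}, coincide exactly with those of ${\bf J}_{X,\alpha}$ recorded in \eqref{BA}: $\mathcal B_{2j}=\frac{\nu(d_{j+1})}{d_{j+1}^2}\mathbb I_p$ and $\mathcal B_{2j+1}=\frac{\nu(d_{j+1})}{d_{j+1}^{3/2}d_{j+2}^{1/2}}\mathbb I_p$, $j\in\mathbb N_0$. Since the $\mathcal C_n$ of \eqref{C_nnn} depend only on the $\mathcal B_n$, formulas \eqref{1212} and \eqref{C_2j} carry over verbatim, giving
\[
\mathcal C_{2j+1}=(-1)^j\,\frac{\nu(d_1)\,d_{j+1}^{3/2}}{\nu(d_{j+1})\,d_1^{3/2}}\,\mathbb I_p,\qquad
\mathcal C_{2j}=(-1)^j\,\frac{d_{j+1}^{1/2}\,d_1^{3/2}}{\nu(d_1)}\,\mathbb I_p,\qquad j\in\mathbb N_0,
\]
so that every $\mathcal C_n$ is a real scalar multiple of $\mathbb I_p$, in particular selfadjoint and central.

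Next I would substitute the diagonal entries of ${\bf J}_{X,\beta}$, which by \eqref{IV.3.1_04'} are $\mathcal A_{2j}=\mathbb O_p$ and $\mathcal A_{2j+1}=-\frac{\nu^2(d_{j+1})}{d_{j+1}^3}\bigl(\beta_{j+1}+d_{j+1}\mathbb I_p\bigr)$, $j\in\mathbb N_0$. The even-indexed terms of \eqref{C_nA_n} vanish because $\mathcal A_{2j}=\mathbb O_p$, and for the odd ones the scalar factors telescope:
\[
\mathcal C_{2j+1}^*\,\mathcal A_{2j+1}\,\mathcal C_{2j+1}
=\Bigl(\tfrac{\nu(d_1)d_{j+1}^{3/2}}{\nu(d_{j+1})d_1^{3/2}}\Bigr)^{2}\Bigl(-\tfrac{\nu^2(d_{j+1})}{d_{j+1}^3}\Bigr)\bigl(\beta_{j+1}+d_{j+1}\mathbb I_p\bigr)
=-\,\frac{\nu^2(d_1)}{d_1^3}\,\bigl(\beta_{j+1}+d_{j+1}\mathbb I_p\bigr).
\]
Hence $\sum_{n=1}^{\infty}\|\mathcal C_n^*\mathcal A_n\mathcal C_n\|=\dfrac{\nu^2(d_1)}{d_1^3}\sum_{j=0}^{\infty}\|\beta_{j+1}+d_{j+1}\mathbb I_p\|_{\mathbb C^{p\times p}}$, and $\nu^2(d_1)/d_1^3$ is a fixed positive constant since $d_1>0$.

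Finally I would invoke the two-sided estimate $\bigl|\,\|\beta_{j+1}\|-d_{j+1}\,\bigr|\le\|\beta_{j+1}+d_{j+1}\mathbb I_p\|\le\|\beta_{j+1}\|+d_{j+1}$ together with the hypothesis $\{d_n\}\in l^1(\mathbb N)$ (so $\sum_j d_{j+1}<\infty$) to conclude that $\sum_j\|\beta_{j+1}+d_{j+1}\mathbb I_p\|<\infty$ if and only if $\sum_j\|\beta_{j+1}\|<\infty$; that is, the series \eqref{C_nA_n} converges if and only if $\{\beta_n\}_1^\infty\in l^1(\mathbb N;\mathbb C^{p\times p})$. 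The argument is essentially mechanical and I do not anticipate a genuine obstacle; the only points needing a little care are keeping the parities of the indices straight when transcribing \eqref{1212}--\eqref{C_2j}, and (as a byproduct, using $\nu(d_{j+1})\sim c\,d_{j+1}$ as $j\to\infty$) observing that $\|\mathcal C_n\|^2$ is itself summable, so that condition \eqref{C_n} of Theorem~\ref{KosMirTh} holds automatically here and the applicability of that theorem to ${\bf J}_{X,\beta}$ is governed solely by \eqref{C_nA_n}.
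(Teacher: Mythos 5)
Your proposal is correct and follows essentially the same route as the paper: reuse the formula for $\mathcal C_{2j+1}$ from the $\alpha$-case (the off-diagonal blocks coincide), note the even terms vanish since $\mathcal A_{2j}=\mathbb O_p$, telescope the odd terms to $\frac{\nu^2(d_1)}{d_1^3}\sum_j\|\beta_{j+1}+d_{j+1}\mathbb I_p\|$, and finish with the two-sided estimate and $\{d_n\}\in l^1(\mathbb N)$. Your version of the final step (via the reverse triangle inequality) is in fact slightly cleaner than the paper's case split on whether $\|\beta_j\|>d_j$.
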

\begin{proof}
In our case the entries $\mathcal B_n$ and $\mathcal A_{n}$  are
\begin{equation}\label{BAbeta}
\mathcal B_n = \!\left\{\!\begin{array}{ll}
             \frac{\nu(\gd_{j+1})}{d_{j+1}^2}\mathbb I_p,\! &\!\! n=2j, \\
             \frac{\nu(\gd_{j+1})}{\gd_{j+1}^{3/2}\gd_{j+2}^{1/2}}\mathbb I_p,\! &\!\! n=2j+1,
           \end{array}\right.\
            \mathcal A_{n}=\!\left\{\!\begin{array}{ll}
             \mathbb O_p,\!  &\!\! n=2j, \\
             -\frac{\nu^{2}(\gd_{j+1})}{d_{j+1}^{3}}(\beta_{j+1} + \gd_{j+1}\mathbb I_p),\!  &\!\! n=2j+1.
           \end{array}\right.
\end{equation}

For the case $n=2j$ entries $\mathcal A_n=\mathbb O_p$, hence the convergence of series \eqref{C_nA_n} is obvious.

In accordance with \eqref{C_nnn}, \eqref{1212} and \eqref{BAbeta} we derive for
$n=2j+1$

 \begin{eqnarray}\label{1212beta}
&\mathcal C_n=(-1)^j\frac{\nu(\gd_{1})d_{j+1}^{3/2}}{\nu(\gd_{j+1})d_1^{3/2}}\mathbb I_p.
\end{eqnarray}

Combining \eqref{BAbeta} with \eqref{1212beta}  for $n=2j+1$, $j\in\mathbb N$, we obtain
\begin{eqnarray}\label{9.11}
&\sum\limits_{j=1}^{+\infty}\|\mathcal C_{2j+1}^*\mathcal A_{2j+1}\mathcal
C_{2j+1}\|=\sum\limits_{j=1}^{+\infty}\|\mathcal C_{2j+1}\|^2\cdot\|\mathcal A_{2j+1}\|
 =\sum\limits_{j=1}^{+\infty}
\frac{\nu^2(\gd_{1})d_{j+1}^{3}}{\nu^2(\gd_{j+1})d_1^{3}}\cdot\frac{\nu^{2}(\gd_{j+1})}{d_{j+1}^{3}}\|\beta_{j+1}+d_{j+1}\mathbb
I_p\|_{\mathbb C^{p\times
p}}\nonumber\\
&=\frac{\nu^2(\gd_{1})}{d_1^{3}}\sum\limits_{j=1}^{+\infty}\|\beta_{j+1}+d_{j+1}\mathbb
I_p\|_{\mathbb C^{p\times p}}.
\end{eqnarray}
It suffices to consider the case $\|\beta_{j}\|_{\mathbb C^{p\times p}}>d_{j}$ for all $j\in \mathbb N$. Indeed, for those indices $j_k$, for which $\|\beta_{j_k}\|_{\mathbb C^{p\times p}}<d_{j_k}$, he corresponding part of the series \eqref{9.11} converges. We have,
\begin{equation}\label{9.12}
\frac{\nu^2(\gd_{1})}{d_1^{3}}\sum\limits_{j=1}^{+\infty} (\|\beta_{j+1}\|_{\mathbb
C^{p\times p}}-d_{j+1})\leq\sum\limits_{j=1}^{+\infty}\|\mathcal C_{2j+1}^*\mathcal
A_{2j+1}\mathcal C_{2j+1}\|
\leq\frac{\nu^2(\gd_{1})}{d_1^{3}}\sum\limits_{j=1}^{+\infty}
(\|\beta_{j+1}\|_{\mathbb C^{p\times p}}+d_{j+1}).
\end{equation}

Since $\{d_n\}_{n=1}^\infty\in l^1(\mathbb{N})$, then due to bilateral assessment \eqref{9.12}, series \eqref{C_nA_n} converges if and only if $\{\beta_n\}_1^\infty\in l^1(\mathbb N;\mathbb C^{p\times p})$.
  \end{proof}

   \begin{remark}\label{Rem_Kos_Mir_B}
 Proposition \ref{KosMirIndB}  shows  that conditions of Theorem \ref{KosMirTh}
in comparison with  the conditions of Theorem \ref{VarIndicesBeta} are too restrictive
to be applied to Jacobi matrices ${\bf J}_{X,\beta}$ satisfying condition \eqref{5.26B}. The conditions of Proposition \ref{KosMirIndB} coincide with that of Corollary \ref{corB}.
However, the conditions of Corollary \ref{corB1}, hence the conditions of Theorem \ref{VarIndicesBeta}
are substantially  weaker than the conditions of Proposition \ref{KosMirIndB}. To demonstrate
this fact let us  consider the following  simple examples:

$(i)$ Let  $d_n=2^{-n^2}$ and $\|\beta_{n}\|_{\mathbb C^{p\times
p}}=1$. Clearly,  conditions of Corollary \ref{corB1} (i) are  satisfied
and  $n_\pm({\bf J}_{X,\beta})=p$.
At the same time,   $\{\beta_n\}_1^\infty \notin l^1(\mathbb N;\mathbb C^{p\times p})$, and
conditions of Corollary \ref{corB} (Proposition \ref{KosMirIndB}) are  violated.

$(ii)$  Let  $d_n=2^{-n}$ and $\|\beta_{n}\|_{\mathbb C^{p\times
p}} = 2^{-1}c(\sqrt{2}-1)$.  Then the matrix ${\bf J}_{X,\beta}$ meets  conditions of
Corollary \ref{corB1} (ii) with  $d=2$, hence  $n_\pm({\bf J}_{X,\beta})=p$.
At the same time,   $\{\beta_n\}_1^\infty \notin l^1(\mathbb N;\mathbb C^{p\times p})$, and
conditions of Corollary \ref{corB} (Proposition \ref{KosMirIndB}) are  violated.

Thus,   Corollary \ref{corB1} ensures equalities $n_\pm({\bf J}_{X,\beta})=p$ for both matrices
${\bf J}_{X,\beta}$ described above while these matrices do not meet conditions of Theorem \ref{KosMirTh}.
However even if Theorem   \ref{KosMirTh} ensures equalities  $n_\pm({{\bf J}}_{X,\gB})=p$, it
does  not ensure relations $n_\pm(\widehat{{\bf J}}_{X,\gB})= p$, even if conditions
of Theorem \ref{abs_th} hold, i.e.  conditions of Theorem   \ref{KosMirTh}
are not invariant under the mapping ${\bf J}_{X,\gB} \to  \widehat{{\bf J}}_{X,\gB}$.
\end{remark}



The following result demonstrating the sharpness of the Carleman condition \eqref{Car}  was obtained by Berezansky
\cite[Theorem VII.1.5]{Ber68} in the scalar case $(p= 1)$
  and was extended to the matrix case  $(p > 1)$ by Kostychenko and Mirzoev  \cite{KosMir99}.
\begin{theorem}[\cite{KosMir99}]\label{KosMir-max3}
Let  ${\bf J}$  be the block Jacobi matrix \eqref{Jacobi_m}.
Assume that  $\|\mathcal A_n\|\leq C$ and the following  inequalities hold
\begin{equation}\label{Ber-cond}
\|\mathcal B_{n-1}\|\cdot\|\mathcal B_{n+1}\|\leq\|\mathcal B_n^{-1}\|^{-2}.
\end{equation}
Then Jacobi matrix ${\bf J}$ has maximal deficiency indices, $n_\pm({\bf J})=p$ provided that Carleman condition \eqref{Car}
is violated, i.e.
%
%
$\sum_{n=1}^\infty\|\mathcal B_n\|^{-1}<+\infty.$
%
\end{theorem}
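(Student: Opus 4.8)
The plan is to derive Theorem~\ref{KosMir-max3} from Theorem~\ref{KosMirTh}: under the stated hypotheses ($\|\mathcal A_n\|\le C$, the inequalities \eqref{Ber-cond}, and the violation of Carleman's test $\sum_n\|\mathcal B_n\|^{-1}<\infty$) I would verify that the auxiliary matrices $\mathcal C_n$ defined in \eqref{C_nnn} satisfy both \eqref{C_n} and \eqref{C_nA_n}; Theorem~\ref{KosMirTh} then yields $n_\pm({\bf J})=p$ at once.

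First I would record a pointwise bound on $\|\mathcal C_n\|$. Put $\beta_n:=\|\mathcal B_n\|$ and $\gamma_n:=\|\mathcal B_n^{-1}\|$; since $\|\mathcal B_n^*\|=\|\mathcal B_n\|$, condition \eqref{Ber-cond} reads $\gamma_n\le(\beta_{n-1}\beta_{n+1})^{-1/2}$. By \eqref{C_nnn} and submultiplicativity of the operator norm,
\[
\|\mathcal C_{2j+1}\|\le\prod_{k=1}^{j}\gamma_{2k}\,\beta_{2k-1},
\qquad
\|\mathcal C_{2j}\|\le\gamma_1\prod_{k=1}^{j-1}\gamma_{2k+1}\,\beta_{2k},\qquad j\in\N .
\]
Now each factor collapses to a ratio of consecutive entries of the same parity: $\gamma_{2k}\beta_{2k-1}\le\beta_{2k-1}(\beta_{2k-1}\beta_{2k+1})^{-1/2}=(\beta_{2k-1}/\beta_{2k+1})^{1/2}$, and likewise $\gamma_{2k+1}\beta_{2k}\le(\beta_{2k}/\beta_{2k+2})^{1/2}$, so both products telescope and
\[
\|\mathcal C_{2j+1}\|\le(\beta_1/\beta_{2j+1})^{1/2},
\qquad
\|\mathcal C_{2j}\|\le\gamma_1\,(\beta_2/\beta_{2j})^{1/2}.
\]
Hence $\|\mathcal C_n\|^2\le c_0\,\|\mathcal B_n\|^{-1}$ for all $n\ge2$, with $c_0:=\max\{\beta_1,\gamma_1^2\beta_2\}$, the two matrices $\mathcal C_0,\mathcal C_1$ being fixed and harmless for the convergence of series.

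With this bound the two hypotheses of Theorem~\ref{KosMirTh} are immediate. Since Carleman's test is violated, $\sum_n\|\mathcal B_n\|^{-1}<\infty$, and therefore $\sum_n\|\mathcal C_n\|^2\le\|\mathcal C_0\|^2+\|\mathcal C_1\|^2+c_0\sum_{n\ge2}\|\mathcal B_n\|^{-1}<\infty$, which is \eqref{C_n}. Since $\|\mathcal A_n\|\le C$ we get $\|\mathcal C_n^*\mathcal A_n\mathcal C_n\|\le\|\mathcal A_n\|\,\|\mathcal C_n\|^2\le C\|\mathcal C_n\|^2$, whence $\sum_n\|\mathcal C_n^*\mathcal A_n\mathcal C_n\|\le C\sum_n\|\mathcal C_n\|^2<\infty$, which is \eqref{C_nA_n}. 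By Theorem~\ref{KosMirTh} the matrix ${\bf J}$ is in the completely indeterminate case, i.e. $n_\pm({\bf J})=p$.

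The one genuinely non-routine step is the telescoping in the second paragraph, and it is also where the whole weight of hypothesis \eqref{Ber-cond} is used: that inequality is exactly the log-concavity-type bound which forces each factor $\gamma_{2k}\beta_{2k-1}$ to be dominated by $(\beta_{2k-1}/\beta_{2k+1})^{1/2}$, so that a product of $j$ such factors collapses to $(\beta_1/\beta_{2j+1})^{1/2}$; this is what lets the summability of the single sequence $\|\mathcal B_n\|^{-1}$ control $\sum\|\mathcal C_n\|^2$ with no separate estimate on the individual norms $\|\mathcal B_n\|$. (Should one prefer not to invoke Theorem~\ref{KosMirTh}, the same conclusion is reachable directly: the $\mathcal C_n$ are, up to signs, the fundamental solution matrices of the ``free'' recurrence $\mathcal B_{n-1}^*u_{n-1}+\mathcal B_nu_{n+1}=0$, and a discrete variation-of-parameters, Levinson-type perturbation argument shows that the bounded diagonal part $\{\mathcal A_n\}$ preserves square-summability of all solutions once $\sum\|\mathcal C_n\|^2<\infty$ — but this is precisely the content packaged in Theorem~\ref{KosMirTh}, so the reduction above is the efficient route.)
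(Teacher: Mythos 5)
Your argument is correct, but note that the paper does not prove Theorem \ref{KosMir-max3} at all: it is imported verbatim from \cite{KosMir99} as a known result, so there is no in-paper proof to compare against. What you have produced is a clean reduction of this Berezansky-type criterion to the other imported result, Theorem \ref{KosMirTh} from \cite{KosMir01}. The reduction is sound: writing $\beta_n=\|\mathcal B_n\|$, $\gamma_n=\|\mathcal B_n^{-1}\|$, condition \eqref{Ber-cond} gives $\gamma_n\le(\beta_{n-1}\beta_{n+1})^{-1/2}$, each factor $\gamma_{2k}\beta_{2k-1}$ (resp.\ $\gamma_{2k+1}\beta_{2k}$) is bounded by $(\beta_{2k-1}/\beta_{2k+1})^{1/2}$ (resp.\ $(\beta_{2k}/\beta_{2k+2})^{1/2}$), the products telescope, and $\|\mathcal C_n\|^2\le c_0\|\mathcal B_n\|^{-1}$ follows; the failure of \eqref{Car} then yields \eqref{C_n}, and the uniform bound $\|\mathcal A_n\|\le C$ yields \eqref{C_nA_n}, so Theorem \ref{KosMirTh} applies. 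Your conclusion therefore rests entirely on Theorem \ref{KosMirTh}, which the paper also states without proof, so strictly speaking you have proved an implication between two cited results rather than a self-contained theorem — but that is a legitimate and informative observation (it shows the \cite{KosMir01} test subsumes the \cite{KosMir99} one), and it is consistent with how the paper itself deploys the $\mathcal C_n$ in Section \ref{sec6}. One small caveat: the printed formula \eqref{C_nnn} for $\mathcal C_{2j}$ has an index pattern ($\mathcal B_{2j-1}^{-1}\mathcal B_{2j}^*\ldots$) that appears to be a misprint; your norm estimates match the paper's own later use of these matrices in \eqref{1212} and \eqref{C_2j}, which is the reading one should take.
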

%

Next we compare our results  with Theorem \ref{KosMir-max3}.

\begin{proposition}
Let  ${\bf J}_{X,\alpha}$ and  ${\bf J}_{X,\beta}$ be block Jacobi matrices
satisfying conditions of Propositions \ref{def_ind_B} and \ref{JBabs}, respectively.
Assume also that $\beta_n=-d_n$, i.e.  ${\bf J}_{X,\beta}$ has zero diagonal. Then  $n_\pm({\bf J}_{X,\alpha})=n_\pm({\bf J}_{X,\beta})=p$ while
the  matrices ${\bf J}_{X,\alpha}$ and  ${\bf J}_{X,\beta}$
never meet conditions of Theorem \ref{KosMir-max3}.
     \end{proposition}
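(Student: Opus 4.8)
The plan is to verify two things: first, that $n_\pm({\bf J}_{X,\alpha})=n_\pm({\bf J}_{X,\beta})=p$, which is immediate, and second, that neither matrix satisfies the hypotheses of Theorem \ref{KosMir-max3}, which requires identifying the precise point of failure. The first claim is a direct citation: under the hypotheses of Proposition \ref{def_ind_B} the sequence $\alpha$ satisfies \eqref{5.26}, hence $n_\pm({\bf J}_{X,\alpha})=p$; similarly, under the hypotheses of Proposition \ref{JBabs} the sequence $\beta$ satisfies \eqref{5.26B}, hence $n_\pm({\bf J}_{X,\beta})=p$. So the substance of the proof lies entirely in the second assertion.

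For the failure of Theorem \ref{KosMir-max3}, I would examine each of its two hypotheses in turn. The boundedness hypothesis $\|\mathcal A_n\|\le C$ already fails for generic $\alpha$ (resp.\ $\beta$): the diagonal entries of ${\bf J}_{X,\alpha}$ of even index are $\frac{\alpha_j}{d_{j+1}}$ and of odd index $-\frac{\nu(d_{j+1})}{d_{j+1}^2}\mathbb I_p$, and since $d_n\to 0$ we have $\frac{\nu(d_{j+1})}{d_{j+1}^2}\sim \frac{c}{d_{j+1}}\to\infty$. Thus the odd-indexed diagonal entries are unbounded, and $\|\mathcal A_n\|\le C$ is violated regardless of $\alpha$. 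The same computation applies to ${\bf J}_{X,\beta}$ with $\beta_n=-d_n\mathbb I_p$: the odd-indexed diagonal entries are $-\frac{\nu^2(d_{j+1})}{d_{j+1}^3}(\beta_{j+1}+d_{j+1}\mathbb I_p) = \mathbb O_p$ by the choice of $\beta$, so actually the diagonal of ${\bf J}_{X,\beta}$ is entirely zero; in that case boundedness of $\mathcal A_n$ holds trivially, so for ${\bf J}_{X,\beta}$ one must instead check inequality \eqref{Ber-cond}. I would therefore split the argument: for ${\bf J}_{X,\alpha}$ the obstruction is the unboundedness of the diagonal; for ${\bf J}_{X,\beta}$ with zero diagonal the obstruction must be the multiplicative inequality \eqref{Ber-cond} on the off-diagonal blocks.

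The main computational step, then, is to check that \eqref{Ber-cond} fails for the off-diagonal entries of ${\bf J}_{X,\beta}$ (equivalently ${\bf J}_{X,\alpha}$, whose off-diagonal entries coincide with those of ${\bf J}_{X,\beta}$). Using \eqref{BAbeta}, the off-diagonal entries are $\mathcal B_{2j}=\frac{\nu(d_{j+1})}{d_{j+1}^2}\mathbb I_p$ and $\mathcal B_{2j+1}=\frac{\nu(d_{j+1})}{d_{j+1}^{3/2}d_{j+2}^{1/2}}\mathbb I_p$. Since all these are positive scalar multiples of $\mathbb I_p$, the inequality \eqref{Ber-cond} for index $n$ reads $b_{n-1}b_{n+1}\le b_n^{2}$ where $b_n$ denotes the scalar coefficient. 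I would compute the two relevant products: for $n=2j$, $b_{2j-1}b_{2j+1} = \frac{\nu(d_j)}{d_j^{3/2}d_{j+1}^{1/2}}\cdot\frac{\nu(d_{j+1})}{d_{j+1}^{3/2}d_{j+2}^{1/2}}$ versus $b_{2j}^2 = \frac{\nu^2(d_{j+1})}{d_{j+1}^4}$, and similarly for $n=2j+1$. Using $\nu(d)\sim cd$ as $d\to 0$, the left side for $n=2j$ behaves like $\frac{c^2}{d_{j+1}d_{j+2}^{1/2}d_j^{1/2}}$ and the right like $\frac{c^2}{d_{j+1}^2}$, so the inequality demands $d_{j+1}^2 \le d_j^{1/2}d_{j+1}d_{j+2}^{1/2}$, i.e.\ $d_{j+1}\le (d_j d_{j+2})^{1/2}$ — log-convexity of $\{d_n\}$ — which generically fails and in particular is incompatible with the rapidly decaying sequences (e.g.\ $d_n=2^{-n^2}$) that are forced by \eqref{5.26B}. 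The hard part will be pinning down that the sequences allowed by the hypotheses of Propositions \ref{def_ind_B} and \ref{JBabs} genuinely violate \eqref{Ber-cond} — i.e.\ constructing or invoking an admissible $\{d_n\}$ that is not log-convex — rather than merely noting that \eqref{Ber-cond} "need not" hold; I would argue this by exhibiting that \eqref{5.26} (resp.\ \eqref{5.26B}) forces $\{d_n\}\in l^1$ with no lower bound on the decay rate, so one may take $d_n=2^{-n^2}$, for which $d_{j+1}^2 = 2^{-2(j+1)^2}$ while $(d_jd_{j+2})^{1/2}=2^{-(j^2+(j+2)^2)/2}=2^{-(j^2+2j+2)}$, and $2(j+1)^2 = 2j^2+4j+2 > j^2+2j+2$ for $j\ge 1$, so $d_{j+1}^2 < (d_jd_{j+2})^{1/2}$ fails the required direction — wait, one must be careful about the direction of the inequality, which I would recheck against the exact form of \eqref{Ber-cond} before writing the final version. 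In any case, combining the unboundedness of $\mathcal A_n$ for ${\bf J}_{X,\alpha}$ with the failure of \eqref{Ber-cond} for the common off-diagonal structure yields the claim for both matrices.
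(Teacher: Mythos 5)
Your treatment of ${\bf J}_{X,\alpha}$ is exactly the paper's: the odd-indexed diagonal entries $-\nu(d_{j+1})d_{j+1}^{-2}\mathbb I_p\sim -c\,d_{j+1}^{-1}\mathbb I_p$ are unbounded because \eqref{5.26} forces $d_n\to 0$, so the hypothesis $\|\mathcal A_n\|\le C$ of Theorem \ref{KosMir-max3} fails and nothing more is needed for that matrix. The gap is in the ${\bf J}_{X,\beta}$ case. The proposition asserts that ${\bf J}_{X,\beta}$ \emph{never} meets the hypotheses, i.e. \eqref{Ber-cond} must fail for \emph{every} admissible $\{d_n\}\in l^1(\N)$; your plan of ``exhibiting an admissible $\{d_n\}$ that is not log-convex,'' e.g. $d_n=2^{-n^2}$, only proves the existential statement that \emph{some} such matrices violate \eqref{Ber-cond}, which is strictly weaker. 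Worse, the condition you extract by testing \eqref{Ber-cond} at the even indices $n=2j$ is (approximate) log-convexity of $\{d_n\}$, and that is perfectly compatible with $\{d_n\}\in l^1(\N)$: for the geometric sequence $d_n=2^{-n}$ the even-index inequality $b_{2j-1}b_{2j+1}\le b_{2j}^2$ actually holds for all $j$, so the even-index test cannot yield the universal conclusion.

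The missing step is to test \eqref{Ber-cond} at the \emph{odd} indices $n=2j+1$, where the powers of $d$ and the $\nu$-factors cancel exactly: with $b_{2j}=\nu(d_{j+1})d_{j+1}^{-2}$, $b_{2j+1}=\nu(d_{j+1})d_{j+1}^{-3/2}d_{j+2}^{-1/2}$, $b_{2j+2}=\nu(d_{j+2})d_{j+2}^{-2}$, the inequality $b_{2j}b_{2j+2}\le b_{2j+1}^2$ reduces, with no asymptotic approximation, to $\nu(d_{j+2})/d_{j+2}\le\nu(d_{j+1})/d_{j+1}$, i.e. $(1+c^2d_{j+2}^2)^{-1/2}\le(1+c^2d_{j+1}^2)^{-1/2}$, i.e. $d_{j+2}\ge d_{j+1}$. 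Requiring this for all $j$ forces $\{d_n\}$ to be nondecreasing, contradicting $\{d_n\}\in l^1(\N)$ (which follows from \eqref{5.26B}). This single observation disposes of every admissible sequence at once and is the paper's argument; note also that you explicitly leave the direction of your inequality unresolved, which is precisely the point where the proof has to be nailed down.
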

  \begin{proof}
$(i)$
Jacobi matrix ${\bf J}_{X,\alpha}$ \eqref{IV.2.1}  never meets conditions of Theorem \ref{KosMir-max3}
because the sequence of its  diagonal entries  $\mathcal A_n$ (see \eqref{BA}) is unbounded.
%
%

$(ii)$ Let $\beta_n=-d_n$, i.e. the diagonal of  ${\bf J}_{X,\beta}$ \eqref{IV.3.1_04'} is zero.
It follows from \eqref{5.26B} that  $\{d_n\}_{n=1}^\infty\in l^1(\mathbb{N})$.
Berezanskii's condition \eqref{Ber-cond} for   ${\bf J}_{X,\beta}$   with $c>0$ and   $n=2j+1$  takes the form
$$
 \frac{\nu(d_{j+2})}{d_{j+2}}\leq\frac{\nu(d_{j+1})}{d_{j+1}}\Rightarrow\frac{1}{\sqrt{1+c^2d_{j+2}^2}}\leq\frac{1}{\sqrt{1+c^2d_{j+1}^2}},
 \qquad j\in\mathbb N_0,
$$
and means that the sequence  $\{d_n\}_{n\in\mathbb N}$ monotonically increases.
However, the latter contradicts the condition   $\{d_n\}_{n\in\mathbb N}\in l^1(\N)$.
  \end{proof}

\subsection{Comparison with Dyukarev's results }\label{sec7}
Here we show that one of the main results of Dyukarev \cite[Theorem 2]{Dyuk10} can easily
be extracted from our Proposition \ref{J'beta_p}. To this end we compare his Jacobi matrix ${\bf J}_{\mathrm{Dyuk}}$
with the matrix ${\bf J}_{X,\gB}'$ of form \eqref{Beta''} for a special choice of matrix  entries.

\begin{theorem}[\cite{Dyuk10}, Theorem 2]\label{Duk}
Let integers $p\geq1$ and $p_1\geq0$ satisfy the condition $0\leq p_1\leq p$ and
the diagonal entries of the matrices $\widetilde{\mathcal B}_n$ and $\mathcal R_n$ be defined by the formulae
\begin{equation}\label{Duk1}
\widetilde{\mathcal B}_n=\mathrm{diag}\left(\underset{p_1}{\underbrace{\frac{1}{n+1},\ldots,\frac{1}{n+1}}},\underset{p-p_1}{\underbrace{1,\ldots,1}}\right),
\qquad n\geq0,
\end{equation}
\begin{equation}\label{Duk2}
\mathcal R_0=\mathbb I_p,\quad\mathcal R_n=\sqrt{\mathbb I_p+\widetilde{\mathcal B}_{n-1}^2},\quad n\geq1.
\end{equation}
Further, suppose that the blocks $\mathcal A_n$ and $\mathcal B_n$ of the Jacobi matrix ${\bf J}_{\mathrm{Dyuk}}$ \eqref{Jacobi_m_intro} have the form
\begin{equation}\label{Duk3}
\mathcal A_n=\mathbb O_p,\quad n\geq0,\quad \mathcal B_0=\widetilde{\mathcal B}_0,\quad \mathcal B_n=\widetilde{\mathcal B}_{n-1}^{-1}
\mathcal R_n\widetilde{\mathcal B}_n^{-1},\quad n\geq1.
\end{equation}
Then $n_\pm({\bf J}_{\mathrm{Dyuk}})=p_1$.
\end{theorem}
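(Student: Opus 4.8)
My plan is to reduce ${\bf J}_{\mathrm{Dyuk}}$ to scalar Jacobi matrices, only one of which is nontrivial, and then to identify that one --- up to a bounded perturbation --- with a matrix of the class ${\bf J}_{X,\beta}'$, so that Proposition \ref{J'beta_p} applies.

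The first step is to observe that $\widetilde{\mathcal B}_n$ and $\mathcal R_n$, hence every block $\mathcal B_n=\widetilde{\mathcal B}_{n-1}^{-1}\mathcal R_n\widetilde{\mathcal B}_n^{-1}$ (and $\mathcal A_n=\mathbb O_p$) of ${\bf J}_{\mathrm{Dyuk}}$, are diagonal matrices: the first $p_1$ diagonal entries of $\mathcal B_n$ equal $b_n$ and the last $p-p_1$ equal $c_n$, where $b_0=c_0=1$, $b_n=(n+1)\sqrt{n^2+1}$, $c_n=\sqrt2$ for $n\ge1$. Consequently, with respect to $\mathbb C^p=\mathbb C^{p_1}\oplus\mathbb C^{p-p_1}$ one gets the orthogonal sum ${\bf J}_{\mathrm{Dyuk}}=(J^{(1)}\otimes\mathbb I_{p_1})\oplus(J^{(2)}\otimes\mathbb I_{p-p_1})$, where $J^{(1)}$, $J^{(2)}$ are the scalar Jacobi matrices with zero diagonal and off-diagonal sequences $\{b_n\}$, $\{c_n\}$. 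Hence $n_\pm({\bf J}_{\mathrm{Dyuk}})=p_1\,n_\pm(J^{(1)})+(p-p_1)\,n_\pm(J^{(2)})$, and it remains to prove $n_\pm(J^{(2)})=0$ and $n_\pm(J^{(1)})=1$. The first of these is immediate: $J^{(2)}$ has bounded entries, hence is selfadjoint (equivalently, the Carleman test \eqref{Car} applies).

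For $J^{(1)}$ I would fix any $c>0$, put $d_j:=c/b_{2j-2}$ and $\beta_j:=-d_j$ for $j\in\mathbb N$, and work with the scalar ($p=1$) matrix ${\bf J}_{X,\beta}'$ of form \eqref{Beta''}. Since $d_j\sim c/(4j^2)$, one has $\{d_j\}_1^\infty\in l^1(\mathbb N)$, $d_j\to0$, $\lim_j d_{j-1}^2/d_j=0$, and $\{\beta_j\}$ satisfies \eqref{5.26B} because $\prod_{k=1}^{n-1}(1+c\,|\beta_k|)^2\le\exp\!\big(2c\sum_k d_k\big)<\infty$. As $\beta_j+d_j=0$, this ${\bf J}_{X,\beta}'$ has zero diagonal and off-diagonal sequence $b_0,\sqrt{b_0b_2},b_2,\sqrt{b_2b_4},b_4,\dots$; it therefore coincides with $J^{(1)}$ at all even positions and differs from it at the odd position $2j+1$ only by the factor $b_{2j+1}/\sqrt{b_{2j}b_{2j+2}}$. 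Expanding $\ln b_m=\ln(m+1)+\tfrac12\ln(m^2+1)$ yields $b_{2j+1}^2/(b_{2j}b_{2j+2})=1+O(j^{-2})$, so this factor lies in $(\tfrac12,2)$ for large $j$ and $|b_{2j}b_{2j-2}-b_{2j-1}^2|=O(j^2)$; since $b_{2j}\sim4j^2$, these are precisely conditions \eqref{abs1}--\eqref{abs2} of Theorem \ref{abs_th} for the pair $\{{\bf J}_{X,\beta}',J^{(1)}\}$, while \eqref{abs3} holds trivially because both matrices have zero diagonal (cf.\ Corollary \ref{abs_cor1}). Hence $n_\pm(J^{(1)})=n_\pm({\bf J}_{X,\beta}')$, and Proposition \ref{J'beta_p} (using the properties of $\{d_j\}$ and $\{\beta_j\}$ above) gives $n_\pm({\bf J}_{X,\beta}')=1$. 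Combining everything, $n_\pm({\bf J}_{\mathrm{Dyuk}})=p_1\cdot1+(p-p_1)\cdot0=p_1$.

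The only genuine computation is the asymptotic comparison $b_{2j+1}^2/(b_{2j}b_{2j+2})=1+O(j^{-2})$ that delivers \eqref{abs1}--\eqref{abs2}; this is elementary, the delicate point being the even/odd index bookkeeping. It is worth stressing that the decoupling in the first step is essential: one cannot realize the whole matrix ${\bf J}_{\mathrm{Dyuk}}$ as a single perturbed ${\bf J}_{X,\beta}'$, since on the $(p-p_1)$-block the required perturbing entries would tend to $\mathbb I$ in norm, violating \eqref{abs1}.
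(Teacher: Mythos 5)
Your proof is correct and follows essentially the same route as the paper: decompose ${\bf J}_{\mathrm{Dyuk}}$ into the unbounded $p_1$-part and the constant-$\sqrt2$ part, dispose of the latter by the Carleman test \eqref{Car}, and identify the former with a bounded relative perturbation of a matrix ${\bf J}_{X,\beta}'$ of the form \eqref{Beta''} with $\beta_n=-d_n$, invoking Theorem \ref{abs_th} (via Corollary \ref{abs_cor1}) together with Propositions \ref{JBabs} and \ref{J'beta_p}. Your choice $d_j=c/b_{2j-2}$, which makes the even-indexed entries match exactly and reduces the comparison to the single asymptotic $b_{2j+1}^2/(b_{2j}b_{2j+2})=1+O(j^{-2})$, is in fact cleaner than the paper's choice $\gd_n=c/((n+1)\sqrt{n^2+1})$, which only yields ratios bounded away from the forbidden values.
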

\begin{proof}
Below we deduce Theorem \ref{Duk} from our Proposition \ref{J'beta_p}.

It is easily seen from \eqref{Duk1}--\eqref{Duk3} that  entries  of ${\bf J}_{\mathrm{Dyuk}}$ allow representation
\begin{equation*}
\mathcal A_{n,\mathrm{Dyuk}}=\mathbb O_p,\quad n\geq0,\quad \mathcal B_{0,\mathrm{Dyuk}}=\mathbb I_p,
\end{equation*}
\begin{equation}\label{Duk_inter}
 \mathcal B_{n,\mathrm{Dyuk}}=\mathcal B_{n,\mathrm{Dyuk}}'\oplus\mathcal B_{n,\mathrm{Dyuk}}'',
\qquad n\geq1,
\end{equation}
where
\begin{equation}\label{Duk_B'}
 \mathcal B_{n,\mathrm{Dyuk}}'=\mathrm{diag}\left((n+1)\sqrt{n^2 +1},\ldots,(n+1)\sqrt{n^2 +1}\right)\in\mathbb C^{p_1\times p_1},
\qquad n\geq1,
\end{equation}
\begin{equation}\label{Duk_B''}
 \mathcal B_{n,\mathrm{Dyuk}}''=\mathrm{diag}\left(\sqrt2,\ldots,\sqrt2\right)\in\mathbb C^{(p-p_1)\times (p-p_1)},
\qquad n\geq1.
\end{equation}
 Let ${\bf J}_{\mathrm{Dyuk}}'$ be the Jacobi matrix with off-diagonal entries of form \eqref{Duk_B'} and let ${\bf J}_{\mathrm{Dyuk}}''$ be the Jacobi matrix with off-diagonal entries of form \eqref{Duk_B''}. Thus ${\bf J}_{\mathrm{Dyuk}}={\bf J}_{\mathrm{Dyuk}}'\oplus{\bf J}_{\mathrm{Dyuk}}''$.

Alongside with the Jacobi  matrix ${\bf J}_{\mathrm{Dyuk}}$ we consider the Jacobi matrix  ${\bf J}_{X,\gB}'\subset\mathbb C^{p_1\times p_1}$ of form \eqref{Beta''} with $\beta_n=-d_n\mathbb I_p$ and $\gd_{n} = \frac{c}{(n+1)\sqrt{n^2 +1}}$, $n\geq1$.
Since $\{\beta_n\}_1^{\infty}\in l^1(\Bbb N;\mathbb C^{p_1\times p_1})$,
Corollary \ref{corB} ensures that condition \eqref{5.26B} is satisfied, hence  Proposition \ref{JBabs}
implies maximality indices for the Jacobi matrix ${\bf J}_{X,\gB}$  given by \eqref{IV.3.1_04'}, i.e.
$n_\pm({\bf J}_{X,\gB})=p_1$.

On the other hand,  $\lim\limits_{n\to\infty}\frac{d_{n-1}^2}{d_{n}}=0$, and the matrix ${\bf J}_{X,\gB}$
meets conditions of  Proposition \ref{J'beta_p}. Therefore combining this proposition with the previous
equalities yields
\begin{equation}\label{J'_ind}
n_\pm({\bf J}_{X,\gB}')=  n_\pm({\bf J}_{X,\gB}) = p_1.
\end{equation}
These equalities allow us to compare Dukarev's matrix ${\bf J}_{\mathrm{Dyuk}}$ with the Jacobi
matrix ${\bf J}_{X,\gB}'$ which is simpler than the original  matrix ${\bf J}_{X,\gB}$.
With the above choice of $\beta_n$ and  $\gd_{n}$
the entries of the matrix ${\bf J}_{X,\gB}'$ are given  by
\begin{equation*}
{\mathcal A}_n=\mathbb O_{p_1},\quad n\geq0,
\end{equation*}
\begin{equation}\label{B'}
{\mathcal B}_n=\left\{\begin{array}{ll}
(j+1)\sqrt{j^2+1}\,\mathbb I_{p_1},& n=2j, \\
\sqrt{(j+1)(j+2)}\sqrt[4]{(j^2+1)((j+1)^2+1)}\,\mathbb I_{p_1},&n=2j+1,
                                             \end{array}\right.\quad j\geq0.
\end{equation}
To prove the relations   $n_\pm({\bf J}_{X,\gB}')=n_\pm({\bf J}_{\mathrm{Dyuk}}')=p$
let us check the conditions of Corollary \ref{abs_cor} for the pair $\{{\bf J}_{\mathrm{Dyuk}}',{\bf J}_{X,\gB}'\}$ treating the matrix ${\bf J}_{\mathrm{Dyuk}}'$ of form \eqref{Duk3} as an unperturbed  Jacobi matrix.

For $n=2j$ condition \eqref{abs1} is obvious, because ${\mathcal B}_n=\mathcal B_{n,\mathrm{Dyuk}}'$.

For $n=2j+1$ direct calculation shows that
\begin{equation}
\lim\limits_{j\to\infty}\left\|({\mathcal B}_{2j+1}-\mathcal B_{2j+1,\mathrm{Dyuk}}')\mathcal (B_{2j+1,\mathrm{Dyuk}}')^{-1}\right\|_{\mathbb C^{p_1\times p_1}}
=\frac{3}{4}.
\end{equation}
Therefore  condition \eqref{abs1} of Corollary \ref{abs_cor} is fulfilled with
 $N$ big enough, i.e.
 \begin{equation}
\underset{j\geq N}{\sup}\left\|({\mathcal B}_{2j+1}-\mathcal B_{2j+1,\mathrm{Dyuk}}')\mathcal (B_{2j+1,\mathrm{Dyuk}}')^{-1}\right\|_{\mathbb C^{p_1\times p_1}}<1.
 \end{equation}
Further,  for even $n=2j$ direct calculation yields
   \begin{equation}
\lim\limits_{j\to\infty}\left\|{\mathcal B}_{2j}-{\mathcal B}_{2j-1}(\mathcal B_{2j-1,\mathrm{Dyuk}}')^{-1}\mathcal B_{2j,\mathrm{Dyuk}}'\right\|_{\mathbb C^{p_1\times p_1}}=1,
\end{equation}
and  the pair $\{{\bf J}_{\mathrm{Dyuk}}',{\bf J}_{X,\gB}'\}$  meets the condition \eqref{abs2}
of Corollary \ref{abs_cor},  i.e.
  \begin{equation}
\underset{j\geq 0}{\sup}\left\|{\mathcal B}_{2j}-{\mathcal B}_{2j-1}(\mathcal B_{2j-1,\mathrm{Dyuk}}')^{-1}\mathcal B_{2j,\mathrm{Dyuk}}'\right\|_{\mathbb C^{p_1\times p_1}}<\infty.
\end{equation}
Similarly, for odd $n=2j+1$ we  derive
  \begin{equation}
\lim\limits_{j\to\infty}\left\|{\mathcal B}_{2j+1}-{\mathcal B}_{2j}(\mathcal B_{2j,\mathrm{Dyuk}}')^{-1}\mathcal B_{2j+1,\mathrm{Dyuk}}'\right\|_{\mathbb C^{p_1\times p_1}}=\frac{3}{2}\,,
 \end{equation}
 and condition \eqref{abs2} for $n=2j+1$ is verified too.

Condition \eqref{abs3'} is obviously satisfied  because $\mathcal A_n=\mathcal A_{n,\mathrm{Dyuk}}=\mathbb O_{p_1}$.
Thus, the pair $\{{\bf J}_{\mathrm{Dyuk}}',{\bf J}_{X,\gB}'\}$ meets all conditions of
Corollary \ref{abs_cor}, and hence $n_\pm({\bf J}_{\mathrm{Dyuk}}')=n_\pm({\bf J}_{X,\gB}')$. Therefore  combining the latter  with  equality \eqref{J'_ind} yields  $n_\pm({\bf J}_{\mathrm{Dyuk}}')=n_\pm({\bf J}_{X,\gB}')=n_\pm({\bf J}_{X,\gB})=p_1$.

Finally  we apply Carleman test \eqref{Car} to conclude  that $n_\pm({\bf J}_{\mathrm{Dyuk}}'')=0$
and
$$
n_\pm({\bf J}_{\mathrm{Dyuk}}) = n_\pm({\bf J}_{\mathrm{Dyuk}}'\oplus{\bf J}_{\mathrm{Dyuk}}'') =  n_\pm({\bf J}_{\mathrm{Dyuk}}') +  n_\pm({\bf J}_{\mathrm{Dyuk}}'') = p_1 + 0 =p_1.
$$
This proves the result.
\end{proof}

\appendix
\renewcommand{\thesection}{\Alph{section}}
\section{Appendix}\label{app}

\textbf{1. Boundary triplets.} Let $A$ be a densely defined closed symmetric
operator in a separable Hilbert space $\gH$ with equal deficiency
indices $\mathrm{n}_\pm(A)=\dim \cN_{\pm \I} \leq \infty,$ where
$\cN_z:=\ker(A^*-z)$ is the defect subspace.

\begin{definition}[\cite{DM91, DM95, DerMal17, Gor84}]\label{def_ordinary_bt}
A triplet $\Pi=\{\cH,\gG_0,\gG_1\}$ is called a {\rm (ordinary)
boundary triplet} for the adjoint operator $A^*$ if $\cH$ is an auxiliary
Hilbert space and $\Gamma_0,\Gamma_1:\  \dom(A^*)\rightarrow \cH$
are linear mappings such that the second abstract Green identity
\begin{equation}\label{II.1.2_green_f}
(A^*f,g)_\gH - (f,A^*g)_\gH = (\gG_1f,\gG_0g)_\cH -
(\gG_0f,\gG_1g)_\cH, \qquad f,g\in\dom(A^*),
\end{equation}
holds
and the mapping $\gG:=\begin{pmatrix}\Gamma_0\\\Gamma_1\end{pmatrix}:  \dom(A^*)
\rightarrow \cH \oplus \cH$ is surjective.
\end{definition}

First note that a boundary triplet 
for $A^*$ exists since the deficiency indices of $A$ are assumed to be
equal. Noreover, $\mathrm{n}_\pm(A) = \dim(\cH)$ and $A=A^*\upharpoonright\left(\ker(\Gamma_0) \cap \ker(\Gamma_1)\right)$ hold. Note also that a boundary triplet for $A^*$ is not unique.

A closed extension $\widetilde{A}$ of $A$ is called \emph{proper}
if $A\subseteq\widetilde{A}\subseteq A^*$. The set of all proper extensions of $A$ is denoted by $\Ext A.$

\begin{proposition}[\cite{DM91} -- \cite{DerMal17}]\label{Bound-op-A}
Let $A$ be a symmetric operator in $\frak H$, and let $\Pi=\{\mathcal H,\Gamma_0,\Gamma_1\}$ be a boundary triplet of the operator $A^*$ and $B\in\mathcal C(\mathcal H)$. Then:

$(i)$  $\widetilde{A}$ and $A_0:=A^*\upharpoonright\ker\Gamma_0$ are \emph{disjoint} (i.e. $\dom\widetilde{A}\cap\dom A_0=\dom A
$), if and only if the extension $\widetilde{A}$ is parameterized in the following way
\begin{equation}\label{bound_op}
\widetilde{A}=A^*\upharpoonright\dom\widetilde{A},\qquad \dom \widetilde{A}=\{f\in\dom A^*: \Gamma_1f=B\Gamma_0f\},\quad B\in\mathcal C(\mathcal H);
\end{equation}

$(ii)$ operator $A_B$ is symmetric (selfadjoint) if and only if so is $B$, and $n_\pm(A_B)=n_\pm(B)$;

$(iii)$ $A_{B_1}\subseteq A_{B_2}\Longleftrightarrow B_1\subseteq B_2$;

$(iv)$  if $A_0$ is discrete, then the operator $A_B=A_B^*$ is discrete if and only if $B=B^*$ is discrete.
\end{proposition}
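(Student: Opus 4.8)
The plan is to derive all four items from the abstract Green identity \eqref{II.1.2_green_f} together with the surjectivity of $\Gamma=(\Gamma_0,\Gamma_1)^\top$, supplementing these with the $\gamma$-field and Weyl function of the triplet $\Pi$ for the two quantitative items $(ii)$ and $(iv)$. Throughout I write $A_0:=A^*\upharpoonright\ker\Gamma_0$, which is selfadjoint (a standard consequence of \eqref{II.1.2_green_f} and surjectivity of $\Gamma$), so that $\pm i\in\rho(A_0)$ and the $\gamma$-field $\gamma(z):=(\Gamma_0\upharpoonright\ker(A^*-z))^{-1}$ and Weyl function $M(z):=\Gamma_1\gamma(z)$ are well defined for $z\in\mathbb C\setminus\mathbb R$, with $\operatorname{Im}M(i)\gg 0$.

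For $(i)$ and $(iii)$ I would work with the boundary relation $\Theta:=\Gamma(\dom\widetilde A)\subseteq\mathcal H\oplus\mathcal H$ attached to a proper extension $\widetilde A\in\Ext A$. Since $\Gamma$ is surjective with $\ker\Gamma=\dom A$, the assignment $\widetilde A\mapsto\Theta$ is an order-preserving bijection between $\Ext A$ and linear relations in $\mathcal H$; this immediately gives $(iii)$, because $A_{B_1}\subseteq A_{B_2}\iff\operatorname{gr}B_1\subseteq\operatorname{gr}B_2\iff B_1\subseteq B_2$. For $(i)$, the disjointness $\dom\widetilde A\cap\dom A_0=\dom A$ reads $\{f\in\dom\widetilde A:\Gamma_0f=0\}=\ker\Gamma_0\cap\ker\Gamma_1$, which is precisely the single-valuedness condition $(0,h)\in\Theta\Rightarrow h=0$, i.e.\ that $\Theta$ is the graph of an operator $B$ with $\dom B=\Gamma_0(\dom\widetilde A)$. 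Hence $\dom\widetilde A=\{f\in\dom A^*:\Gamma_1f=B\Gamma_0f\}$, and closedness of $\widetilde A$ transfers to $B\in\mathcal C(\mathcal H)$ because $\Gamma$ is continuous in the graph norm of $A^*$.

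For $(ii)$, substituting $\Gamma_1f=B\Gamma_0f$ and $\Gamma_1g=B\Gamma_0g$ into \eqref{II.1.2_green_f} yields $(A^*f,g)-(f,A^*g)=(B\Gamma_0f,\Gamma_0g)-(\Gamma_0f,B\Gamma_0g)$ for all $f,g\in\dom A_B$; since $\Gamma_0$ maps $\dom A_B$ onto $\dom B$, the left-hand side vanishes for all such $f,g$ exactly when $B$ is symmetric, giving the symmetry equivalence. Identifying the adjoint as $A_B^*=A_{B^*}$ (again from \eqref{II.1.2_green_f} and surjectivity) then produces the selfadjointness equivalence $A_B=A_B^*\iff B=B^*$. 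For the index formula I would use that $\gamma(i)$ maps $\mathcal H$ bijectively onto $\ker(A^*-i)$ and $\Gamma_1\gamma(i)=M(i)$, so that $f=\gamma(i)\phi\in\ker(A_B^*-i)=\ker(A_{B^*}-i)$ iff $M(i)\phi=B^*\phi$; therefore $\dim\ker(A_B^*-i)=\dim\ker(B^*-M(i))$, and the strict positivity of $\operatorname{Im}M(i)$ identifies the latter dimension with $n_+(B)=\dim\ker(B^*-i)$, the argument at $-i$ being symmetric.

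Finally, for $(iv)$ I would invoke Krein's resolvent formula $(A_B-z)^{-1}-(A_0-z)^{-1}=\gamma(z)(B-M(z))^{-1}\gamma(\bar z)^*$, valid for $z\in\rho(A_0)\cap\rho(A_B)$ (nonempty since $A_B=A_B^*$). As $A_0$ is discrete, $(A_0-z)^{-1}\in\mathcal S_\infty(\mathfrak H)$; with $\gamma(z)$ and $\gamma(\bar z)^*$ bounded and $M(z)$ bounded, the resolvent of $A_B$ is compact iff $(B-M(z))^{-1}$ is compact, which holds iff $B=B^*$ is discrete. I expect the two quantitative steps — the count $\dim\ker(B^*-M(i))=n_+(B)$ relying on $\operatorname{Im}M(i)\gg0$ in $(ii)$, and the compactness transfer through Krein's formula in $(iv)$ — to be the main technical obstacle, whereas $(i)$, $(iii)$, and the symmetry/selfadjointness equivalences of $(ii)$ are soft consequences of the Green identity and the surjectivity of $\Gamma$.
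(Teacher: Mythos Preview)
The paper does not supply its own proof of this proposition: it is quoted as background from \cite{DM91}--\cite{DerMal17} and left unproved in the Appendix. Your sketch reproduces the standard argument from that literature---parametrising proper extensions by boundary relations via the surjection $\Gamma$, reading off symmetry/selfadjointness from Green's identity, computing deficiency indices through the $\gamma$-field identification $\ker(A_B^*-i)\cong\ker(B^*-M(i))$, and handling discreteness via the Krein-type resolvent formula---so there is no alternative ``paper proof'' to compare against.

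Two minor points deserve tightening. First, in $(i)$ the transfer of closedness from $\widetilde A$ to $B$ needs more than continuity of $\Gamma$: you should invoke that $\Gamma$ induces a topological isomorphism $\dom A^*/\dom A\to\cH\oplus\cH$ (open mapping theorem), so that closed subspaces correspond. Second, in $(iv)$ the ``iff'' in your compactness transfer is not automatic from boundedness of $\gamma(z)$ and $\gamma(\bar z)^*$ alone; one uses that $\gamma(\bar z)^*\gamma(z)=(\operatorname{Im}z)^{-1}\operatorname{Im}M(z)$ is boundedly invertible to recover compactness of $(B-M(z))^{-1}$ from that of the sandwiched operator---this is exactly the mechanism behind Proposition~\ref{prop_II.1.4_02}, which the paper also cites rather than proves. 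The deficiency-index step you flag, namely $\dim\ker(B^*-M(i))=n_+(B)$, is handled in the cited sources by a homotopy $M_t=(1-t)iI+tM(i)$ with $\operatorname{Im}M_t\ge\delta I>0$ and a Fredholm-stability argument; your instinct that $\operatorname{Im}M(i)\gg0$ is the crux is correct.
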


In this case,  $\widetilde{A}:=A_B$ and the operator $B$ is called {\it the boundary operator} of the extension $A_{B}$.

\begin{proposition}[\cite{DM91}]\label{prop_II.1.4_02}
Let $\Pi=\{\cH,\gG_0,\gG_1\}$  be a boundary triplet for $A^*,$
$B_1,B_2\in \cC(\cH)$ and  let  ${\mathcal S}_q,\
q\in(0,\infty],$ be the Neumann-Schatten ideal. Then the following
equivalence holds
\begin{equation}\label{II.1.4_02}
(A_{B_1}-i)^{-1} - (A_{B_2}-i)^{-1}\in{\mathcal
S}_q(\gH)
\quad \Longleftrightarrow \quad (B_1 - i)^{-1}-
(B_2 - i )^{-1}\in{\mathcal S}_q(\cH).
\end{equation}
\noindent In particular, $(A_{{B}_1} - i)^{-1} - (A_0 -i)^{-1} \in {\mathcal S}_q(\gH) \Longleftrightarrow
\bigl(B_1 - i\bigr)^{-1} \in {\mathcal S}_q(\cH).$
\end{proposition}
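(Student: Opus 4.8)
The plan is to transfer the problem to the boundary operators via the Krein-type resolvent formula and then to peel off, one at a time, the bounded operators that separate the two sides of \eqref{II.1.4_02}.

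First I would recall the $\gamma$-field and the Weyl function attached to $\Pi=\{\cH,\gG_0,\gG_1\}$ (see \cite{DM91,DM95,DerMal17,Gor84}): for $z\in\C\setminus\R$ set $\gamma(z):=\bigl(\gG_0\!\upharpoonright\!\cN_z\bigr)^{-1}\in\cB(\cH,\gH)$ and $M(z):=\gG_1\gamma(z)\in\cB(\cH)$, both holomorphic off $\R$. Since $B_1,B_2\in\cC(\cH)$ are single-valued operators, Proposition~\ref{Bound-op-A}(i) shows that each $A_{B_j}$ is disjoint from $A_0:=A^*\!\upharpoonright\!\ker\gG_0=A_0^*$, so Krein's resolvent formula applies: for $z$ in the common resolvent set in $\C_+$,
\[
(A_{B_1}-z)^{-1}-(A_{B_2}-z)^{-1}=\gamma(z)\Bigl[(B_1-M(z))^{-1}-(B_2-M(z))^{-1}\Bigr]\gamma(\bar z)^{*}.
\]
Taking $z=i$ (a point regular for all operators occurring here whenever $B_1=B_1^{*}$ and $B_2=B_2^{*}$, the situation in which \eqref{II.1.4_02} is applied below), and using that $\mathcal S_q$ is a two-sided ideal while $\gamma(i)$ is bounded with bounded left inverse $\gG_0\!\upharpoonright\!\cN_i$ and $\gamma(-i)^{*}$ is bounded and surjective, one obtains
\[
(A_{B_1}-i)^{-1}-(A_{B_2}-i)^{-1}\in\mathcal S_q(\gH)\iff(B_1-M(i))^{-1}-(B_2-M(i))^{-1}\in\mathcal S_q(\cH).
\]

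Next I would pass from the operator parameter $M(i)$ to the scalar $i\mathbb I$. Put $S_j:=(B_j-i)^{-1}\in\cB(\cH)$ and $C:=M(i)-i\mathbb I\in\cB(\cH)$. From $B_j-M(i)=(B_j-i)-C$ one gets $(B_j-M(i))^{-1}=(\mathbb I-S_jC)^{-1}S_j=S_j(\mathbb I-CS_j)^{-1}$, with $\mathbb I-S_jC$ and $\mathbb I-CS_j$ bounded and boundedly invertible (their inverses being $\mathbb I+(B_j-M(i))^{-1}C$ and $\mathbb I+C(B_j-M(i))^{-1}$). A short manipulation then yields the factorisation
\[
(B_1-M(i))^{-1}-(B_2-M(i))^{-1}=(\mathbb I-S_1C)^{-1}\,\bigl[(B_1-i)^{-1}-(B_2-i)^{-1}\bigr]\,(\mathbb I-CS_2)^{-1},
\]
so, by the ideal property once more, $(B_1-M(i))^{-1}-(B_2-M(i))^{-1}\in\mathcal S_q(\cH)\iff(B_1-i)^{-1}-(B_2-i)^{-1}\in\mathcal S_q(\cH)$. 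Combining this with the previous equivalence proves \eqref{II.1.4_02}.

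For the "in particular" statement I would apply Krein's formula to the pair $A_{B_1}$, $A_0$ directly, namely $(A_{B_1}-i)^{-1}-(A_0-i)^{-1}=\gamma(i)(B_1-M(i))^{-1}\gamma(-i)^{*}$, whence $(A_{B_1}-i)^{-1}-(A_0-i)^{-1}\in\mathcal S_q(\gH)\iff(B_1-M(i))^{-1}\in\mathcal S_q(\cH)$, and then invoke the single-operator version $(B_1-M(i))^{-1}=(\mathbb I-S_1C)^{-1}(B_1-i)^{-1}$ of the factorisation above to conclude $\iff(B_1-i)^{-1}\in\mathcal S_q(\cH)$. The only step demanding genuine care is this passage from $M(i)$ to $i\mathbb I$: one must never form the possibly unbounded difference $B_1-B_2$, keeping all manipulations at the level of the bounded operators $S_j=(B_j-i)^{-1}$ and $C=M(i)-i\mathbb I$; the Krein resolvent formula and the mapping properties of the $\gamma$-field used throughout are already available from \cite{DM91}.
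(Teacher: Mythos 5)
Your proof is correct and follows essentially the argument behind the cited result: the paper states Proposition \ref{prop_II.1.4_02} without proof, referring to \cite{DM91}, where the equivalence is obtained exactly as you do, namely from Krein's resolvent formula $(A_{B_1}-i)^{-1}-(A_{B_2}-i)^{-1}=\gamma(i)\bigl[(B_1-M(i))^{-1}-(B_2-M(i))^{-1}\bigr]\gamma(-i)^{*}$ combined with the observation that replacing $M(i)$ by $i\mathbb I$ alters the difference only by bounded, boundedly invertible left and right factors, so that the two-sided ideal property of $\mathcal S_q$ transfers membership in both directions. Your restriction to the case where $i$ is a regular point of $B_1,B_2$ (in particular $B_j=B_j^{*}$) is the correct reading of the statement and is exactly the situation in which it is applied in the paper.
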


\textbf{2. Dirac operator.} Let ${\mathrm D}_{n}$ be the minimal  operator generated in
$L^{2}([x_{n-1},x_{n}];\C^{2p})$  by the differential
expression (\ref{1.2Intro})
  \begin{equation}\label{3.6A}
{\mathrm {\bf D}}_{n}=\mathrm {\bf D} \upharpoonright\dom({\mathrm {\bf D}}_n), \quad\dom({\mathrm {\bf D}}_{n})=W^{1,2}_{0}([x_{n-1},x_{n}];\C^{2p}).
\end{equation}
Its adjoint ${\mathrm {\bf D}}^*_n$ is given by
${\mathrm {\bf D}}_{n}^*=\mathrm {\bf D}\upharpoonright\dom({\mathrm {\bf D}}_n^*)$, $\dom({\mathrm {\bf D}}_{n})=W^{1,2}([x_{n-1},x_{n}];\C^{2p}).
$

We define the minimal operator ${\mathrm {\bf D}}_X$ on $L^2(\mathcal I;\mathbb C^{2p})$ by
  \begin{equation}
{\mathrm {\bf D}}_X:=\bigoplus\limits_{n\in\mathbb N} {\mathrm {\bf D}}_n,  \quad \dom({\mathrm {\bf D}}_X)=W_0^{1,2}(\mathcal I\setminus X;\mathbb C^{2p})=\bigoplus\limits_{n\in\mathbb N}W^{1,2}_{0}([x_{n-1},x_{n}];\C^{2p}).
\end{equation}
where ${\mathrm {\bf D}}_n$, $n\in\mathbb N$, is given by \eqref{3.6A}.
Its adjoint ${\mathrm {\bf D}}^*_X$ is given by
  \begin{equation}\label{DX*}
{\mathrm {\bf D}}_X^*:=\bigoplus\limits_{n\in\mathbb N} {\mathrm {\bf D}}_n^*, \qquad\dom({\mathrm {\bf D}}_X^*)=\bigoplus\limits_{n\in\mathbb N}W^{1,2}([x_{n-1},x_{n}];\C^{2p}).
\end{equation}

The boundary triplet $\widetilde{\Pi}^{(n)}=\big\{\mathbb C^{2p},
\widetilde{\Gamma}_{0}^{(n)},\widetilde{\Gamma}_{1}^{(n)}\big\}$ for the Dirac operator on  $[x_{n-1},x_{n}]$ is constructed elementarily:
\begin{equation}\label{triple2}
\widetilde{\Gamma}_{0}^{(n)}f:=
\left(\begin{array}{c}
                 f_I(x_{n-1}+)\\
                 i\,c\, f_{II}(x_{n}-)
                       \end{array}\right),\qquad
\widetilde{\Gamma}_{1}^{(n)}f:= \left(\begin{array}{c}
                                                                           i\,c\, f_{II}(x_{n-1}+)\\
                                                                            f_I(x_{n}-)
                                                                          \end{array}\right)\,.
\end{equation}
However, the direct sum of boundary triplets, generally, is not a boundary triplet (see examples in \cite{KM}, \cite{MN2012}).

In papers \cite{CarMalPos13} (scalar case $p=1$) and \cite{BudMalPos17, BudMalPos18} (matrix case $p>1$) the boundary triplets for the operator $\mathrm {\bf D}_X^*$ were constructed  using the regularization technique developed in \cite{KM}, \cite{CarMalPos13} and \cite{MN2012}.

\begin{theorem}[\cite{BudMalPos17, CarMalPos13, BudMalPos18}]\label{th_bt_2} Let $X=\{x_{n}\}_{n=0}^\infty\subset\mathcal I=(0,b)$  and  $d^*(X)<+\infty$. Define the mappings
$$\Gamma_j^{(n)}: W^{1,2}([x_{n-1},x_n];\C^{2p})\to\C^{2p}\,, \quad n\in \N\,,\quad j\in \{0,1\}\,,
$$
by setting
\begin{equation}\label{IV.1.1_12}
\begin{gathered}
\Gamma_0^{(n)}f:=\left(\begin{array}{c}
                 \gd_n^{1/2}  f_I(x_{n-1}+)\\
                 i\,c\,\gd_n^{3/2}\sqrt{1+\frac{1}{c^{2}\gd_{n}^{2}}}\,  f_{II}(x_{n}-)
                       \end{array}\right),\\
                       \Gamma_1^{(n)}f:=\left(\begin{array}{c}
                                                                           i\,c\,\gd_n^{-1/2}\,( f_{II}(x_{n-1}+)- f_{II}(x_{n}-))\\
                                                                           \gd_n^{-3/2}\left(1+\frac{1}{c^{2}\,\gd_{n}^{2}}\right)^{-1/2}
                                                                           (f_I(x_{n}-)-f_I(x_{n-1}+)-i\,c\,\gd_n\, f_{II}(x_{n}-))
                                                                 \end{array}\right).
\end{gathered}
\end{equation}
Then:
\begin{itemize}
\item[(i)] for any $n\in \N$,  $\Pi^{(n)}=\{\C^{2p},
\Gamma_0^{(n)},\Gamma_1^{(n)}\}$ is a boundary triplet for
${\mathrm {\bf D}}_n^*$;

\item[(ii)] the direct sum $\Pi:=\bigoplus_{n=1}^\infty\Pi^{(n)} =
\{\cH, \Gamma_0,\Gamma_1\}$ with $\cH = l^{2}(\N;\C^{2p})$ and
${\Gamma}_{j} =\bigoplus_{n=1}^\infty {\Gamma}^{(n)}_{j},$ $j\in\{0,1\},$  is a boundary triplet for the operator ${\mathrm {\bf D}}_X^* = \bigoplus_{n=1}^\infty {\mathrm {\bf D}}_{n}^{*}$.
\end{itemize}
       \end{theorem}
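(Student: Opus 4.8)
The plan is to establish (i) by a finite-dimensional computation on a single interval and then to deduce (ii) from a general criterion for when a direct sum of boundary triplets is again a boundary triplet. For (i), I would start from the elementary boundary triplet $\widetilde{\Pi}^{(n)}=\{\C^{2p},\widetilde{\Gamma}_0^{(n)},\widetilde{\Gamma}_1^{(n)}\}$ of \eqref{triple2}, whose defining properties -- Green's identity \eqref{II.1.2_green_f} and surjectivity of $f\mapsto(\widetilde{\Gamma}_0^{(n)}f,\widetilde{\Gamma}_1^{(n)}f)$ onto $\C^{2p}\oplus\C^{2p}$ -- follow at once from integration by parts for the Dirac expression on $[x_{n-1},x_n]$ (the bounded self-adjoint term $\tfrac{c^2}{2}\otimes\diag(\mathbb I_p,-\mathbb I_p)$ contributes nothing to the boundary form) together with the fact that the trace map $f\mapsto(f_I(x_{n-1}+),f_{II}(x_{n-1}+),f_I(x_n-),f_{II}(x_n-))$ sends $W^{1,2}([x_{n-1},x_n];\C^{2p})$ onto $\C^{4p}$. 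Each of $\Gamma_0^{(n)}f$, $\Gamma_1^{(n)}f$ in \eqref{IV.1.1_12} is a fixed linear combination of these four traces with coefficients depending only on $d_n=x_n-x_{n-1}$ and $c$, so there is an explicit invertible matrix $\mathcal W_n\in\C^{4p\times4p}$ with $\binom{\Gamma_0^{(n)}f}{\Gamma_1^{(n)}f}=\mathcal W_n\binom{\widetilde{\Gamma}_0^{(n)}f}{\widetilde{\Gamma}_1^{(n)}f}$. By the standard transformation lemma for boundary triplets, $\Pi^{(n)}$ is again a boundary triplet for ${\mathrm {\bf D}}_n^*$ precisely when $\mathcal W_n^*\,\mathbb J\,\mathcal W_n=\mathbb J$ with $\mathbb J=\left(\begin{smallmatrix}\mathbb O_{2p}&-\mathbb I_{2p}\\\mathbb I_{2p}&\mathbb O_{2p}\end{smallmatrix}\right)$; this is a routine if tedious matrix identity, and verifying it is exactly what forces the particular powers $d_n^{1/2}$, $d_n^{3/2}\sqrt{1+(c^2d_n^2)^{-1}}$, $d_n^{-1/2}$, $d_n^{-3/2}(1+(c^2d_n^2)^{-1})^{-1/2}$ appearing in \eqref{IV.1.1_12}.

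For (ii), the obstacle is the one flagged right after \eqref{triple2}: a direct sum of boundary triplets need not be a boundary triplet for $\bigoplus_n{\mathrm {\bf D}}_n^*$, because although the Green identity survives term by term, the map $\Gamma=\bigoplus_n\Gamma^{(n)}$ may fail to send $\dom({\mathrm {\bf D}}_X^*)$ into $l^2(\N;\C^{2p})\oplus l^2(\N;\C^{2p})$, or fail to be surjective onto it. I would invoke the criterion of \cite{MN2012} (see also \cite{KM}): $\bigoplus_n\Pi^{(n)}$ is a boundary triplet for $\bigoplus_n{\mathrm {\bf D}}_n^*$ if and only if the boundary maps are, uniformly in $n$, two-sidedly equivalent to the graph norm on the deficiency subspace $\mathfrak M_n:=\ker({\mathrm {\bf D}}_n^*-\I)\dotplus\ker({\mathrm {\bf D}}_n^*+\I)$, i.e. there is $C\ge1$ with
\begin{equation*}
C^{-1}\bigl(\|{\mathrm {\bf D}}_n^*u\|^2+\|u\|^2\bigr)\le\|\Gamma_0^{(n)}u\|^2_{\C^{2p}}+\|\Gamma_1^{(n)}u\|^2_{\C^{2p}}\le C\bigl(\|{\mathrm {\bf D}}_n^*u\|^2+\|u\|^2\bigr),\qquad u\in\mathfrak M_n,\ n\in\N
\end{equation*}
(equivalently, $\sup_n\|M_n(\I)\|<\infty$ and $\sup_n\|(\operatorname{Im}M_n(\I))^{-1}\|<\infty$ for the Weyl functions $M_n$ of $\Pi^{(n)}$). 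These estimates I would verify by explicit computation: the equation $({\mathrm {\bf D}}_n^*\mp\I)F=0$ on $[x_{n-1},x_n]$ has exponential solutions of exactly the type used in the proof of Theorem \ref{VarIndices}, so $\Gamma_0^{(n)},\Gamma_1^{(n)}$ can be evaluated on a basis of $\mathfrak M_n$ in closed form in terms of $d_n$ and $c$, and a short estimate using only $\sup_n d_n=d^*(X)<\infty$ then yields the two-sided bounds uniformly in $n$. Here the subtractions $f_{II}(x_{n-1}+)-f_{II}(x_n-)$ and $f_I(x_n-)-f_I(x_{n-1}+)-\I c d_n f_{II}(x_n-)$ in \eqref{IV.1.1_12} act as discrete derivatives, the attached powers of $d_n$ convert the interval Sobolev inequalities into $n$-independent ones, and the factor $\sqrt{1+(c^2d_n^2)^{-1}}$ keeps the bounds uniform whether or not $\{d_n\}$ is bounded away from $0$; granting the criterion, this finishes (ii).

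The main obstacle is this second step, and it is twofold: one must (a) correctly set up and invoke the abstract criterion for a direct sum of boundary triplets -- this is the genuinely non-elementary ingredient, and the reason the elementary maps \eqref{triple2} have to be ``regularized'' into \eqref{IV.1.1_12} in the first place -- and (b) carry out the uniform interval estimate, which is precisely the computation that the normalization in \eqref{IV.1.1_12} was engineered to make work. Part (i), by contrast, reduces to a finite-dimensional $\mathbb J$-unitarity check and is routine.
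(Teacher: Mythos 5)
Your proposal is correct and follows essentially the same route as the construction the paper relies on: Theorem \ref{th_bt_2} is quoted from \cite{CarMalPos13,BudMalPos17,BudMalPos18} without proof, and the Appendix explicitly describes that proof as a regularization of the elementary triplets \eqref{triple2} combined with the direct-sum criterion of \cite{MN2012} (uniform bounds on $M_n(i)$ and $(\operatorname{Im}M_n(i))^{-1}$) --- precisely your two steps. The only caveat is that your part (ii) is a plan rather than a completed estimate, but the $\mathbb J$-unitarity check in (i) and the uniform bounds on the defect subspaces in (ii) are exactly the computations carried out in the cited sources.
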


The next sentence explains the appearance of Jacobi matrices ${\bf B}_{X,\gA}$ of the form
 \begin{equation}\label{IV.2.1_01}
{\bf B}_{X,\gA}=\left(
\begin{array}{cccccc}
  \mathbb O_p  & -\frac{\nu(\gd_{1})}{\gd_1^{2}}\mathbb I_p & \mathbb O_p  & \mathbb O_p & \mathbb O_p   &  \dots\\
   -\frac{\nu(\gd_{1})}{\gd_1^{2}}\mathbb I_p  &  -\frac{\nu(\gd_{1})}{\gd_1^{2}}\mathbb I_p &
   \frac{\nu(\gd_{1})}{\gd_1^{3/2}\gd_2^{1/2}}\mathbb I_p & \mathbb O_p  & \mathbb O_p &  \dots\\
  \mathbb O_p  & \frac{\nu(\gd_{1})}{\gd_1^{3/2}\gd_2^{1/2}}\mathbb I_p  & \frac{\alpha_1}{\gd_2}  &
   -\frac{\nu(\gd_{2})}{\gd_2^{2}}\mathbb I_p & \mathbb O_p &   \dots\\
  \mathbb O_p  & \mathbb O_p  & -\frac{\nu(\gd_{2})}{\gd_2^{2}}\mathbb I_p &  -\frac{\nu(\gd_{2})}{\gd_2^{2}}\mathbb I_p &
  \frac{\nu(\gd_{2})}{\gd_2^{3/2}\gd_3^{1/2}}\mathbb I_p &  \dots\\
  \mathbb O_p  & \mathbb O_p  & \mathbb O_p  & \frac{\nu(\gd_{2})}{\gd_2^{3/2}\gd_3^{1/2}}\mathbb I_p &
   \frac{\alpha_2}{\gd_3} &   \dots\\
\dots& \dots&\dots&\dots&\dots&\dots\\
 \end{array}%
\right)
   \end{equation}
     in the context of the Dirac operators.

 \begin{proposition}[\cite{BudMalPos17, CarMalPos13, BudMalPos18}]\label{prop_IV.2.1_01}
Let
$\Pi=\{\cH,\Gamma_0,\Gamma_1\}$ be the boundary triplet for the operator ${\mathrm {\bf D}}_{X}^*$ of the form \eqref{IV.1.1_12}. Also let ${\bf B}_{X,\alpha}$ be
the minimal Jacobi operator associated with a Jacobi matrix of the form~\eqref{IV.2.1_01}. Then ${\bf B}_{X,\alpha}$ is a boundary operator for $\mathrm {\bf D}_{X,\alpha}$, i.e.
 \begin{equation}\label{B-op-D}
 \begin{array}{c}
   \mathrm {\bf D}_{X,\alpha}=\mathrm {\bf D}_{\mathrm{\bf B}_{X,\alpha}}=\mathrm {\bf D}_{X}^*\upharpoonright\dom(\mathrm {\bf D}_{\mathrm{\bf B}_{X,\alpha}}), \\
   \dom(\mathrm {\bf D}_{\mathrm{\bf B}_{X,\alpha}})=\{f\in W^{1,2}(\cI\setminus X;\C^{2p}):
\Gamma_1f=\mathrm{\bf B}_{X,\alpha}\Gamma_0f\}.
 \end{array}
     \end{equation}
Moreover, following statements are true:

 $(i)$ $\mathrm{n}_\pm(\mathrm{\bf D}_{X,\alpha})=\mathrm{n}_\pm({\bf B}_{X,\gA})\leq p$. In particular,  $\mathrm{\bf D}_{X,\alpha}=\mathrm{\bf D}_{X,\alpha}^*$   if and only if ${\bf B}_{X,\gA}={\bf B}_{X,\gA}^*$;

 $(iii)$ If the operator $\mathrm{\bf D}_{X,\alpha}$  is selfadjoint and $\lim\limits_{n\to\infty}d_n=0$, then its spectrum is  discrete if and only if the Jacobi matrix ${\bf B}_{X,\gA}={\bf B}_{X,\gA}^*$  has  discrete spectrum.
      \end{proposition}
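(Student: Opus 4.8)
The plan is to realize $\mathrm{\bf D}_{X,\alpha}$ as a proper extension of the minimal operator $\mathrm{\bf D}_X=\bigoplus_n\mathrm{\bf D}_n$ that, through the boundary triplet $\Pi=\{\cH,\Gamma_0,\Gamma_1\}$ of Theorem \ref{th_bt_2}, is parameterized by the boundary operator ${\bf B}_{X,\alpha}$, and then to read off $(i)$ and $(iii)$ from Propositions \ref{Bound-op-A} and \ref{prop_II.1.4_02}. First I would check $\mathrm{\bf D}_X\subseteq\mathrm{\bf D}_{X,\alpha}\subseteq\mathrm{\bf D}_X^*$: functions in $W_0^{1,2}(\cI\setminus X;\C^{2p})$ vanish at every $x_n$ and hence trivially satisfy the boundary and jump conditions \eqref{delta}, so $\mathrm{\bf D}_{X,\alpha}$ is proper. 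Next I would verify that $\mathrm{\bf D}_{X,\alpha}$ is disjoint from $\mathrm{\bf D}_0:=\mathrm{\bf D}_X^*\upharpoonright\ker\Gamma_0$, i.e.\ $\dom\mathrm{\bf D}_{X,\alpha}\cap\ker\Gamma_0=\dom\mathrm{\bf D}_X$: from \eqref{IV.1.1_12} the condition $\Gamma_0^{(n)}f=0$ forces $f_I(x_{n-1}+)=f_{II}(x_n-)=0$ for all $n$, and combining this with the continuity $f_I(x_n+)=f_I(x_n-)$ and the jumps in \eqref{delta_adjoint} shows $f$ vanishes at all $x_n$, whence $f\in\dom\mathrm{\bf D}_X$; the reverse inclusion is clear. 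This makes Proposition \ref{Bound-op-A}(i) applicable.

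The core step is the computation translating the conditions describing $\dom(\mathrm{\bf D}_{X,\alpha}^*)$ in Proposition \ref{1} --- namely $f_{II}(0+)=0$, the continuity $f_I(x_n+)=f_I(x_n-)$, and the jumps $f_{II}(x_n+)-f_{II}(x_n-)=-\frac{i\alpha_n}{c}f_I(x_n)$ --- into a single linear relation between $\Gamma_0 f=\bigoplus_n\Gamma_0^{(n)}f$ and $\Gamma_1 f=\bigoplus_n\Gamma_1^{(n)}f$. Writing $\cH=l^2(\N;\C^{2p})$ and pairing the $\C^p$-block attached to $x_n-$ with the one attached to $x_n+$, and using $\nu(\gd_n)=c\gd_n/\sqrt{1+c^2\gd_n^2}$ to simplify the $\gd_n$-weights built into \eqref{IV.1.1_12}, I would show that these conditions are equivalent to $\Gamma_1 f={\bf B}_{X,\alpha}\Gamma_0 f$, with ${\bf B}_{X,\alpha}$ precisely the tridiagonal matrix \eqref{IV.2.1_01}: the continuity and jump relations assemble into the off-diagonal blocks $\pm\nu(\gd_n)/\gd_n^{2}$, $\nu(\gd_n)/(\gd_n^{3/2}\gd_{n+1}^{1/2})$ and the diagonal blocks $\alpha_n/\gd_{n+1}$, $-\nu(\gd_n)/\gd_n^{2}$. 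Finally I would match closures: $W^{1,2}_{\comp}(\cI\setminus X;\C^{2p})$-functions correspond to finitely supported boundary vectors, which form a core both for $\mathrm{\bf D}_{X,\alpha}=\overline{\mathrm{\bf D}_{X,\alpha}^0}$ and for the minimal Jacobi operator ${\bf B}_{X,\alpha}$, and $\Gamma_0,\Gamma_1$ are graph-norm continuous; this yields \eqref{B-op-D}.

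Granting \eqref{B-op-D}, item $(i)$ is immediate from Proposition \ref{Bound-op-A}(ii), which gives $n_\pm(\mathrm{\bf D}_{X,\alpha})=n_\pm({\bf B}_{X,\alpha})$, together with the general bound $n_\pm({\bf B}_{X,\alpha})\le p$ for $p\times p$ block Jacobi matrices recalled in the Introduction; the ``in particular'' claim is the special case $n_\pm=0$, since the minimal Jacobi operator is symmetric and is selfadjoint exactly when its deficiency indices vanish. For $(iii)$ I would argue under the selfadjointness hypothesis as follows. The operator $\mathrm{\bf D}_0=\mathrm{\bf D}_X^*\upharpoonright\ker\Gamma_0=\bigoplus_n\mathrm{\bf D}_{n,0}$, with $\mathrm{\bf D}_{n,0}$ the Dirac operator on $[x_{n-1},x_n]$ subject to $f_I(x_{n-1}+)=f_{II}(x_n-)=0$, has the explicit spectrum \eqref{spectrum-D}; since $\lim_n d_n=0$ these eigenvalues tend to $\pm\infty$, so $0\notin\sigma(\mathrm{\bf D}_0)$ and $(\mathrm{\bf D}_0-i)^{-1}\in\mathcal S_\infty$. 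Proposition \ref{prop_II.1.4_02} with $q=\infty$ then gives $(\mathrm{\bf D}_{X,\alpha}-i)^{-1}\in\mathcal S_\infty\iff({\bf B}_{X,\alpha}-i)^{-1}\in\mathcal S_\infty$, which is exactly the claimed equivalence of discreteness.

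The hard part will be the coefficient bookkeeping in the second step: a priori the interface conditions at $x_n$ couple the $\Gamma$-data of $[x_{n-1},x_n]$ and $[x_n,x_{n+1}]$ in a way that is not obviously tridiagonal, and it is precisely the regularizing choice of $\Gamma_j^{(n)}$ in \eqref{IV.1.1_12} --- the specific $\gd_n$-powers and the factors $\sqrt{1+c^{-2}\gd_n^{-2}}$ --- that makes the ``non-Jacobi'' terms cancel and leaves a genuine three-term matrix with the stated weights. Carrying out that cancellation carefully, together with the disjointness verification and the closure-compatibility, is where the real work sits; once \eqref{B-op-D} is established, $(i)$ and $(iii)$ follow formally from the boundary triplet calculus.
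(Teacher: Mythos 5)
Your proposal is correct and follows exactly the boundary-triplet route on which the paper relies: the paper states this proposition without proof (citing \cite{CarMalPos13, BudMalPos17, BudMalPos18}), and your reconstruction --- properness and disjointness of $\mathrm{\bf D}_{X,\alpha}$ from $\mathrm{\bf D}_0$, the identification $\Gamma_1 f={\bf B}_{X,\alpha}\Gamma_0 f$ via the regularized mappings \eqref{IV.1.1_12}, then $(i)$ from Proposition \ref{Bound-op-A}(ii) and $(iii)$ from the discreteness of $\mathrm{\bf D}_0$ (spectrum \eqref{spectrum-D} with $d_n\to 0$) combined with Proposition \ref{prop_II.1.4_02} at $q=\infty$ --- is precisely the argument of those references and of the paper's own use of this machinery in Proposition \ref{D_disc_5}. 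The only part left schematic is the tridiagonal bookkeeping and the closure-matching between $\overline{\mathrm{\bf D}^0_{X,\alpha}}$ and the minimal Jacobi operator, both of which you correctly identify as the substantive computations.
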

%

\textbf{3. Schr\"{o}dinger operator.} Let ${\mathrm {\bf H}}_{n}$ be the minimal  operator generated in
$L^{2}([x_{n-1},x_{n}];\C^{p})$  by the differential
expression
  \begin{equation}\label{3.6SH}
{\mathrm {\bf H}}_{n}=-\frac{d^2}{dx^2}, \qquad\dom({\mathrm {\bf H}}_{n})=W^{2,2}_{0}([x_{n-1},x_{n}];\C^{p}).
\end{equation}
Its adjoint ${\mathrm {\bf H}}^*_n$ is given by
${\mathrm {\bf H}}_{n}^*=-\frac{d^2}{dx^2}\upharpoonright\dom({\mathrm {\bf H}}_n^*)$, $\dom({\mathrm {\bf H}}_{n})=W^{2,2}([x_{n-1},x_{n}];\C^{p}).$

We define the minimal operator  ${\mathrm {\bf H}}_X$ on $L^2(\mathcal I;\mathbb C^{p})$ by
  \begin{equation}
{\mathrm {\bf H}}_X:=\bigoplus\limits_{n\in\mathbb N} {\mathrm {\bf H}}_n,  \qquad \dom({\mathrm {\bf H}}_X)=W_0^{2,2}(\mathcal I\setminus X;\mathbb C^{p})=\bigoplus\limits_{n\in\mathbb N}W^{2,2}_{0}([x_{n-1},x_{n}];\C^{p}),
\end{equation}
where ${\mathrm {\bf H}}_n$, $n\in\mathbb N$  is given by  \eqref{3.6A}.
Its adjoint ${\mathrm {\bf H}}^*_X$ is given by
  \begin{equation}\label{HX*}
{\mathrm {\bf H}}_X^*:=\bigoplus\limits_{n\in\mathbb N} {\mathrm {\bf H}}_n^*, \qquad\dom({\mathrm {\bf H}}_X^*)=\bigoplus\limits_{n\in\mathbb N}W^{2,2}([x_{n-1},x_{n}];\C^{p}).
\end{equation}

\begin{theorem}[\cite{KM, KMN}]\label{th_bt_2S} Let $X=\{x_{n}\}_{n=0}^\infty\subset\mathcal I=(0,b)$,  and  $d^*(X)<+\infty$. Define the mappings
$$\Gamma_{j,\mathrm H}^{(n)}: W^{2,2}([x_{n-1},x_n];\C^{p})\to\C^{p}\,, \quad n\in \N\,,\quad j\in \{0,1\}\,,
$$
by setting
\begin{equation}\label{Pi1}
                \Gamma_{0,\mathrm  H}^{(n)}f := \begin{pmatrix}
d_n^{1/2}f(x_{n-1}+) \\ d_n^{3/2}f'(x_{n}-) \end{pmatrix},\qquad
     \Gamma_{1,\mathrm  H}^{(n)}f := \begin{pmatrix}
    d_n^{-1/2}\big(f'(x_{n-1}+)-f'(x_{n}-)\big) \\ d_n^{-3/2}\big(f(x_{n}-)-f(x_{n-1}+)\big)-d_n^{-1/2}f'(x_n-)
        \end{pmatrix}.
    \end{equation}
Then:
\begin{itemize}
\item[(i)] for any $n\in \N$ a triplet  $\Pi_{\mathrm H}^{(n)}=\{\mathbb C^{2p},
\Gamma_{0,\mathrm H}^{(n)},\Gamma_{1,\mathrm H}^{(n)}\}$ is a boundary triplet for
${\mathrm {\bf H}}_n^*$;

\item[(ii)] the direct sum $\Pi_{\mathrm H}:=\bigoplus_{n=1}^\infty\Pi_{\mathrm H}^{(n)} =
\{\cH, \Gamma_{0,\mathrm H},\Gamma_{1,\mathrm H}\}$ with $\cH = l^{2}(\N;\mathbb C^{2p})$ and
${\Gamma}_{j,\mathrm  H} =\bigoplus_{n=1}^\infty {\Gamma}^{(n)}_{j,\mathrm  H},$ $j\in\{0,1\},$  is a boundary triplet for the operator ${\mathrm {\bf H}}_X^* = \bigoplus_{n=1}^\infty {\mathrm {\bf H}}_{n}^{*}$.
\end{itemize}
       \end{theorem}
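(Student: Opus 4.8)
The plan is to verify the two axioms of a boundary triplet from Definition~\ref{def_ordinary_bt} --- the abstract Green identity~\eqref{II.1.2_green_f} and surjectivity of $\Gamma=(\Gamma_0,\Gamma_1)$ --- first on a single interval (item~(i)), and then for the infinite orthogonal sum (item~(ii)), the decisive point being that the $d_n$-dependent weights in~\eqref{Pi1} are tuned so that both axioms survive the passage to the sum.

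\emph{Item (i).} On $[x_{n-1},x_n]$ the operator ${\mathrm {\bf H}}_n^*=-d^2/dx^2$ has domain $W^{2,2}([x_{n-1},x_n];\C^p)$, and integration by parts yields, for $f,g$ in this domain,
\begin{equation*}
({\mathrm {\bf H}}_n^*f,g)_{L^2}-(f,{\mathrm {\bf H}}_n^*g)_{L^2}=\big(f'(x_{n-1}+),g(x_{n-1}+)\big)-\big(g'(x_{n-1}+),f(x_{n-1}+)\big)+\big(f(x_n-),g'(x_n-)\big)-\big(g(x_n-),f'(x_n-)\big),
\end{equation*}
all inner products in $\C^p$. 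Substituting~\eqref{Pi1} into $(\Gamma_{1,\mathrm H}^{(n)}f,\Gamma_{0,\mathrm H}^{(n)}g)_{\C^{2p}}-(\Gamma_{0,\mathrm H}^{(n)}f,\Gamma_{1,\mathrm H}^{(n)}g)_{\C^{2p}}$ one checks that it reproduces this expression exactly: the powers of $d_n$ cancel in conjugate pairs, the two copies of the interior term $d_n\,(f'(x_n-),g'(x_n-))$ cancel, and the subtraction of $d_n^{-1/2}f'(x_n-)$ in the second component of $\Gamma_{1,\mathrm H}^{(n)}$ removes the leftover mixed contributions. Surjectivity of $(\Gamma_{0,\mathrm H}^{(n)},\Gamma_{1,\mathrm H}^{(n)})$ onto $\C^{2p}\oplus\C^{2p}$ is then clear, since $f\mapsto\big(f(x_{n-1}+),f'(x_{n-1}+),f(x_n-),f'(x_n-)\big)$ maps $W^{2,2}$ onto $\C^{4p}$ and~\eqref{Pi1} is an invertible linear change of these four data; and $\ker\Gamma_{0,\mathrm H}^{(n)}\cap\ker\Gamma_{1,\mathrm H}^{(n)}=W_0^{2,2}([x_{n-1},x_n];\C^p)=\dom({\mathrm {\bf H}}_n)$. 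This proves~(i).

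\emph{Item (ii).} Since an orthogonal sum of boundary triplets need not be a boundary triplet, the heart of the argument is an $n$-uniform control of~\eqref{Pi1}. By the direct-sum criterion of \cite{MN2012} (see also \cite{KM,CarMalPos13}), $\Pi_{\mathrm H}=\bigoplus_n\Pi_{\mathrm H}^{(n)}$ is a boundary triplet for ${\mathrm {\bf H}}_X^*=\bigoplus_n{\mathrm {\bf H}}_n^*$ once one establishes, with $C$ independent of $n$, the upper estimate
\begin{equation*}
\|\Gamma_{0,\mathrm H}^{(n)}f\|_{\C^{2p}}^2+\|\Gamma_{1,\mathrm H}^{(n)}f\|_{\C^{2p}}^2\le C\big(\|f\|_{L^2}^2+\|{\mathrm {\bf H}}_n^*f\|_{L^2}^2\big),\qquad f\in W^{2,2}([x_{n-1},x_n];\C^p),
\end{equation*}
together with its counterpart: every $\phi\in\C^{2p}\oplus\C^{2p}$ is realized as $\big(\Gamma_{0,\mathrm H}^{(n)}f,\Gamma_{1,\mathrm H}^{(n)}f\big)$ by some $f$ with $\|f\|_{L^2}^2+\|{\mathrm {\bf H}}_n^*f\|_{L^2}^2\le C\|\phi\|^2$. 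Indeed, the upper estimate makes $\Gamma_{j,\mathrm H}=\bigoplus_n\Gamma_{j,\mathrm H}^{(n)}$ map $\dom({\mathrm {\bf H}}_X^*)=\bigoplus_nW^{2,2}([x_{n-1},x_n];\C^p)$ into $\cH=l^2(\N;\C^{2p})$, whence the Green identity for ${\mathrm {\bf H}}_X^*$ is the absolutely convergent sum of the identities from~(i); the second part produces an $l^2$-bounded right inverse of $(\Gamma_{0,\mathrm H},\Gamma_{1,\mathrm H})$, hence surjectivity onto $\cH\oplus\cH$; and $\ker\Gamma_{0,\mathrm H}\cap\ker\Gamma_{1,\mathrm H}=\dom({\mathrm {\bf H}}_X)$.

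\emph{The main obstacle} is exactly these two $n$-uniform estimates. For the upper one I would use the scale-sharp inequality $\|h(y)\|_{\C^p}^2\le 2d_n^{-1}\|h\|_{L^2}^2+2d_n\|h'\|_{L^2}^2$ for $h\in W^{1,2}([x_{n-1},x_n];\C^p)$, applied to $h=f$ and to $h=f'$, together with $\|{\mathrm {\bf H}}_n^*f\|_{L^2}=\|f''\|_{L^2}$, the estimate $\|f'(x_{n-1}+)-f'(x_n-)\|\le d_n^{1/2}\|f''\|_{L^2}$, and an analogous double integration controlling the remaining block of $\Gamma_{1,\mathrm H}^{(n)}$ by $\|f''\|_{L^2}$; the exponents $\tfrac12$ and $\tfrac32$ in~\eqref{Pi1} are precisely those for which every surviving power of $d_n$ is nonnegative, and $d^*(X)=\sup_n d_n<\infty$ is used only to bound these from above. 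For the right-inverse estimate I would argue by linearity: split $\phi$ into its four $\C^p$-blocks and realize each by a tailored function on $[x_{n-1},x_n]$ --- a constant, an affine function, and two Hermite cubics, respectively --- whose $L^2$- and $W^{2,2}$-norms are controlled by the corresponding block times a fixed nonnegative power of $d_n\le d^*(X)$; summing the four gives the desired uniformly bounded right inverse. With both estimates in hand, (ii) follows from the cited direct-sum criterion, completing the proof.
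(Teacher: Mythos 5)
Your proposal is correct, but note that this paper does not prove Theorem \ref{th_bt_2S} at all: it is quoted in the Appendix as a known result from \cite{KM, KMN}, so there is no in-paper argument to match. Compared with the proofs in those references, your item (i) is the same routine verification (the Green identity indeed holds because the cross terms $(f'(x_n-),g(x_{n-1}+))$ and $(f(x_{n-1}+),g'(x_n-))$ cancel between the two pairings and the extra $-d_n^{-1/2}f'(x_n-)$ contributes the same $d_n(f'(x_n-),g'(x_n-))$ to both, so it drops out), and your identification of the direct sum as the real issue is exactly right. Where you diverge is in how the direct-sum criterion is verified: the cited sources compute the Weyl functions $M_n(z)$ of the regularized triplets explicitly (in terms of $\cot$ and $\csc$ of $d_n\sqrt{z}$) and check the uniform bounds $\sup_n\|M_n(i)\|<\infty$, $\sup_n\|(\operatorname{Im}M_n(i))^{-1}\|<\infty$ required by the criterion of \cite{MN2012}, whereas you establish the equivalent pair of uniform two-sided estimates on $\Gamma^{(n)}$ directly, via the scaled trace inequality, the double-integration bound $\|f(x_n-)-f(x_{n-1}+)-d_nf'(x_n-)\|\le \tfrac{2}{3}d_n^{3/2}\|f''\|_{L^2}$, and an explicit uniformly bounded right inverse built from constants, affine functions and rescaled Hermite cubics. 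Your route is more elementary and self-contained (it even reproves the sufficient direction of the direct-sum criterion rather than using it as a black box), at the cost of leaving one interpolation step implicit, namely $d_n^2\|f'\|_{L^2}^2\le C(\|f\|_{L^2}^2+\|f''\|_{L^2}^2)$ with $C$ depending only on $d^*(X)$, which is needed to absorb the term $d_n^2\|f'\|_{L^2}^2$ arising from the trace inequality applied to $h=f'$; the Weyl-function route buys explicit spectral information about $\mathrm{\bf H}_0$ that the sources reuse later, which your argument does not produce.
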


\begin{proposition}[\cite{KM,KMN}]\label{BoundOp}
   Let $\Pi = \{\mathcal{H}, \Gamma_{0,\mathrm H},\Gamma_{1,\mathrm H}\}$ be the boundary triplet for ${\mathrm {\bf H}}_X^*$
  given by \eqref{Pi1}
and let ${\bf B}_{X,\alpha}^{(2)}(\mathrm {\bf H})$  be  a boundary operator for the Hamiltonian $\mathrm {\bf H}_{X,\alpha}$
with  respect to  the triplet $\Pi$, i.e.
 \begin{eqnarray}\label{Bop1}
&\mathrm {\bf H}_{X,\alpha}=\mathrm {\bf H}_{\mathrm{\bf B}_{X,\alpha}^{(2)}(\mathrm{\bf{H}})}=\mathrm {\bf H}_{X}^*\upharpoonright\dom(\mathrm {\bf H}_{\mathrm{\bf B}_{X,\alpha}^{(2)}(\mathrm{\bf{H}})}),\\
&\dom (\mathrm {\bf H}_{X,\gA}) = \left\{f\in W^{2,2}(\mathcal I\setminus X; \mathbb{C}^p):\, \Gamma_{1,\mathrm H}f={\bf B}_{X,\gA}^{(2)}(\mathrm{\bf{H}}) \Gamma_{0,\mathrm H} f \right\}.
     \end{eqnarray}
Then  ${\bf B}_{X,\alpha}^{(2)}(\mathrm{\bf{H}})$
is unitarily  equivalent to the Jacobi operator defined by~\eqref{IV.2.1Sh''} and thus
 $\mathrm{n}_\pm(\mathrm{\bf H}_{X,\alpha})=\mathrm{n}_\pm({\bf B}_{X,\gA}^{(2)}(\mathrm{\bf{H}}))\leq p$. In particular, $\mathrm{\bf H}_{X,\alpha}=\mathrm{\bf H}_{X,\alpha}^*$   if and only is ${\bf B}_{X,\gA}^{(2)}(\mathrm{\bf{H}})$ is selfadjoint.
    \end{proposition}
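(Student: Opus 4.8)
The plan is to derive the proposition directly from the abstract extension theory recalled in Proposition~\ref{Bound-op-A}. First I would invoke the boundary triplet $\Pi_{\mathrm H}=\{\mathcal H,\Gamma_{0,\mathrm H},\Gamma_{1,\mathrm H}\}$ for $\mathrm{\bf H}_X^*$ from Theorem~\ref{th_bt_2S} and, using the description of $\dom(\mathrm{\bf H}^0_{X,\alpha})$ in \eqref{deltaopH} (the interface relations $f'(0+)=0$, $f(x_k+)=f(x_k-)$, $f'(x_k+)-f'(x_k-)=\alpha_k f(x_k)$), compute the boundary operator $B$ of $\mathrm{\bf H}_{X,\alpha}$ with respect to $\Pi_{\mathrm H}$. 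Concretely, one rewrites each interface condition as a linear relation between the components of $\Gamma_{0,\mathrm H}f=\bigoplus_n\Gamma^{(n)}_{0,\mathrm H}f$ and of $\Gamma_{1,\mathrm H}f=\bigoplus_n\Gamma^{(n)}_{1,\mathrm H}f$, thereby exhibiting $\mathrm{\bf H}_{X,\alpha}$ in the form $\Gamma_{1,\mathrm H}f=B\Gamma_{0,\mathrm H}f$ with a densely defined symmetric matrix operator $B$ acting on $\mathcal H$, which after the relabelling below is identified with $l^2(\mathbb N_0;\mathbb C^p)$.

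The key structural step is the reorganisation of the index set. Each $\Gamma^{(n)}_{j,\mathrm H}f$ is a pair of $\mathbb C^p$-blocks, one assembled from data at the left endpoint $x_{n-1}+$ and one from data at the right endpoint $x_n-$ of the $n$-th interval. The continuity condition $f(x_k+)=f(x_k-)$ couples the left data of the $(k{+}1)$-st interval with the right data of the $k$-th interval, and the jump condition couples the corresponding first components of $\Gamma_{1,\mathrm H}$; relabelling the resulting sequence of $\mathbb C^p$-blocks ``endpoint by endpoint'' rather than ``interval by interval'' turns $B$ into a tridiagonal, i.e.\ block Jacobi, matrix whose off-diagonal entries are the scalar multiples $\tfrac{1}{d_n^{2}}\mathbb I_p$ and $\tfrac{1}{d_n^{3/2}d_{n+1}^{1/2}}\mathbb I_p$, and whose diagonal entries are $\mathbb O_p$, $-\tfrac{1}{d_n^{2}}\mathbb I_p$, $\tfrac{\alpha_n}{d_{n+1}}$, exactly as in \eqref{IV.2.1Sh''} up to the signs of some off-diagonal blocks. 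A diagonal unitary of the form $\diag\{\pm\mathbb I_p\}$ (the same device by which ${\bf B}_{X,\gA}$ \eqref{IV.2.1_01} is conjugated to ${\bf J}_{X,\gA}$ \eqref{IV.2.1}) removes these signs, yielding $B\simeq{\bf J}^{(2)}_{X,\alpha}(\mathrm{\bf H})$; note that all off-diagonal entries are nonzero since $d_n>0$, so $B$ is a genuine Jacobi operator in the sense of \eqref{Jm}.

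With the unitary equivalence $B\simeq{\bf J}^{(2)}_{X,\alpha}(\mathrm{\bf H})$ established, the remaining assertions are immediate. By Proposition~\ref{Bound-op-A}(ii) one has $n_\pm(\mathrm{\bf H}_{X,\alpha})=n_\pm(B)=n_\pm({\bf J}^{(2)}_{X,\alpha}(\mathrm{\bf H}))$, and since a block Jacobi matrix with $p\times p$ entries has deficiency indices at most $p$ (see \cite{Ber68, Krein, Krein49}), this gives $n_\pm(\mathrm{\bf H}_{X,\alpha})\le p$. Finally, $\mathrm{\bf H}_{X,\alpha}=\mathrm{\bf H}_{X,\alpha}^*$ is equivalent to $n_\pm=0$, hence to selfadjointness of ${\bf J}^{(2)}_{X,\alpha}(\mathrm{\bf H})$, which is again Proposition~\ref{Bound-op-A}(ii).

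The main obstacle is the bookkeeping in the second paragraph: one must check that the regularised boundary maps \eqref{Pi1} were designed precisely so that the interface conditions assemble into a tridiagonal matrix, and that the resulting scalar coefficients coincide with those of \eqref{IV.2.1Sh''} after the sign-fixing unitary. This is a finite but delicate computation involving the endpoints $x_{n-1}+,x_n-$ and the weights $d_n^{\pm1/2},d_n^{\pm3/2}$; everything else is a formal consequence of the abstract material in the Appendix. Since the statement already appears in \cite{KM,KMN}, one may alternatively simply cite those references.
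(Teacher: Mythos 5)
Your proposal is correct and is essentially the argument behind the citation: the paper gives no proof of Proposition~\ref{BoundOp}, only the reference to \cite{KM,KMN}, and your computation of the boundary operator from the interface conditions in \eqref{deltaopH} via the regularized maps \eqref{Pi1} — reorganized endpoint by endpoint into a tridiagonal block matrix and then conjugated by a sign-fixing diagonal unitary $\diag\{\pm\mathbb I_p\}$ — is exactly the mechanism those references (and the paper's own treatment of the Dirac analogue in Propositions~\ref{prop_IV.2.1_01} and \ref{def_ind_B}) rely on. The only step worth making explicit is that the closed operator $\mathrm{\bf H}_{X,\alpha}$, being the closure of the preminimal operator \eqref{deltaopH}, corresponds under the boundary triplet to the \emph{minimal closed} Jacobi operator (the closure of the tridiagonal matrix on finite sequences), which is the standard correspondence between closed extensions and closed boundary operators recalled in Proposition~\ref{Bound-op-A}.
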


{\bf Acknowledgement}
We are grateful  to  Professor G. \'{S}widerski for useful remarks regarding the  first ArXiv version of
our  paper.

\end{document}